\documentclass[11pt, a4paper,reqno]{amsart}

\usepackage{graphicx}
\usepackage{latexsym}
\usepackage{xcolor}
\definecolor{RED}{rgb}{1,0,0}
\usepackage{enumerate}
\usepackage{float}
\usepackage{caption, subcaption}
\usepackage{amsmath,amsthm,amsfonts, amssymb, mathrsfs}
\usepackage{bbold}
\usepackage{thmtools}
\usepackage{hyperref}
\usepackage[capitalise,noabbrev]{cleveref}
\usepackage[backend=biber,
style=alphabetic,
sorting=nty]{biblatex}
\renewbibmacro{in:}{}
\DeclareFieldFormat[article, inbook]{title}{#1}
\usepackage{anyfontsize}
\usepackage{t1enc}

\usepackage{enumitem}
\setlist[itemize,1]{label=$\circ$}

\usepackage{tikz}

\newcommand{\R}{\mathbb{R}}
\newcommand{\N}{\mathbb{N}}

\newcommand{\tailtype}{\tau}

\newcommand{\rauzymat}{M}
\newcommand{\finiteperm}{\pi}

\newtheorem{thm}{Theorem}[section]
\newtheorem{ex}[thm]{Example}
\newtheorem{cor}[thm]{Corollary}
\newtheorem{lem}[thm]{Lemma}
\newtheorem{prop}[thm]{Proposition}
\theoremstyle{remark}
\newtheorem{rem}[thm]{Remark}
\theoremstyle{definition}
\newtheorem{defs}[thm]{Definition}

\theoremstyle{plain}
\newcommand{\thistheoremname}{}
\newtheorem{genericthm}[thm]{\thistheoremname}

  \newtheorem*{genericthm*}{\thistheoremname}
\newenvironment{namedthm*}[1]
  {\renewcommand{\thistheoremname}{#1}%
   \begin{genericthm*}}
  {\end{genericthm*}}

  \newtheorem*{theoremA}{Theorem A}
    \newtheorem*{theoremB}{Theorem B}
      \newtheorem*{theoremC}{Theorem C}

\usepackage[textsize=scriptsize, textwidth=2.5cm]{todonotes}
  \author[C.\ Fougeron]{Charles Fougeron}
\address{LAGA - Université Sorbonne Paris Nord, 99 Av. Jean Baptiste Clément,
93430 Villetaneuse, France}
\email{charles.fougeron@math.cnrs.fr}

\author[S.\ Schmidhuber]{Sophie Schmidhuber}
\address{Institut f\"ur Mathematik, Universit\"at Z\"urich, Winterthurerstrasse 190,
CH-8057 Z\"urich, Switzerland}
\email{sophie.schmidhuber@math.uzh.ch}

\title[Rauzy-Veech induction for infinite-type IETs]{Rauzy-Veech induction for infinite-type interval exchange transformations}

\begin{document}
 \begin{abstract} We consider infinite-type IETs arising from elementary examples of finite-area translation surfaces of infinite genus such as the Baker's surface. We call such IETs \textit{tail-reversing} and we show that
 for any tail-reversing permutation the subset of the simplex of lengths $\Delta$ for which the corresponding infinite-type IET is uniquely ergodic contains a dense $G_\delta$ set with respect to the $\ell^1$-topology.
 To this end, we generalize Rauzy-Veech induction to a large class of infinite-type IETs, where we prove a minimality criterion as a generalization of Keane's criterion in the finite setting. We then restrict ourselves to tail-reversing IETs and obtain our genericity result through a combinatorial analysis of their infinite-type Rauzy diagrams. Moreover, we derive an explicit condition for a tail-reversing IET to be uniquely ergodic by studying the diameter of its induction matrices.
 \end{abstract}
 \maketitle

\section{Introduction}

An \textit{interval exchange transformation} (IET) is a piecewise continuous map from an interval $[0,l]$ to itself whose restriction to each interval of continuity is a translation.
Equivalently, an IET partitions $[0,l]$ into countably many subintervals and \textit{permutes} them.
Depending on whether the number of intervals is finite or infinite, we say the IET is of \textit{finite-type} or \textit{infinite-type}. This class of maps is extremely rich: every invertible aperiodic measure-preserving transformation is isomorphic to such a map \cite{ArnouxOrnsteinWeiss85}. Nevertheless, one may study infinite-type IETs by restricting to classes with simplified combinatorics.

\begin{figure}[H]
    \centering
    \includegraphics[width=0.6\linewidth]{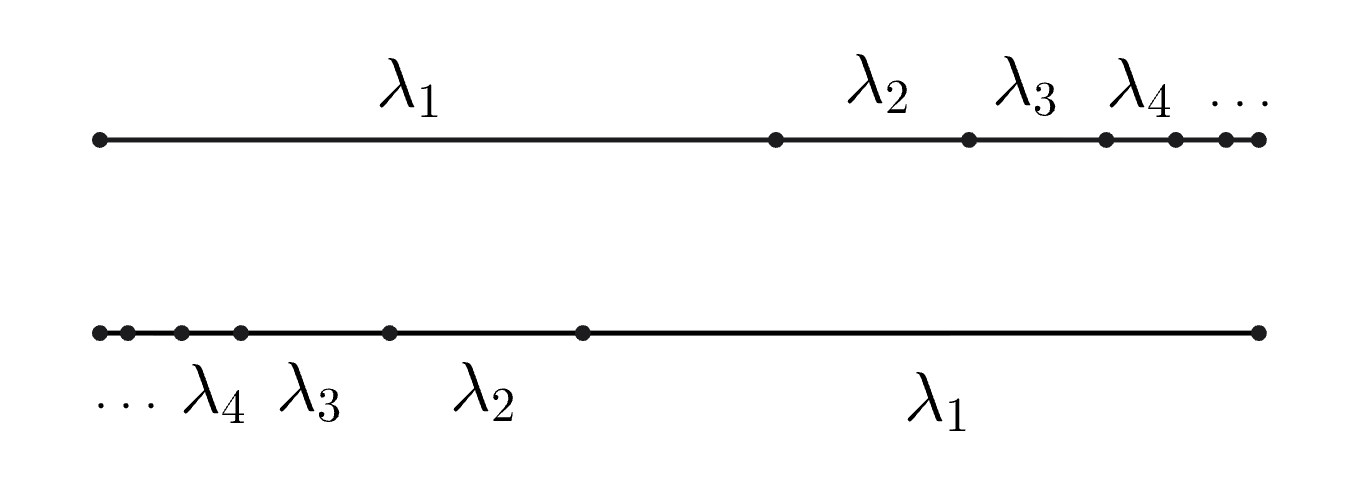}
    \caption{The inverse permutation.}
    \label{fig:intro}
\end{figure}

In this paper we focus on infinite-type IETs arising from elementary examples of finite-area translation surfaces of infinite genus such as the Baker's surface \cite{DeHuVa}.
A basic family of examples is provided in Figure \ref{fig:intro}: given a summable sequence of positive reals $(\lambda_i)_{i \in \mathbb{N}}$, consider the IET which reverses the order of intervals of lengths $\lambda_i$. We call this order the \textit{inverse permutation}; it is a special case of the class of \textit{tail-reversing} permutations studied later in this paper.

Even within this class, infinite-type IETs can exhibit dynamical behaviour far more complex than their finite-type counterparts.
Recall that one of the earliest results in the finite-type theory, \textit{Keane's criterion} \cite{Keane1975}, states that a finite-type IET without \textit{connections} --- that is, without an orbit connecting two singularities (see Definition~\ref{def:connection}) --- is necessarily \textit{minimal}, meaning every infinite orbit is dense. This implication fails for infinite-type IETs already in a simple setting: in \cite{DeHuVa}, the authors construct an infinite-type IET with inverse permutation and without connections, yet admitting an orbit whose closure is an invariant Cantor set and therefore not dense.
Such Cantor-like dynamics for a piecewise translation are a genuinely infinite-type phenomenon, requiring infinitely many intervals of continuity to occur.

The main goal of this paper is to show that, despite this added complexity, unique ergodicity is still topologically generic in the space of length parameters for the class of IETs we study.

\subsection*{Relation to previous works} A natural approach in the study of infinite-type IETs is to approximate them by finite-type projections in an inverse-limit type construction.
This is the strategy employed by M\'alaga Sabogal--Troubetzkoy \cite{MalagaTroubetzkoy} to prove generic unique ergodicity for certain infinite genus translation surfaces - the geometric counterpart to infinite IETs - where, as they note, the approximations must converge ``sufficiently quickly'' for the argument to go through.
A similar path appears in the work of Rafi--Randecker \cite{RafiRandecker} on an attempted generalization of Masur's criterion to finite-area translation surfaces of infinite genus, where the idea takes the geometric form of truncating the surface by discarding small-area pieces near which infinite genus
concentrates.
The core difficulty in both cases is that the Teichm\"uller flow tends to rapidly zoom into precisely these discarded regions, forcing one to retain precise information about the pieces that were meant to be forgotten.
Our approach confronts this tension directly, working at the level of the combinatorics of the IET rather than its geometry.

\subsection{Content of the paper} We extend the \textit{Rauzy--Veech induction} --- the classical renormalization scheme for finite-type IETs developed around 1980 by Rauzy \cite{Rauzy} and Veech \cite{Veech}, which has been central to the finite-type theory --- to the infinite-type setting.
We define the induction for a broad class of infinite-type IETs, which we call \textit{renormalizable}, and use it to establish a generalization of Keane's criterion.
Concretely, at the $n$-th induction step we associate to an infinite-type IET $T$ a new infinite-type IET $T^{(n)}$, obtained by restricting $T$ to a smaller subinterval $I^{(n)} \subset [0,l]$.
To any sequence of induction steps between times $m$ and $n$ we associate an infinite matrix $M^{(m,n)}$, indexed by the intervals of the IET, encoding how interval lengths transform.
We call $T$ \textit{combinatorially minimal} if, starting from any time $m$, all entries of $M^{(m,n)}$ eventually become positive.
Our first result is:

\begin{theoremA}
    An infinitely renormalizable IET $T$ is combinatorially minimal if and only if it has no connections and is minimal.
\end{theoremA}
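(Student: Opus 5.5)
The plan is to follow the structure of the finite-type argument, in which Keane's criterion is equivalent to positivity of Rauzy--Veech products. The new ingredient is handling infinitely many intervals, where positivity of a single matrix is replaced by eventual positivity of every entry. Throughout I use the induction set-up of the paper. The induced maps $T^{(n)}$ are first return maps of $T$ to nested intervals $I^{(n)}$ with common left endpoint $0$. For $m<n$, the entry $M^{(m,n)}_{\alpha\beta}$ counts the number of visits of the $T^{(n)}$-subinterval $I^{(n)}_\beta$ to $I^{(m)}_\alpha$ under $T^{(m)}$ before it returns to $I^{(n)}$. In particular $M^{(m,n)}_{\alpha\beta}>0$ means that the Rokhlin tower over $I^{(n)}_\beta$, built from $T^{(m)}$, passes through $I^{(m)}_\alpha$.

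\textbf{(Combinatorially minimal $\Rightarrow$ minimal, no connections.)}

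First I show $|I^{(n)}|\to 0$. Fix $m$ and any $\alpha$. For large $n$, every $\beta$ has $M^{(m,n)}_{\alpha\beta}\ge 1$. The towers over the $I^{(n)}_\beta$ are disjoint and fill $I^{(m)}$, so $|I^{(m)}|\ge \sum_\beta \lambda^{(n)}_\beta\cdot(\text{number of }\alpha\text{ hit})$. Choosing $N$ indices $\alpha_1,\dots,\alpha_N$ and $n$ large enough that all rows $\alpha_i$ are positive gives $|I^{(m)}|\ge N|I^{(n)}|$, hence $|I^{(n)}|\to0$.

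Now take any point $x$ with infinite orbit and any open interval $J$. I need $J$ to contain some $T^{(m)}$-subinterval image $T^k(I^{(n)}_\beta)$ lying in a tower level. For that I argue that the levels of the towers at time $n$ have diameters going to $0$. Each level is a translate of some $I^{(n)}_\beta\subset I^{(n)}$, and $|I^{(n)}|\to0$, so this holds. Since the levels partition $[0,l]$ up to a countable set, $J$ contains a whole level, say over $I^{(n)}_\gamma$.

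Next, the orbit of $x$ enters $I^{(n')}$ for every $n'$. This uses the renormalizability assumption: induction is defined, so every point returns to $I^{(n)}$. So the orbit meets some $I^{(n')}_\beta$ with $n'>n$. Combinatorial minimality from time $n$ gives $M^{(n,n')}_{\gamma\beta}>0$, so the $T^{(n)}$-orbit of that piece visits $I^{(n)}_\gamma$, and then climbs the tower into $J$. This proves minimality.

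For no connections, I argue by contradiction. A connection forces the induction to stop, or forces some endpoint of $I^{(n)}$ to stabilize, contradicting $|I^{(n)}|\to0$. Alternatively, it forces a singular orbit segment of bounded length to persist as a tower boundary, keeping some entry $M^{(m,n)}_{\alpha\beta}=0$ for all $n$. I expect to use the paper's characterization of when induction is defined in terms of connections.

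\textbf{(Minimal, no connections $\Rightarrow$ combinatorially minimal.)}

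This is the harder direction. Fix $m$ and indices $\alpha,\beta$; I must show $M^{(m,n)}_{\alpha\beta}>0$ for large $n$, uniformly in the infinitely many pairs $(\alpha,\beta)$.

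In the finite case, no connections gives $|I^{(n)}|\to0$, and minimality gives a bounded return time to each $I^{(m)}_\alpha$. Here the first step (shrinking of $I^{(n)}$ without connections) should follow from a Keane-type argument. If the lengths stabilize at a limit interval $I^{(\infty)}$, its endpoint produces a connection or a non-returning point, either way a contradiction.

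For positivity, I use minimality: every point of $I^{(n)}$ eventually enters the open interval $\operatorname{int} I^{(m)}_\alpha$. But with infinitely many $\beta$, I need the entry time to be attained within a single return to $I^{(n)}$ for all $\beta$ simultaneously, and the number of pairs is infinite. I need two things:
\begin{itemize}
\item for fixed $\alpha$, eventual positivity of row $\alpha$ for all $\beta$ at once;
\item this for every $\alpha$.
\end{itemize}
This is how I read ``all entries eventually positive''. If it only means entrywise eventual positivity, the uniformity issue disappears, and it suffices to argue for each fixed pair.

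For a fixed pair, I argue via the interval $I^{(n)}_\beta$ at a large time $n$. The sub-subinterval structure shows that the $T^{(m)}$-orbit of $I^{(n)}_\beta$ before its return to $I^{(n)}$ contains the orbit of a small interval $I^{(n)}_\beta$ deep inside $I^{(n)}$. By minimality, the first hitting time of $\operatorname{int}I^{(m)}_\alpha$ from points of $I^{(n)}$ is finite. Also $I^{(n)}\subset I^{(m')}_{\alpha'}$ for a fixed left-most index once $n$ is large. So I reduce to showing that the first return to $I^{(n)}$ from $I^{(n)}$ takes longer than the hitting time of $I^{(m)}_\alpha$. This follows because return times to shrinking intervals tend to infinity, and by no connections, pieces of $I^{(n)}$ follow orbits that, after a fixed number of steps, visit $I^{(m)}_\alpha$. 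The main obstacle is making this hitting time uniform over the infinitely many $\beta$. There is no compactness, and pieces near the accumulation point of the intervals could escape. I would first try a compactness argument on the closure of $I^{(n)}$ using lower semicontinuity of hitting times of an open set, treating the accumulation point separately using the tail-structure assumption in the definition of renormalizable.
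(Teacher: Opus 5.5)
Your proposal has genuine gaps in both directions. The common source is the quantifier structure of Definition~\ref{def:irred}: combinatorial minimality is \emph{entrywise}. For each pair $(\alpha,\beta)$ and each $k$ there is an $n$, depending on the pair, with $M^{(k,k+n)}_{\alpha\beta}>0$; no single $n$ is guaranteed to make a whole row positive.

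\textbf{Forward direction.} You use the row-uniform version twice. For $|I^{(n)}|\to 0$ you need $N$ whole rows positive at a common time. Entrywise, for a fixed $\beta$ and $N$ labels $\alpha_1,\dots,\alpha_N$ (finitely many entries, monotone in $n$), the tower over $I^{t,(n)}_\beta$ eventually has $N$ disjoint levels in $I^{(m)}$. This only gives $\lambda^{(n)}_\beta\to 0$ for each $\beta$. That step is repairable: each $\lambda^{(n)}_\beta$ is non-increasing in $n$ (only the winner shrinks) and bounded by the summable $\lambda_\beta$, so dominated convergence gives $l^{(n)}=\sum_\beta \lambda^{(n)}_\beta\to 0$. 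The paper argues the contrapositive: a letter whose length does not tend to $0$ loses only finitely often, so its column freezes.

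The density step, however, is wrong as written. You first fix $n'$, then choose a label $\beta$, depending on $n'$, with the orbit meeting $I^{(n')}_\beta$. You then assert $M^{(n,n')}_{\gamma\beta}>0$ at that particular time $n'$, which nothing in the hypothesis gives.

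The missing idea is a target label independent of time. Passing to $T^{-1}$ if necessary (condition (ii) of renormalizability), assume $0\notin Acc(\mathcal S^t)$. The leftmost top interval then carries the same label $\alpha_0$ at every step, with left endpoint $0$ (Lemma~\ref{lem:leftendpointremains}). Every infinite orbit enters $I^{(n'')}$ for $n''$ with $l^{(n'')}<\lambda^{(n')}_{\alpha_0}$, so it meets $I^{t,(n')}_{\alpha_0}$ for \emph{every} $n'$. You then only need eventual positivity of the single entry $(\gamma,\alpha_0)$, which the definition provides.

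\textbf{Converse.} Two steps are missing. First, $x_0=\lim l^{(n)}=0$ should come from minimality, not from a Keane-type argument from absence of connections, which you do not carry out. By Proposition~\ref{prop:renormalizationlimitandorbitclosures}, if $x_0>0$ then the forward orbit of $x_0$ is infinite and non-trivially recurrent with closure in $[x_0,l]$, so $T$ is not minimal.

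Second, the positivity step is left open. Your worry about uniformity over infinitely many $\beta$ is unnecessary, since the definition is entrywise. But even for a single pair, your reduction ``hitting time of $\operatorname{int} I^{(m)}_\alpha$ is less than the return time'' needs a hitting-time bound valid on the moving bases $I^{t,(n)}_\beta\subset[0,l^{(n)}]$, that is, on a whole right-neighbourhood of $0$. That requires continuity of iterates of $T$ near $0$, which is exactly what can fail near accumulation points of singularities, and you leave it unresolved.

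The paper avoids hitting times altogether by arguing by contradiction. Suppose $M^{(k,k+n)}_{\beta\alpha}=0$ for all $n$.
\begin{enumerate}
\item Since all lengths tend to $0$, every letter wins, hence (Lemma~\ref{lem:winlose}) loses, infinitely often. So the tower over $\alpha$ has height tending to infinity.
\item A losing step only concatenates a piece of the winner's tower above or below the tower of $\alpha$, and a winning step only shortens its base from the right. Hence the left endpoints of the levels of these towers form an increasing family of segments of a single orbit.
\item Their union is an infinite (half-)orbit that never enters the open interval $I^{t,(k)}_\beta$, contradicting minimality.
\end{enumerate}
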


We then consider a class of infinite-type IETs which we call \textit{tail-reversing}.
These are infinite-type IETs whose intervals can be assembled into finitely many \textit{groups}, such that the closure of each group consisting of infinitely many intervals (called  \textit{tail intervals}) is connected, and the order of intervals on the top and bottom is reversed (see Definition~\ref{def:tail-reversing}). For instance, any IET with the inverse permutation of Figure~\ref{fig:intro} is tail-reversing.

The key feature of tail-reversing IETs is that the order of the grouped intervals, as well as the order in which intervals accumulate in the tails, is encoded by a single permutation on finitely many elements.
We track how this grouped permutation evolves under Rauzy--Veech induction via an additional technique which we call \textit{revealing}, and record the result in \textit{Rauzy diagrams}. As in the finite-type case, any tail-reversing IET defines (through its length vector) a \textit{path} in the corresponding Rauzy diagram (see Definition \ref{def:rotationnumber}), and similarly a given path may correspond to one or more IETs. We perform a combinatorial study of these paths to establish the following genericity result.

Let $\Pi$ be the grouped permutation of a tail-reversing IET that is \textit{proper} and \textit{irreducible}: irreducible is defined analogously to the standard notion for finite-type permutations, while proper means that the left endpoint $0$ is not simultaneously an accumulation point of top and bottom singularities, and that the same holds for the right endpoint $l$ (see Definition~\ref{def:proper}).
Let $\Delta$ denote the space of strictly positive sequences in $\mathbb{R}_{>0}^{\mathbb{N}}$ summing to one, equipped with the $\ell^1$-topology.

\begin{theoremB}
    For any proper and irreducible permutation $\Pi$, the set of lengths $\lambda \in \Delta$ for which the IET $T_{(\Pi,\lambda)}$ is uniquely ergodic contains a dense $G_\delta$ set with respect to the $\ell^1-$topology.
\end{theoremB}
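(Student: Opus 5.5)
The plan is to adapt the classical Veech/Masur Baire-category argument to the infinite-type Rauzy diagrams, using Theorem A for minimality and a diameter (Hilbert-metric type) estimate on induction matrices for unique ergodicity.

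\textbf{Step 1: a basis of the $\ell^1$-topology by path cylinders.} For a finite path $\gamma$ in the Rauzy diagram of $\Pi$, starting at $\Pi$ and including revealing steps, let $\Delta_\gamma\subset\Delta$ be the set of $\lambda$ whose induction path begins with $\gamma$. As in the finite case, $\Delta_\gamma$ should be the intersection of $\Delta$ with the image of a positive cone under the matrix $M_\gamma$, normalized. I would show two things. First, the interiors of such sets, together with the sets cut out by finitely many strict inequalities between finitely many coordinates, generate the topology; $\ell^1$-open sets are only controlled by finitely many coordinates up to an $\varepsilon$ tail mass, and a revealing step only compares finitely many lengths, or a length with a tail sum, so each step is an open condition. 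Second, every nonempty open set contains some $\Delta_\gamma$ with nonempty interior.

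\textbf{Step 2: a positive loop.} Using properness and irreducibility, I would build a finite path $\gamma_0$ from any vertex back to a fixed vertex such that $M_{\gamma_0}$ has all entries positive, uniformly including the tail columns. Concretely, each interval of the IET, tail intervals included, must visit every interval within the loop. Since there are infinitely many intervals, this has to exploit the tail-reversing structure: the reversal makes a whole tail cross the whole of another group in one move. It also needs a uniform bound $\operatorname{diam}(M_{\gamma_0}\Delta)\le D<\infty$ in the projective (Hilbert) metric. This is the explicit unique ergodicity condition of the abstract: if the path of $\lambda$ contains infinitely many disjoint occurrences of blocks with bounded Hilbert diameter, then $\bigcap_n M^{(0,n)}\overline{\Delta}$ is a single ray. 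By Birkhoff's contraction, that gives unique ergodicity, while minimality comes from Theorem A since positivity recurs from every time $m$. I would prove this criterion first, as the paper's diameter result.

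\textbf{Step 3: $G_\delta$ and density.} Define $U_N$ as the set of $\lambda$ whose path contains at least $N$ disjoint occurrences of $\gamma_0$ ending in open cylinders. Each $U_N$ is a union of interiors of cylinders, hence open. Density of $U_N$ comes from Step 1 plus connectivity: any path $\gamma$ can be extended by a connecting path to the base vertex and then by $\gamma_0^N$, giving a nonempty subcylinder. Then $\bigcap_N U_N$ is a dense $G_\delta$ set of uniquely ergodic lengths. We also intersect with the dense $G_\delta$ set of lengths with no connections, which is a countable intersection of open dense sets.

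\textbf{Main obstacle.} I expect the main obstacle to be Step 2 together with connectivity of the Rauzy diagram. Positivity of an infinite matrix in finitely many steps is not automatic, and the Hilbert diameter of an infinite-dimensional cone image may be infinite. I would first try to show that after a suitable loop the columns of $M_{\gamma_0}$ corresponding to tail intervals become eventually constant, up to bounded ratios, within each group. That would reduce the diameter bound to a finite-dimensional computation on the grouped permutation.
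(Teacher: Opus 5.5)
\textbf{There is a genuine gap in Steps 2--3.} The fixed positive loop $\gamma_0$ with bounded Hilbert diameter does not exist in these diagrams. Neither do the connecting paths back to a base vertex.

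\emph{No finite path has a positive matrix.} By Lemma~\ref{lem:formrvmatrix}, the matrix of any finite path is the identity on every row outside a finite set. The row of a tail label changes only after that label has been revealed and then wins as a single interval, and a finite path reveals only finitely many labels. So $M_{\gamma_0}$ is never positive. Moreover $\hat M_{\gamma_0}\Delta$ has infinite Hilbert diameter: for two unrevealed labels $k\neq l$, the columns $k,l$ give the ratio $M_{kk}M_{ll}/(M_{lk}M_{kl})=1/0$.

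\emph{The directed diagram is not strongly connected.} Revealing strictly refines the partition $\mathcal P$, and no edge ever coarsens it. So the diagram is graded by the number of revealed intervals. A loop contains no revealing step, and once a path reveals it can never return to an earlier vertex. This breaks Step 3 in two ways:
\begin{itemize}
\item There is no connecting path back to a base vertex after your Step~1 path. That path must itself reveal every coordinate it approximates, since an unrevealed coordinate $\mu_i$ ranges down to $0$ on the cylinder.
\item Any path in $\bigcap_N U_N$ visits the base vertex infinitely often, so it eventually stops revealing. The rows of the remaining tail labels then stay identity rows, so such a path is not even combinatorially minimal.
\end{itemize}

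\emph{Step 1 is also unproved where it matters.} The closure of a cylinder of a finite path contains the normalized tail columns $\widehat{c^*+e_j}$. So the cylinder fits in a small $\ell^1$-ball only if every tail has already lost many times and the revealed coordinates are already well approximated. Showing this is possible is exactly the content of the approximation property, not a formality.

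\textbf{What is missing.} Control can only be achieved on a growing finite set of rows. The blocks must change and reveal more and more intervals, and you need an estimate on the mass leaking into unrevealed tail coordinates. The paper keeps your Baire skeleton but fills it as follows.
\begin{itemize}
\item Cylinders $\Delta^\gamma$ are open, being cut out by finitely many strict linear inequalities.
\item Proposition~\ref{prop:approximation} places a cylinder inside any ball. It reveals the first $d$ letters, applies Keane--Rauzy to the finite permutation $(\pi_d)_*$ (Corollary~\ref{cor:cylinderapproxfin}), and lifts the path. It then appends $\eta^N$, where $\eta$ is a loop along which every tail loses. Each tail column becomes $e_i+\sum_j x_j^N m_j$, whose normalization is within $2/N$ of that of $\sum_j x_j^N m_j$.
\item Proposition~\ref{prop:ueexists} chains these approximations, using Lipschitz constants of $\hat M^{\gamma^{r-1}}$. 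This produces infinite-complete paths with $\operatorname{diam}_{\ell^1}(\Delta^{\gamma_n})\to 0$; non-emptiness of the limit needs the no-escape-of-mass argument of Proposition~\ref{prop:nonemptyintersection}.
\item Finally $\mathcal G=\bigcap_m\mathcal O_m$, where $\mathcal O_m$ is the union of cylinders of diameter $<1/m$. Proposition~\ref{prop:invmeasures} shows $\mathcal G$ consists of uniquely ergodic lengths.
\end{itemize}

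The Hilbert-metric route you have in mind is Theorem C. It works only with restricted diameters $D_{A_n}$ and with blocks $\zeta_n$ that reveal at least $n$ intervals per tail, which bounds column leakage by $1/n$. Lemmas~\ref{lem:perturbation} and~\ref{lem:localization} then absorb the tail error. Your observation that tail columns within a group agree is Lemma~\ref{lem:formrvmatrix}(2). It gives finite diameter only on finitely many rows, never on the whole matrix.
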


This result should be compared with the finite-type case.
Topological genericity of unique ergodicity for finite-type IETs was established by Keane and Rauzy \cite{KeaneRauzy1980} in 1980 and is in fact used in the proof of Theorem B. Two years later, Masur \cite{Masur1982IET} and Veech \cite{Veech} independently proved the stronger statement that unique ergodicity holds for \textit{almost every} finite-type IET, in the sense of the Lebesgue measure on the finite-dimensional simplex of length vectors.
Neither of these results has so far been extended to infinite-type IETs; Theorem~B provides the first step in this direction.

We conclude the present study of infinite-type IETs by proving a sufficient condition for unique ergodicity of a given tail-reversing IET. We define the \textit{Hilbert metric} on the infinite-dimensional simplex $\Delta$ and for a finite subset $A \subset \N$ we define the \textit{diameter} $D_{A}(M)$ of a given Rauzy-Veech matrix $M$ restricted to $A$ as the diameter of the matrix obtained by restricting $M$ to the rows indexed by $A$. In addition we say that $M$ is $A$-supported if it acts trivially on rows outside of $A$. We then prove the following:

\begin{theoremC} Let $\boldsymbol{\gamma}$ be an infinite path in a Rauzy diagram $\mathcal{R}$. Assume there exist
		\begin{itemize}
			\item an increasing sequence of finite subsets $A_1 \subset A_2 \subset \dots \subset \N$ with $\bigcup_n A_n = \N$,
			\item a factorisation $\boldsymbol{\gamma} = \zeta_1 \cdot \zeta_2 \cdots$ such that, for every $n \in \N$, the matrix $M_n = M^{\zeta_n}$ is $A_n$-supported and $\zeta_n$ reveals at least $n$ intervals in each tail,
			\item a constant $C$ with $\sup_n D_{A_n}(M_n) < C$.
		\end{itemize}
		Then there is a unique IET associated to $\boldsymbol{\gamma}$, and it is uniquely ergodic.
\end{theoremC}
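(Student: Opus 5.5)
We prove Theorem C by adapting the classical Hilbert-metric contraction argument (Veech's criterion) to the infinite-dimensional simplex. Concretely, we identify both the set of IETs associated to $\boldsymbol{\gamma}$ and the cone of invariant measures with nested intersections of images of cones, and show these intersections are single rays.

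\textbf{Step 1: set-up.} Write $P_n = M_1 M_2 \cdots M_n$, the matrix of the prefix $\zeta_1\cdots\zeta_n$. The length vectors $\lambda \in \Delta$ whose IET follows $\boldsymbol{\gamma}$ are exactly those lying in every cone $P_n \R_{\ge 0}^{\N}$, normalised in $\ell^1$; the normalisation uses that $M^{\zeta}$ preserves $\ell^1$-summability of positive vectors. Dually, invariant measures of an IET $T_{(\Pi,\lambda)}$ following $\boldsymbol{\gamma}$ correspond to vectors $h$ with $h \in \bigcap_n P_n^{T}\R_{\ge0}^{\N}$, after normalisation. This is as in the finite case: Rokhlin towers over $I^{(n)}$ give the measure of each interval as a positive combination of the columns of the heights. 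Because $\zeta_n$ reveals at least $n$ intervals in each tail, the intervals $I^{(n)}$ shrink to a point. Hence every invariant measure is determined by its values on the towers, and the description is complete. Uniqueness of the IET and unique ergodicity then both reduce to showing that the corresponding nested intersection of cones is a single ray. We treat the length cone; the measure cone is handled symmetrically.

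\textbf{Step 2: contraction on finite blocks.} Each $M_n$ is $A_n$-supported, so it is the identity outside the rows and columns indexed by $A_n$ up to its block. Since $A_n$ is finite, $M_n$ acts on the finite-dimensional cone $\R_{\ge0}^{A_n}$. By Birkhoff's theorem, $D_{A_n}(M_n) < C$ implies that $M_n$ contracts the Hilbert metric on that cone by the factor $\tanh(C/4) =: \theta < 1$. We then show that $P_{n}$ maps the positive cone into a set whose $A_k$-projection has Hilbert diameter at most $C\,\theta^{\#\{j\le n: A_j \supseteq A_k\}-1}$. This follows by chaining the contractions of all $M_j$ with $j \ge k$, whose supports contain $A_k$ because the sets are nested. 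The point to check is that the Hilbert distance between the $A_k$-coordinates of two vectors is controlled by the Hilbert distance on $A_j$ for $j\ge k$. This holds since restricting to a subset of coordinates does not increase the Hilbert metric. It follows that, for each fixed $k$, the projective $A_k$-coordinates of any vector in $\bigcap_n P_n\R^{\N}_{\ge 0}$ are uniquely determined.

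\textbf{Step 3: from finite coordinates to the whole ray.} Since $\bigcup A_k = \N$, Step 2 shows that any two vectors in the intersection are proportional on every $A_k$. The proportionality constants agree on overlaps, so the vectors are globally proportional, and $\ell^1$-normalisation gives uniqueness. The intersection is nonempty because of the path itself, or, for measures, because a $T$-invariant probability measure exists (e.g.\ Lebesgue measure). Hence the IET is unique and its invariant measure is unique, so it is uniquely ergodic.

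\textbf{Main obstacle.} I expect the main difficulty in Step 2 to be the interaction between the factorisation order and the supports. The product $P_n = M_1\cdots M_n$ applies $M_n$ innermost, while the contraction of $P_n$ acting on the full cone needs bounded diameter of the outer factors. The support assumption says $M_j$ acts trivially outside $A_j$, but it is not obvious that $M_1\cdots M_{k-1}$ does not distort Hilbert distances on the $A_k$-coordinates. The first thing I would try is to apply contraction to the tail products $M_k\cdots M_n$ rather than to $P_n$ itself, and then push the result forward through the fixed matrix $P_{k-1}$. This works because a nonnegative matrix never increases Hilbert distance, so the contraction survives the push-forward. One must also justify Birkhoff's inequality for the rectangular block of $M_n$ restricted to the rows in $A_n$, together with the identity on the complement.
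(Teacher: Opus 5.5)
There is a genuine gap in Step 2, and it sits exactly where the infinite-type setting differs from the finite one. An $A_n$-supported matrix is the identity only on the \emph{rows} outside $A_n$. Its rows in $A_n$ carry entries, generally nonzero and constant along each tail, in all of the infinitely many tail columns:
\[
M_n=\begin{pmatrix} B_n & C_n\\ 0 & \mathrm{Id}\end{pmatrix}, \qquad (M_n u)_{A_n}=B_n u_{A_n}+C_n u_{\overline{A_n}} .
\]
So $M_n$ does not act on $\R_{\ge 0}^{A_n}$ in a way that sees the relevant vectors. The $A_j$-projective coordinates of $M_j u$ depend on coordinates of $u$ outside $A_j$ (in fact on infinitely many of them), so Birkhoff's inequality for the finite block $B_n$ cannot be chained.

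Your proposed fix does not help either. Non-expansion under a nonnegative matrix holds for the \emph{full} metric $d$. But for $u=M_k\cdots M_n x$ and $v=M_k\cdots M_n y$ one has $u_i=x_i$ and $v_i=y_i$ for $i\notin A_n$. Hence $d(u,v)\ge d_{\overline{A_n}}(x,y)$, which is typically $+\infty$ on $\Delta$; for the same reason $D_{\N}(M_n)=\infty$.

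What is missing is a quantitative control of the mass sitting in the tail coordinates. This is where the hypothesis that $\zeta_n$ reveals at least $n$ intervals per tail is really needed; you only use it to assert, without proof, that $I^{(n)}$ shrinks. In the paper the argument runs as follows.
\begin{itemize}
\item Revealing gives $|A_n|\ge n$. Together with positivity on the rows in $A_n$, this yields $\|M x\|_{\overline{A_n}}\le \|M x\|_1/n$ for the relevant products.
\item A perturbation estimate $d_A(Mx_B,Mx)\le (e^{D_A(M)}-1)N_A(M)\,\|x\|_{\overline B}/\|x\|_1$ (Lemma~\ref{lem:perturbation}) then gives contraction up to an additive error.
\item The constant $(e^{D_A(M)}-1)N_A(M)$ is not uniform in $n$, so the errors cannot simply be summed over consecutive factors. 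Instead one fixes a head block and goes far enough out that its error is below $\epsilon$.
\item One then regroups $M_{n_{l-1}+1}\cdots M_{n_l}$ into the next head block. The localization lemma (Lemma~\ref{lem:localization}, which again relies on revealing) keeps the diameters of these blocks below $C$.
\end{itemize}

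Two further steps need repair. First, nonemptiness is part of the conclusion (``there is'' an IET with path $\boldsymbol{\gamma}$) and is not given by ``the path itself''. In infinite dimension, $\bigcap_n \hat P_n\Delta$ can be empty because mass escapes to infinity or coordinates tend to zero. It has to be derived, e.g.\ from infinite-completeness of $\boldsymbol{\gamma}$ (the rows of $M_n$ in $A_n$ are positive) together with Proposition~\ref{prop:nonemptyintersection}.

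Second, invariant measures correspond to the same column cone $\bigcap_n P_n\R_{\ge0}^{\N}$ as the lengths, since $\mu(I^t_\alpha)=\sum_\beta (P_n)_{\alpha\beta}\,\mu(I^{t,n}_\beta)$ (Proposition~\ref{prop:invmeasures}). They do not correspond to the transposed (height) cone. So no separate ``symmetric'' argument is needed, and the transposed action does not even preserve $\ell^1$.
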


Theorem C provides an alternative proof for Theorem B without using the previously mentioned finite-type result by Keane and Rauzy \cite{KeaneRauzy1980}. In addition however, Theorem C enables us to construct a large set of paths which determine uniquely ergodic IETs and might prove useful in the future for establishing measure-theoretic genericity results for tail-reversing IETs.

\section{Preliminaries}  We define interval exchange transformations as well as some important notions regarding their parametrization and their possible dynamics. We then highlight the differences between the finite-type and the infinite-type setting: we introduce two classical theorems for finite-type IETs, namely \textit{Keane's criterion} and \textit{Maier's theorem}, and show that both fail in the infinite setting using a counterexample constructed by Delecroix, Hubert and Valdez in \cite{DeHuVa}.

\subsection{Interval exchange transformations} We introduce IETs as well as their combinatorial and length data.

\subsubsection{Definition of IETs} IETs are piecewise continuous translations on the interval with possibly infinitely many intervals of continuity. More formally,

\begin{defs}\label{def:gGIETs} Let $\mathcal{A}$ be a countable set. An \textit{interval exchange transformation (IET)} on $\mathcal{A}$ is a map $T$ from the interval $[0, l]$ to itself such that
\begin{enumerate}[label = \roman*)]
\item there are two partitions $\bigsqcup_{\alpha \in \mathcal{A}}I_{\alpha}^t$, $\bigsqcup_{\alpha \in \mathcal{A}} I_{\alpha}^b$ of $[0, l]$ into disjoint open subintervals $\{ I_{\alpha}^t\}_{\alpha \in \mathcal{A}}$, $\{ I_{\alpha}^b\}_{\alpha \in \mathcal{A}}$ called the \textit{top} and \textit{bottom} intervals.
\item for each $\alpha \in \mathcal{A}$, the map $T$ restricted to $I_{\alpha}^t$ is a translation of the form $x \to x + t_{\alpha}, \hspace{1mm}t_{\alpha} \in \mathbb{R}$ with image $T(I_{\alpha}^t) = I_{\alpha}^b$.
\end{enumerate}

If $\mathcal{A}$ is finite, we say that $T$ is of \textit{finite-type}, otherwise we say that $T$ is of \textit{infinite-type}.
\end{defs}

\subsubsection{Singularities}\label{sec:singularities} The points $\mathcal{S}^t = [0,l] \backslash \bigsqcup_{\alpha \in \mathcal{A}}I_{\alpha}^t$ are called the \textit{top singularities} of $T$, the points  $\mathcal{S}^b = [0,l] \backslash \bigsqcup_{\alpha \in \mathcal{A}}I_{\alpha}^b$ are called the \textit{{bottom singularities}}. The set $Acc(\mathcal{S}^t)$ (resp. $Acc(\mathcal{S}^b$)) denotes the set of accumulation points of $\mathcal{S}^t$ (resp. $\mathcal{S}^b$). For the remainder of this text, we assume that this set of accumulation points is \textbf{finite}.

We say that an element in $Acc(\mathcal{S}^t)$ ($Acc(\mathcal{S}^b$)) is \textit{right-sided} if it is accumulated by singularities from the right, and similarly it is \textit{left-sided} if it is accumulated by singularities from the left. An element in $Acc(\mathcal{S}^t)$ ($Acc(\mathcal{S}^b$)) may be both right-sided and left-sided.

\subsubsection{Enumeration}
\label{sec:enumerationmap}
Since we want to define vectors and later on matrices whose entries are indexed by $\mathcal{A}$, we choose an injective map $\pi: \mathcal{A} \to \N$ (bijective if  $|\mathcal{A}| = \infty$) and call it the \textit{enumeration map} of $\mathcal{A}$. To simplify notation, we will often denote $\pi(\alpha)$ by $\alpha$. Note that in general it is not possible to choose $\pi$ in such a way that the order of the natural numbers corresponds to the order of the intervals in $[0,l]$, meaning in such a way that the sequence
\begin{align*}
	I_{\pi^{-1}(1)}^t , I_{\pi^{-1}(2)}^t , I_{\pi^{-1}(3)}^t, \dots
\end{align*}
describes the order of the intervals in $[0,l]$. This is for example not possible when there is more than one accumulation point of singularities.

\subsubsection{Combinatorial and length data}
\label{sec:length_data}
As explained previously, we cannot simply encode the order of intervals by arranging them in an infinite vector, hence we use total orders. For $T$ an infinite-type IET on $\mathcal{A}$, consider two total orders $(\mathcal{A},\leq_t)$ and $(\mathcal{A},\leq_b)$ defined as follows:
for two labels $\alpha, \beta \in \mathcal{A}$ we have
\begin{align*}
    \alpha \leq_t \beta \iff \inf I_{\alpha}^t < \inf I_{\beta}^t, \\
    \alpha \leq_b \beta \iff \inf I_{\alpha}^b < \inf I_{\beta}^b.
\end{align*}
This describes the order of the intervals in the top and bottom partition of $[0,l]$ from left to right.
In Section \ref{sec:permutationtail-reversing} we associate to an infinite-type IET satisfying certain assumptions a permutation on finitely many letters by \textit{grouping} together intervals.

We also define the \textit{length vector} associated to an IET:

\begin{defs}\label{def:lengthvector} For $T:[0,l] \to [0,l] $ an IET, its \textit{length vector} is defined to be the countably infinite vector $\lambda \in \R_{>0}^{\mathcal{A}}$ such that $\sum_{\alpha \in \mathcal A} \lambda_{\alpha} = l$ and $\lambda_{\alpha} = |I_{\alpha}^t| = |I_{\alpha}^b|$.
\end{defs}

\subsection{Orbits of IETs} We now consider orbits of interval exchange transformations, define recurrence and classify the possible dynamical behaviour of recurrent orbits.

\subsubsection{Definition of an orbit} We define the forward orbit  $\mathcal{O^+}(x)$ of a point $x \in [0,l] \backslash \mathcal{S}^t$ as
\[
    \mathcal{O^+}(x) := \{y \in [0,l] \hspace{1mm} | \hspace{1mm} \exists N \in \mathbb{N} \; \text{s.t} \;T^{n}(x) \; \notin \mathcal{S}^t \; \text{for all $n < N$ and} \; T^{N}(x) = y \},
\]
and the backward orbit of a point $\mathcal{O^-}(x) $ of a point $x \in [0,l] \backslash \mathcal{S}^b$ as
\[
    \mathcal{O^-}(x) := \{y \in [0,l] \hspace{1mm} | \hspace{1mm} \exists M \in \mathbb{N} \; \text{s.t} \;T^{-m}(x) \; \notin \mathcal{S}^b \; \text{for all $m < M$ and} \; T^{-M}(x) = y \}.
\]
The \textit{full orbit} $\mathcal{O}(x)$ of a point $x \in [0,l] \backslash \mathcal{S}^t \cup \mathcal{S}^b$ is then defined by
$$\mathcal{O}(x) := \mathcal{O}^+(x) \cup \mathcal{O}^-(x).$$

$\mathcal{O^+}(x), \mathcal{O^-}(x)$ and $\mathcal{O}(x)$ are called \textit{finite}, resp. \textit{infinite}, if their cardinality is finite, resp. infinite. If $\mathcal{O}^+(x)$ is finite, we call $T^{n}(x)$ the \textit{forward endpoint} of $\mathcal{O}(x)$, where $n$ is the largest integer for which $T^{n}(x) \in \mathcal{O}(x)$, similarly, if $\mathcal{O}^-(x)$ is finite, we call $T^{-m}(x)$ the \textit{backward endpoint} of $\mathcal{O}(x)$, where $m$ is the largest integer for which $T^{-m}(x) \in \mathcal{O}(x)$. Note that an orbit $\mathcal{O}(x)$ is contained in the domain of definition of $T$ except for its forward (backward) endpoint, if it exists, which may be equal to a top (bottom) singularity.

\begin{defs}\label{def:connection} A \textit{regular orbit} is an orbit which is infinite in both directions.
A \textit{connection} of $T$ is an orbit which is finite both in the forward and backward direction. A \textit{connection between accumulation points} is a connection between an element in $Acc(\mathcal{S}^t)$ and $Acc(\mathcal{S}^b)$.

\end{defs}

\subsubsection{Recurrence} A \textit{recurrent} orbit is an orbit which comes back infinitely often to any point which lies on it both in the future and in the past.

\begin{defs}  If $\mathcal{O}^+(x)$ is infinite then the $\omega$-limit set $\omega(x)$ is the set of accumulation points of $\mathcal{O^+}(x)$ in $[0,l]$. If $\mathcal{O}^-(x)$ is infinite then the $\alpha$-limit set $\alpha(x)$ is the set of accumulation points of $\mathcal{O^-}(x)$ in $[0,l]$.
\end{defs}

A point $x \in I^t$ is said to be \textit{$\omega$-recurrent} if $\mathcal{O}^+(x)$ is infinite and $x \in \omega(x)$, it is said to be \textit{$\alpha$-recurrent} if $\mathcal{O}^-(x)$ is infinite and $x \in \alpha(x)$. An $\alpha-$ or $\omega-$ recurrent orbit which is not closed is called \textit{non-trivially recurrent}.
We define a \textit{quasiminimal} of an infinite-type IET then as follows:

\begin{defs}
The topological closure (in $[0,l]$) of a non-trivially $\alpha-$ or $\omega-$ recurrent orbit is called a \textit{quasiminimal} of $T$.
\end{defs}

\subsubsection{Minimality and unique ergodicity}
We now define the two main dynamical behaviours of an IET that we wish to study, \textit{minimality} and the stronger form of \textit{unique ergodicity}:

\begin{defs} An IET $T:[0,l] \to [0,l] $ is called \textit{minimal} if every infinite orbit is dense in $[0,l]$.
\end{defs}

In particular this means that $T$ contains a unique quasiminimal equal to $[0,l]$. Note that here "infinite orbits" also includes non-regular orbits, i.e. infinite orbits which in forward or backward time hit a singularity.

\begin{defs} An IET $T:[0,l] \to [0,l] $ is called \textit{uniquely ergodic} if there exists a unique $T$-invariant Borel probability measure on $[0,l]$.
\end{defs}

This unique measure is then necessarily equal to the Lebesgue-measure on $[0,l]$.

\subsection{Differences in dynamics of finite and infinite-type IETs}

Infinite-type IETs exhibit significantly more complex dynamical behavior than their finite counterparts.
This complexity is illustrated by a construction introduced by Delecroix, Hubert and Valdez in \cite{DeHuVa}.
While the authors primarily use this example to demonstrate the failure of Keane's criterion in the infinite setting, it also serves as a counterexample to the infinite-type extension of Maier’s Theorem.
In this section, we review these two classical results for finite-type IETs and discuss how their infinite-type example fails to satisfy either property.

\subsubsection{Keane's criterion and Maier's Theorem} In the 1930s, during an active period in the study of topological properties of flows on surfaces, A. Maier \cite{Maierfinite} proved the following result for flows on surfaces of finite genus, which in particular holds for finite-type interval exchange transformations (see also \cite{dynamicaldecomposition}, Section 3.5 for more details):

\begin{thm}(Maier's Theorem)
    Any nontrivially $\alpha-$ or $\omega-$recurrent orbit contained in a quasiminimal is dense therein.
\end{thm}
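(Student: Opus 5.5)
The statement is Maier's theorem for finite-type IETs. I would prove it by reducing to the first-return dynamics on a small interval around a recurrent point. The key tool is the finiteness of the singularity set.

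Let $x$ be non-trivially $\omega$-recurrent; the $\alpha$ case follows by applying the argument to $T^{-1}$. Let $Q=\overline{\mathcal O(x)}$, and let $\mathcal O(y)\subset Q$ be any other non-trivially recurrent orbit. We must show $Q\subset\overline{\mathcal O(y)}$.

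\textbf{Step 1: a minimal component $U$ around $x$.} Since $x$ is recurrent and not periodic, $\omega(x)$ is infinite. Take a small open interval $J$ around $x$ containing no singularity and consider the first return map $T_J$. For a finite-type IET this is again a finite-type IET on at most $|\mathcal A|+2$ intervals. Non-periodic recurrent orbits of $T_J$ can only live in the part of $J$ not filled by periodic cylinders. These cylinders are unions of intervals bounded by orbits of singularities, and there are only finitely many of them.

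Using the finiteness of $\mathcal S^t\cup\mathcal S^b$, one obtains a decomposition of $[0,l]$ into finitely many $T$-invariant pieces. Each piece is bounded by finitely many segments of orbits of singularities (saddle connections and separatrix pieces), and each piece is either periodic or a minimal component. This is the standard argument behind the dynamical decomposition of flows on surfaces. Concretely, the union of finite orbit segments starting at the finitely many singularities cuts $[0,l]$ into finitely many intervals. On each of these, $T$ acts by translation up to the cut points, so recurrence of a non-periodic point forces its whole component to be minimal, by a Keane-type argument applied to the induced finite IET on that component. Let $U$ be the component containing $x$; then $Q=\overline U$.

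\textbf{Step 2: conclude for $y$.} The point $y$ lies in $Q=\overline U$. Its orbit is non-trivially recurrent, so it cannot stay forever on the finitely many boundary orbit segments; otherwise it would be finite or periodic. Hence $y$ eventually enters the interior of a minimal component. That component must be $U$, since other components meet $\overline U$ only in boundary points. Minimality of $U$ then gives $\overline{\mathcal O(y)}=\overline U=Q$.

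\textbf{Main obstacle.} The hard step is Step 1: producing finitely many invariant components, and proving that each component containing a non-periodic recurrent point is minimal. This is exactly where finiteness of $\mathcal A$ is used. I would first try Keane's criterion applied to the induced IET on each component. After inducing, the remaining connections are the boundary ones, so the induced map has no connections in the interior and is therefore minimal there.
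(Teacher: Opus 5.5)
The paper does not prove this statement. It quotes it as Maier's classical theorem, with a pointer to the dynamical-decomposition reference, only to contrast it with the infinite-type counterexample. So your proposal has to stand on its own, and it has a genuine gap.

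Your Step~2 is fine once Step~1 is available. But Step~1 carries all the content, it is asserted rather than proved, and the mechanism you describe does not work.

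\emph{The cut set.} The boundary of an invariant finite union of intervals is again invariant away from singularities. So its points have orbits confined to a finite set, and no such point can lie on an infinite singular orbit. Components therefore cannot be bounded by ``separatrix pieces''; the relevant cut set is the finite set $C$ of points lying on connections. But on the intervals cut out by $C$, $T$ is not a translation, because singularities not lying on connections sit in their interior.

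\emph{The appeal to Keane.} Keane's criterion applies only to IETs with no connections at all. The map induced on a component keeps its boundary connections, as you note yourself. The inference ``no connections in the interior, hence minimal'' is therefore not Keane's theorem, and as stated it is false: a periodic cylinder component satisfies its hypothesis. So the periodic/minimal dichotomy, and the minimality of the component containing $x$, are exactly what remains to be shown.

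\emph{A repair.} You do not need the global decomposition. Your opening move, inducing on a small interval around $x$, suffices if the interval is chosen correctly. The set $C$ is finite, since each connection starts at one of finitely many bottom singularities and is a finite orbit. Also $x\notin C$, since its orbit is infinite in one direction. Take $J\ni x$ with $J\cap C=\emptyset$, and with endpoints off the countably many orbits of singularities and off each other's orbits. Then any connection of the finite IET $T_J$ would be a $T$-orbit segment from a bottom to a top singularity of $T$ passing through $J$, i.e.\ a point of $C$ in $J$. Hence $T_J$ has no connections and is minimal by Keane's criterion.

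The closure of the finitely many tower floors over $J$ is a closed $T$-invariant set containing $\mathcal{O}(x)$, hence containing $Q$, so it contains the orbit of $y$. Being infinite, that orbit cannot stay in the finite set of floor endpoints. So it enters an open floor and hence $J$. Minimality of $T_J$ then makes it dense in $J$, hence in every floor, hence in $Q$.

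Finiteness of $\mathcal{A}$ is used exactly to make $T_J$ a finite IET to which Keane's criterion applies, and to make $C$ and the set of floor endpoints finite. This is what breaks down in the infinite-type counterexample.
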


This implies in particular that the intersection of two distinct quasiminimals cannot contain a non-trivially recurrent orbit, in particular, a quasiminimal cannot be strictly contained inside another quasiminimal. Maier's Theorem leads to an upper bound of the number of distinct quasiminimals of a finite-type IET in terms of the number of intervals of the IET (see \cite{Maierfinite}, or \cite{dynamicaldecomposition} for the specific case of IETs).

Another result, which was one of the earliest results in the study of finite-type IETs, was established in 1975 by M. Keane and became known as \textit{Keane's criterion}:

\begin{thm}(Keane's criterion)
    Any finite-type IET without connections is minimal.
\end{thm}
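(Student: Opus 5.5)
The plan is to argue by contradiction. Suppose $T$ is a finite-type IET on $\mathcal{A}$, $|\mathcal{A}|=d$, without connections, and that some infinite orbit $\mathcal{O}(x)$ is not dense. We first reduce to regular orbits. The top singularities $\mathcal{S}^t$ are finite (the interior endpoints $\beta_1,\dots,\beta_{d-1}$ together with $0,l$), and so are the bottom singularities $\mathcal{S}^b$. An infinite orbit that hits a singularity in one direction is a half-orbit emanating from a singularity. Since there are no connections, this half-orbit is infinite, and the argument below applies to it verbatim. So we work with the forward orbit of a point and show that its closure is all of $[0,l]$.

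The first key step is a return-time and finiteness argument via induced maps. Fix a nonempty open interval $J\subset[0,l]$. The first return map of $T$ to $J$ is again a finite-type IET with at most $d+2$ intervals. The reason is that the discontinuities of the return map come from the preimages of the $d-1$ singularities, taken along the first time the orbit of the singularity enters $J$, together with the endpoints of $J$. Poincaré recurrence for Lebesgue measure shows that almost every point of $J$ returns. The finiteness of the partition then upgrades this to every point, except for finitely many orbits ending in singularities.

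The second step is to show that the orbit of every point enters $J$. Let $U=\bigcup_{n\in\mathbb{Z}}T^n(J)$, which is an invariant open set, and suppose $U\neq[0,l]$ up to finitely many points. Take a boundary point $p$ of a connected component $C$ of $U$ with $p\in(0,l)$. Because $T$ is a translation on each continuity interval, the image of a component of $U$ is again a component of $U$, unless a singularity cuts it. Since $J$ has finite measure and $U$ is invariant with components permuted by $T$ piecewise isometrically, the set of components of positive length is finite modulo this cutting. Each boundary point of such a component must then lie in the forward orbit of a bottom singularity and in the backward orbit of a top singularity. Otherwise the component could be extended by continuity, contradicting maximality. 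This produces an orbit segment joining $\mathcal{S}^b$ to $\mathcal{S}^t$, that is, a connection, which is a contradiction. Hence $U$ is dense, and every orbit that is infinite in the relevant direction meets $J$. Since $J$ was arbitrary, every infinite orbit is dense.

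I expect the main obstacle to be the boundary-point argument in the second step. One must carefully check that the endpoints of the components of $U$ are forced onto singular orbits in both time directions, and that the argument yields a connection that is finite in both directions, as in Definition \ref{def:connection}, rather than a one-sided singular orbit. I would handle this by following the endpoint $p$ forward and backward. Because there are only finitely many components of length at least $|J|/2$ along the orbit, the forward iterates of $p$ cannot continue indefinitely without either returning to $p$ (a periodic endpoint, which is impossible since nearby points return as a rigid translation, so the component would be all of $[0,l]$) or hitting $\mathcal{S}^t$. The symmetric backward argument hits $\mathcal{S}^b$, giving the connection. The finiteness of $\mathcal{A}$ is used exactly here, which is consistent with the infinite-type counterexample of \cite{DeHuVa}.
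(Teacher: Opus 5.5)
The paper does not prove this statement; it only quotes it from \cite{Keane1975}, and its own route to minimality is Rauzy--Veech induction (Theorem A). Your strategy is a legitimate classical route: take an endpoint $p\in(0,l)$ of a component of the invariant open set $U=\bigcup_{n\in\mathbb Z}T^n(J)$ and show that the orbit of $p$ is a connection. But the step that carries all the content is not established. That step is that the forward orbit of $p$ must reach $\mathcal S^t$, and the backward orbit $\mathcal S^b$.

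Your sentence ``otherwise the component could be extended by continuity, contradicting maximality'' is not an argument. By construction $p\notin U$, and nothing you say produces points of $U$ beyond $p$. In your last paragraph you rely on the components met along the orbit of $p$ having length at least $|J|/2$, but nothing supports this. If $p$ is the right endpoint of $C_0=(a,p)$, what you can actually transport is a left neighbourhood $(T^np-\epsilon_n,T^np)\subset U$. It is shortened every time a top singularity falls inside it, so a priori $\epsilon_n\to 0$, and the pigeonhole argument collapses. The claim that components are permuted and ``finite modulo this cutting'' is likewise unjustified: images of different pieces can also merge across a bottom singularity lying in $U$. Your first step (the return map to $J$) is never used.

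The missing idea is to count the cuts. Suppose the forward orbit of $p$ avoids $\mathcal S^t$. By invariance of $U$ and injectivity of $T$, each $T^n(p)$ lies outside $U$ and is the right endpoint of a component $C_n\supset(T^np-\epsilon_n,T^np)$. Moreover, $\epsilon_{n+1}<\epsilon_n$ only when a top singularity lies in $(T^np-\epsilon_n,T^np)\subset C_n$.

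\emph{Case $p$ not periodic.} The points $T^n(p)$ are distinct, so the $C_n$ are pairwise disjoint. Each of the $d-1$ interior top singularities therefore lies in at most one $C_n$, so at most $d-1$ cuts ever occur and $\epsilon_n\ge\epsilon>0$ for all $n$. This yields infinitely many disjoint intervals of length at least $\epsilon$ in $[0,l]$, a contradiction.

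\emph{Case $p$ periodic of period $k$.} Then $T^k$ is a translation fixing $p$ on $(p-\epsilon_k,p]$, so it is the identity there and you have an interval of periodic points. Your reason for discarding this case (``the component would be all of $[0,l]$'') does not follow. Instead, use the standard fact that the boundary of a maximal periodic cylinder consists of connections.

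With this inserted, the rest of your argument goes through: both half-orbits of $p$ terminate, giving a connection, so $U\supseteq(0,l)$ and every infinite orbit meets every $J$.
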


It is a well-known fact that if a finite-type IET has connections, we may restrict it to finitely many subintervals on which the IET is either minimal or all orbits are periodic of the same period.

\subsubsection{A counterexample for the infinite setting}\label{sec:theconstruction} For infinite-type IETs, both Maier's Theorem and Keane's criterion fail to be true. This can be seen using a counterexample constructed by the authors of \cite{DeHuVa} in Section 4.3. For details of the construction we refer the reader to \cite{DeHuVa} and only give an informal summary here: starting from a minimal infinite-type IET with saddle connections, called the \textit{odometer}\footnote{The \textit{odometer} is the infinite-type IET on $[0,1]$ from Figure \ref{fig:intro} where $\lambda_i = \frac{1}{2^i}$.}
, the authors \textit{add} infinitely many periodic intervals inbetween the existing intervals in order to obtain an IET with an invariant Cantor set and with saddle connections. They then slightly perturb the lengths in order to obtain a new IET with an invariant Cantor set and without saddle connections, as given in Figure \ref{fig:3}.

\begin{figure}[H]
    \centering
\includegraphics[width=0.8\linewidth]{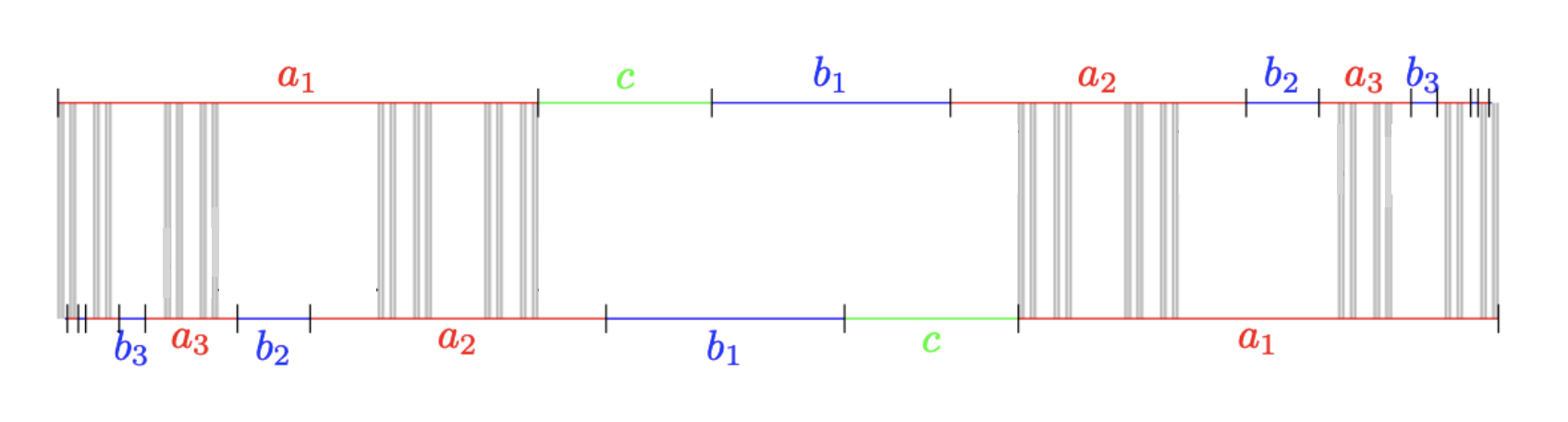}
    \caption{An infinite-type IET without saddle connections and an invariant Cantor set. \textit{From "Infinite translation surfaces in the wild" by Delecroix, Hubert and Valdez \cite{DeHuVa}, p. 215, reprinted with permission.} }
    \label{fig:3}
\end{figure}

\subsubsection{Failure of Keane's criterion and Maier's Theorem in the infinite setting} The previous construction yields an example of an infinite-type IET $T$ which has no saddle connections but which is also not minimal since there exists an infinite orbit which is nowhere dense, hence providing a counterexample for Keane's criterion in the infinite setting. It furthermore contradicts Maier's Theorem in the infinite setting: indeed, using our results from Section 4 (see Remark \ref{rem:orbitstozero}) one can show that all orbits of $T$ must accumulate at $0$. By Poincaré recurrence and the fact that there are no saddle connections there must exist a non-trivially recurrent orbit in the complement of the Cantor set $\Omega_1$, and since it accumulates at zero its closure strictly contains $\Omega_1$, hence $T$ has at least two nested quasiminimals $\Omega_1 \subsetneq \Omega_2$, contradicting Maier's result in the infinite setting. In particular, the proof of an upper bound on the number of quasiminimals for the finite setting fails in the infinite setting, in fact, in \cite{gutierrezinfinite} the authors even construct infinite-type IETs with an infinite \textit{chain} of quasiminimals $(\overline{\gamma_i})_{i \in \N}$ strictly contained inside one another.

These counterexamples already hint that the dynamics of infinite-type IETs is much more complicated and difficult to study than the dynamics of finite-type IETs. Nevertheless, in the remainder of this text, we generalize one of the most important tools in the study of finite-type IETs, the \textit{Rauzy-Veech induction}, to the case of infinite-type IETs and show that it provides a useful point of view also for the study of infinite-type IETs.

\section{Rauzy-Veech induction for infinite-type IETs}

We introduce our main tool for the remainder of this article: Rauzy-Veech induction for infinite-type IETs, a generalization of the renormalization scheme for finite-type IETs introduced by Rauzy \cite{Rauzy} and Veech \cite{Veech}. The key idea behind the induction is to study the long-term dynamics of a given IET by associating to it a sequence of \textit{accelerated} IETs obtained by considering its first return map to smaller and smaller subintervals, where the subintervals are defined according to \textit{winning} and \textit{losing} intervals. However, in the infinite-type case we also have to deal with the case when infinitely many intervals \textit{play} at once.

After having defined the induction, we discuss the notion of \textit{renormalizability} and, for a given sequence of induction steps, define the corresponding Rauzy-Veech matrices which describe the change in the length vector. Using these matrices, we then introduce the notion of \textit{combinatorially minimal} for infinite-type IETs and prove Theorem A, which states that minimality is equivalent to combinatorial minimality.

\subsection{The induction process}\label{sec:standardRVinduction} We define the induction process and discuss the notion of \textit{renormalizability} as well as \textit{towers} for an IET.

\subsubsection{The elementary step} Let $T:[0,l] \to [0,l]$ be an IET. Given $\alpha \in \mathcal{A}$, we denote the left and right endpoints of the top and bottom interval labelled by $\alpha$ by
\begin{align*}
    I_{\alpha}^t = (l_{\alpha}^t, r_{\alpha}^t) \\
    I_{\alpha}^b = (l_{\alpha}^b, r_{\alpha}^b)
\end{align*}
We further denote the rightmost (resp. leftmost) accumulation points of the top and bottom singularities by
\begin{align*}
    s_{\max}^t := \max (Acc(\mathcal{S}^t)), \hspace{2mm}s_{\max}^b := \max (Acc(\mathcal{S}^b))\\
    s_{\min}^t := \min (Acc(\mathcal{S}^t)), \hspace{2mm}s_{\min}^b := \min (Acc(\mathcal{S}^b)).
\end{align*}
For the induction to be well-defined, we further need to assume that the rightmost accumulation points $s_{\max}^t$ and $s_{\max}^b$ are not both equal to $l$.
In the language that we introduce below, this means that we want to exclude the case when infinitely many intervals play against each other both at the top and at the bottom.

We also need the definition of left-sided and right-sided elements of $Acc(\mathcal{S}^t)$ and $Acc(\mathcal{S}^b)$ from Section \ref{sec:singularities}, as well as Definition \ref{def:connection} of a connection.

\begin{defs}\label{def:RVinduction}
	Let $T$ be an IET without $s_{\max}^t = s_{\max}^b = l$. The IET $T^{(1)}$ obtained by one step of Rauzy-Veech induction from $T$ is the IET obtained as the first return map of $T$ to the interval $I^{(1)}:=[0,l^{(1)}]$ where $l^{(1)}$ is defined below.
	Let $\alpha_t, \alpha_b \in \mathcal{A}$ be, when they exist, the label of the rightmost top and bottom intervals, i.e. $r_{\alpha_t}^t = r_{\alpha_b}^b = l$.
	We distinguish several cases.
\begin{itemize}
\item \textit{Two intervals play}:
	Assume $s_{\max}^t \neq l$, $s_{\max}^b \neq l$, i.e. there are two rightmost top and bottom intervals, which in addition do not contain a right-sided accumulation point, i.e. if $s_{\max}^t \in I_{\alpha_b}^b$ or $s_{\max}^b \in I_{\alpha_t}^t$, then $s_{\max}^t$, resp. $s_{\max}^b$, are left-sided. Then $l^{(1)} := \max(l_{\alpha_t}^t, l_{\alpha_b}^b).$

\begin{figure}[H]
\centering
\includegraphics[width=0.45\textwidth]{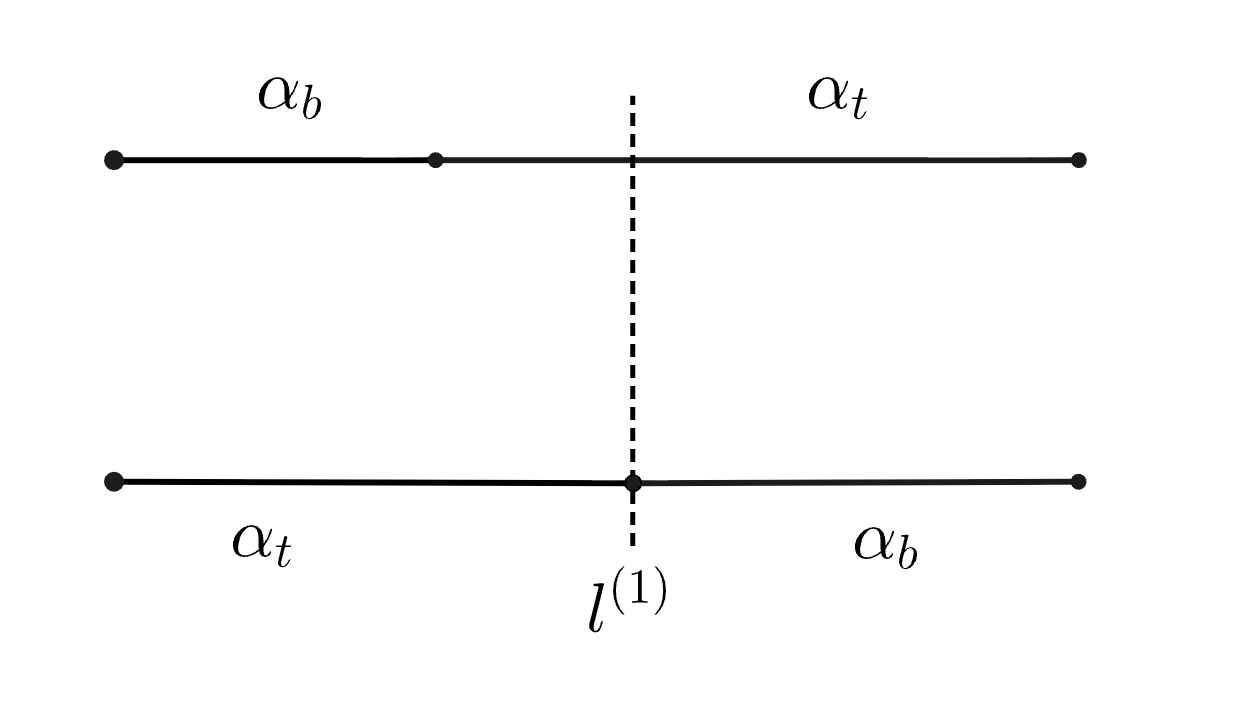}
\end{figure}

\item \textit{A tail plays (right-sided accumulation point)}:
	Assume $s_{\max}^t \neq l$, $s_{\max}^b \neq l$ and $s_{\max}^b \in I_{\alpha_t}^t$ (resp. $s_{\max}^t \in I_{\alpha_b}^b$) where $s_{\max}^b$ (resp. $s_{\max}^t$) is right-sided. Then $l^{(1)} = s_{\max}^b$ (resp. $s_{\max}^t$).

	\begin{figure}[H]
	\centering
	\includegraphics[width=0.45\textwidth]{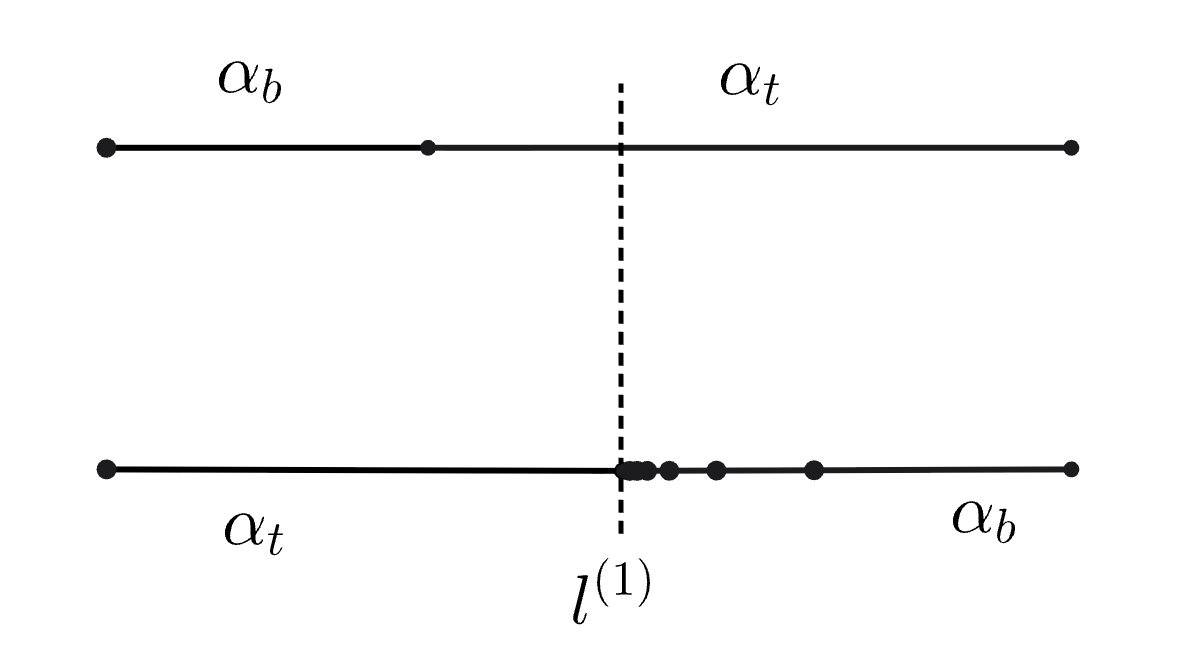}
	\end{figure}

\item \textit{A tail plays (left-sided accumulation point)}:
	assume $s_{\max}^b = l$ (resp. $s_{\max}^t = l$), then it is a left-sided accumulation point and
	$l^{(1)} = \min(\mathcal{S}^b \cap I_{\alpha_t}^t) \hspace{2mm}(\text{resp. min}(\mathcal{S}^t \cap I_{\alpha_b}^b)).$

\begin{figure}[H]
\centering
\includegraphics[width=0.45\textwidth]{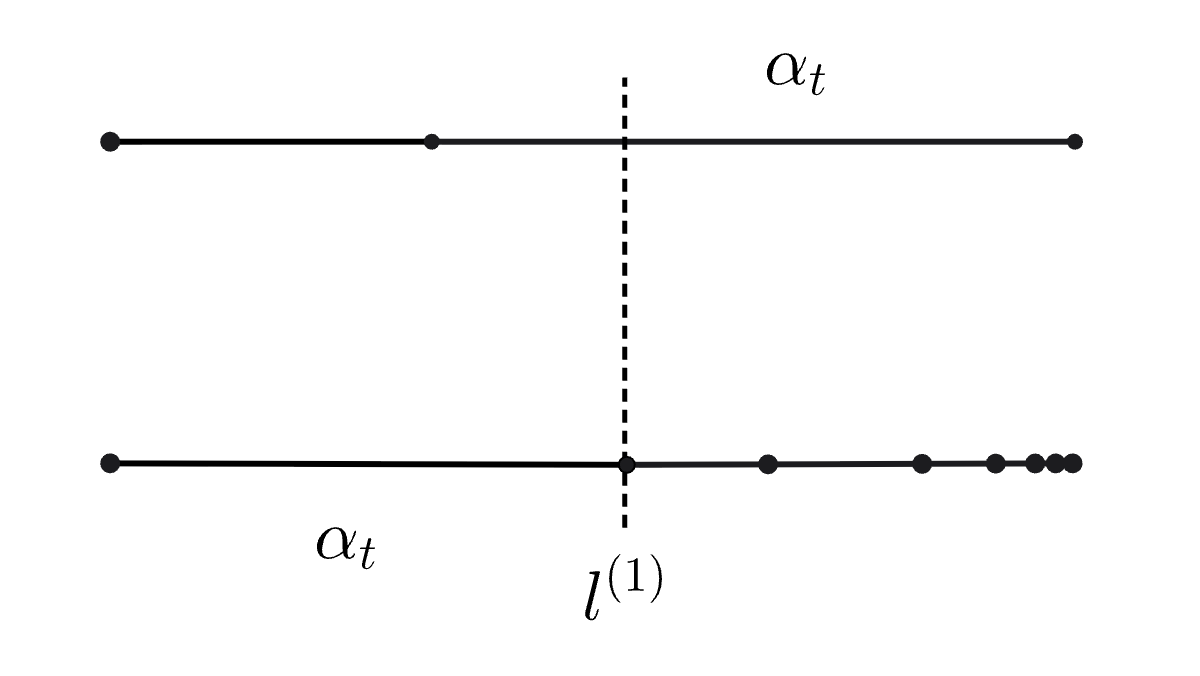}
\end{figure}
\end{itemize}

In all of these cases, if $l^{(1)} \in \mathcal S^t \cup \mathcal S^b$ then $T$ has a connection and the induction is not well defined.
If this is not the case, we say as in the case of finite-type IETs:
\begin{itemize}
	\item labels $\mathcal{L} \subset \mathcal A$ such that $I_\alpha^t \subset [l^{(1)}, l]$ or $I_\alpha^b \subset [l^{(1)}, l]$ for $\alpha \in \mathcal L$ are \textit{losing},
	\item the unique label $w \in \{\alpha_t, \alpha_b\}$ such that $l^{(1)} \in I_w^t$ or $I_w^b$ is \textit{winning}.
\end{itemize}

Note that the cardinality of $\mathcal{L}$ is either one or infinity. Note also that the losing intervals are all simultaneously on the top (resp. bottom). We say in short that top (resp. bottom) loses, and similarly that bottom (resp. top) wins.

\end{defs}

\subsubsection{Explicit description} We can explicitly describe $T^{(1)}:[0,l^{(1)}] \to [0,l^{(1)}]$
as an IET with alphabet $\mathcal{A}$, length vector $\lambda^{(1)}$ and top and bottom intervals $\{I^{t,(1)}_{\alpha_t}\}_{\alpha_t \in \mathcal{A}}$, $\{I^{b,(1)}_{\alpha_t}\}_{\alpha_t \in \mathcal{A}}$ as follows:

\begin{enumerate}[label=(\roman*)]
\item \textbf{(top wins)} if the top interval $w$ wins against a set of bottom intervals $\mathcal{L}$ which are the losers, then

\begin{align*}
    I_{\gamma}^{t,(1)} &= I_{\gamma}^{t} \cap [0,l^{(1)}], \hspace{2mm} \gamma \in \mathcal{A}^{},\\
 T^{(1)}_{\gamma} &=
    \begin{cases}
      T|_{I_{\gamma}^{t,(1)}}, &\hspace{2mm} \gamma \in \mathcal{A}^{} \backslash \mathcal{L},\\
      T^2|_{I_{\gamma}^{t,(1)}}, &\hspace{2mm} \gamma \in \mathcal{L}.
    \end{cases}
\end{align*}

Note that the order $\leq_t$, $\leq_b$ of the intervals in $\mathcal{A}^{}$ (from left to right in $[0,l^{(1)}]$) remains the same for the top partition but changes for the bottom partition: the bottom intervals labeled by $\mathcal{L}$ are now ordered right after the bottom interval labeled by $w$. Explicitly, we obtain two new orders $(\mathcal{A}^{}, \leq_t^{(1)})$, $(\mathcal{A}^{}, \leq_b^{(1)})$, where for $\alpha, \beta \in \mathcal{A}^{}$ the top order remains the same, i.e.
\begin{align*}
\alpha \leq_t^{(1)} \beta &\iff \alpha \leq_t \beta,
\end{align*}
and for the bottom order we have
\begin{align*}
\alpha \leq_b \beta &\iff \alpha \leq_b^{(1)} \beta \text{ if both $\alpha, \beta \in \mathcal{L}$ or both $\alpha, \beta \notin \mathcal{L}$, } \\
\alpha \leq_b w &\iff \alpha <_b^{(1)} \beta \text{ if $\alpha \notin \mathcal{L}$, $\beta \in \mathcal{L}$, } \\
w <_b \alpha &\iff \beta <_b^{(1)} \alpha \text{ if $\alpha \notin \mathcal{L}$, $\beta \in \mathcal{L}$. }
\end{align*}

\item \textbf{(bottom wins)} If now the bottom interval $w$ wins against a set of top intervals $\mathcal{L}$, then

\begin{align*}
I_{\gamma}^{t,(1)} &=
    \begin{cases}
      T^{-1}(I_{\gamma}^{b} \cap [0,l^{(1)}]), &\hspace{2mm} \gamma \in \mathcal{A}^{} \backslash \mathcal{L},\\
      T^{-1}(I_{\gamma}^{t}), &\hspace{2mm} \gamma \in \mathcal{L}.
    \end{cases} \\
 T^{(1)}_{\gamma} &=
    \begin{cases}
      T|_{I_{\gamma}^{t,(1)}}, &\hspace{9mm} \gamma \in \mathcal{A}^{} \backslash \mathcal{L},\\
      T^2|_{I_{\gamma}^{t,(1)}}, &\hspace{9mm} \gamma \in \mathcal{L}.
    \end{cases}
\end{align*}
In the second case, the order of the intervals remains the same for the bottom partition but changes for the top partition: the top intervals labeled by $\mathcal{L}$ keep their order among themselves but are inserted right after the top interval labeled by $w$ with rules given as in (i), where we exchange $\leq_t$ with $\leq_b$.
\end{enumerate}

Regarding the length vector, in both of the two cases explained above, the length vector $\lambda^{(1)}$ only changes at the entry corresponding to the winning letter, i.e.
\begin{equation}\label{eq:lengths}
\lambda_{\gamma}^{(1)}=
    \begin{cases}
   \lambda_{\gamma}^{} \hspace{2mm} &\hspace{2mm} \gamma \in \mathcal{A}^{} \backslash \{w\}, \\
      \lambda_{w} - \sum_{l \in \mathcal{L}} \lambda_l &\hspace{2mm} \gamma = w.
    \end{cases}
\end{equation}

\subsubsection{Renormalizability}\label{sec:iteratingtheinduction} We say that $T$ is renormalizable if we can apply the elementary step of Rauzy-Veech induction and if in addition neither $0$ nor $l$ is an accumulation point both of top and bottom singularities:\footnote{The assumption regarding $0$ is not necessary for the induction to be well-defined, but it will be needed in the proof of the minimality criterion.}

\begin{defs}\label{def:oncerenormalizable} We say that $T$ is \textit{renormalizable} if
\begin{enumerate}[label=(\roman*)]
    \item the rightmost top and bottom  accumulation points $s_{\max}^t$, $s_{\max}^b$ are not both equal to $l$,
    \item the leftmost top and bottom accumulation points $s_{\min}^t$, $s_{\min}^b$ are not both equal to $0$,
    \item the left endpoints of the playing intervals are not equal.
\end{enumerate}
\end{defs}

If $T^{(1)}$ is again renormalizable, we can apply the induction step repeatedly to obtain a sequence of IETs $T^{(n)}: I^{(n)} \to I^{(n)}$ together with
\begin{itemize}
    \item top and bottom intervals $\{I^{t,(n)}_{\alpha}\}_{\alpha \in \mathcal{A}^{}}$, $\{I^{b,(n)}_{\alpha}\}_{\alpha \in \mathcal{A}^{}}$,
    \item length vectors $\lambda^{(n)}$,
    \item orders $\leq_t^{(n)}$, $\leq_b^{(n)}$,
    \item winning and losing letters $w^{(n)}$ and $\mathcal{L}^{(n)}$.
\end{itemize}

\begin{ex}
    For the infinite-type IET from Section \ref{sec:theconstruction} in Figure \ref{fig:3}, the winner and loser of the first two induction steps are
    \begin{align*}
    w^{(1)} = \{a_1\},\; \mathcal{L}^{(1)}= \{b_2, a_3, b_3, \dots\}, \\
    w^{(2)} = \{a_2\},\; \mathcal{L}^{(2)}= \{a_1\}.
    \end{align*}
    At the first induction step, an interval at the bottom wins against a tail at the top, whereas at the second induction step, an interval at the top wins against an interval at the bottom.
\end{ex}

\begin{defs}\label{def:N times renormalizable} For $N \in \N$, we say that an IET $T$ is \textit{renormalizable up to time $N$} if $T^{(n)}: [0, l^{(n)}] \to [0, l^{(n)}]$ is renormalizable for $n=0,1, \dots N-1$. A renormalizable IET $T: [0,l] \to [0,l]$ is called \textit{infinitely renormalizable} if it is renormalizable for all $N \in \N$.
\end{defs}

We now give a sufficient (but not necessary) condition for $T$ to be infinitely renormalizable.

\begin{lem} An IET $T$ is infinitely renormalizable if it is renormalizable and if there are no connections.
\end{lem}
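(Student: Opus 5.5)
By induction on $n$, I will show that if $T$ is renormalizable and has no connections, then so is $T^{(1)}$. Iterating this gives that $T^{(n)}$ is renormalizable for every $n$. The argument splits into three parts: $T^{(1)}$ has no connections, and $T^{(1)}$ satisfies conditions (ii), (i) and (iii) of Definition~\ref{def:oncerenormalizable}. Throughout I use that, when $T$ has no connections, $l^{(1)}\notin\mathcal S^t\cup\mathcal S^b$, so the elementary step is well defined and $T^{(1)}$ is the first return map to $[0,l^{(1)}]$.

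\emph{No connections for $T^{(1)}$.} Every top (resp.\ bottom) singularity of $T^{(1)}$ lies on a forward (resp.\ backward) $T$-orbit of a top (resp.\ bottom) singularity of $T$, or of the point $l^{(1)}$. This follows from the explicit description: new singularities come from $T^{-1}$ of bottom singularities, or from $T^{-1}(l^{(1)})$. Suppose $T^{(1)}$ had a connection, that is, an orbit segment joining a bottom singularity to a top singularity of $T^{(1)}$. Concatenating with these $T$-orbit pieces would produce a finite $T$-orbit joining singularities of $T$, or joining such a singularity to $l^{(1)}$. The first is excluded by hypothesis. The second is excluded as well, because $l^{(1)}$ is itself the endpoint of a playing interval, hence an element or image of $\mathcal S^t\cup\mathcal S^b$, or it is an accumulation point handled by the connection clause in Definition~\ref{def:RVinduction}. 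So the case analysis follows the three bullet cases of the elementary step.

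\emph{Condition (ii).} Near $0$ the first return map coincides with $T$ on a neighbourhood $[0,\varepsilon)$. Hence the accumulation behaviour of singularities at $0$ is unchanged, except that new singularities may be inserted. These come from finitely many points, plus possibly a tail preimage, which is a translate of a tail of $T$ and so accumulates at a translate of an accumulation point. I need to check that no new two-sided accumulation at $0$ appears. If it did, $0$ would be a $T$-image of an accumulation point, which gives a connection between accumulation points. This is a connection in the sense of Definition~\ref{def:connection}, since accumulation points lie in $\mathcal S^t$ and $\mathcal S^b$ respectively.

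\emph{Conditions (i) and (iii).} For (i), suppose both $s^{t,(1)}_{\max}$ and $s^{b,(1)}_{\max}$ equal $l^{(1)}$. Tracing back, $l^{(1)}$ would be accumulated by top singularities of $T$ and, through $T$ or $T^{-1}$ applied to a tail, by bottom singularities. This forces $l^{(1)}$, or its image, to be a singularity, contradicting well-definedness, which already excluded $l^{(1)}\in\mathcal S^t\cup\mathcal S^b$. For (iii), equal left endpoints of the two playing intervals of $T^{(1)}$ means a top singularity coincides with a bottom singularity of $T^{(1)}$. Unwinding through $T$ as above, this is a connection of $T$ of length at most two.

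I expect the main obstacle to be the bookkeeping for tails: showing that a tail which plays creates only accumulation points that are $T$-translates of old ones, so that any degeneracy yields a genuine connection of $T$. I would handle this by checking each of the three bullet cases separately, using the explicit formulas for $I^{t,(1)}_\gamma$.
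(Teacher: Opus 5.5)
Your route is the paper's: its whole proof is the remark that if the $n$-th step is undefined then $T^{(n-1)}$ has a connection, implicitly pulled back to $T$. Your induction on ``$T^{(1)}$ is renormalizable without connections'' expands this, and usefully checks conditions (i) and (ii) of Definition~\ref{def:oncerenormalizable}, which the paper passes over. There is, however, a concrete error.

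\textbf{The false premise.} The premise you use throughout, $l^{(1)}\notin\mathcal S^t\cup\mathcal S^b$, is false. By construction $l^{(1)}$ is always a singularity on the losing side:
\begin{itemize}
\item in the first case it is $l^b_{\alpha_b}$ (resp.\ $l^t_{\alpha_t}$);
\item in the second case it is $s^b_{\max}$ (resp.\ $s^t_{\max}$), a point of the closed set $\mathcal S^b$ (resp.\ $\mathcal S^t$);
\item in the third case it is $\min(\mathcal S^b\cap I^t_{\alpha_t})$.
\end{itemize}
So the clause in Definition~\ref{def:RVinduction} cannot be read literally. What it must exclude is $l^{(1)}$ being a singularity on the winning side as well, i.e.\ $l^{(1)}$ not lying in the interior of the winner's interval. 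Your own second paragraph uses that $l^{(1)}$ is a singularity, so the proposal contradicts itself. Your proof of (i), which derives a contradiction from $l^{(1)}$ being a singularity, therefore proves nothing.

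\textbf{The repair.} Conditions (i) and (ii) hold automatically, without the hypothesis. Take the top-wins case; bottom wins is symmetric with $T^{-1}$.
\begin{itemize}
\item For (i): the winner $w$ is a single interval with $l^t_w<l^{(1)}$, so $I^{t,(1)}_w=(l^t_w,l^{(1)})$ is a genuine interval. Hence $l^{(1)}$ is not accumulated by top singularities of $T^{(1)}$, and (i) follows.
\item For (ii): apart from the new endpoint $l^{(1)}$, the step creates only bottom singularities in $[T(l^{(1)}),r^b_w]$. These are $T(l^{(1)})$ and the $T$-images of the bottom singularities of $T$ in $[l^{(1)},l]$. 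Moreover $T(l^{(1)})-l^b_w=l^{(1)}-l^t_w>0$, and the singularities of $T^{(1)}$ left of $T(l^{(1)})$ are exactly those of $T$. So the accumulation behaviour at $0$ is unchanged, and the situation you rule out via a connection never arises.
\end{itemize}

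\textbf{Direction slips.} This also fixes the orbit directions in your first paragraph, which are swapped. When top wins, the new bottom singularities are forward $T$-images of bottom singularities of $T$, including $T(l^{(1)})$ with $l^{(1)}\in\mathcal S^b$. When bottom wins, the new top singularities are $T^{-1}$-images of top singularities, including $T^{-1}(l^{(1)})$ with $l^{(1)}\in\mathcal S^t$. With these directions, a connection of $T^{(1)}$ extends to a finite $T$-orbit from a bottom singularity to a top singularity of $T$, as you need.

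Only (iii) and the well-definedness of the next step genuinely use the absence of connections. There your pull-back argument is exactly the paper's one-line proof.
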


\begin{proof} From Definition \ref{def:RVinduction} of the induction it follows that if there exists $n \in \N$ such that the $n$-th induction step is not defined, then $T^{(n-1)}$ has a connection.
\end{proof}

\subsubsection{Towers over an IET}\label{sec:towers} To conclude this section, we want to define the notion of \textit{towers over an IET}. Let $T: I \to I$ be an IET on top and bottom intervals $\{I^{t}_{\alpha}\}_{\alpha \in \mathcal{A}}$, $\{I^{b}_{\alpha}\}_{\alpha \in \mathcal{A}}$ and let $T^{(n)}: I^{(n)} \to I^{(n)}$ be the IET obtained after $n$ steps of Rauzy-Veech induction with top and bottom intervals $\{I^{t,(n)}_{\alpha}\}_{\alpha \in \mathcal{A}^{(n)}}$, $\{I^{b,(n)}_{\alpha}\}_{\alpha \in \mathcal{A}^{(n)}}$.

\begin{defs} Let $\alpha \in \mathcal{A}$. We define the \textit{tower} over $T$ at time $n$ labelled by $\alpha$ as the finite, disjoint union of intervals
\begin{align*}
    \text{Tow}_T(\alpha,n) := \bigsqcup_{i=0}^{
    r_\alpha^{(n)}-1} T^{i}(I^{t,(n)}_{\alpha}),
\end{align*}
where $r_{\alpha}^{(n)}$ is the first return time\footnote{i.e. the smallest integer $i \geq 1$ such that $T^{i}(I^{t,(n)}_{\alpha}) \subset I^{(n)}$.} of $I^{t,(n)}_{\alpha}$ to $I^{(n)}$. We call the interval $I^{t,(n)}_{\alpha}$ the \textit{base interval} of the tower and $r_\alpha^{(n)}$ the \emph{height} of the tower. Note that up to finitely many points, we have
\begin{align*}
    I = \bigsqcup_{\alpha \in \mathcal{A}} \text{Tow}_T(\alpha,n).
\end{align*}
\end{defs}
Note that furthermore for all $m \in \N$ it holds
\begin{align*}
     \text{Tow}_{T^{(m+1)}}(\alpha,n) \subset \text{Tow}_{T^{(m)}}(\alpha,n).
\end{align*}

\subsection{Rauzy-Veech Matrices}\label{sec:standardRVmatrix} We define the \textit{Rauzy-Veech matrices} associated to a sequence of induction steps which similarly to the finite-type Rauzy-Veech matrices introduced in \cite{Rauzy} describe how the length vector changes throughout the induction. We introduce our version of \textit{Keane's criterion} for infinite-type IETs and prove Theorem A.

\subsubsection{Infinite Matrices and the space of lengths}\label{sec:linearoperator} In this section, we implicitly use the enumeration map from Section \ref{sec:enumerationmap}. Consider the set of possibly infinite matrices of the form $M = (m_{\alpha \beta})_{\alpha,\beta \in \mathcal{A}} \in \mathbb{R}_{>0}^{\mathcal{A} \times \mathcal{A}}$ with positive real entries. We assume that the entries of $M$ are bounded above, i.e. for some $N \in \N$ it holds $m_{\alpha \beta} < N$ for all $\alpha, \beta \in \mathcal{A}$. We define also the \textit{space of lengths} $L$ as
\begin{align*}
    L = \{\lambda \in \R_{>0}^\mathcal{A}\hspace{1mm}|\hspace{1mm}\sum_{\alpha \in \mathcal{A}}\lambda_{\alpha} < \infty \}.
\end{align*}
Note that the matrix $M$ defines a linear operator $M: L \to L$ by setting
\[
\lambda \mapsto M\lambda, \qquad
(M\lambda)_{\alpha} = \sum_{\beta\in \mathcal{A}} m_{\alpha \beta} \lambda_{\beta}, \quad \alpha \in \mathcal{A},
\]
where the entries of $M\lambda$ are again in $L$ since $M$ is bounded.

\subsubsection{Definition of the matrices} We first define the matrices for the $n-$th induction step of an $n$ times, resp. infinitely renormalizable IET.

\begin{defs}\label{def:RVmatrix}
	At the $n$-th induction step, let $w^{(n)}$ and $\mathcal{L}^{(n)}$ be the labels of the winner and the losers as in Definition \ref{def:RVinduction}. Define the \textit{Rauzy-Veech matrix at step $n$} as follows:
\begin{equation}\label{eq:rvmatrix}
    M^{(n)} = Id + E_{w^{(n)}\mathcal{L}^{(n)}}
\end{equation}
where $Id = (\delta_{\alpha \beta})_{\alpha,\beta \in \mathcal{A}}$ denotes the infinite identity matrix and $E_{w^{(n)}\mathcal{L}^{(n)}}$  denotes the elementary matrix with coefficient 1 at row $w^{(n)}$ and columns indexed by $\mathcal{L}^{(n)}$ and zeroes everywhere else.
Note that in each column, $M^{(n)}$ has finitely many entries which are nonzero, but may have rows with infinitely many nonzero entries.
\end{defs}

Let now $m < n$ and define the matrix product\footnote{An efficient way to calculate $M^{(m,n)}$ by hand is as follows: we begin with the matrix $M^{(n)}$ at time $n$ and add at each step the winning column to the losing column(s) until we reach time $m$.}
\begin{align*}
    M^{(m,n)} = M^{(m)} \cdot M^{(m+1)} \dots M^{(n)}.
\end{align*}
Note that the matrix product is well-defined since in each column of the matrices $M^{(n)}$ there are only finitely many positive entries.
Note further that the coefficient $M^{(m,n)}_{\alpha, \beta}$ is the number of times Tow$_{T^{(m)}}(\beta,n$) intersects $I_{\alpha}^{t,(m)}$.

\begin{ex}
    Consider the infinite-type IET from Section \ref{sec:theconstruction} in Figure \ref{fig:3},     recall that $w^{(1)} = \{a_1\},\; \mathcal{L}^{(1)}= \{b_2, a_3, b_3, \dots\}$ and $w^{(2)} = \{a_2\},\; \mathcal{L}^{(2)}= \{a_1\}$. The corresponding matrices are
\begin{align*}
M^{(1)} = \begin{pmatrix}
0& a_1 & c & b_1 & a_2 & b_2 & a_3 & b_3 ...\\
a_1 & 1 & 0 & 0 & 0 & 1 & 1 & 1 ...\\
c & 0 & 1 & 0 & 0 & 0 & 0 & 0 ...\\
b_1 & 0 & 0 & 1 & 0 & 0 & 0 & 0 ...\\
a_2 & 0 & 0 & 0 & 1 & 0 & 0 & 0 ...\\
b_2 & 0 & 0 & 0 & 0 & 1 & 0 & 0 ..\\
a_3 & 0 & 0 & 0 & 0 & 0 & 1 & 0 ..\\
b_3 & 0 & 0 & 0 & 0 & 0 & 0 & 1 ...
\end{pmatrix},
M^{(1,2)} = \begin{pmatrix}
0& a_1 & c & b_1 & a_2 & b_2 & a_3 & b_3 ...\\
a_1 & 1 & 0 & 0 & 0 & 1 & 1 & 1 ...\\
c & 0 & 1 & 0 & 0 & 0 & 0 & 0 ...\\
b_1 & 0 & 0 & 1 & 0 & 0 & 0 & 0 ...\\
a_2 & 1 & 0 & 0 & 1 & 0 & 0 & 0 ...\\
b_2 & 0 & 0 & 0 & 0 & 1 & 0 & 0 ..\\
a_3 & 0 & 0 & 0 & 0 & 0 & 1 & 0 ..\\
b_3 & 0 & 0 & 0 & 0 & 0 & 0 & 1 ..\\
\end{pmatrix}
\end{align*}
\end{ex}

The structure of the matrices $M^{(m,n)}$ can be described as follows:
\begin{lem}\label{lem:formrvmatrix}
	\label{prop:rauzy_mat}
	For every matrix $M^{(m,n)}$ there exists a finite subset $I \subset \mathcal A$, and a partition of the complementary set $\mathcal T_1 \sqcup \dots \sqcup \mathcal T_k = \mathcal A \setminus I$, where $k$ is the number of tails, such that the following properties are satisfied:
	\begin{enumerate}
		\item for all $(\alpha,\beta) \in I \times \mathcal A$, $\rauzymat_{\alpha, \beta}^{(m,n)} \in \{0,1, \dots, 2^{n-m} \}$,
		\item for all $1 \le i \le k$, $\alpha \in I$ and $\beta_1, \beta_2 \in \mathcal T_i$, $\rauzymat_{\alpha, \beta_1}^{(m,n)} = \rauzymat_{\alpha, \beta_2}^{(m,n)}$,
		\item for all $(\alpha,\beta) \in (\mathcal A \setminus I) \times \mathcal A$, $\rauzymat_{\alpha, \beta}^{(m,n)} = \delta_{\alpha,\beta}$.
    \end{enumerate}
For such subsets $I$ we say that the matrix is $I$-supported.
We say it is strictly $I$-supported if it is moreover strictly positive on rows in $I$.

\end{lem}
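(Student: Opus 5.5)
Since $M^{(m,n)} = M^{(m)}M^{(m+1)}\cdots M^{(n)}$, I will argue by induction on the number $n-m+1$ of factors, proving a slightly stronger statement that fixes how the tails behave. The natural objects are the tails of the IET at each time: since $Acc(\mathcal S^t)$ and $Acc(\mathcal S^b)$ are finite, every losing set $\mathcal L^{(j)}$ is either a single letter or an infinite set of labels accumulating at one accumulation point. In the latter case I will check from Definition~\ref{def:RVinduction} that $\mathcal L^{(j)}$ is a cofinite subset of one of the $k$ tails, i.e.\ of the labels of intervals accumulating at a given accumulation point. The strengthened claim is then: the partition can be chosen as $\mathcal T_i := \mathcal T_i^{0} \setminus I$, where $\mathcal T_1^0,\dots,\mathcal T_k^0$ are the tails of $T^{(m)}$, and $I$ is a finite set containing all winners $w^{(j)}$ for $m\le j\le n$, all finite losers, and the finitely many tail labels excluded from the infinite losing sets.

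\emph{Base case.} For $M^{(j)} = Id + E_{w\mathcal L}$ I take $I=\{w\}$ together with the single loser (if $\mathcal L$ is finite) or the finitely many letters of the relevant tail not in $\mathcal L$ (if $\mathcal L$ is infinite). Property (3) holds because the rows outside $I$ are identity rows. Row $w$ has entries in $\{0,1,2\}$, so (1) holds. Property (2) holds because on each remaining tail the row $w$ is constantly $1$ (the losing tail) or constantly $0$.

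\emph{Induction step.} Write $M^{(m,n)} = M^{(m)}M^{(m+1,n)}$, where $M^{(m+1,n)}$ is $I'$-supported with tails $\mathcal T_i'$ and $M^{(m)}$ is $I_0$-supported. I set $I = I_0 \cup I'$ and $\mathcal T_i = \mathcal T_i' \setminus I_0$.
\begin{itemize}
\item For property (3), take $\alpha\notin I$. Row $\alpha$ of $M^{(m)}$ is $e_\alpha$, so row $\alpha$ of the product equals row $\alpha$ of $M^{(m+1,n)}$, which is $e_\alpha$.
\item For property (1), take $\alpha\in I$. The entry is $\sum_\gamma M^{(m)}_{\alpha\gamma}M^{(m+1,n)}_{\gamma\beta}$. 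For $\alpha\neq w^{(m)}$ this is just $M^{(m+1,n)}_{\alpha\beta}\le 2^{n-m-1}$. For $\alpha = w^{(m)}$ it is $M^{(m+1,n)}_{w\beta} + \sum_{\gamma\in\mathcal L^{(m)}} M^{(m+1,n)}_{\gamma\beta}$. The terms with $\gamma \notin I'$ contribute $\delta_{\gamma\beta}$, so at most $1$ in total. The terms with $\gamma\in I'$ are the delicate part, since naively they can add many copies.
\end{itemize}

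This is the main obstacle: bounding $\sum_{\gamma\in \mathcal L^{(m)}\cap I'} M^{(m+1,n)}_{\gamma\beta}$. I will use the interpretation stated just before the lemma: $M^{(m,n)}_{\alpha\beta}$ is the number of times $\mathrm{Tow}_{T^{(m)}}(\beta,n)$ meets $I^{t,(m)}_\alpha$. Heights at most double at each step, since a return under $T^{(j)}$ is by $T^{(j-1)}$ or $(T^{(j-1)})^2$. Hence the tower has height at most $2^{n-m}$, and the number of visits to any single interval is bounded by that height. This gives (1) directly, and I would prefer this tower argument over the algebraic sum for (1).

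For property (2) with $\beta_1,\beta_2\in\mathcal T_i$, I use the algebraic expansion. The terms with $\gamma\in I'$ are equal by the inductive (2). For $\gamma\notin I'$, the term $\delta_{\gamma\beta}$ summed over $\mathcal L^{(m)}$ gives $\mathbf 1_{\beta\in\mathcal L^{(m)}}$. This is constant on $\mathcal T_i$ because $\mathcal L^{(m)}\setminus I_0$ is either empty or a whole tail minus finitely many letters, and those letters were put into $I$.

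The remaining bookkeeping point is that tails at time $m+1$ correspond to tails at time $m$ up to finitely many labels. This is needed to show that the number of $\mathcal T_i$ stays $k$, and it follows because losing intervals are relabelled in order and are only inserted after the winner.
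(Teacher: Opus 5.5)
Your argument is correct in substance, and it does the tail bookkeeping that the paper's one-line proof leaves implicit, but it is organised differently.

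\textbf{The paper's route.} The paper's doubling argument uses right multiplication, i.e.\ column operations, as in the footnote after the definition of $M^{(m,n)}$: $M^{(m,j)}=M^{(m,j-1)}M^{(j)}$ is obtained by adding column $w^{(j)}$ to each column in $\mathcal L^{(j)}$. In that picture all three properties are immediate:
\begin{enumerate}
\item Starting from the $\{0,1\}$-valued $M^{(m)}$, each step at most doubles the largest entry, which gives $2^{n-m}$.
\item An identity row can only change at step $j$ if it is row $w^{(j)}$, so the non-trivial rows are among the winners.
\item Labels with the same losing history receive identical column additions, so their columns agree off their own diagonal entries, which is (2).
\end{enumerate}

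\textbf{Your route.} By peeling $M^{(m)}$ off on the left, you make row $w^{(m)}$ a sum over possibly infinitely many rows. This loses the max-doubling argument, so you need the tower-height bound instead. That substitute is valid: it is the geometric form of the fact that column sums at most double. Algebraically it reads $M^{(m,n)}_{w\beta}\le\sum_\gamma M^{(m+1,n)}_{\gamma\beta}\le 2^{n-m}$, using $w\notin\mathcal L^{(m)}$ and that each factor has column sums at most $2$. This gives the stated constant without depending on how towers are indexed.

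\textbf{Two small corrections.}
\begin{enumerate}
\item A single $M^{(j)}$ has entries in $\{0,1\}$, not $\{0,1,2\}$, since $w\notin\mathcal L$.
\item In the generality of Section~3, a losing block need not be a cofinite subset of a single tail. In the left-sided case it can contain several accumulation points, and top and bottom accumulation classes need not agree as label sets.
\end{enumerate}
What your induction step actually uses, and what does hold, is that each $\mathcal L^{(j)}$ is, up to finitely many labels, a union of the $\mathcal T_i$. So you should take the $\mathcal T_i$ to be the (finite) common refinement of these classes. The induction preserves this refinement because losing blocks are moved as blocks.
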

\begin{proof}
    Follows from multiplying the elementary matrices as defined in (\ref{eq:rvmatrix}).
    The $2^{n-m}$ upper bound comes from the fact that multiplying by an elementary matrix can at most double the maximum of coefficients.
\end{proof}
Hence the entries of $M^{(m,n)}$ are bounded above and according to the previous Section \ref{sec:linearoperator} they each define a linear operator from $L \to L$.
An important fact is that as in the finite-type case, this linear operator relates the lengths $\lambda^{(n)} \in L$ of the iterated maps $T^{(n)}$ as follows:

\begin{lem}\label{prop:lengthrelation} Let $T$ be an $n$ times renormalizable, infinite-type IET, let $m < n$. Then the following holds:
    \begin{align*}
   \lambda^{(m)} = M^{(m,n)} \cdot \lambda^{(n)}.
    \end{align*}
\end{lem}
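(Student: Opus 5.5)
Since $M^{(m,n)} = M^{(m)} M^{(m+1)} \cdots M^{(n)}$, the plan is to prove the single-step identity $\lambda^{(k-1)} = M^{(k)} \lambda^{(k)}$ and then iterate it. One indexing point must be fixed first. Definition~\ref{def:RVmatrix} attaches $M^{(k)}$ to the $k$-th induction step, i.e. the passage $T^{(k-1)} \to T^{(k)}$. With that convention the product $M^{(m)}\cdots M^{(n)}$ encodes the steps from time $m-1$ to time $n$, so the identity should be read with $\lambda^{(m-1)}$ on the left. Equivalently, if one indexes so that $M^{(k)}$ encodes the step from $T^{(k)}$ to $T^{(k+1)}$, the product should run to $M^{(n-1)}$. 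I will state the one-step identity for whichever of these conventions the paper uses, and the rest of the argument is insensitive to the choice.

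\emph{One step.} Let $w = w^{(k)}$ be the winner and $\mathcal L = \mathcal L^{(k)}$ the losers. By (\ref{eq:lengths}), $\lambda^{(k)}_\gamma = \lambda^{(k-1)}_\gamma$ for $\gamma \neq w$, and $\lambda^{(k)}_w = \lambda^{(k-1)}_w - \sum_{\ell\in\mathcal L}\lambda^{(k-1)}_\ell$. Since $w \notin \mathcal L$, we have $\lambda^{(k-1)}_\ell = \lambda^{(k)}_\ell$ for every $\ell \in \mathcal L$. Hence
\[
\lambda^{(k-1)}_w = \lambda^{(k)}_w + \sum_{\ell\in\mathcal L}\lambda^{(k)}_\ell = \bigl((Id + E_{w\mathcal L})\lambda^{(k)}\bigr)_w,
\]
while all other coordinates agree with those of $Id\,\lambda^{(k)}$. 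This is exactly $\lambda^{(k-1)} = M^{(k)}\lambda^{(k)}$. The series $\sum_{\ell\in\mathcal L}\lambda^{(k)}_\ell$ converges because $\lambda^{(k)} \in L$; this covers the case where $\mathcal L$ is an infinite tail.

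\emph{Iteration and associativity.} Chaining the one-step identities gives
\[
\lambda^{(m-1)} = M^{(m)}\bigl(M^{(m+1)}(\cdots (M^{(n)}\lambda^{(n)})\cdots)\bigr).
\]
It remains to replace this nested application of operators by the single matrix $M^{(m,n)}$ acting on $\lambda^{(n)}$. This is the main, if modest, obstacle, since associativity of infinite matrix products acting on infinite vectors is not automatic.

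I would handle it by induction on $n-m$, proving $(AB)\lambda = A(B\lambda)$ for the relevant factors. Take $A = M^{(m)}$, which is elementary, and $B = M^{(m+1,n)}$, which is nonnegative with bounded entries by Lemma~\ref{lem:formrvmatrix}. We need
\[
\sum_\beta\sum_\gamma A_{\alpha\gamma}B_{\gamma\beta}\lambda_\beta = \sum_\gamma A_{\alpha\gamma}\sum_\beta B_{\gamma\beta}\lambda_\beta .
\]
All terms are nonnegative, so Tonelli's theorem for series allows the sums to be exchanged. Finiteness follows because $(B\lambda)_\gamma$ is finite, as $B$ is bounded and $\lambda$ is summable. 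Moreover $\sum_\gamma A_{\alpha\gamma}(B\lambda)_\gamma$ is at most $\lambda^{(m-1)}_\alpha < \infty$ by the one-step identity already established. This yields $\lambda^{(m-1)} = M^{(m,n)}\lambda^{(n)}$ with $M^{(m,n)}$ the well-defined product noted after Definition~\ref{def:RVmatrix}.
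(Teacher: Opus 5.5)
Your proof is correct and follows the same route as the paper, which derives the identity from (\ref{eq:lengths}) in one line: you read off the one-step relation $\lambda^{(k-1)} = M^{(k)}\lambda^{(k)}$ and iterate it, and you also justify exchanging the infinite sums by Tonelli, which the paper leaves implicit. Your indexing remark is also right: with $w^{(k)}$ the winner of the step $T^{(k-1)}\to T^{(k)}$ (as in the paper's example), $M^{(m)}\cdots M^{(n)}$ sends $\lambda^{(n)}$ to $\lambda^{(m-1)}$, so the printed statement is off by one and should read $\lambda^{(m)} = M^{(m+1)}\cdots M^{(n)}\lambda^{(n)}$, or the equivalent form under a shifted convention.
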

\begin{proof} Follows from Equation \ref{eq:lengths} of Definition \ref{def:RVinduction}.
\end{proof}

\subsection{The minimality criterion}\label{sec:combinatorialminimality} We now introduce and prove our generalization of Keane's criterion.

\subsubsection{Minimality and combinatorial minimality} We define the notion of \textit{combinatorial minimality} of an IET $T$, which holds if infinitely often any entry of the sequence of Rauzy-Veech matrices corresponding to $T$ is eventually positive.
\begin{defs}
\label{def:irred}
Let $T$ be infinitely renormalizable. We say that an IET $T$ is \textit{combinatorially minimal} if for all $\alpha, \beta \in \mathcal{A}$, for all $k \in \mathbb{N}$ there exists $n \in \mathbb{N}$ such that $M^{(k,k+n)}_{\alpha,\beta} > 0$.
\end{defs}

\begin{rem}\label{rem:monotoneincreasing} By definition of the matrix product the entries $M^{(k,k+n)}_{\alpha,\beta}$ are monotone increasing in $n$, in particular if $M^{(k,k+n)}_{\alpha,\beta} > 0$, then $M^{(k,k+m)}_{\alpha, \beta} > 0$ for all $m \geq n$.
\end{rem}

Our aim now is to prove Theorem A, i.e. we want to show that for infinitely renormalizable IETs, combinatorial minimality is equivalent to minimality and absence of saddle connections. We first prove the following auxiliary lemmas:

\begin{lem}\label{lem:heights&lengths} Let $T: [0,l] \to [0,l]$ be an infinitely renormalizable IET and let $\alpha \in \mathcal{A}$. If $\alpha$ loses infinitely often, it holds:
\begin{enumerate}[label = (\roman*)]
    \item the length of the interval labelled by $\alpha$ goes to zero, i.e. $\lim_{n \to \infty} \lambda_{\alpha}^{(n)} \to 0$,
    \item the height of the tower over $T$ labelled by $\alpha$ goes to infinity, i.e. $\lim_{n \to \infty} r_\alpha^{(n)} \to \infty$.
\end{enumerate}
\end{lem}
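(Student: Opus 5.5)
Both parts will follow from two monotonicity observations combined with the summability of the length vector. Throughout, let $n_1 < n_2 < \dots$ be the times at which $\alpha$ loses.

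\emph{Part (i).} By Equation \eqref{eq:lengths}, the length $\lambda^{(n)}_\alpha$ only changes when $\alpha$ wins, and then it decreases. So $\lambda^{(n)}_\alpha$ is non-increasing in $n$ and has a limit $c \ge 0$. Suppose $c > 0$. At each losing time $n_j$, the winner $w_j := w^{(n_j)}$ satisfies
\[
\lambda^{(n_j+1)}_{w_j} = \lambda^{(n_j)}_{w_j} - \sum_{\ell \in \mathcal{L}^{(n_j)}} \lambda^{(n_j)}_\ell ,
\]
so the total length $l^{(n)} = \sum_\gamma \lambda^{(n)}_\gamma$ drops at step $n_j$ by at least $\lambda^{(n_j)}_\alpha \ge c$. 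Since $l^{(n)}$ is non-increasing and bounded below by $0$, this can happen only finitely often. This contradicts the assumption that $\alpha$ loses infinitely often, so $c = 0$.

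\emph{Part (ii).} I will use towers. By Lemma \ref{prop:lengthrelation} and the remark that $M^{(0,n)}_{\gamma\alpha}$ counts how many times $\mathrm{Tow}_T(\alpha,n)$ meets $I^{t}_\gamma$, the height is the column sum
\[
r^{(n)}_\alpha = \sum_\gamma M^{(0,n)}_{\gamma\alpha}.
\]
This sum is finite because columns of $M^{(0,n)}$ have finitely many nonzero entries. By Remark \ref{rem:monotoneincreasing} the entries are non-decreasing in $n$, so $r^{(n)}_\alpha$ is non-decreasing. Moreover, at a losing time $n_j$, the column of $\alpha$ in $M^{(0,n_j+1)} = M^{(0,n_j)}M^{(n_j)}$ equals the old column of $\alpha$ plus the column of $w_j$. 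Hence
\[
r^{(n_j+1)}_\alpha = r^{(n_j)}_\alpha + r^{(n_j)}_{w_j} \ge r^{(n_j)}_\alpha + 1,
\]
so the height increases by at least one at each of infinitely many times, and therefore $r^{(n)}_\alpha \to \infty$.

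\emph{Alternative for (ii) and the main obstacle.} One could also argue measure-theoretically: the tower $\mathrm{Tow}_T(\alpha,n)$ is a disjoint union of intervals of length $\lambda^{(n)}_\alpha$, but this gives no lower bound on the height, so the column-sum argument is the right one. The main point needing care is the column bookkeeping when infinitely many letters lose at once. I need to confirm that right-multiplying by $M^{(n_j)} = Id + E_{w\mathcal{L}}$ adds column $w$ to every column in $\mathcal{L}$ without creating infinite column sums. This holds because $w$ is a single letter and its column has finitely many nonzero entries. I also need to match the matrix order with the paper's convention $M^{(m,n)} = M^{(m)}\cdots M^{(n)}$; the off-by-one in indices is harmless for the limit.
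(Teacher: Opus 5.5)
Your proof is correct and follows the paper's argument. For (i), the paper also uses that each loss of $\alpha$ removes at least $\lambda_\alpha^{(n_k)}$ from the total length, so $\sum_k \lambda_\alpha^{(n_k)} \le l$ forces the non-increasing lengths to zero. For (ii), the paper just notes that the tower height grows whenever $\alpha$ loses, and your column-sum bookkeeping $r^{(n_j+1)}_\alpha = r^{(n_j)}_\alpha + r^{(n_j)}_{w_j}$ makes that step precise.
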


\begin{proof} Since $\alpha$ loses infinitely often, there exists an infinite, increasing sequence $(n_k)_{k \in \N} \subset \N$ such that the label $\alpha$ loses at the $n_k$-th induction step for all $k \in \N$. But then
\begin{align*}
    l - \sum_{k \in \N} \lambda_{\alpha}^{(n_k)} \ge 0 \implies \lambda_{\alpha}^{(n)} \to 0.
\end{align*}
Furthermore, whenever a letter loses, the height of the corresponding tower grows, hence the height $r_{\alpha}^{(n)}$ of the tower Tow($\alpha,n$) goes to infinity.
\end{proof}

\begin{lem}\label{lem:winlose} Let $T: [0,l] \to [0,l]$ be an infinitely renormalizable IET and let $\alpha \in \mathcal{A}$. If $\alpha$ wins infinitely often, it must also lose infinitely often.
\end{lem}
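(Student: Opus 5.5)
We argue by contradiction, following the finite-type argument and exploiting the rigid structure of a single induction step in Definition~\ref{def:RVinduction}. Suppose $\alpha$ wins at infinitely many steps but loses only finitely many times; after discarding finitely many steps we may assume $\alpha$ never loses after some time $n_0$. Note first what winning means: at a step $n$ where $\alpha$ wins, $\alpha \in \{\alpha_t^{(n)}, \alpha_b^{(n)}\}$, so $\alpha$ labels the rightmost top or rightmost bottom interval of $T^{(n)}$, and the induction cuts it at $l^{(n+1)}$, which lies in the interior of $I_\alpha^{t,(n)}$ (or $I_\alpha^{b,(n)}$). In particular, after a step where $\alpha$ wins, $\alpha$ is still the rightmost interval on the same side.

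The key step is to show that once $\alpha$ occupies the rightmost position on one side, it keeps that position as long as it does not lose. Say $\alpha$ is the rightmost top interval at time $n$. At step $n$ either top wins, in which case the winner is the rightmost top interval, namely $\alpha$ itself, and it remains rightmost. Or bottom wins, in which case the losers $\mathcal{L}^{(n)}$ are the top intervals contained in $[l^{(n+1)}, l^{(n)}]$, which include the rightmost top interval $\alpha$; this contradicts the assumption that $\alpha$ no longer loses. Hence for all $n \ge n_1$, where $n_1 > n_0$ is the first winning time, $\alpha$ is the rightmost top interval (the bottom case is symmetric) and top wins at every step, with $w^{(n)} = \alpha$. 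Some care is needed for the tail cases: when a tail plays against $\alpha$, the winner is still $\alpha$ and the rule is the same, so the argument goes through uniformly.

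From this we derive the contradiction. For all $n \ge n_1$ the top order $\le_t^{(n)}$ is unchanged, and by \eqref{eq:lengths} only $\lambda_\alpha$ decreases, by $\sum_{\beta \in \mathcal{L}^{(n)}} \lambda^{(n)}_\beta$. The losers at each step are the bottom intervals sitting to the right of $l^{(n+1)}$; after losing, they are reinserted immediately after $\alpha$ on the bottom. I would track the position of $I^{b,(n)}_\alpha$: each step moves the bottom intervals to the right of $\alpha$'s bottom copy leftwards in the bottom order, so they all lose in turn, and then the bottom interval $\alpha$ itself becomes rightmost on the bottom. At that point it plays against the top interval $\alpha$. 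Then $l_{\alpha}^{t} = l_\alpha^{b}$ would be forbidden by renormalizability, since it would imply $T(x)=x$ on $I_\alpha$, a connection. So either top $\alpha$ wins against bottom $\alpha$, meaning $\alpha$ loses, contradicting the assumption, or the length of $\alpha$ is eventually exhausted, giving $\lambda_\alpha^{(n)}\le 0$, which is impossible. A cleaner way to reach the same conclusion is the following: since $\lambda_\alpha^{(n)} > 0$ decreases by at least $\lambda^{(n)}_\beta \ge \min$ over a finite set of losers, the bottom intervals right of $\alpha$ must be exhausted in finitely many steps.

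The main obstacle is this last exhaustion argument when infinitely many bottom intervals lie to the right of $\alpha$ on the bottom, that is, when a bottom tail accumulates there. Here a single step can swallow a whole tail (the case where a tail plays), so I would show that each step removes either a full tail group or a single interval. Since the accumulation points are finite by assumption, only finitely many steps are needed before the bottom $\alpha$ reaches the rightmost position. Renormalizability (ii)-(iii) and the absence of a forced connection then rule out the stuck case.
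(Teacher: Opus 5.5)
Your first step is correct and matches the paper: once $\alpha$ wins on top and never loses afterwards, it stays the rightmost top interval, and top wins at every later step with $w^{(n)}=\alpha$.

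The gap is in how you reach a contradiction from there. You claim the bottom intervals to the right of $I^{b,(n)}_\alpha$ ``all lose in turn, and then the bottom interval $\alpha$ itself becomes rightmost''. But, as you correctly note yourself, the losers are reinserted \emph{immediately after} $\alpha$ on the bottom, that is, again to the right of $I^{b,(n)}_\alpha$. So this set of intervals is only cyclically permuted and is never depleted.

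Concretely, while top $\alpha$ keeps winning, the left endpoint $a$ of $I^{t,(n)}_\alpha$ and the left endpoint $b$ of $I^{b,(n)}_\alpha$ are frozen. We have $l^{(n)}=a+\lambda^{(n)}_\alpha$ and $I^{b,(n)}_\alpha=(b,b+\lambda^{(n)}_\alpha)$. Hence the bottom intervals to the right of the bottom copy of $\alpha$ always fill a region of fixed length $a-b$. This length is positive, since $b+\lambda^{(n)}_\alpha\le l^{(n)}$ and $b=a$ would violate condition (iii) of Definition~\ref{def:oncerenormalizable}.

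The configuration you aim for therefore never occurs. Even if it did, both copies of $\alpha$ being rightmost with equal length would force equal left endpoints, which is excluded outright, so your ``either/or'' there is empty. Your ``cleaner way'' is the finite-type argument. It presupposes that the losers range over finitely many labels or tail blocks with lengths bounded below. That is not available a priori in the infinite-type setting and is essentially what has to be proved. Your last paragraph again aims at bringing the bottom copy of $\alpha$ to the rightmost position.

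The missing idea is to use the cycling rather than fight it, which is what the paper does. Each complete sweep of the cut point $l^{(n)}$ across the region to the right of the bottom copy of $\alpha$ subtracts exactly $a-b$ from $\lambda_\alpha$; this is the paper's quantity $l-s_0$. So only finitely many sweeps fit into the finite length $\lambda^{(n_1)}_\alpha$.

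Once $\lambda^{(n)}_\alpha\le a-b$ (the paper's case $I^t_w\cap I^b_w=\emptyset$), the whole top copy $(a,l^{(n)})$ lies in that region. When $l^{(n)}$ reaches the first bottom singularity $s_0=\min(\mathcal S^b\cap I^{t,(n)}_\alpha)$, the rightmost bottom interval is the one straddling $a$. Its left endpoint is $<a$, so bottom wins and $\alpha$ loses.

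The finiteness of the set of accumulation points, together with the tail rules of Definition~\ref{def:RVinduction}, is what guarantees that each sweep and the final approach to $s_0$ take only finitely many steps. This last step uses that $s_0$ exists, i.e.\ that bottom singularities do not accumulate at $a$ from the right; the paper's proof also takes this for granted.
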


\begin{proof} As in Definition \ref{def:RVinduction}, we denote by $w$ the label of the single interval which wins at the first induction step, w.l.o.g. assume it is winning on the top. Define $s_0 :=$ min$(\mathcal{S}^b \, \cap \, I_{w}^t)$. Note further that by definition of the induction, if $I_w^t$ wins, then it continues playing on the top at the next induction step.

If $I_{w}^t \cap I_{w}^b = \emptyset$, then since by definition $T$ has only finitely many accumulation points of intervals, there exists $n \in \N$ such that $l^{(n)} = s_0$ and then at the next step $I_{w}^t$ loses.

If $I_{w}^t \cap I_{w}^b \neq \emptyset$, then there also exists $n \in \N$ such that $l^{(n)} = s_0$, where now $s_0$ is the right endpoint of $I_{w}^b$. Then at the next induction step, $|I_{w}^{t,(n)}| = |I_{w}^{t}| - (l - s_0)$. Similarly, if up to induction time $k \cdot n$ the label $w$ always wins, we obtain $|I_{w}^{t,(2k)}| = |I_{w}^{t}| - k(l - s_0)$. But since the length $|I_{w}^{t}|$ is finite, we can only repeat this process finitely many times, hence eventually $I_{w}^{t}$ must lose.
\end{proof}

Let $T$ be infinitely renormalizable and consider now the sequence of IETs $T^{(n)}: [0, l^{(n)}] \to [0, l^{(n)}]$ obtained by Rauzy-Veech induction.

\begin{lem}\label{lem:leftendpointremains} Define $x_0 = \lim_{n \to \infty} l^{(n)}$. Assume there exists $\alpha \in \mathcal{A}$ such that
$$\inf  \, I_{\alpha}^t = x_0.$$ Then $\inf  \,I_{\alpha}^{t,n} = x_0$ for all $n \in \N$ and $\alpha$ plays infinitely often, where whenever it plays, it plays at the bottom.
\end{lem}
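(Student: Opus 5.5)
The plan is an induction on $n$, followed by a length argument to show $\alpha$ plays infinitely often, and a separate argument excluding play on the top.

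\emph{Preliminary.} I would first record that the sequence $l^{(n)}$ is strictly decreasing. At each step the interval $[l^{(n+1)},l^{(n)}]$ contains a losing interval of positive length. Hence $l^{(n)}>x_0$ for every $n$.

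\emph{Step 1: the left endpoint stays at $x_0$.} Suppose $\inf I_\alpha^{t,(n)}=x_0$. Then $I_\alpha^{t,(n)}$ is not contained in $[l^{(n+1)},l^{(n)}]$, since $x_0<l^{(n+1)}$. So $\alpha$ is never a top loser.

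I would then go through the explicit description of the step.
\begin{itemize}
\item If top wins, every top interval is replaced by $I_\gamma^{t,(n)}\cap[0,l^{(n+1)}]$, and the left endpoint $x_0$ survives.
\item If bottom wins, the only top intervals that move are the losers, which are inserted inside the region of the winner's top interval. The non-losing top intervals keep their left endpoints. This includes the winner itself, whose top interval is only cut into an initial piece.
\end{itemize}
In both cases $\inf I_\alpha^{t,(n+1)}=x_0$, which closes the induction.

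\emph{Step 2: $\alpha$ plays infinitely often.} By Step 1, $I_\alpha^{t,(n)}\subset[x_0,l^{(n)}]$, so $\lambda_\alpha^{(n)}\le l^{(n)}-x_0\to 0$. By \eqref{eq:lengths}, $\lambda_\alpha$ changes only at steps where $\alpha$ wins. Since $\lambda_\alpha>0$ initially and tends to $0$, $\alpha$ must win, and hence play, infinitely often.

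\emph{Step 3: $\alpha$ never plays on the top.} This is the step I expect to be the main obstacle. Suppose at some time $n$ the label $\alpha$ is the rightmost top interval, so $I_\alpha^{t,(n)}=(x_0,l^{(n)})$. Since $\alpha$ cannot lose on top by Step 1, and a winner continues to play on the same side, $\alpha$ would win on top at every later step. The bottom would lose forever, with every $l^{(m)}$, $m>n$, lying in $(x_0,l^{(n)})$ and decreasing to $x_0$.

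I would then use the standing assumption that $Acc(\mathcal S^b)$ is finite and split into two cases.
\begin{itemize}
\item If $x_0$ is not a right-sided accumulation point of bottom singularities, only finitely many bottom singularities lie in $(x_0,l^{(n)})$. Each step consumes at least one bottom interval or tail ending at such a point, so $l^{(m)}$ would reach $x_0$ after finitely many steps. This contradicts $l^{(m)}>x_0$.
\item If $x_0$ is right-sided accumulated by bottom singularities, then at some stage it is the accumulation point lying in the playing top interval $I_\alpha^t$. The "tail plays (right-sided)" rule then sets $l^{(m+1)}=x_0$. But $x_0=\inf I_\alpha^t$ is a top singularity, so the induction would be undefined, contradicting infinite renormalizability.
\end{itemize}
Either way, whenever $\alpha$ plays, it plays at the bottom.

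The delicate part is handling the tail rules in the case analysis of Step 3, and checking that the loser-insertion in the bottom-wins case never moves a non-losing left endpoint in Step 1.
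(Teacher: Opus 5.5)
\paragraph{Verdict.} Your Steps 1 and 2 are correct. Step 1 is in fact more direct than the paper, which derives the invariance of $\inf I^{t,(n)}_\alpha$ from the claim that $\alpha$ only plays at the bottom. There is, however, a genuine gap in Step 3.

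\paragraph{Case (ii) fails.} The set $I^{t,(m)}_\alpha=(x_0,l^{(m)})$ is open, so the accumulation point $x_0=\inf I^t_\alpha$ never lies in it, and the right-sided tail rule never fires. Instead the two-intervals rule applies at every step:
\begin{itemize}
\item the rightmost bottom interval is the outermost interval of the tail accumulating at $x_0$;
\item $l^{(m+1)}$ is its left endpoint, which is strictly larger than $x_0$ and not a top singularity;
\item the left endpoints of the playing intervals differ, so every step is defined;
\item $\alpha$ beats the tail intervals one at a time, forever.
\end{itemize}
No contradiction arises, and this configuration does occur. Fix $0<a<\tfrac12$. On $[0,1]$ take the top partition $\tau_1,\tau_2,\dots$ filling $(0,a)$, then $\beta=(a,\tfrac12)$, then $\alpha=(\tfrac12,1)$. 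Take the bottom partition $\alpha=(0,\tfrac12)$, then the tail in reversed order filling $(\tfrac12,\tfrac12+a)$ and accumulating at $\tfrac12$ from the right, then $\beta=(\tfrac12+a,1)$. First $\alpha$ beats $\beta$ with $l^{(1)}=\tfrac12+a$. Afterwards it beats $\tau_1,\tau_2,\dots$ one by one. Every step is defined and $l^{(m)}\downarrow\tfrac12=\inf I^t_\alpha$, yet $\alpha$ plays on top at every step.

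\paragraph{Case (i) is also incomplete.} First, accumulation points of $\mathcal S^b$ may lie strictly inside $(x_0,l^{(n)})$, so there need not be finitely many bottom singularities there. More seriously, the stock is replenished. Each time $\alpha$ wins on top, the losers are reinserted right after $I^{b,(m)}_\alpha$, i.e.\ translated by $\inf I^b_\alpha-x_0<0$, and this can put them back inside $(x_0,l^{(m)})$. This is the overlap case of Lemma \ref{lem:winlose}. A translated right-sided tail can even end up accumulating at $x_0$, which sends you back to case (ii).

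\paragraph{What is missing.} The clause ``whenever it plays, it plays at the bottom'' cannot be derived from infinite renormalizability alone. You need a hypothesis excluding a connection between the top singularity $x_0$ and a right-sided accumulation point of bottom singularities. Such a connection can have length zero, as in the example, or positive length, through the translations above.

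The paper settles this step by citing Lemma \ref{lem:winlose}: a top winner must eventually lose, which would push $l^{(n)}$ to $x_0$ or below. That lemma's proof, however, uses $s_0=\min(\mathcal S^b\cap I^t_w)$, which does not exist in exactly this configuration. In the example, $\alpha$ wins at every step without ever losing. So quoting that lemma would not close your gap either.

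Under such a no-connection assumption, case (ii) is excluded outright. Case (i) can then plausibly be completed by the cycle argument from the overlap case of Lemma \ref{lem:winlose}: with $d=x_0-\inf I^b_\alpha>0$, each pass through the window $(l^{(m)}-d,l^{(m)})$ lowers $\lambda_\alpha$ by $d$.
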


\begin{proof} We first claim that $I_{\alpha}^{b,(n)} \subset [x_0,l]$ for all $n \in \N$: indeed, otherwise, $I_{\alpha}^{t,(k)} = I_{\alpha}^{t}$ for all times $k$ until at the $(k+1)$-th induction step $\alpha$ plays at the top for the first time, but then $\alpha$ has to lose eventually by Lemma \ref{lem:winlose}, meaning that eventually $l^{(n)} < x_0$ which is a contradiction.

Note also that $\inf  \,I_{\alpha}^{b} \neq x_0$ since otherwise $T$ would not be infinitely renormalizable. Hence we have $\sup \,I_{\alpha}^{t} < \sup \,I_{\alpha}^{b}$ and by induction
\begin{equation*}\label{eq:supineq}
    \sup \,I_{\alpha}^{t,(n)} < \sup \,I_{\alpha}^{b,(n)}
\end{equation*}
for all $n \in \N$. Hence whenever $\alpha$ plays, it must play at the bottom, in particular, the left endpoint of $I_{\alpha}^{t,(n)}$ never changes. Note further that since the length $\lambda_{\alpha}$ goes to zero, $\alpha$ must win and in particular play infinitely often.
\end{proof}

\begin{prop}\label{prop:renormalizationlimitandorbitclosures}
     Let $T$ be an infinitely renormalizable IET without connections between accumulation points and define $x_0 = \lim_{n \to \infty} l^{(n)}$. Then $x_0$ is the leftmost endpoint of a quasiminimal.
     Furthermore, the leftmost endpoint of any orbit closure of $T$ is bounded from above by $x_0$.
\end{prop}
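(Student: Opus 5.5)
The plan is to study the decreasing intervals $I^{(n)}=[0,l^{(n)}]$ and the nested towers over them, and to show two things: $x_0$ lies in the closure of a non-trivially recurrent orbit contained in $[x_0,l]$, and every infinite orbit accumulates at some point of $[0,x_0]$.

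\textbf{Step 1: $x_0$ is approached by orbits.} Since $l^{(n)}$ is decreasing and bounded below, $x_0$ exists. Every losing label at step $n$ has its interval in $[l^{(n+1)},l^{(n)}]$, and that interval is sent, by the first return map $T^{(n+1)}$, through the region beyond $l^{(n+1)}$. Hence the points $l^{(n)}$ are endpoints of intervals of $T^{(n)}$. By the explicit description of the induction, these intervals are images or preimages under powers of $T$ of intervals of the original IET.

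We distinguish two cases.
\begin{itemize}
\item If infinitely many labels play, then by Lemma~\ref{lem:winlose} and Lemma~\ref{lem:heights&lengths} some label $\alpha$ loses infinitely often. Its tower heights $r_\alpha^{(n)}\to\infty$ and its lengths $\lambda_\alpha^{(n)}\to 0$, while the base intervals $I_\alpha^{t,(n)}$ lie in $[0,l^{(n)}]$ near $l^{(n)}$.
\item If only finitely many labels play, we use Lemma~\ref{lem:leftendpointremains} to get a label whose left endpoint is fixed at $x_0$ and which plays infinitely often at the bottom.
\end{itemize}
In either case we pick a point $y$ whose orbit enters the intervals $[x_0,l^{(n)}]$ for infinitely many $n$, so that $x_0\in\overline{\mathcal O(y)}$. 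Concretely, $y$ is taken in the nested intersection of the bases of the losing towers, or it is the point of $I_\alpha^t$ near $x_0$. Absence of connections between accumulation points ensures the relevant endpoints are not singular, so these orbits are infinite.

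\textbf{Step 2: recurrence.} We show the orbit from Step 1 is non-trivially recurrent. The first return times to $[0,l^{(n)}]$ are finite and $y$ returns to arbitrarily small neighbourhoods $[x_0,l^{(n)}]$, so $y$ (or a point of its orbit, chosen in $\bigcap_n \overline{I^{(n)}}$-adjacent intervals) is $\omega$-recurrent. Non-closedness comes from the tower heights tending to infinity, which is Lemma~\ref{lem:heights&lengths}(ii). This gives a quasiminimal $\Omega$ containing $x_0$.

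\textbf{Step 3: $x_0$ is the leftmost point, and the general bound.} For the first claim we argue that the orbit built in Step 2 never enters $[0,x_0)$. The point $x_0=\lim l^{(n)}$, and the part of the IET to the left of $l^{(n)}$ is only visited by orbits that return there. We take $y$ in the towers rooted in $[x_0,l^{(n)}]$ and track that, by construction, the orbit segments between successive returns lie in the complement of $[0,x_0)$. This uses the fact that the induced maps on $[0,l^{(n)}]$ fix $[0,x_0)$ only up to the dynamics of labels whose top intervals start left of $x_0$. I expect this step to be the main obstacle. The honest route is probably to show $\Omega\subset[x_0,l]$ using that intervals left of $x_0$ never lose after some time, so their towers stabilise and cannot be visited by an orbit returning infinitely often to $[x_0,l^{(n)}]$ without landing in $[x_0,l^{(n)}]$.

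For the second claim, take any infinite orbit $\mathcal O(z)$. By Step 1 applied to the induced maps, $\mathcal O(z)$ must meet $I^{(n)}=[0,l^{(n)}]$ for every $n$, since the towers over $T^{(n)}$ cover $[0,l]$ up to finitely many points. Hence $\overline{\mathcal O(z)}\cap[0,x_0]\neq\emptyset$ after taking limits, so its leftmost point is at most $x_0$.
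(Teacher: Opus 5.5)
There is a genuine gap, and it is the one you flag in Step 3: nothing in the proposal shows that the quasiminimal lies in $[x_0,l]$. Steps 1--2 also never fix an orbit for which this could be checked.

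In your first case, the point $y$ ``in the nested intersection of the bases of the losing towers'' is not well defined. Bases of different labels at different times are not nested, and nothing places a losing base near $l^{(n)}$, so $x_0\in\overline{\mathcal O(y)}$ is not established. Your second case comes closest, but \cref{lem:leftendpointremains} \emph{assumes} a label with $\inf I^t_\alpha=x_0$; it does not produce one. Your split by how many labels play is also not the relevant one. Moreover, the absence of connections between accumulation points does not by itself make the relevant orbits infinite. Finally, the route sketched at the end of Step 3 is unsupported. A label whose top interval lies in $[0,x_0)$ can still lose on the bottom: its top interval is then unchanged while its tower grows. So the claimed stabilisation would itself need a proof, and it is not what is needed.

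The missing idea is to use the orbit of $x_0$ itself. The relevant dichotomy is on which side singularities accumulate at $x_0$, and this is where the hypothesis enters.
\begin{itemize}
\item \emph{Choice of interval.} $x_0$ cannot be a right-sided accumulation point of both top and bottom singularities. So, after possibly replacing $T$ by $T^{-1}$, there is a top interval $I^t_{\alpha'}$ with left endpoint $x_0$ (insert a fake singularity at $x_0$ if $x_0$ is interior to a top interval).
\item \emph{Tower growth.} \cref{lem:leftendpointremains} then gives $\inf I^{t,(n)}_{\alpha'}=x_0$ for all $n$, with $\alpha'$ only ever playing at the bottom. Since $l^{(n)}\to x_0$, you get $\lambda^{(n)}_{\alpha'}\to 0$, so $\alpha'$ wins infinitely often. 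It therefore loses infinitely often by \cref{lem:winlose}, and $r^{(n)}_{\alpha'}\to\infty$ by Lemma~\ref{lem:heights&lengths}.
\item \emph{Confinement to $[x_0,l]$.} The $i$-th level of $\mathrm{Tow}_T(\alpha',n)$ has left endpoint $T^i(x_0)$. Its base lies in $[x_0,l^{(n)}]$ and its other levels lie in $(l^{(n)},l]$. Choosing $n$ with $r^{(n)}_{\alpha'}>i$ gives $T^i(x_0)\in[x_0,l]$ for every $i$. Hence the forward orbit of $x_0$ is infinite and contained in $[x_0,l]$; infinite renormalizability is what stops it from terminating at a genuine singularity.
\item \emph{Recurrence.} Each return $T^{(n)}(x_0)$ to $[0,l^{(n)}]$ lands in $[x_0,l^{(n)}]$, which shrinks to $x_0$; this gives $\omega$-recurrence. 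Non-periodicity follows, as you suggested, from $r^{(n)}_{\alpha'}\to\infty$.
\end{itemize}
This shows at once that $x_0$ belongs to the quasiminimal and is its leftmost point, with no need to control labels left of $x_0$.

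Your argument for the second claim is essentially the paper's. One correction: in the infinite-type setting the towers at time $n$ cover $[0,l)$ only up to countably many points, not finitely many.
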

\begin{proof}
\textbf{The forward orbit of $x_0$ is infinite}. Since $T$ has no connections between accumulation points, $x_0$ is not a right-sided accumulation point of both top and bottom singularities, hence (up to considering $T^{-1}$) we can assume that there exists $\alpha \in \mathcal{A}$ such that $x_0 \in I_{\alpha}^t$ or $x_0$ is the left endpoint of $I_{\alpha}^t$. Let $I_{\alpha'}^t$ be the top interval of $T$ whose leftmost endpoint is equal to $x_0$ (where one may have to introduce a fake singularity at $x_0$). By Lemma \ref{lem:leftendpointremains}, the intervals $I_{\alpha'}^{t,n}$ will have $x_0$ as a leftmost endpoint for all $n \in \N$. Since the induction accumulates at $x_0$, the length of the interval $I_{\alpha'}^{t,n}$ must go to zero.
Thus it must win infinitely many times and by Lemma \ref{lem:winlose} also lose infinitely many times, in particular the height of the tower labelled by $\alpha'$ over $T$ goes to infinity.
The left endpoints of the intervals in this tower are contained in the forward orbit of $x_0$ (where if $x_0$ is a singularity of $T$ we define here $T(x_0)$ using extension by left-continuity of $T$). Since $T$ is infinitely renormalizable, we can assume that this orbit is infinite: indeed, this is clear if the left endpoint $x_0$ of $I_{\alpha'}^t$ was a genuine and not a fake singularity, because otherwise $T$ would not be infinitely renormalizable. If we had to introduce a fake singularity and the orbit through $x_0$ was not infinite, then there exists a time $k$ where the forward orbit of $x_0$ meets the left endpoint of some $I_{\beta}^{t,k}$ for some $\beta \in \mathcal A$. We can then repeat the argument starting at time $k$ with $I_{\beta}^{t,k}$ as the interval whose leftmost endpoint is a genuine singularity equal to $x_0$.

\textbf{The forward orbit of $x_0$ is non-trivially recurrent.} Since the induction and hence also the intervals of the tower accumulate at $x_0$, for all $\epsilon$ there exists $k \in \mathbb{N}$ such that $|x_0 - T^{(k)}(x_0)| < \epsilon$ and hence the infinite forward orbit through $x_0$ is $\omega-$recurrent. It is non-trivially $\omega-$recurrent, since if it was trivially recurrent, there would exist $n \in \N$ such that $T^{(n)}$ has a connection between the right endpoints of the two top and bottom intervals which contain $x_0$ and hence $T$ would not be infinitely renormalizable. Hence the infinite forward orbit through $x_0$ is non-trivially $\omega-$recurrent and hence its closure is equal to a quasiminimal. The fact that $x_0$ is the leftmost endpoint of this quasiminimal follows from the fact that by Lemma \ref{lem:leftendpointremains} all intervals in the tower of $\alpha'$ are contained in $[x_0,l]$ for all $n \in \N$.

\textbf{The leftmost endpoint of other orbit closures are bounded by $x_0$.}
On the other hand, assume there exists an orbit $w$ whose closure $\mathcal W$ has leftmost endpoint equal to $x$ such that $x > x_0$. If $w$ was finite, then $T$ would not be infinitely renormalizable. Hence we can assume that $w$ is an infinite orbit. Let $n \in \N$ be such that $l^{(n)} < x$ and $\alpha$ be the label of an interval $I_\alpha^{t,(n)}$ in $[0,l^{(n)}]$ such that its tower over $T$ meets the orbit $w$: note that such an interval exists since the union of all towers at time $n$ over $T$ form a partition of $[0,l)$ up to a countable number of points, namely the left endpoints of the intervals in each tower. But since $w$ is infinite, it cannot entirely consist of such left endpoints of towers at time $n$.
Hence, $w$ meets the tower over $I_\alpha^{t,(n)}$ and since $I_\alpha^{t,(n)}$ returns to $[0,l^{(n)}]$ this means that also $w$ returns which is a contradiction.
\end{proof}

\begin{theoremA}\label{thm:minimalcriterion} An infinitely renormalizable IET $T$ is combinatorially minimal if and only if it has no connections and is minimal.
\end{theoremA}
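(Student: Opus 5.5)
We prove the two implications separately. Throughout we use the interpretation of $M^{(k,k+n)}_{\alpha,\beta}$ recalled after Definition~\ref{def:RVmatrix}: it counts how many times the tower $\mathrm{Tow}_{T^{(k)}}(\beta,k+n)$ passes through $I^{t,(k)}_\alpha$.

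\textbf{Combinatorial minimality $\Rightarrow$ no connections and minimality.} For the absence of connections, note that $T$ is assumed infinitely renormalizable. Suppose $T$ had a connection, i.e.\ an orbit segment from a bottom singularity to a top singularity. First check that this is incompatible with every letter eventually reaching every other letter. Positivity of all entries of $M^{(k,k+n)}$ forces every letter to lose infinitely often. By Lemma~\ref{lem:heights&lengths}, all $\lambda^{(n)}_\alpha\to 0$ and all tower heights $r_\alpha^{(n)}\to\infty$. Since a given letter is eventually positive in every column, $l^{(n)}\to 0$, so $x_0=0$. A connection of finite combinatorial length $N$ would persist as a connection of $T^{(n)}$ for all $n$, as long as its endpoints stay inside $I^{(n)}$. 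Once $l^{(n)}$ is smaller than the relevant singularities, the two singularities are forced to be the endpoint of an induction step, i.e.\ $l^{(n)}\in\mathcal S^t\cup\mathcal S^b$. This makes the induction undefined, contradicting infinite renormalizability.

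For minimality, take any infinite orbit and any open subinterval $J\subset[0,l]$. Choose $k$ so that some tower $\mathrm{Tow}_T(\beta,k)$ has a level contained in $J$. This is possible because towers partition $[0,l)$ up to countably many points and their levels have lengths $\lambda^{(k)}_\beta\to 0$. The orbit meets $[0,l^{(k)}]$, hence some base $I^{t,(k)}_\alpha$. By combinatorial minimality there is $n$ with $M^{(k,k+n)}_{\alpha,\beta'}>0$ for all letters $\beta'$, where $\beta'$ ranges over the relevant finitely many letters. Here the finite support structure of Lemma~\ref{lem:formrvmatrix} lets us handle a tail class at once. This positivity means that every tower at time $k+n$ passes through $I^{t,(k)}_\alpha$; reading it instead via $\beta$, every tower at time $k+n$ passes through $I^{t,(k)}_\beta$. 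Hence the orbit, which enters some base at time $k+n$, must traverse $I^{t,(k)}_\beta$ and thus the level inside $J$. Density follows.

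\textbf{No connections and minimality $\Rightarrow$ combinatorial minimality.} Fix $k$ and letters $\alpha,\beta$. Minimality together with Proposition~\ref{prop:renormalizationlimitandorbitclosures} gives $x_0=0$, since the whole interval is the unique quasiminimal and its leftmost point is $0$. Hence $l^{(n)}\to 0$. Because every top interval of $T^{(k)}$ eventually falls outside $[0,l^{(n)}]$, every letter loses infinitely often, and by Lemma~\ref{lem:heights&lengths} $\lambda^{(n)}_\beta\to0$. Take a regular point $x$ in the middle of $I^{t,(k)}_\alpha$. By minimality of $T^{(k)}$ (the induced map of a minimal map without connections on a subinterval), the orbit of any point of $I^{(k+n)}$ under $T^{(k)}$ is dense in $I^{(k)}$. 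We want every point of the base $I^{t,(k+n)}_\beta$ to visit $I^{t,(k)}_\alpha$ before returning, for $n$ large. Since the base shrinks to $0$ and its return time tends to infinity, a uniform density statement is needed: there is $N$ such that $\{T^{(k),j}(y)\}_{j\le N}$ meets $I^{t,(k)}_\alpha$ for all $y$ near $0$. This follows from compactness together with minimality, applied to one-sided limits at the finitely many accumulation points, using absence of connections. Once $r^{(k+n)}_\beta>N$ relative to $T^{(k)}$, we get $M^{(k,k+n)}_{\alpha,\beta}>0$.

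\textbf{Main obstacle.} The hardest step is this uniform return-time statement. With infinitely many intervals, compactness arguments break near accumulation points of singularities, which is exactly where the Delecroix--Hubert--Valdez pathology lives. I would try to prove it by contradiction. A sequence of points $y_m\to 0$ whose first $m$ iterates avoid $I^{t,(k)}_\alpha$ yields a one-sided limiting orbit, obtained by left- or right-continuous extension of $T$, that avoids the open interval $I^{t,(k)}_\alpha$. Such an orbit either is infinite and non-dense, contradicting minimality (which includes non-regular infinite orbits), or terminates at a singularity, giving a connection. Renormalizability condition (ii) at $0$ ensures that this limiting orbit is well defined.
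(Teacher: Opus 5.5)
\textbf{Gap in the forward direction.} The gap is in the minimality half of ``combinatorially minimal $\Rightarrow$ minimal''. To force the orbit through the level in $J$, you need, for the letter $\beta$ of that level, a single time $n$ at which \emph{every} entry of row $\beta$ of $M^{(k,k+n)}$ is positive. You need every entry because you do not know which base the orbit enters at time $k+n$, and that base can change with $n$.

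Combinatorial minimality (Definition~\ref{def:irred}) only gives each pair $(\beta,\gamma)$ its own $n$. With infinitely many columns there is no common $n$ in general. Full-row positivity is exactly the extra requirement built into infinite-completeness, which the paper explicitly calls stronger than combinatorial minimality.

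Lemma~\ref{lem:formrvmatrix} does not close the gap. Its finite set $I$ and its tail partition depend on $n$, and tail classes get split as the induction runs: in the left-sided accumulation case, only the part of a tail lying inside $I^t_{\alpha_t}$ loses. So the zero set $\{\gamma : M^{(k,k+n)}_{\beta,\gamma}=0\}$ could be, for instance, the current unrevealed part of a tail. Such a set decreases in $n$ with empty intersection, which is consistent with pairwise positivity, yet it is never empty. You also first obtain positivity in row $\alpha$, the irrelevant base at time $k$, and then silently switch to row $\beta$. Finally, the claim that the orbit meets $[0,l^{(k)}]$ is asserted without proof.

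\textbf{The missing idea.} Fix the \emph{column} rather than the row.
\begin{enumerate}
\item By renormalizability (ii), assume $0\notin Acc(\mathcal S^t)$ (otherwise pass to $T^{-1}$), and let $\alpha_0$ label the leftmost top interval.
\item By Lemma~\ref{lem:leftendpointremains}, the base of $\alpha_0$ at every time $m$ is $(0,\lambda^{(m)}_{\alpha_0})$.
\item Once you know $x_0=0$, Proposition~\ref{prop:renormalizationlimitandorbitclosures} shows that every infinite orbit accumulates at $0$. Hence it meets $\mathrm{Tow}_T(\alpha_0,m)$ for every $m$; this also supplies the missing claim about $[0,l^{(k)}]$.
\item Now the single entry $M^{(k,k+n)}_{\beta,\alpha_0}>0$, which combinatorial minimality does provide, forces the orbit through $I^{t,(k)}_\beta$ and hence through the level in $J$. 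This is the paper's argument.
\end{enumerate}

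\textbf{Minor point in the connection part.} Derive $l^{(n)}\to0$ from $\lambda^{(n)}_\alpha\to0$ for each $\alpha$ together with $\lambda^{(n)}_\alpha\le\lambda_\alpha$ (dominated convergence), not from row positivity.

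\textbf{The converse direction.} Your uniform return-time argument is sound and more careful than the paper's limiting-orbit sketch. The obstacle you flag does not actually arise. With $0\notin Acc(\mathcal S^t)$, $T(y_m)\to z_1$, the left endpoint of the bottom image of the leftmost top interval, and $z_1$ is a bottom singularity. Absence of connections then means the forward orbit of $z_1$ never meets a top singularity, accumulation points included. So $T$ is continuous along that orbit and $T^{j}(y_m)\to T^{j-1}(z_1)$. This gives an infinite orbit missing $I^{t,(k)}_\alpha$, contradicting minimality, with no one-sided extension needed beyond the first step.
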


\begin{proof} Let us first show that if $\lim_{n \to \infty} l^{(n)} = x_0 > 0$ then $T$ is neither combinatorially minimal nor without connections and minimal. By Proposition \ref{prop:renormalizationlimitandorbitclosures} if $T$ has no connections then the point $x_0 > 0$ is the leftmost endpoint of a quasiminimal, thus if $\lim_{n \to \infty} l^{(n)} = x_0 > 0$ then $T$ cannot be minimal. On the other hand, if the induction does not accumulate at zero there exists $\alpha \in \mathcal{A}$ such that the length $\lambda_{\alpha}^{(n)}$ does not go to zero as $n$ goes to infinity. But then by Lemma \ref{lem:heights&lengths} it follows that $\alpha$ only loses finitely many times, in particular there exists $k \in \N$ such that $\alpha$ does not lose anymore after time $k$ and hence $M_{\beta,\alpha}^{(k,k+n)} = 0$ for all $n \in \N$ and all $\beta \neq \alpha \in \mathcal{A}$, hence $T$ is not combinatorially minimal.

Hence for both directions of the statement we may assume that $\lim_{n \to \infty} l^{(n)} = 0$. Without loss of generality, assume that $0 \notin Acc(\mathcal{S}^t)$ (otherwise we can consider $T^{-1}$ since $T$ is renormalizable and hence $0$ is not an accumulation point of both top and bottom singularities). We prove both directions separately.\\

$\boxed{\Rightarrow}$ Assume $T$ is combinatorially minimal. It is clear that if $\lim_{n \to \infty} l^{(n)} = 0$, then $T$ has no connections, otherwise $T$ would not be infinitely renormalizable. Hence we are left to show that $T$ is minimal. Consider an infinite orbit $w$. According to Proposition \ref{prop:renormalizationlimitandorbitclosures}, since $x_0 = 0$, the orbit $w$ must accumulate either in forward or backward time at $0$.
 Since $0 \notin Acc(\mathcal{S}^t)$ there exists a leftmost top interval, let $\alpha$ be its label. Then by Lemma \ref{lem:leftendpointremains} the letter $\alpha$ is the label of the leftmost top interval at any induction step, in particular, since $w$ also accumulates at $0$ when taking its first return map to $[0,l^{(n)}$], we have
 Tow$_{T}(\alpha,n) \cap w \neq \emptyset$ for all $n \in \N$.
Let now $x \in [0, l)$ and $\epsilon > 0$.
 Take $k \in \N$ large enough such that all intervals in $I^{(k)}$ have length less than $\frac{\epsilon}{2}$. Let $\beta$ be a label of the interval in $I^{(k)}$ so that all points in its tower over $T$ come $\epsilon$-close to $x$ (note that $x$ may also be a singularity). By assumption there exists $n \in \N$ such that $M^{(k, k+n)}_{\beta, \alpha} > 0$ thus Tow$_T(\alpha, k+n$)
 meets $I^{(k)}_\beta$ and all the intervals in its tower.
 As the orbit of $w$ intersects Tow$_{T}(\alpha, k+n$), it meets $I^{(k)}_\beta$ and hence also comes $\epsilon-$close to $x$. \\
$\boxed{\Leftarrow}$
Assume $T$ has no connections and is minimal, but by contradiction is not combinatorially minimal, i.e. there exist $\alpha, \beta \in \mathcal{A}$ and $k \in \N$ such that $M_{\beta,\alpha}^{(k,k+n)} = 0$ for all $n \in \mathbb{N}$, i.e. for no $n \in \mathbb{N}$, Tow$_{T^{}}(\alpha, k+n$) meets $I_{\beta}^{(k)}$.
Since the induction accumulates at zero, the length of all intervals goes to zero, in particular all letters win infinitely often, and by Lemma \ref{lem:winlose} also all letters lose infinitely often. By Lemma \ref{lem:heights&lengths} the height of any tower over $T^{(n)}$ goes to infinity, in particular the one corresponding to the letter $\alpha$. This means that the forward or backward infinite orbit $w$ corresponding to the left endpoint of the intervals in Tow$_{T^{}}(\alpha, k+n$) for all $n \in \N$ has the property that $w \cap I_{\beta}^{(k)} = \emptyset$, in particular, $T$ is not minimal.
\end{proof}

\subsubsection{Irreducibility and the invariant Cantor set} We return to the counterexample from Section \ref{sec:theconstruction}: the IET in Figure \ref{fig:3} is infinitely renormalizable since by construction it has no connections and its intervals do not accumulate at $0$ (or $l$) both at the top or at the bottom. It further has a quasiminimal $\Omega$ which is a Cantor set, hence it is not minimal. We claim that it is also not combinatorially minimal.

\begin{ex}
    The IET from Figure \ref{fig:3} is not combinatorially minimal.
\end{ex}

Indeed, by construction of the IET it holds for the blue intervals that $I_{b_i}^t \cap \Omega = \emptyset$ for all $i \in \N$. Note furthermore that the left endpoint of $I_{a_1}^{t,n}$ is equal to zero for all $n \in \N$ and since $\Omega$ accumulates at zero it holds that Tow$_T(a_1,n) \cap \Omega \neq \emptyset$ for all $n \in \N$, in particular Tow$_T(a_1,n) \cap I_{b_i}^t = \emptyset$ for all $n \in \N$. Hence $M^{(0,n)}_{b_i, a_1}=0$ for all $n,i \in \mathbb{N}$ which implies that the IET is not combinatorially minimal.

\section{Tail-reversing IETs}\label{sec:revealing}
Having defined Rauzy-Veech induction for the class of renormalizable IETs, we now restrict ourselves to the case of \textit{tail-reversing} IETs. These are infinite-type IETs whose intervals can be partitioned into sets of single intervals and sets of infinitely many intervals (the \textit{tail intervals}), where the order of intervals on the tails is reversed. The key advantage of an IET being tail-reversing is that we can define a \textit{finite} permutation which describes the order of the grouped intervals, and record how this finite permutation changes when performing Rauzy-Veech induction using an additional technique which we call \textit{revealing}. This then allows us to define infinite-type \textit{Rauzy diagrams} which we use to describe this change in permutations.

\subsection{Tail-reversing IETs and their permutations} We define tail-reversing IETs as well as their permutations and introduce the notion of \textit{properness} and \textit{irreducibility} of their permutations. Before we do so, we quickly recall the definition of a permutation in the finite-type case.

\subsubsection{Permutations in the finite-type case} The classical combinatorial datum used to define Rauzy-Veech induction and Rauzy diagrams for finite-type IETs (see \cite{Yoccoz_Course}) are \textit{finite-type permutations}. Given a finite-type IET $T$ on alphabet $\mathcal A$, where $|\mathcal A|=d$, its finite-type permutation $\pi = (\pi_t, \pi_b)$ or its \textit{finite-type combinatorial datum} consists of two bijective maps
\[
\pi_t,\pi_b:\mathcal A \to \{1,\dots,d\},
\]
called the \textit{top} and \textit{bottom} finite-type permutation, so that the order of the top and bottom intervals of $T$ from left to right is given by
\[
I_{\pi_t^{-1}(1)}, I_{\pi_t^{-1}(2)},\ldots, I_{\pi_t^{-1}(d)}
\qquad\text{and}\qquad
I_{\pi_b^{-1}(1)}, I_{\pi_b^{-1}(2)},\ldots, I_{\pi_b^{-1}(d)}.
\]
The finite-type combinatorial data is also recorded by writing
\vspace{2mm}
\[
	\pi = \begin{pmatrix}
		\pi^{-1}_t(1) & \dots & \pi^{-1}_t(d)\\
		\pi^{-1}_b(1) & \dots & \pi^{-1}_b(d)
	\end{pmatrix}.
\]
\vspace{0.2mm}

\subsubsection{Definition of tail-reversing IETs}
Consider now a finite partition $\mathcal{P}$ of $\mathcal{A}$ with $d \geq 2$ elements into sets
$\mathcal{A} = \mathcal{A}_1 \sqcup \dots \sqcup \mathcal{A}_d$
which we call \textit{groups}.
We define the corresponding \textit{grouped top and bottom intervals}
$$
I_{\mathcal{A}_n}^t := \bigsqcup_{\alpha \in \mathcal{A}_n} I_{\alpha}^t \quad \text{and} \quad
I_{\mathcal{A}_n}^b := \bigsqcup_{\alpha \in \mathcal{A}_n} I_{\alpha}^b.
$$

\begin{defs}
\label{def:tail-reversing}
An IET on alphabet $\mathcal{A}$ is called \textit{tail-reversing} if there is such finite partition $\mathcal P$ satisfying:

\begin{enumerate}
\item for all $1 \leq n \leq d$ the cardinality $|\mathcal{A}_n|$ is either $1$ or $\infty$,
\item the closures $\overline{I_{\mathcal{A}_n}^t}$, $\overline{I_{\mathcal{A}_n}^b}$ are connected and
$T(I_{\mathcal{A}_n}^t) = I_{\mathcal{A}_n}^b $,
\item the endpoints of the infinitely many intervals contained in an infinite grouped top interval $I_{\mathcal{A}_n}^{t}$ have a unique accumulation point which is either $\inf I_{\mathcal{A}_n}^{t}$ or $\sup I_{\mathcal{A}_n}^{t}$,
\item the order of the intervals in $I_{\mathcal{A}_n}^t$ and $I_{\mathcal{A}_n}^b$ is reversed.
\end{enumerate}
\end{defs}

If a grouped interval is composed of infinitely many intervals it is called a \textit{tail interval}, otherwise it is called a \textit{single interval}.
We say it is \textit{right-sided} if its accumulation point is on the left --- the intervals accumulate from the right --- otherwise it is called \textit{left-sided}.

\subsubsection{Permutation of a tail-reversing IET}\label{sec:permutationtail-reversing} Let $T$ be a \textit{tail-reversing} IET, let $\mathcal{P}$ be a partition for $T$ with groups $\mathcal{A}_1, \dots \mathcal{A}_d$ for $d \geq 2$. We can define the permutation of $T$ in a way similar to permutations of finite-type IETs, however, we also want to keep track of the direction of the tail intervals using a direction map $\tailtype: \mathcal{P} \to \{\leftarrow, \bullet, \rightarrow\}$, where for $\mathcal{A}_n \in \mathcal{P}$ we have
\begin{align*}
   \tailtype(\mathcal{A}_n) =
    \begin{cases}
     \leftarrow \hspace{2mm} & \text{ if $I_{\mathcal{A}_n}^t$ is a right-sided tail interval, }\\
      \bullet \hspace{2mm} & \text{ if $I_{\mathcal{A}_n}^t$ is a single interval},\\
      \rightarrow \hspace{2mm} & \text{ if $I_{\mathcal{A}_n}^t$ is a left-sided tail interval. }
   \end{cases}
\end{align*}

In order to describe the order of the grouped intervals in $[0,l]$, we use two maps
\begin{align*}
  \pi_t, \pi_b: \mathcal{P} \to \{1, . . . , d\},
\end{align*}
where $d$ is the number of groups and the maps $\pi_t$ and  $\pi_b$, called the \textit{top and bottom permutations}, are bijective.
They describe the order of the grouped intervals in the top and bottom partition respectively, so that the order of the top and bottom  grouped intervals from left to right is
\begin{align*}
I_{\pi^{-1}_t(1)} , I_{\pi^{-1}_t(2)} , \dots I_{ \pi^{-1}_t(d)} \quad \text{and} \quad I_{\pi^{-1}_b(1)}, I_{\pi^{-1}_b(2)} , \dots , I_ {\pi^{-1}_b(d)}.
\end{align*}
We may also think of $\pi = (\pi_t, \pi_b)$ as a finite-type permutation on $d$ intervals.


\begin{defs} \label{def:permutation} We call the set $\Pi := (\mathcal{P}, \tailtype, \pi)$, where $\pi = (\pi_t, \pi_b)$, the \textit{combinatorial data} or the \textit{(grouped) permutation} of $T$.
	The combinatorial data is also recorded by writing
\[
	\Pi = \begin{pmatrix}
		\pi^{-1}_t(1) & \dots & \pi^{-1}_t(d)\\
		\pi^{-1}_b(1) & \dots & \pi^{-1}_b(d)
	\end{pmatrix},
\]
where we write arrows over the elements of $\mathcal{P}$ in $\Pi$ corresponding to tail intervals which indicate their direction, i.e. we write $\overleftarrow{\boldsymbol{\cdot}}$ if $\tailtype(\boldsymbol{\cdot})$ is equal to $\rightarrow$ and $\overrightarrow{\boldsymbol{\cdot}}$ if $\tailtype(\boldsymbol{\cdot})$ is equal to $\leftarrow$.
If $\Pi$ has at least one tail interval, we say it is of \textit{infinite-type}; otherwise its tail-type map $\tailtype$ is trivial and $\Pi$ amounts to the finite-type permutation $\pi$ on $d$ intervals.

\end{defs}

Note that a tail-reversing IET $T$ is uniquely determined by its permutation $\Pi$ together with its length vector $\lambda$ from Definition \ref{def:lengthvector}.
We write $T_{(\Pi, \lambda)}$ for the corresponding IET and say $(\Pi, \lambda)$ is a \textit{presentation} of $T$ if $T = T_{(\Pi, \lambda)}$.
The opposite however is not true: a tail-reversing IET can be presented by infinitely many permutations, as explained below.
\subsubsection{An example: the inverse permutation}\label{sec: inverse permutation} Consider the interval exchange transformation $T$ on alphabet $\mathcal{A} = \{\alpha_1, \alpha_2, \alpha_3, \dots \}$ which reverses the order of its intervals as depicted in Figure \ref{fig:simpleinverseorder}:
\begin{figure}[H]
    \centering
\includegraphics[width=0.6\linewidth]{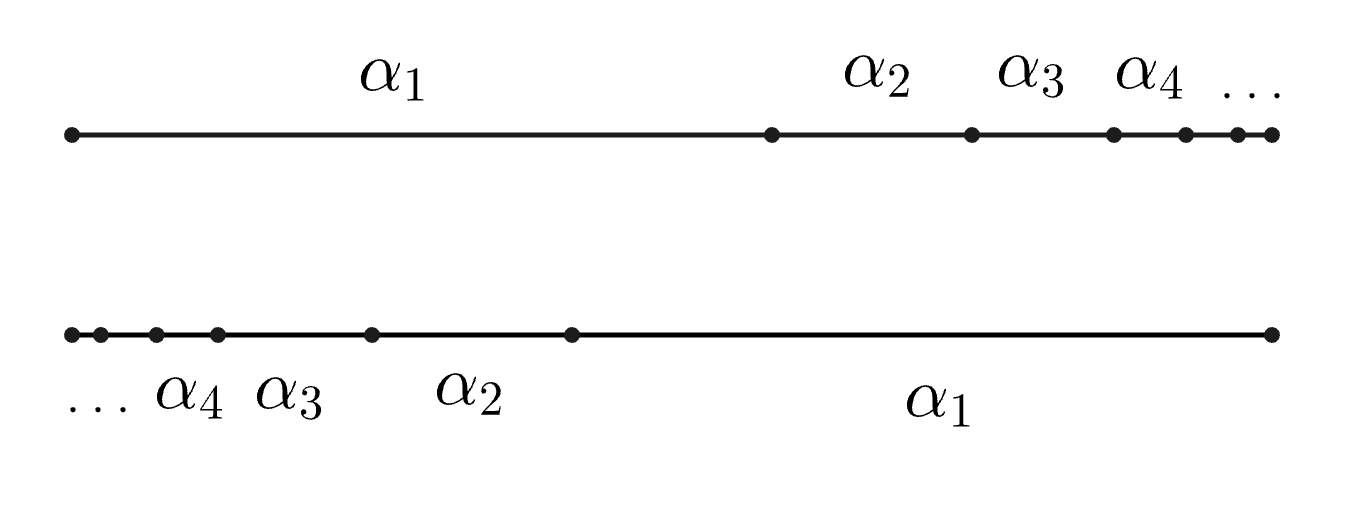}
    \caption{}\label{fig:simpleinverseorder}
\end{figure}
$T$ is a tail-reversing IET with respect to the partitions
$\mathcal{P} := \{ \mathcal{T} \}$, and $\mathcal{P}' := \{ \{\alpha_1 \}, \mathcal{T}' \}$
where $\mathcal{T} := \mathcal{A}$ and $\mathcal{T}' := \mathcal{A}\backslash\{\alpha_1\}$.
The corresponding permutations $\Pi, \Pi'$ are then of the form:
\vspace{2mm}
\begin{align*}
\Pi = \begin{pmatrix}
	\overrightarrow{\mathcal{T}}\\
	\overleftarrow{\mathcal{T}}
	\end{pmatrix}
    \quad \text{and} \quad
    \Pi' = \begin{pmatrix}
	\alpha_1 & \overrightarrow{\mathcal{T}'}\\
	\overleftarrow{\mathcal{T}'} &  \alpha_1
	\end{pmatrix}.
\end{align*}
If $\lambda$ is the length vector associated to $T$, then $(\Pi, \lambda)$ and $(\Pi', \lambda)$ are both presentations of $T$.

\subsubsection{Revealing}\label{sec:revealingprocess} As we saw from the previous example, a tail-reversing IET $T$ may be described by infinitely many permutations $\Pi$. These different permutations are related by a process which we call \textit{revealing}.

\begin{defs} Given a finite partition $\mathcal{P}$ of $\mathcal{A}$ for $T$ and a tail interval labelled by an infinite set $\mathcal{A}_n \in \mathcal{P}$, the \textit{outermost} interval of $\mathcal{A}_n$ is defined to be the rightmost (in $[0,l]$) interval of $I_{\mathcal{A}_n}^t$ if $I_{\mathcal{A}_n}^t$ is right-sided, otherwise it is the leftmost interval.
\end{defs}

\begin{defs} Let $T$ be a tail-reversing IET, let $\mathcal{P}$ be a partition on $d \geq 2$ intervals for $T$ given by
\begin{align*}
   \mathcal{A}_1 \sqcup \dots \sqcup \mathcal{A}_d.
\end{align*}
We say that the partition $\mathcal{P'}$ of $\mathcal{A}$ has been obtained from $\mathcal{P}$ by \textit{revealing a tail interval} if it is given by removing the outermost letter $\alpha$ of a tail interval $\mathcal{A}_n$ and adding it as a single element set of the partition, i.e. $\mathcal{P}'$ is the partition given by
\begin{align*}
    \mathcal{A}_1 \sqcup  \dots \sqcup \mathcal{A}_{n-1} \sqcup \mathcal{A}_n \backslash \{ \alpha \} \sqcup \{ \alpha \} \sqcup \mathcal{A}_{n+1} \sqcup \dots \sqcup \mathcal{A}_d.
\end{align*}
\end{defs}

Similarly we say that the permutation $\Pi'$ has been obtained from $\Pi$ by \textit{revealing a tail interval} if $\mathcal{P'}$ has been obtained from $\mathcal{P}$ by \textit{revealing a tail interval} and $\Pi = (\mathcal{P}, \tailtype, \pi)$ and $\Pi' = (\mathcal{P}', \tailtype', \pi')$ are both permutations for $T$. The following Lemma is immediate:

\begin{lem}
    Assume $(\Pi, \lambda)$ and $(\Pi', \lambda)$ are presentations of the same tail-reversing IET.
    Then $\Pi$ is obtained from $\Pi'$ (or vice versa) by revealing finitely many tail intervals.
\end{lem}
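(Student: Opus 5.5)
The proof compares the two partitions $\mathcal P$ and $\mathcal P'$ of the same alphabet $\mathcal A$, both of which are valid partitions for the same IET $T = T_{(\Pi,\lambda)} = T_{(\Pi',\lambda)}$. The plan is to show that each tail group of one partition differs from a tail group of the other only by finitely many outermost letters. Once that is established, we pass to a common refinement and reveal the missing letters one at a time.

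\textbf{Step 1: matching tail groups.} The accumulation points of $\mathcal S^t$ are intrinsic to $T$; they do not depend on the partition. By condition (3) of Definition~\ref{def:tail-reversing}, each tail group $\mathcal A_n\in\mathcal P$ has exactly one accumulation point $p_n$, located at an endpoint of $I^t_{\mathcal A_n}$, and it has a well-defined side. The single groups contribute no accumulation points. Let $\mathcal B\in\mathcal P'$ be the tail group accumulating at the same point from the same side. Such a group exists because, near $p_n$ on that side, all but finitely many intervals of $T$ lie in $\mathcal A_n$. Those intervals must then also lie in one infinite group of $\mathcal P'$, since $\mathcal P'$ has finitely many groups with connected closures, and each of its tails has a unique accumulation point.

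\textbf{Step 2: the symmetric difference $\mathcal A_n\triangle\mathcal B$ is finite and lies at the outer end.} Both $\overline{I^t_{\mathcal A_n}}$ and $\overline{I^t_{\mathcal B}}$ are connected and share the endpoint $p_n$. Hence each is an interval starting at $p_n$, and one contains the other. Say $\mathcal B\subseteq\mathcal A_n$. The letters in $\mathcal A_n\setminus\mathcal B$ then form an interval of labels at the far end from $p_n$. Since the intervals of $\mathcal A_n$ accumulate only at $p_n$, this set of labels is finite and consists of the outermost letters of $\mathcal A_n$. Every letter of $\mathcal A_n\setminus\mathcal B$ must be a singleton in $\mathcal P'$. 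It cannot belong to another tail of $\mathcal P'$, because that tail would have to accumulate elsewhere while having a connected closure containing intervals adjacent to $\mathcal B$. This is the step I expect to be the main obstacle: one has to rule out that a letter near the outer end of $\mathcal A_n$ is absorbed into a different tail of $\mathcal P'$. I plan to handle it by combining the connectedness of the closures with the uniqueness of the accumulation point in each tail group, possibly using the bottom side as well (condition (4) on reversed order).

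\textbf{Step 3: conclusion.} The tails of $\mathcal P$ and $\mathcal P'$ are now paired, and within each pair one tail is the other minus a finite set of outermost letters. The single groups of the two partitions agree outside these finite sets. Revealing finitely many letters, one outermost letter at a time, from each larger tail on either side yields the common refinement $\mathcal P''$. Each intermediate partition is still tail-reversing for $T$: conditions (1)--(4) are preserved when an outermost letter is removed, because the remaining tail still accumulates at $p_n$, its closure stays connected, and the reversal of order on the tail is unchanged. The lemma as stated asks for one partition to be obtained from the other by revealing, so I would argue that the larger tail always lies in the same partition, or read the statement as meaning that both partitions reach the common refinement by revealing. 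In either case, $\Pi$ and $\Pi'$ are related by finitely many revealings.
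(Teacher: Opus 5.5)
Your Step 1 and the nesting half of Step 2 are sound. Do make explicit that a group is exactly the set of labels whose top interval lies in its closure; that is what turns nested closures $[p_n,b]\subseteq[p_n,a]$ into $\mathcal B\subseteq\mathcal A_n$ with a finite, outermost difference. The genuine gap is the claim you single out as the main obstacle: that every letter of $\mathcal A_n\setminus\mathcal B$ is a singleton of $\mathcal P'$. It is false, and condition (4) does not save it.

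Take the IET with top order $x,\ \dots,\alpha_3,\alpha_2,\alpha_1,\beta_1,\beta_2,\beta_3,\dots,\ y$ and bottom order $y,\ \dots,\beta_3,\beta_2,\beta_1,\alpha_1,\alpha_2,\alpha_3,\dots,\ x$. Both $\mathcal P=\{\{x\},\{y\},\{\alpha_i\}_{i\ge1},\{\beta_i\}_{i\ge1}\}$ and $\mathcal P'=\{\{x\},\{y\},\{\alpha_i\}_{i\ge2},\{\alpha_1\}\cup\{\beta_i\}_{i\ge1}\}$ satisfy (1)--(4). Indeed, the block $\alpha_1\beta_1\beta_2\cdots$ is contiguous on top and accumulates only at its right end. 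Its image $\cdots\beta_2\beta_1\alpha_1$ is contiguous on the bottom and in reversed order.

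Pair $\mathcal A_n=\{\alpha_i\}_{i\ge1}$ with $\mathcal B=\{\alpha_i\}_{i\ge2}$. Then the letter $\alpha_1\in\mathcal A_n\setminus\mathcal B$ lies in a tail of $\mathcal P'$. The larger member of one pair lies in $\mathcal P$ and that of the other pair in $\mathcal P'$, so neither partition is a revealing of the other, since revealing only refines. Even more simply, revealing $\alpha_1$ from $\mathcal P$ and revealing $\beta_1$ from $\mathcal P$ give two presentations of the same IET, neither obtained from the other.

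So the statement read literally is false, and the first option in your Step 3 (showing the larger tails all lie in one partition) cannot be carried out. What is true is that $\Pi$ and $\Pi'$ admit a common revealing. That is how the paper actually uses the word ``equivalent''; the paper calls the lemma immediate and gives no argument.

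The repair is to drop the singleton claim altogether; your pairing and nesting already prove the correct statement. Let $\mathcal P''$ have as tails the smaller members $\mathcal A_n\cap\mathcal B$ of the paired tails, with all other letters singletons.

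Start from $\mathcal P$. For each pair in which $\mathcal P$ carries the larger tail, reveal its finitely many extra letters one at a time, outermost first. Every intermediate partition satisfies (1)--(4): the removed letter sits at the outer end of the tail on top, and by (4) also at the outer end of the block on the bottom, so both closures stay connected.

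The outcome has exactly the smaller member of each pair as tails and singletons elsewhere, i.e.\ it is $\mathcal P''$. The same holds starting from $\mathcal P'$. At no point do you need to know in which group of the other partition a revealed letter sat.
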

In this case, we say that $\Pi$ and $\Pi'$ are \textit{equivalent}.

\subsubsection{Proper and irreducible permutations} We introduce the notion of \textit{proper} and \textit{irreducible} permutations:

\begin{defs}\label{def:irreducible} We say that combinatorial data $\Pi = (\mathcal{P}, \tailtype, \pi)$ is \textit{irreducible} if for every $1 \leq n \leq d$ we have
\begin{align*}
    \pi_t^{-1}(\{1, \dots, n\}) \neq \pi_b^{-1}(\{1, \dots, n\}).
\end{align*}
\end{defs}

\begin{defs}\label{def:proper} We say that combinatorial data $\Pi = (\mathcal{P}, \tailtype, \pi)$ is \textit{proper} if at least one of the rightmost top and bottom grouped intervals, and one of the leftmost top and bottom grouped intervals, is a single interval.
\end{defs}

Note that if the rightmost (or leftmost) top and bottom grouped intervals are tail intervals with opposite directions, then, up to revealing a tail interval, $\Pi$ is equivalent to a proper permutation. The only real obstruction to a permutation being proper is if $l$ (or $0$) is an accumulation point of both top and bottom singularities, i.e. if the underlying IET is not renormalizable:

\begin{lem} Any renormalizable, tail-reversing IET $T$ without connections can be represented by a proper and irreducible permutation $\Pi$.
\end{lem}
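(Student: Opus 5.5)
Starting from a tail-reversing presentation $\Pi_0 = (\mathcal{P}_0,\tailtype_0,\pi_0)$ of $T$, we apply revealing moves to make the permutation proper, and then show that irreducibility comes for free from the absence of connections. Revealing never changes the IET; it only refines the partition. So every permutation produced this way is still a presentation of $T$, and it suffices to handle the two properties separately.

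\textbf{Properness.} Consider the rightmost top and bottom grouped intervals of $\Pi_0$. If one of them is a single interval, nothing needs to be done at the right end. Suppose instead both are tail intervals. If both accumulate at $l$, then $l$ is an accumulation point of both $\mathcal{S}^t$ and $\mathcal{S}^b$, so $s_{\max}^t = s_{\max}^b = l$, contradicting condition (i) of Definition~\ref{def:oncerenormalizable}. Hence at least one of them, say the top one, is a tail whose accumulation point is its left endpoint. Its outermost interval is then its rightmost interval, which ends at $l$. Revealing it turns this interval into a single element of the partition, and it becomes the rightmost top grouped interval. The same argument at $0$, using condition (ii), fixes the left end with at most one more revealing. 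These moves do not interfere with each other, because each grouped tail has a unique accumulation point, so one tail cannot be outermost at both ends in a conflicting way. The only care needed is to check that the new partition still satisfies conditions (1)--(4) of Definition~\ref{def:tail-reversing}. Removing the outermost letter keeps the remaining tail connected, infinite, with the same accumulation point, and still reversed; the revealed letter forms a single group whose top and bottom images are connected.

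\textbf{Irreducibility.} Suppose the resulting proper $\Pi$ is reducible: for some $n<d$ we have $\pi_t^{-1}(\{1,\dots,n\}) = \pi_b^{-1}(\{1,\dots,n\})$. The union of the first $n$ top grouped intervals is then an interval $[0,x]$, and it coincides with the union of the first $n$ bottom grouped intervals. So $x$ is simultaneously a top and a bottom singularity, or an accumulation point of each. Apply $T$ to the top group ending at $x$ (on the left) and to the top group starting at $x$ (on the right), extending by one-sided continuity.
\begin{itemize}
\item If the left and right neighbours of $x$ are single intervals on both top and bottom, then $x$ is a genuine top singularity that is also a bottom singularity. This is a connection of length zero, i.e.\ $T^{-1}$ or $T$ maps an endpoint to an endpoint, which gives a finite orbit in both directions. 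That is excluded.
\item If a tail accumulates at $x$, we use the reversal structure instead. On the top side, the tail accumulating at $x$ from the left has the reversed bottom image accumulating at the opposite end of its block. Following the orbit of the point $x$, which is the endpoint of the block $[0,x]$, then produces an orbit starting and ending at singularities or at accumulation points of singularities. This is a connection in the sense of Definition~\ref{def:connection}, possibly a connection between accumulation points.
\end{itemize}

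I expect this last case analysis to be the main obstacle. One has to make precise that when $x$ is an accumulation point rather than a singularity, the forced behaviour of $T$ near $x$ still yields a connection, or else violates renormalizability. If this cannot be done directly, my fallback is to use invariance: $T([0,x]) = [0,x]$ up to finitely many points. I would then show that the induction of Definition~\ref{def:RVinduction}, run on $T$, must at some step have $l^{(n)} = x$ or a playing endpoint equal to $x$. That contradicts infinite renormalizability, which follows from the absence of connections by the preceding lemma.
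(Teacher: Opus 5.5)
Your plan is the same as the paper's: make the presentation proper by revealing finitely many outermost intervals, and get irreducibility of every presentation from the fact that a reducible splitting yields a connection. Your properness argument via conditions (i)--(ii) of Definition~\ref{def:oncerenormalizable} is correct. The simpler reason the two ends do not interfere is that revealing never merges a single group into a tail, so an end that already carries a single interval keeps it.

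\textbf{The gap.} It is in the irreducibility step, which you leave open in the tail case. The missing observation is one line. $\mathcal{S}^t$ and $\mathcal{S}^b$ are closed, being complements in $[0,l]$ of unions of open intervals. Also $0<x<l$ because $1\le n<d$ and all lengths are positive. Hence the splitting point $x$ lies in $\mathcal{S}^t\cap\mathcal{S}^b$ whatever the adjacent groups are: single intervals, tails whose outermost interval abuts $x$ (a case neither of your bullets covers), or tails accumulating at $x$. No case distinction and no orbit-following is needed. An interior coincidence of a top and a bottom singular point is exactly what the paper counts as a connection. Compare ``if $\inf(I^t_{\mathcal{A}_t})=\inf(I^b_{\mathcal{A}_b})$ then $T$ has a connection'' in Definition~\ref{def:inductiontail-reversing}; Definition~\ref{def:connection} also allows accumulation points as endpoints of connections. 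This is all the paper's one-line proof uses.

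You should state this convention explicitly rather than try to follow the orbit of $x$ to a non-degenerate connection, because one need not exist. Take $(A\,B\,C\,D;\,B\,A\,D\,C)$ with generic lengths. Both invariant blocks are then irrational rotations, and no orbit of a regular point is finite in both directions.

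\textbf{Why the fallback fails.} If $[0,x]$ and $[x,l]$ are both invariant, the standard induction only ever plays intervals in $[x,l^{(n)}]$. It coincides with the induction of $T|_{[x,l]}$, and nothing forces it to reach $x$. In the example above, the block $(C\,D;\,D\,C)$ is fixed by the induction. The playing left endpoints are $x+\lambda^{(n)}_C\neq x+\lambda^{(n)}_D$, and $l^{(n)}$ decreases to $x$ without ever equalling it, so $T$ is infinitely renormalizable. This is the $x_0>0$ situation treated at the start of the proof of Theorem A, and it is no contradiction. Reducibility is ruled out only by the connection convention, not by renormalizability.
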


\begin{proof} Any permutation $\Pi$ representing $T$ must be irreducible, since otherwise $T$ has a connection. Also, by the definition of renormalizability, neither $0$ nor $l$ are an accumulation point both of top and bottom singularities. Hence, up to revealing a finite number of intervals, $T$ also admits a permutation $\Pi$ which is proper.
\end{proof}

We will need the following Lemma later on in order to prove that irreducibility is preserved under the \textit{grouped Rauzy-Veech induction}, which we introduce in the coming section.

\begin{lem}
	\label{lem:remove_irred}
	Let $\pi = (\pi_t,\pi_b)$ be a finite-type permutation on a set $\mathcal P$ of $d$ intervals and let $\alpha \in \mathcal P$ with $(\pi_t(\alpha), \pi_b(\alpha)) \notin \{(1, 1), (d, d)\}$. Let $\pi'$ be obtained by restricting $\pi$ to $\mathcal P \setminus \{\alpha\}$. If $\pi'$ is irreducible, then $\pi$ is irreducible.
\end{lem}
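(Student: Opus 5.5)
We argue by contraposition: assume $\pi$ is reducible and show that $\pi'$ is reducible. Reducibility of $\pi$ means there is some $1 \le n \le d-1$ with $S := \pi_t^{-1}(\{1,\dots,n\}) = \pi_b^{-1}(\{1,\dots,n\})$. The case $n=d$ is excluded, because that equality holds for every permutation and is never the witness of reducibility. The idea is to delete $\alpha$ from $S$, or leave $S$ unchanged, and check that the result is still a common initial segment of $\pi'$.

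Write $\pi' = (\pi'_t,\pi'_b)$ on $\mathcal P\setminus\{\alpha\}$ with $d-1$ letters. Then $\pi'_t(\beta) = \pi_t(\beta)$ if $\pi_t(\beta) < \pi_t(\alpha)$, and $\pi'_t(\beta) = \pi_t(\beta)-1$ otherwise; the same holds for $\pi'_b$. Since $S$ is an initial segment both on top and on bottom, $\alpha \in S$ if and only if $\pi_t(\alpha)\le n$, and also if and only if $\pi_b(\alpha)\le n$. So $\alpha$ lies in the top segment exactly when it lies in the bottom segment. This gives two cases.

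\textbf{Case $\alpha\in S$.} Then $S\setminus\{\alpha\}$ consists of the first $n-1$ letters of $\pi'$ on top and of the first $n-1$ letters on bottom. Hence $\pi_t'^{-1}(\{1,\dots,n-1\}) = \pi_b'^{-1}(\{1,\dots,n-1\})$. This is a reducibility witness for $\pi'$ provided $n-1 \ge 1$. If $n=1$, then $S=\{\alpha\}$ and $\pi_t(\alpha)=\pi_b(\alpha)=1$, which the hypothesis $(\pi_t(\alpha),\pi_b(\alpha)) \ne (1,1)$ excludes.

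\textbf{Case $\alpha\notin S$.} Then $S$ is still the set of the first $n$ letters of $\pi'$ on top and on bottom, so $S$ witnesses reducibility of $\pi'$ provided $n \le d-2$, i.e.\ $n$ is strictly less than the size $d-1$ of $\pi'$. If $n = d-1$, then $\alpha$ is the unique letter outside $S$, so $\pi_t(\alpha)=\pi_b(\alpha)=d$, which the hypothesis $(\pi_t(\alpha),\pi_b(\alpha)) \ne (d,d)$ excludes.

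In both cases $\pi'$ is reducible, which proves the lemma. The only delicate point is the boundary indices $n=1$ and $n=d-1$, where the candidate segment would be trivial, and these are exactly the situations ruled out by the hypothesis $(\pi_t(\alpha),\pi_b(\alpha))\notin\{(1,1),(d,d)\}$.
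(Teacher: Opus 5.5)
Your proof is correct and follows the paper's argument essentially step for step. Both argue by contraposition with a common initial segment of size $1 \le n < d$, split on whether $\alpha$ lies in it, and use the hypothesis $(\pi_t(\alpha),\pi_b(\alpha)) \notin \{(1,1),(d,d)\}$ exactly to exclude the degenerate cases $n=1$ and $n=d-1$. Your explicit re-indexing of $\pi'_t,\pi'_b$ and your remark that $n=d$ must be excluded from the definition of irreducibility are useful clarifications, but they do not change the route.
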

\begin{proof}
	Assume by contraposition that $\pi$ is reducible, i.e. there exists $1 \le n < d$ such that $\pi_t^{-1}(\{1, \dots, n\}) = \pi_b^{-1}(\{1, \dots, n\}) =: \mathcal{I}$.
	We show that $\pi'$ is reducible.

	If $\alpha \notin \mathcal I$, then $\pi_t(\alpha), \pi_b(\alpha) > n$, so removing $\alpha$ leaves the first $n$ top and bottom positions unchanged and $\mathcal I$ still witnesses reducibility of $\pi'$, provided $n \le d-2$.
	If $n = d-1$, then $\mathcal I = \mathcal P \setminus \{\alpha\}$, forcing $(\pi_t(\alpha),\pi_b(\alpha)) = (d,d)$, which is excluded; hence $n \le d-2$ and $\pi'$ is reducible.

	If $\alpha \in \mathcal I$, then $\mathcal I \setminus \{\alpha\}$ consists of the first $n-1$ top and bottom positions of $\pi'$ and witnesses its reducibility, provided $n - 1 \ge 1$.
	If $n = 1$, then $\mathcal I = \{\alpha\}$, forcing $(\pi_t(\alpha),\pi_b(\alpha)) = (1,1)$, which is excluded; hence $n \ge 2$ and $\pi'$ is reducible.
\end{proof}

\subsection{Grouped Rauzy-Veech induction} The \textit{grouped Rauzy-Veech induction} for tail-reversing IET $T$ is defined in a similar way as the standard induction from Section \ref{sec:standardRVinduction}. However, we now associate to a sequence of induction steps also a sequence of permutations $\Pi_n$ on a growing number of partition elements. This number is growing, since whenever a \textit{tail interval wins}, we reveal an interval and thus increase the number of partition elements.

\subsubsection{The elementary step of the grouped Rauzy-Veech induction} We define the elementary step as follows:

\begin{defs} \label{def:inductiontail-reversing} Let $T_{(\Pi, \lambda)}$ be a renormalizable, tail-reversing IET with proper and irreducible permutation given by $\Pi = (\mathcal{P}, \tailtype, \pi)$. Let $\mathcal{A}_t, \mathcal{A}_b$ be the rightmost top and bottom grouped intervals of $\Pi$, also called the \textit{playing intervals}.
The tail-reversing IET $T_1 = (\Pi_1, \lambda_1)$ obtained after one induction step, where $\Pi_1 = (\mathcal{P}_1, \tau_1, \pi_1)$, and $\pi_1 = (\pi_{t,1}, \pi_{b,1})$, is defined as follows:

\begin{enumerate}[label=\textbf{\arabic*}.]
    \item \textbf{Top wins}. If $\inf(I_{\mathcal{A}_t}^t) < $ $\inf(I_{\mathcal{A}_b}^b)$, we say that \textit{top wins}. Then
    \begin{enumerate}[label=\alph*)]
	\item \textbf{(tail interval wins)} if $I_{\mathcal{A}_t}^t$ is a tail interval, we set $T_1 = T$ and set $\Pi_1$ to be the permutation obtained from $\Pi$ by revealing the outermost letter of $\mathcal{A}_t$ as well as setting $\lambda_1 = \lambda$.
	\begin{figure}[H]
    \centering
\includegraphics[width=0.75\linewidth]{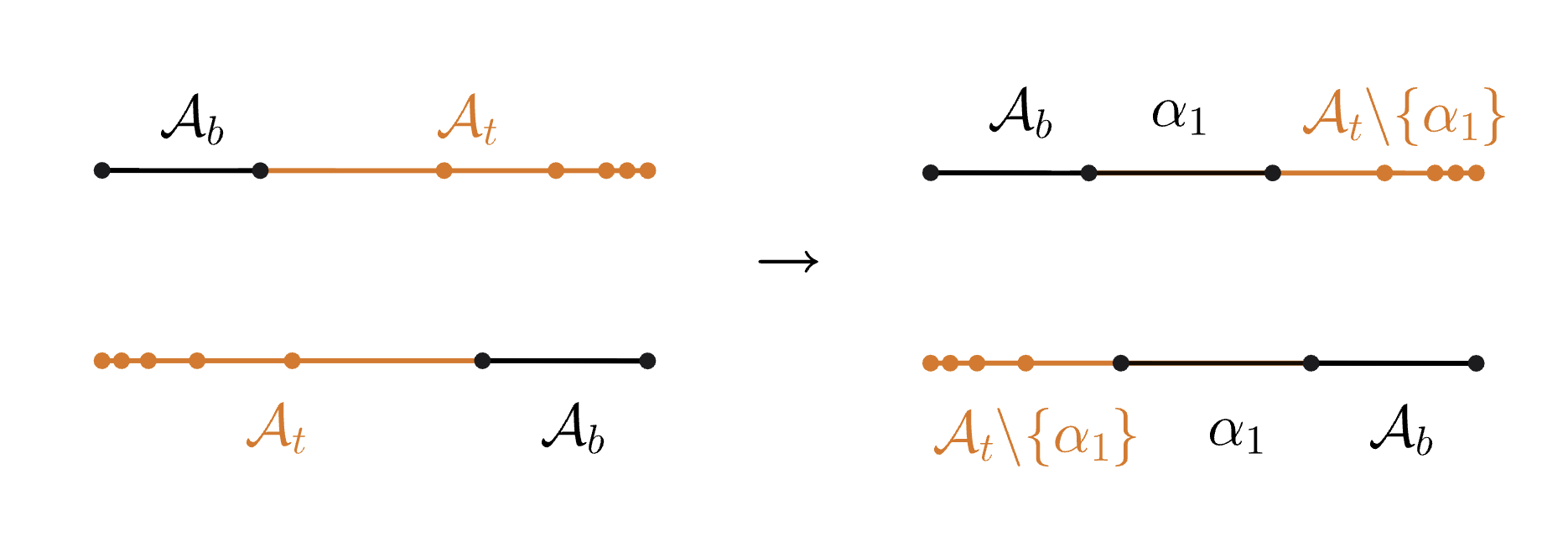}
    \caption{Case 1a) where $I_{\mathcal{A}_t}^t$ is a tail interval and wins on top. We reveal its outermost letter $\alpha_1$.}\label{fig:groupedinduction-tailwins}
\end{figure}
	\item \textbf{(single interval wins)} if $I_{\mathcal{A}_t}^t$ is not a tail interval, we define $T_1$ to be the first return map of $T$ to $[0, $ $\inf(I_{\mathcal{A}_b}^b)]$ (which is explicitly given in Definition \ref{def:RVinduction}(i)  where $w = \mathcal{A}_t$ and $\mathcal{L} = \mathcal{A}_b$).
		We change the bottom permutation $\pi_{b,1}$ as described in Definition \ref{def:RVinduction}(i) and keep all other combinatorial data the same, i.e. $\pi_{t,1} = \pi_t$, $\mathcal{P}_1 = \mathcal{P}$ and $\tau_1 = \tau$.
		The new length vector $\lambda_1$ is described in Definition \ref{def:RVinduction}(iii).

	 \begin{figure}[H]
    \centering
\includegraphics[width=0.75\linewidth]{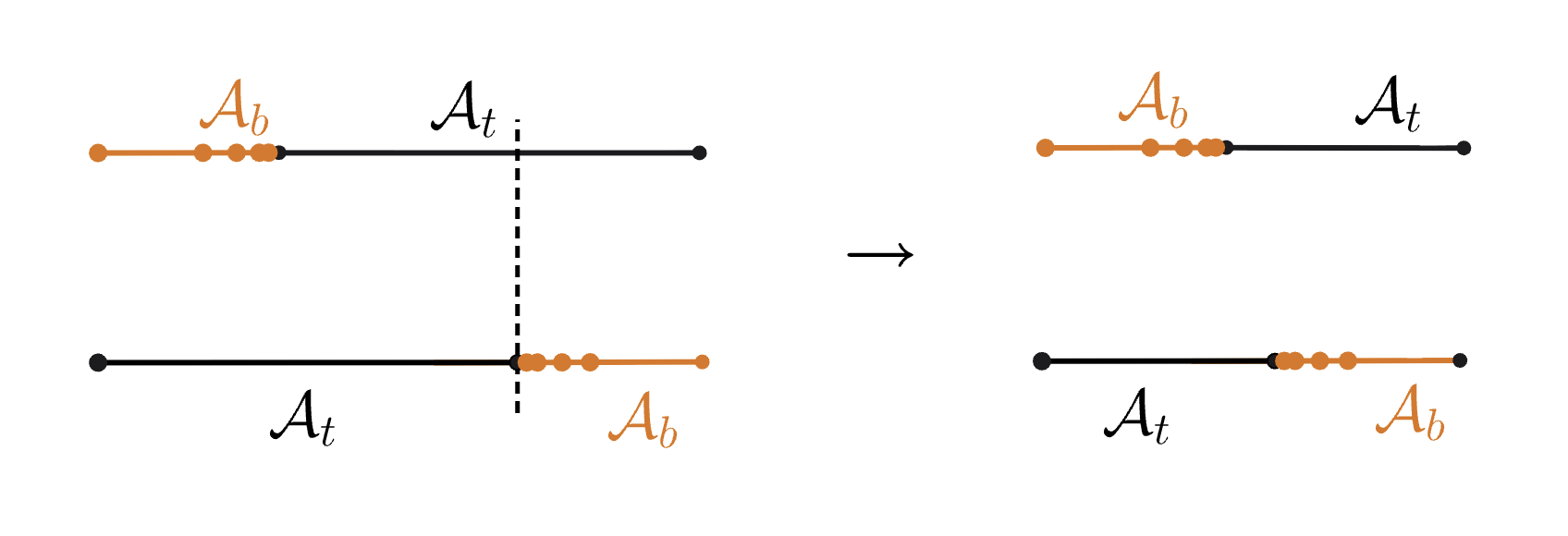}
    \caption{Case 1b) where $I_{\mathcal{A}_t}^t$ is not a tail interval and wins on top. We restrict $T$ to $[0, $ $\inf(I_{\mathcal{A}_b}^b)]$.}\label{fig:groupedinduction-singlewins}
\end{figure}
    \end{enumerate}
    \item \textbf{Bottom wins}. If $\inf(I_{\mathcal{A}_t}^t) > $ $\inf(I_{\mathcal{A}_b}^b)$, we say that \textit{bottom wins}. Then
    \begin{itemize}
    \item \textbf{(tail interval wins)} if $I_{\mathcal{A}_b}^b$ is a tail interval, we set $T_1 = T$ and set $\Pi_1$ to be the permutation obtained from $\Pi$ by revealing the outermost letter of $\mathcal{A}_b$, as well as setting $\lambda_1 = \lambda$.
    \item \textbf{(single interval wins)} if $I_{\mathcal{A}_b}^b$ is not a tail interval, we denote by $T_1$ the first return map of $T$ to $[0, $ $\inf(I_{\mathcal{A}_t}^t$)] (which is explicitly given in Definition \ref{def:RVinduction}(ii)  where $w = \mathcal{A}_b$ and $\mathcal{L} = \mathcal{A}_t$). We change the top permutation $\pi_{t,1}$ (as described in Definition \ref{def:RVinduction}(ii)) and keep all other combinatorial data the same, i.e. $\pi_{b,1} = \pi_b$, $\mathcal{P}_1 = \mathcal{P}$ and $\tau_1 = \tau$. The new length vector $\lambda_1$ is described in Definition \ref{def:RVinduction}(iii).
    \end{itemize}
\end{enumerate}

If $\inf (I_{\mathcal{A}_t}^t) = \inf (I_{\mathcal{A}_b}^b)$ then $T$ has a connection and the induction is not well defined. We also denote
\begin{itemize}
	\item the winning labels by $\mathcal{W} = \mathcal{A}_t$ (resp. $\mathcal{A}_b)$ if top (resp. bottom) wins,
	\item the losing labels by $\mathcal{L} = \mathcal{A}_t$ (resp. $\mathcal{A}_b)$ if bottom (resp. top) wins.
\end{itemize}
Contrary to the induction as defined in Section \ref{sec:standardRVinduction}, the set of winners $\mathcal{W}$ may now consist of infinitely many intervals.
\end{defs}

As in Definition \ref{def:N times renormalizable}, we call $T$ \textit{renormalizable up to time $N$ for the grouped Rauzy-Veech induction} if we can repeat this process $N$ times, and \textit{infinitely renormalizable for the grouped Rauzy-Veech induction} if we can repeat this process infinitely often. We denote by $(T_n)_{n \in \N}$ the sequence of IETs obtained in this way by the induction, where $T_n = T_{(\Pi_n, \lambda_n)}$.

\begin{rem}
	\label{rem:finiteRV} Consider combinatorial data $\Pi_1 = (\mathcal P_1, \tau_1, \pi_1)$ obtained from $\Pi = (\mathcal P, \tau, \pi)$ after one step of the grouped Rauzy-Veech induction. If the winning letter is a single interval, then $\mathcal P_1 = \mathcal P$, $\tau_1 = \tau$ and $\pi_1$ is the permutation induced from $\pi$ by the classical top (or bottom) Rauzy-Veech induction.
\end{rem}

\subsubsection{Induction and irreducibility} Note that the new permutation $\Pi_1$ is uniquely determined by $\Pi$ and whether the top or the bottom interval wins. We also claim that since $\Pi$ is proper and irreducible, also $\Pi_1$ is proper and irreducible. We prove this later in Lemma \cref{lem:invarianceofirreducibility} (together with the analogous invariance under revealing and under removing tail intervals); in particular the two playing intervals are never both tail intervals.

\subsubsection{Standard and grouped Rauzy-Veech induction} The standard Rauzy-Veech induction from Definition \ref{def:RVinduction} is related to the grouped Rauzy-Veech induction from Definition \ref{def:inductiontail-reversing} as follows:
\begin{lem}\label{lem:relationstandardgrouped} Let $T$ be an infinitely renormalizable, tail-reversing IET. Let $(T^{(k)})_{k \in \N}$  be the sequence obtained from the standard induction and let $(T_n)_{n \in \N}$ be the sequence of IETs obtained from the grouped Rauzy-Veech induction. Then there exists an infinite subsequence $(n_k)_{k \in \N}$ such that $T_{n_k} = T^{(k)}$.
\end{lem}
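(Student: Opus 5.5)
The plan is to compare the two inductions step by step. I would show that each grouped step either leaves the underlying IET unchanged, or coincides with exactly one standard step. The subsequence $(n_k)$ is then the set of times at which the IET actually changes; setting $n_0=0$, we get $T_0=T=T^{(0)}$.

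\textbf{Inductive claim.} Suppose $T_{n}=T^{(k)}$ as maps on $[0,l^{(k)}]$, and $\Pi_n$ is a proper, irreducible presentation of it. Properness and irreducibility are preserved by the grouped induction (Lemma \ref{lem:invarianceofirreducibility}). I would then examine the playing grouped intervals $\mathcal{A}_t,\mathcal{A}_b$ of $\Pi_n$. Since $\Pi_n$ is proper, at most one of them is a tail interval. Two cases arise.

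\textbf{Case 1: the winner is a tail interval.} Then $T_{n+1}=T_n$ and only the presentation changes, by revealing an outermost letter. Revealing strictly increases the number of groups but changes nothing else. I would then show that only finitely many consecutive revealing steps can occur before a single interval wins. Suppose, say, the top tail $\mathcal{A}_t$ keeps winning against a single bottom interval $\mathcal{A}_b$. Because the tail is reversed, the revealed outermost letters $\alpha_1,\alpha_2,\dots$ become the new rightmost top interval in turn. Their lengths are summable, so their left endpoints move toward the accumulation point of the tail. That accumulation point lies to the left of $\inf I^b_{\mathcal{A}_b}$; otherwise the tail's accumulation point and $l$ would produce the degenerate configuration excluded by renormalizability. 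Hence after finitely many reveals the rightmost top interval is a single interval $\alpha_j$ with $\inf I^t_{\alpha_j}>\inf I^b_{\mathcal{A}_b}$, so bottom wins, or top wins as a single interval. The bottom-tail case is symmetric. Finite termination of the reveals is the step I expect to be the main obstacle. It has to be argued carefully using the properness and accumulation-point hypotheses, including the possibility that the standard induction is in one of its ``a tail plays'' cases.

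\textbf{Case 2: the winner is a single interval $w$.} Then $T_{n+1}$ is the first return map of $T_n$ to $[0,\inf I^{t/b}_{\mathcal{L}}]$, where $\mathcal{L}$ is the losing grouped interval. I would check that this is exactly the standard step of Definition \ref{def:RVinduction} applied to $T^{(k)}$, with the same winner $w$ and loser set $\mathcal{L}$, and that the new endpoint equals $l^{(k+1)}$. There are three sub-cases.
\begin{itemize}
\item If $\mathcal{L}$ is a single interval, this is the ``two intervals play'' case.
\item If $\mathcal{L}$ is a right-sided tail, its infimum is the accumulation point $s_{\max}$, which gives the right-sided ``a tail plays'' case.
\item If $\mathcal{L}$ is a left-sided tail, $s_{\max}=l$, and its infimum is the minimal singularity inside $I_w$, which gives the left-sided case.
\end{itemize}
In each sub-case the explicit formulas of Definition \ref{def:RVinduction}(i),(ii) give $T_{n+1}=T^{(k+1)}$.

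\textbf{Conclusion.} Infinite renormalizability of $T$ ensures that no step hits a connection. Combining the two cases, an induction on $k$ produces an increasing sequence $n_k$ with $T_{n_k}=T^{(k)}$.
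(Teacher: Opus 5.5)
\textbf{Gap: Case 2 is false as stated.} You claim that a grouped step won by a single interval $w$ coincides with exactly one standard step, and in particular that a single loser $\mathcal L$ means the ``two intervals play'' case. Definition \ref{def:RVinduction} says otherwise. As soon as a right-sided accumulation point $s^b_{\max}$ lies in $I^t_w$, the standard step is the right-sided ``a tail plays'' case, even if the rightmost bottom interval is single. In that case $w$ absorbs in one step \emph{all} bottom intervals in $[s^b_{\max},l]$, which can be several groups.

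Here is a concrete example. Take top order $\beta,\mathcal T,\alpha$ and bottom order $\alpha,\mathcal T,\beta$, which is proper and irreducible. Let the bottom tail accumulate at its left endpoint $\lambda_\alpha$. Choose lengths with $\lambda_\alpha>\lambda_\beta+\lambda_{\mathcal T}$, where $\lambda_{\mathcal T}:=\sum_{\gamma\in\mathcal T}\lambda_\gamma$, and no connections.
\begin{itemize}
\item Then $s^b_{\max}=\lambda_\alpha\in I^t_\alpha$, so the standard step gives $l^{(1)}=\lambda_\alpha$ with losers $\{\beta\}\cup\mathcal T$.
\item The grouped induction first lets $\alpha$ beat $\beta$. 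So $T_1$ is the first return map to $[0,\lambda_\alpha+\lambda_{\mathcal T}]$. This is none of the $T^{(k)}$, since those live on $[0,l]$ or inside $[0,\lambda_\alpha]$.
\item Only at the next step, when $\alpha$ beats $\mathcal T$, does one reach $T_2=T^{(1)}$.
\end{itemize}
So your inductive claim fails already at $n=k=0$. Your third sub-case has the same defect. The point $\min(\mathcal S^b\cap I^t_w)$ is strictly smaller than the infimum of the losing left-sided tail whenever a single bottom interval lies in $I^t_w$ to the left of that tail.

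\textbf{What to prove instead.} The lemma survives because it only asks for a subsequence, but the correct inductive statement is a \emph{block} correspondence. This is what the paper's remark after the lemma records: a sequence of grouped steps corresponds to one standard step.

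Starting from $T_n=T^{(k)}$, the grouped induction performs a finite block of steps:
\begin{itemize}
\item some revealing steps, and
\item steps won by the standard winner $w$ against, one at a time, the groups lying in $[l^{(k+1)},l^{(k)}]$. Each such group has infimum at least $l^{(k+1)}$, which exceeds the infimum of the winning interval.
\end{itemize}
The last map of the block is $T^{(k+1)}$, because first return maps to nested intervals $[0,b]\subset[0,a]$ compose. The block is finite because there are only finitely many groups and only finitely many revealings are needed. The intermediate maps are precisely the ones skipped by $(n_k)$.

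\textbf{Smaller issue in Case 1.} Your finiteness argument mixes up the two orientations.
\begin{itemize}
\item If the winning tail accumulates at its left end, a single reveal already makes the revealed letter the rightmost interval, and the next step is single against single.
\item If it accumulates at $l$, the revealed letters are its \emph{leftmost} intervals. The remaining tail stays rightmost, and its infimum increases to $l$, so it eventually exceeds the opponent's infimum.
\end{itemize}
Your conclusion there is correct, but this is not where the real difficulty lies.
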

At all times in $\N \backslash (n_k)_{k \in \N}$ either a tail interval wins and we do not change the map $T$, or we do a sequence of steps of the grouped induction which correspond to one step of the standard induction.

\subsection{Rauzy diagrams} The combinatorial changes induced by the grouped Rauzy-Veech induction for a tail-reversing IET can be described in a \textit{Rauzy diagram}, which is a directed graph whose vertices are irreducible permutations and whose edges correspond to one induction step. Our definition of a Rauzy diagram extends the classical definition given for finite-type IETs in  \cite{Rauzy}, where now the number of intervals at each vertex may increase. Similarly to the finite-type case, we can define the \textit{combinatorial rotation number} of an IET which is a path in its Rauzy diagram and introduce the notion of an \textit{infinite-complete} path.

\subsubsection{Definition of the diagram}  Let $\mathcal{A}$ be an alphabet. For a proper and irreducible permutation $\Pi$ we have defined in the last section a new proper and irreducible permutation $\Pi'$ depending only on $\Pi$ and whether top or bottom wins, we write $\Pi' = R_t(\Pi)$ or $\Pi' =  R_b(\Pi)$ accordingly.

\begin{defs} For a given alphabet $\mathcal{A}$, we define a \textit{Rauzy diagram} to be a connected component of the following graph:
\begin{itemize}
    \item the vertices of the graph consist of proper and irreducible permutations on $\mathcal{A}$,
    \item from each vertex $\Pi$ there are exactly two outgoing edges to $R_t(\Pi)$ and $R_b(\Pi)$, which we call top or bottom edges respectively and label by $t$ or $b$.
    \item to each edge $e$ we further associate the winning and losing labels $\mathcal{W}_e, \mathcal{L}_e$ as in the Definition \ref{def:inductiontail-reversing} of the grouped Rauzy-Veech induction.
\end{itemize}
\end{defs}
For $\Pi$ proper and irreducible, we denote by $\mathcal{R}_{\Pi}$ the Rauzy diagram which contains it.
Inside a Rauzy diagram $\mathcal{R}_{\Pi}$, we can further consider infinite paths $\boldsymbol{\gamma} =  e_1e_2e_3\dots$.

\begin{defs}
	\label{def:reduced} An infinite path $\boldsymbol{\gamma}$ is called \textit{reduced} if whenever $m<n$ are such that along the edges $$e_m \dots e_{n-1}$$ the same tail interval wins and along edge $e_{n}$ this tail interval loses, then along edge $e_{n+1}$ the interval which was last revealed must win.
\end{defs}

Note that any path on $\mathcal{R}_{\Pi}$ has a corresponding reduced path, whose permutations at the vertices are equivalent up to revealing.

\subsubsection{The Rauzy diagram of the inverse permutation} A particularly nice example of a Rauzy diagram is the inverse permutation from Section \ref{sec: inverse permutation}:
\begin{figure}[H]
\includegraphics[width=1.04\linewidth]{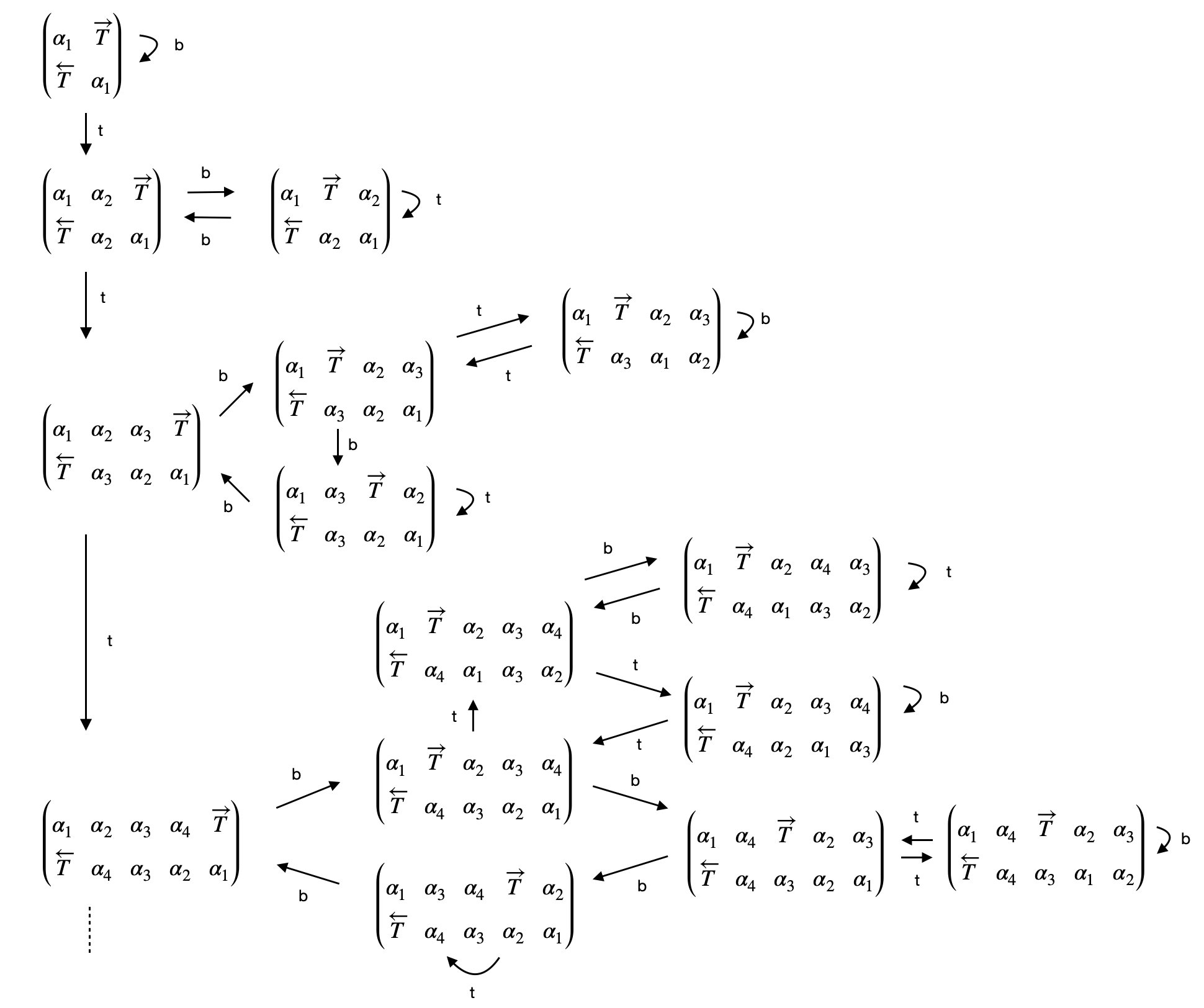}
    \caption{}\label{fig:rauzygraph}
\end{figure}
Its building blocks are finite \textit{hyperelliptic Rauzy diagrams}, meaning Rauzy diagrams with finitely many vertices which contain the finite-type permutation

\begin{align*}
    \pi^d = \begin{pmatrix}
	\alpha_1 & \dots & \alpha_d \\
	\alpha_d & \dots & \alpha_1	\end{pmatrix}
\end{align*}
\vspace{0.5mm}

for some $d \in \N$ and which have been introduced in \cite{Rauzy}. For a given $\pi^d$ we consider only \textit{half} of the hyperelliptic Rauzy diagram on $d$ intervals, i.e. we remove the ingoing and outgoing top edge of $\Pi^d$ and only consider the remaining connected component which contains $\Pi^d$. The Rauzy diagram of the inverse permutation then is obtained as an infinite collection of halves of hyperelliptic Rauzy diagram $\Pi^d$ for all $d \geq 2$, where we replace $\alpha_d$ with a tail interval in all permutations and connect $\Pi^d$ to $\Pi^{d+1}$ with a top edge.

\subsubsection{Rotation numbers}\label{sec:rotationnumber} We give the definition of the \textit{combinatorial rotation number}\footnote{The terminology "rotation number" (see \cite{Yoccoz_Course}) comes from the fact that for finite-type IETs on 2 intervals without connections, this path uniquely determines the classical rotation number.}
of an IET, which is an infinite path in its Rauzy diagram.
For $T_{(\Pi, \lambda)}$ infinitely renormalizable and tail-reversing with proper and irreducible permutation $\Pi$, let $(\Pi_n)_{n \in \N}$ be the sequence of permutations obtained from grouped Rauzy-Veech induction as in Definition \ref{def:inductiontail-reversing}.

\begin{defs}\label{def:rotationnumber} The \textit{combinatorial rotation number} of $T$ is the infinite path
	$$\boldsymbol{\gamma}(T) = e_1e_2e_3\dots$$
in $\mathcal{R}_{\Pi}$ starting at $\Pi$, where $e_1$ connects $\Pi$ to $\Pi_1$ and $e_i$ connects $\Pi_{i-1}$ to $\Pi_{i}$.  For $n, k \in \N$, we further define the finite paths
\begin{align*}
\gamma_n(T) &= e_1 \dots e_n, \\
\gamma_{{k, k+n}}(T) &= e_{k+1} \dots e_{k+n}.
\end{align*}
\end{defs}

\begin{rem}\label{rem:orbitstozero} A first application using the Rauzy graph of the inverse order permutation is to deduce that for any IET $T_{(\Pi, \lambda)}$ which is infinitely renormalizable and where $\Pi$ is the inverse order permutation, all orbits must accumulate at zero. We first claim that along $\boldsymbol{\gamma}(T)$ all letters must lose infinitely often due to the structure of the graph in Figure \ref{fig:rauzygraph}. Indeed, along $\boldsymbol{\gamma}(T)$ infinitely often the tail must win (otherwise the length of the tail intervals of $T$ must be equal to zero) and lose (otherwise the length of the $\alpha_1$ interval must be equal to zero). But by considering the central loop at each level of the Rauzy graph it then follows that all letters must lose infinitely often. In particular, by \cref{lem:heights&lengths}, the induction accumulates at zero and therefore by \cref{prop:renormalizationlimitandorbitclosures} also all orbits must accumulate at zero.
\end{rem}

\subsection{Rauzy-Veech matrices for the grouped induction} Recall the definition of the Rauzy-Veech matrices for the standard induction as stated in Section \ref{sec:standardRVmatrix}. We extend this definition to the case of the grouped Rauzy-Veech induction.

\subsubsection{Extending the definition} We define a sequence of Rauzy-Veech matrices associated to a reduced path in the Rauzy diagram as follows:

\begin{defs}\label{def:groupedmatrix} If $e$ is an edge in the Rauzy diagram with winning and losing labels $w_e$ and $ \mathcal{L}_e$, we define
\begin{equation*}
    M^e = \begin{cases}
    &Id \hspace{2mm}\text{ if $w_e$ is a tail interval,} \\
    &Id + E_{w_e, \, \mathcal{L}_e} \hspace{2mm} \text{otherwise.}
    \end{cases}
\end{equation*}
\end{defs}
We then associate to a finite reduced path  $\gamma_{n} = e_1 e_2 \dots e_n$ a product of matrices
\begin{align*}
    M^{\gamma_{n}} = M^{e_1} M^{e_2} \dots M^{e_n}.
\end{align*}
These matrices relate to the matrices from Section \ref{sec:standardRVmatrix} as follows:
\begin{lem}\label{lem:relationshipRVmatrices}For $T$ a tail-reversing, infinitely renormalizable IET with path $\gamma(T)$, consider the subsequence $(n_k)_{k \in \N}$ as in Lemma \ref{lem:relationstandardgrouped}. Then
\begin{align*}
    M^{\gamma_{n_k}} = M^{(0,k)}.
\end{align*}
\end{lem}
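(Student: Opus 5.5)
We argue by induction on $k$, comparing the grouped matrices $M^{e}$ along the path $\gamma(T)$ with the standard elementary matrices $M^{(j)}$ of Definition~\ref{def:RVmatrix}. The product $M^{\gamma_{n_k}}$ splits into consecutive blocks $M^{e_{n_{j-1}+1}}\cdots M^{e_{n_j}}$ for $j=1,\dots,k$, with $n_0=0$. Both sides are then products of $k$ factors, and since matrix products are associative, it suffices to prove the one-step identity
\[
M^{e_{n_{j-1}+1}} \cdots M^{e_{n_j}} = M^{(j)} .
\]
Here $M^{(j)}$ denotes the elementary matrix of the $j$-th standard step; we index it so that $M^{(0,k)}$ is the product of the $k$ elementary matrices from time $0$ to time $k$. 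The products are well defined because every column has finitely many nonzero entries.

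\textbf{Case analysis.} By Lemma~\ref{lem:relationstandardgrouped} and the remark after it, the grouped steps between $n_{j-1}$ and $n_j$ are of two kinds: steps where a tail interval wins, and steps that together realise one standard step.
\begin{itemize}
\item \emph{Tail wins.} Then $M^{e}=Id$ by Definition~\ref{def:groupedmatrix}. These factors contribute nothing.
\item \emph{A single interval $w$ wins against a single interval.} Then the standard step has $w^{(j)}=w$ and $\mathcal L^{(j)}$ equal to that one interval. The grouped and standard matrices coincide.
\item \emph{A single interval $w$ wins against a tail group $\mathcal A_t$.} Then $\mathcal L_e=\mathcal A_t$, all of its letters lose at once, and $M^e = Id+E_{w,\mathcal A_t}$. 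Compare with Definition~\ref{def:RVinduction}, the case ``a tail plays (left-sided accumulation point)'' together with properness. The losing set there is exactly the set of labels whose intervals lie in $[l^{(1)},l]$, namely the whole tail group. So again $M^e=M^{(j)}$.
\end{itemize}

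\textbf{Main obstacle.} The delicate point is the synchronisation between the tail-winning steps and the standard steps. There are two situations to check.

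First, a standard step may see a tail winning against a single interval, with $l^{(1)}$ chosen as $\min(\mathcal S^b\cap I^t_{\alpha_t})$ or as an accumulation point. The grouped induction handles this by revealing repeatedly: Id-steps occur until the revealed outermost letter becomes a single playing interval. Only the last grouped step of the block then carries a non-identity matrix, namely $Id+E_{w,\mathcal L}$ with $w$ the revealed letter. I would check that this $w$ is exactly the standard winner $w^{(j)}$, i.e. the unique label whose interval contains $l^{(1)}$, and that the loser sets agree.

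Second, I must ensure that a block never contains two non-identity factors. This follows if $n_j$ is defined as the time at which the grouped map first equals $T^{(j)}$. Between $n_{j-1}$ and $n_j$ the map changes exactly once, since only non-tail-winning steps change $T_n$, and each such change is a first return to a strictly smaller interval.

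With these checks, each block product equals $M^{(j)}$, and multiplying the blocks for $j=1,\dots,k$ gives $M^{\gamma_{n_k}}=M^{(0,k)}$.
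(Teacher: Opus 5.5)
You are right to split $M^{\gamma_{n_k}}$ into blocks and reduce to a per-block identity. The argument for that identity, however, rests on the claim that each block $e_{n_{j-1}+1}\cdots e_{n_j}$ contains at most one non-identity factor, and this claim is false.

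In the tail cases of Definition~\ref{def:RVinduction} the standard step is accelerated: every losing-side interval in $[l^{(j)},l^{(j-1)}]$ loses at once. The grouped induction instead removes these grouped intervals one at a time, and each removal is a genuine first return to a smaller interval.

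\emph{Counterexample.} Take groups $\{\alpha\},\{\beta\},\mathcal T$ with top order $\mathcal T,\beta,\alpha$ (top tail accumulating at $0$) and bottom order $\alpha,\beta,\mathcal T$ (bottom tail accumulating at $l$). This permutation is proper and irreducible. Choose lengths without connections and with $\lambda_\alpha>\lambda_\beta+\sum_{\tau\in\mathcal T}\lambda_\tau$.
\begin{itemize}
\item The standard step has $l^{(1)}=\min(\mathcal S^b\cap I^t_\alpha)=\inf I^b_\beta$, winner $\alpha$ and losers $\{\beta\}\cup\mathcal T$.
\item The grouped induction needs two map-changing steps: $\alpha$ beats $\mathcal T$, then $\alpha$ beats $\beta$.
\end{itemize}
So $n_1=2$, and the first block has two non-identity factors.

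The same happens when a right-sided $s^b_{\max}$ lies in $I^t_{\alpha_t}$ and single intervals sit to the right of that tail. There a single-against-single grouped step is only part of a standard step, contrary to your second case. The remark after Lemma~\ref{lem:relationstandardgrouped} explicitly allows such blocks ("a sequence of steps of the grouped induction which correspond to one step of the standard induction"). Your justification that the map changes exactly once, because each change is a first return to a strictly smaller interval, does not follow.

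Three further inaccuracies:
\begin{itemize}
\item Your third case is wrong: the standard loser set need not be a single tail group.
\item In the standard induction the winner is always a single label in $\{\alpha_t,\alpha_b\}$, never a tail.
\item In your first obstacle, the non-identity factor is not in general carried by the revealed letter. If a bottom tail accumulating at $l$ is longer than $\alpha_t$, the standard winner is $\alpha_t$, which beats the unrevealed remainder of the tail. The last revealed letter only wins at the next standard step.
\end{itemize}

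\textbf{What is missing} is a computation of the whole block. All non-identity factors in a block have the same winner $w=w^{(j)}$. Each grouped step of the block either reveals, contributing $Id$, or lets $w$ beat the current rightmost grouped interval on the opposite side; $w$ stays the rightmost interval on its side throughout. The intervals beaten in this way are exactly the losing-side intervals contained in $[l^{(j)},l^{(j-1)}]$.

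Hence the loser sets $\mathcal L_1,\dots,\mathcal L_r$ of these factors are pairwise disjoint, do not contain $w$, and have union $\mathcal L^{(j)}$. Since $w\notin\mathcal L_i$, every product $E_{w,\mathcal L_i}E_{w,\mathcal L_{i'}}$ vanishes, and
\[
\bigl(Id+E_{w,\mathcal L_1}\bigr)\cdots\bigl(Id+E_{w,\mathcal L_r}\bigr)=Id+E_{w,\mathcal L_1\sqcup\dots\sqcup\mathcal L_r}=Id+E_{w^{(j)},\mathcal L^{(j)}}=M^{(j)}.
\]
This is the block identity you need, and with it the rest of your argument goes through.
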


\subsubsection{Infinite-completeness} For an infinite path $\boldsymbol{\gamma}$ in a Rauzy diagram, we now define the notion of \textit{infinite-completeness}.
For finite-type Rauzy diagrams, this notion coincides with the classical definition of infinite-complete (see \cite{Yoccoz}, Section 7.7), which requires all entries of the finite matrices associated to $\boldsymbol{\gamma}$ to be infinitely often eventually positive.\footnote{For finite-type IETs, infinite-completeness ensures that there are no invariant subsets of intervals. It can be seen as a generalization of having an irrational rotation number in the case of the corresponding circle rotation on $d=2$ intervals.}
However, in the infinite setting, we require a slightly stronger assumption: we want all \textit{rows} of matrices associated to $\gamma$ to be infinitely often eventually positive. This is stronger than our definition of combinatorial minimality from Section \ref{sec:combinatorialminimality}, and in particular, by Theorem A, an IET with infinite-complete combinatorial rotation $\boldsymbol{\gamma}$ is minimal.

However, as we show later on in Proposition \ref{prop:nonemptyintersection}, infinite-completeness of a path $\boldsymbol{\gamma}$ also ensures the \textit{existence} of an IET with combinatorial rotation number equal to $\gamma$. We first state the definition of a \textit{complete} path for finite-type IETs.

\begin{defs}\label{def:complete} We say that a path $\gamma$ in a finite-type Rauzy diagram $\mathcal{R}_{\pi}$ is
\textit{complete} if
for all $\alpha \in \mathcal A$ there exists $n \in \N$ such that for all $\beta \in \mathcal A$ it holds $M^{\gamma_{n}}_{\alpha, \beta} > 0$.
\end{defs}

We next state the definition of an \textit{infinite-complete} path both for finite and infinite-type IETs.

\begin{defs}\label{def:infinitecomplete}
	We say that an infinite reduced path $\boldsymbol{\gamma}$ in a Rauzy diagram $\mathcal{R}_{\Pi}$ is \textit{infinite-complete} if for all $\alpha \in \mathcal A$ and all $k \in \N$ there exists $n \geq k$ such that for all $\beta \in \mathcal A$ it holds $M^{\gamma_{k,k+n}}_{\alpha, \beta} > 0$.
\end{defs}

\begin{ex} Consider the Rauzy diagram of the inverse permutation $\Pi$ in Figure \ref{fig:rauzygraph}. Then the path $\boldsymbol{\gamma}_{\infty}$ which passes through all edges at each level of the graph is infinite-complete.
\end{ex}

\begin{lem}\label{lem:infcompisminimal} Let $T$ be an IET with infinite-complete path $\boldsymbol{\gamma}$. Then $T$ is minimal.
\end{lem}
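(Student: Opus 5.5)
The plan is to reduce to Theorem A: show that infinite-completeness of $\boldsymbol{\gamma}=\boldsymbol{\gamma}(T)$ implies that $T$ is infinitely renormalizable and combinatorially minimal. Theorem A then gives that $T$ has no connections and is minimal.

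Since $T$ has a combinatorial rotation number, it is infinitely renormalizable for the grouped induction. By Lemma \ref{lem:relationstandardgrouped} it is therefore also infinitely renormalizable for the standard induction, because each standard step corresponds to finitely many grouped steps. Next fix $\alpha,\beta\in\mathcal A$ and a standard time $k$. Choose the grouped time $n_k$ from Lemma \ref{lem:relationstandardgrouped} with $T_{n_k}=T^{(k)}$. Definition \ref{def:infinitecomplete}, applied at time $n_k$ and row $\alpha$, gives some $n$ with $M^{\gamma_{n_k,n_k+n}}_{\alpha,\beta'}>0$ for all $\beta'$, in particular for $\beta'=\beta$.

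We then want to compare this with a standard matrix. Lemma \ref{lem:relationshipRVmatrices} holds from time $0$, and the same argument applied to $T^{(k)}$ (equivalently, factoring the products) gives $M^{\gamma_{n_k,n_{k'}}}=M^{(k,k')}$ whenever $n_{k'}\ge n_k+n$. A subtlety: grouped time $n_k+n$ need not be one of the $n_{k'}$. The extra grouped steps up to the next $n_{k'}$ only multiply on the right by matrices $Id+E$ or $Id$, which have nonnegative entries and a positive diagonal. Hence positivity of entries is preserved, as in Remark \ref{rem:monotoneincreasing}. So $M^{(k,k')}_{\alpha,\beta}>0$, and $T$ is combinatorially minimal in the sense of Definition \ref{def:irred}.

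The main obstacle I expect is bookkeeping between the two matrix conventions. Grouped matrices are indexed by the current partition, and a single "tail" column represents infinitely many letters. I need that the matrix $M^{e}$ of Definition \ref{def:groupedmatrix}, read on the full alphabet $\mathcal A$, coincides with the standard elementary matrix, where the losing set $\mathcal L$ is a whole tail. I also need that tail-wins steps contribute the identity. Once the identification via Lemma \ref{lem:relationshipRVmatrices} is accepted, including for arbitrary starting times $k$, everything else is monotonicity of nonnegative products plus Theorem A.
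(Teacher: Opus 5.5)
Your proposal is correct and follows the same route as the paper: infinite-completeness gives positivity of the grouped matrices, Lemma \ref{lem:relationshipRVmatrices} transfers this to the standard matrices $M^{(k,k')}$, and Theorem A then turns combinatorial minimality into minimality. Your extra bookkeeping fills in index conversions that the paper's one-line proof glosses over. Specifically, you make infinite renormalizability explicit, you pass from grouped time $n_k+n$ to a later standard time $n_{k'}$, and you observe that right-multiplying by $Id$ or $Id+E$ preserves positive entries.
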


\begin{proof} Since $\boldsymbol{\gamma}$ is infinite-complete, it follows in particular that for all $\alpha, \beta \in \mathcal{A}$, for all $k \in \N$, there exists $n \geq k$ such that $M^{\gamma_{k,k+n}}_{\alpha, \beta} > 0$ and by Lemma \ref{lem:relationshipRVmatrices} the same holds for the matrices from the standard induction $M^{(k, k+n)}$, hence $T$ is combinatorially minimal and by Theorem A it follows that $T$ is also minimal.
\end{proof}

\subsubsection{Poincaré-Yoccoz Theorem for infinite-type IETs} We now show that any two IETs which share the same infinite-complete path $\boldsymbol{\gamma}$ are conjugated. In finite dimension, this statement is known as the Poincaré-Yoccoz Theorem (see Proposition 7 in \cite{Yoccoz_Course}). Our proof closely follows its finite-type analogue, in particular, we show that having the same infinite-complete combinatorial rotation number implies that orbits arrange themselves in the same order. We thus obtain a conjugacy relation on a dense subset of $[0,l]$ which by continuity we can then extend to the closure.

\begin{prop}\label{prop:conjugation} Let $T$, $\tilde{T}$ be two IETs from the interval $I$ to itself which share the same infinite-complete path $\boldsymbol{\gamma}$. Then $T$ and $\tilde{T}$ are topologically conjugate, i.e. there exists a homeomorphism $h: I \to I$ so that
$$
h \circ T = \tilde{T} \circ h \text{ on } I \backslash \mathcal{N},
$$
where $\mathcal{N}$ is the set of discontinuities of \(T\) and the points \(x\) such that \(h(x)\) is a discontinuity of \(\widetilde T\).

\end{prop}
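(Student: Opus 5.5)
We follow the finite-type Poincaré--Yoccoz argument and build $h$ first on the union of the forward orbits of the left endpoints of the towers. Since $T$ and $\tilde T$ share the path $\boldsymbol{\gamma}$, they share the whole sequence of grouped permutations $\Pi_n$, winners and losers, and hence the matrices $M^{\gamma_n}$. In particular, for every $n$ the combinatorics of the tower decomposition
\[
I = \bigsqcup_{\alpha} \mathrm{Tow}_T(\alpha,n), \qquad I = \bigsqcup_{\alpha} \mathrm{Tow}_{\tilde T}(\alpha,n)
\]
is the same for both maps. This covers the order of the base intervals $I^{t,(n)}_\alpha$ in $I^{(n)}$, the heights $r^{(n)}_\alpha$, and, via $M^{(0,n)}$, which level of which tower lies in which original interval $I^t_\beta$. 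By Lemma~\ref{lem:relationstandardgrouped} and Lemma~\ref{lem:relationshipRVmatrices} we may pass freely between the grouped and the standard induction.

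\textbf{Step 1: the two induced sequences shrink to $0$.} By Lemma~\ref{lem:infcompisminimal} both maps are minimal, and their induced matrices are combinatorially minimal. The first part of the proof of Theorem~A then gives $l^{(n)}\to 0$ and $\tilde l^{(n)}\to 0$. Lemma~\ref{lem:heights&lengths} and Lemma~\ref{lem:winlose} then show that every $\lambda^{(n)}_\alpha\to 0$ and every height tends to infinity.

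\textbf{Step 2: the partition into levels becomes arbitrarily fine.} We need more than Step~1, namely that every level of every tower at time $n$ has length going to $0$ uniformly as $n\to\infty$. The level lengths are the $\lambda^{(n)}_\alpha$ themselves. The difficulty is that there are infinitely many labels, so uniformity is not automatic. We handle it as follows.
\begin{itemize}
\item The labels in the tails carry total mass that is summable and tends to zero, because tails keep being revealed.
\item All intervals at time $n$ lie in $[0,l^{(n)}]$, so $\sup_\alpha \lambda^{(n)}_\alpha \le l^{(n)} \to 0$.
\end{itemize}
The second point in fact gives uniformity immediately.

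\textbf{Step 3: define $h$ on the level endpoints.} Let $D$ be the set of left endpoints of all levels of all towers at all times $n$. Define $h$ on $D$ by sending the left endpoint of the $i$-th level of $\mathrm{Tow}_T(\alpha,n)$ to the left endpoint of the $i$-th level of $\mathrm{Tow}_{\tilde T}(\alpha,n)$.

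This is consistent across times. Passing from $n$ to $n+1$ refines towers by stacking and cutting in a way that is dictated by the common edge $e_{n+1}$, so the same combinatorial label determines the same point.

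The map is order preserving. For fixed $n$, the left-to-right order of all levels in $I$ is determined purely combinatorially. Two levels lying in different original intervals $I^t_\beta$ are ordered by $\leq_t$. Two levels in the same $I^t_\beta$ are ordered by comparing them at a later time, where they land at distinct, combinatorially ordered positions in $I^{(m)}$.

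By Step~2, $D$ is dense in $I$ for both maps. Hence $h$ is an order-preserving bijection between dense sets and extends uniquely to an increasing homeomorphism $h:I\to I$.

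\textbf{Step 4: conjugacy.} On $D$, $T$ maps the $i$-th level to the $(i+1)$-th level of the same tower, and $\tilde T$ does the same, so $h\circ T=\tilde T\circ h$ there. Off $\mathcal N$, both maps are continuous translations near the relevant point, so the relation extends by density and continuity.

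\textbf{Main obstacle.} The hardest step is the order-compatibility in Step~3 in the infinite-type setting. A level of a tower may be accumulated by infinitely many other levels, coming from tail intervals. Revealing steps also change the partition without changing the map. One therefore has to check carefully that the relative order of any two points of $D$ is decided after finitely many induction steps and depends only on $\boldsymbol{\gamma}$. I would argue this by induction on the edge type, distinguishing a single interval winning from a tail winning, where the map is unchanged and only one letter is revealed. I would use properness to ensure that the rightmost intervals are always finite objects, so each step moves only finitely much combinatorial data relative to any fixed pair of points.
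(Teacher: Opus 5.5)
Your proposal is correct and follows essentially the same route as the paper: an order-preserving bijection between the tower levels of $T$ and $\tilde T$ obtained by induction along the common path $\boldsymbol{\gamma}$, extended from the dense set of level endpoints to an increasing homeomorphism, with the conjugacy relation established on that dense set and extended by continuity off $\mathcal N$. The differences are minor. You get density from the uniform bound $\sup_\alpha \lambda^{(n)}_\alpha \le l^{(n)} \to 0$, which is slightly cleaner than the paper's per-letter argument. Your aside about comparing two levels in the same $I^t_\beta$ ``at a later time in $I^{(m)}$'' is not a valid justification as stated, since a fixed level does not land in $I^{(m)}$ once $l^{(m)}$ is small. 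It should be replaced by the edge-by-edge refinement induction you sketch at the end, which is exactly how the paper justifies that the orders $\leq_{T,n}$ and $\leq_{\tilde T,n}$ coincide.
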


\begin{proof}
\textbf{There is an order-preserving bijection between the floors of towers:} we recall from Section \ref{sec:towers} the definition of the tower, now with
respect to the grouped Rauzy--Veech induction, corresponding to \(\alpha\)
at time \(n\):
\[
\operatorname{Tow}_T(\alpha,n)
:=
\bigsqcup_{i=0}^{r_\alpha^n-1} T^i(I_\alpha^{t,n}),
\]
where \(r_\alpha^n\) is the first return time of \(I_\alpha^{t,n}\) to
\(I^n\). We consider the partition \(\mathcal Q_n\), up to countably many
endpoints, of \(I\) into the disjoint collection of open intervals
\[
\bigsqcup_{\alpha\in\mathcal A} \operatorname{Tow}_T(\alpha,n)
=
\bigsqcup_{\alpha\in\mathcal A}
\bigsqcup_{i=0}^{r_\alpha^n-1}
T^i(I_\alpha^{t,n}),
\]
where an element \(P\in\mathcal Q_n\) is of the form
\(T^i(I_\alpha^{t,n})\). Similarly to Section~\ref{sec:length_data}, we also define a
total order \(\leq_{T,n}\) on \(\mathcal Q_n\), describing the order of
all elements of \(\mathcal Q_n\) inside the interval \(I\), seen from left
to right.

For \(\widetilde T\), we analogously define the first return times
\(\widetilde r_\alpha^n\), the partition \(\widetilde{\mathcal Q}_n\) of
\(I\), and the total order \(\leq_{\widetilde T,n}\). Since \(T\) and
\(\widetilde T\) share the same Rauzy path, by induction\footnote{see also \cite{Yoccoz_Course}, Proposition 7 or \cite{dynamicaldecomposition}, Theorem 4.9.} it follows that,
for all \(n\in\mathbb N\),
\[
r_\alpha^n=\widetilde r_\alpha^n,
\qquad
\leq_{T,n}=\leq_{\widetilde T,n},
\]
where the two orders are identified through the symbolic labels
\((\alpha,i)\). For each \(n\in\mathbb N\), define a bijection between the elements of the
two partitions by
\[
\Phi_n\bigl(T^i(I_\alpha^{t,n})\bigr)
=
\widetilde T^i(\widetilde I_\alpha^{t,n}),
\qquad
0\leq i<r_\alpha^n.
\]
Since \(r_\alpha^n=\widetilde r_\alpha^n\) and
\(\leq_{T,n}=\leq_{\widetilde T,n}\), the map \(\Phi_n\) is order
preserving. Moreover, since \(T\) and \(\widetilde T\) have the same
Rauzy path, these bijections are compatible when refining the partitions:
if \(P'\in\mathcal Q_{n+1}\), \(P\in\mathcal Q_n\), and \(P'\subset P\),
then
\[
\Phi_{n+1}(P')\subset \Phi_n(P).
\]

\textbf{The bijection extends to an increasing homeomorphism on $I$:} let \(E_n\) be the set of left endpoints of intervals of
\(\mathcal Q_n\), together with the endpoints of \(I\), and let
\[
E=\bigcup_{n\geq 0}E_n.
\]
Similarly define \(\widetilde E_n\) and \(\widetilde E\). The compatibility
under refinement gives a well-defined order-preserving bijection
\[
h_0:E\longrightarrow \widetilde E
\]
by sending the left endpoint of an interval \(P\in\mathcal Q_n\) to the
left endpoint of the corresponding interval \(\Phi_n(P)\), and by sending
the endpoints of \(I\) to the endpoints of \(I\).

Since \(\gamma\) is infinite-complete, all letters must lose infinitely often, hence by Lemma \ref{lem:heights&lengths} the lengths
of the intervals in \(\mathcal Q_n\) and \(\widetilde{\mathcal Q}_n\) go
to zero, in particular, \(E\) and \(\widetilde E\) are dense in \(I\). Therefore
\(h_0\) extends uniquely, by order completion, to an increasing
homeomorphism
\[
h:I\longrightarrow I.
\]

\textbf{The map $h$ satisfies the conjugacy relation:} first note that, by construction of
the tower partitions, if \(e\in E\) is not a singularity of \(T\) and
\(h_0(e)\) is not a singularity of \(\widetilde T\), then \(T(e)\in E\)
and
\begin{equation}\label{eq:conj}
h_0(T(e))=\widetilde T(h_0(e)).
\end{equation}

Now define
\[
\mathcal N
=
\operatorname{Sing}(T)
\cup
h^{-1}(\operatorname{Sing}(\widetilde T)).
\]
Let \(x\in I\setminus\mathcal N\). Then \(T\) is continuous at \(x\) and
\(\widetilde T\) is continuous at \(h(x)\). Hence we may choose an open
interval \(U\ni x\) such that \(T\) is continuous on \(U\) and
\(\widetilde T\) is continuous on \(h(U)\). Since \(E\) is dense in \(I\),
choose a sequence \(e_k\in E\cap U\) such that \(e_k\to x\). Then, by
(\ref{eq:conj}),
\[
h(T(e_k))
=
h_0(T(e_k))
=
\widetilde T(h_0(e_k))
=
\widetilde T(h(e_k)).
\]
Passing to the limit, using the continuity of \(h\), \(T\) on \(U\), and
\(\widetilde T\) on \(h(U)\), we obtain
\[
h(T(x))=\widetilde T(h(x)).
\]
Therefore
\[
h\circ T=\widetilde T\circ h
\qquad
\text{on } I\setminus\mathcal N,
\]
as claimed.
\end{proof}

\section{The Simplex of Lengths} We study the \textit{simplex of lengths} $\Delta$ consisting of all strictly positive real sequences which sum to one. We introduce the \textit{nested simplices} $\hat{M}^{\gamma_n}\Delta$ obtained by applying the Rauzy-Veech matrices corresponding to $\gamma$ to the simplex $\Delta$ and explain the relationship between their infinite intersection and invariant measures. Contrary to the finite case the infinite intersection of these simplices is not guaranteed to be non-empty: in Section \ref{sec:non-emptiness} we then give a sufficient condition for it to be.

\subsection{The nested simplices} We define the \textit{simplex of lengths} $\Delta$ and equip it with the $\ell^1$-topology. We also define the nested simplices obtained by Rauzy-Veech induction and relate their infinite intersection to invariant measures of the corresponding IET.

\subsubsection{The $\ell^1$-topology}\label{sec:convexcones}  We recall the definition from Section \ref{sec:linearoperator} of the space of lengths $L$ as the convex cone
\begin{align*}
   L = \{\lambda \in \R_{>0}^\N\hspace{1mm}|\hspace{1mm}\sum_{i \in \N}\lambda_i < \infty \},
\end{align*}
and equip $L$ with the $\ell^1$-topology on $L$ generated by the $\ell^1$-norm
\begin{align*}
    \| \lambda \|_1 = \sum_{i \in \N} \lambda_i.
\end{align*}
We also define the norm restricted to the tails
\begin{align*}
    \| \lambda \|_{k,1} = \sum_{i > k} \lambda_i.
\end{align*}
We now define the \textit{simplex of lengths}
$$\Delta := \{\lambda \in \R_{>0}^\N\hspace{1mm}|\hspace{1mm}\sum_{i \in \N}\lambda_i = 1 \} \subset L,$$
and the normalization of a vector $\lambda \in L$ as $\hat{\lambda} :=  \frac{\lambda}{\| \lambda \|_1}.$
We further equip $\Delta$ with the subspace topology induced by the $\ell^1$-topology on $L$. Note that the closure of $\Delta$ in $L$ is
\begin{align*}
  \overline{\Delta} = \{\lambda \in \R_{\geq 0}^\N\hspace{1mm}|\hspace{1mm}\sum_{i \in \N}\lambda_i = 1 \}.
\end{align*}

\subsubsection{The matrix action on the simplex} We also recall from Section \ref{sec:linearoperator} that the matrices $M^{\gamma_n}$ act (continuously) on $L$ by matrix multiplication and extend this action to a continuous action
$\hat{M}^{\gamma_n}$ on $\Delta$ given by
\begin{align*}
    \hat{M}^{\gamma_n}: \Delta &\to \Delta, \\
    \lambda &\to \frac{M^{\gamma_n}\lambda}{\| M^{\gamma_n}\lambda \|_1}.
\end{align*}

We further define the simplices
\begin{align*}
\Delta^{\gamma_n} &:= \hat{M}^{\gamma_{n}} \Delta.
\end{align*}
Note that by the Definition \ref{def:groupedmatrix} of $M^{\gamma_n}$ as a growing matrix product these sets are convex up to normalizing and nested, i.e. $\Delta^{\gamma_{n+1}} \subset \Delta^{\gamma_{n}}$ for all $n \in \N$.

\subsubsection{The infinite intersection}
For $\boldsymbol{\gamma}$ an infinite path on a Rauzy diagram, we denote by $\gamma_n$ its finite prefix of length $n$ and consider the nested cones
\begin{align*}
   \Delta^{\gamma_n} \subset \dots \subset \Delta^{\gamma_1} \subset \Delta.
\end{align*}

\begin{lem}\label{lem:pathequivalence} Let $\boldsymbol{\gamma}$ be an infinite and reduced path in a Rauzy diagram starting at $\Pi$. Then the infinite intersection of nested sets
	\begin{align*} \Delta^{\boldsymbol{\gamma}, \infty} := \bigcap_{n \geq 0} \Delta^{\gamma_n}
\end{align*}
consists of all length vectors $\lambda \in \Delta$ such that $T_{(\Pi, \lambda)}$ defines an infinitely renormalizable IET whose combinatorial rotation number is equal to $\boldsymbol{\gamma}$.
\end{lem}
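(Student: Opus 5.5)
The plan is to prove both inclusions by induction on the length of the prefix $\gamma_n$. The key intermediate claim, proved first, is the following finite-time statement: for every $n$, $\lambda \in \Delta^{\gamma_n}$ if and only if $T_{(\Pi,\lambda)}$ is renormalizable up to time $n$ for the grouped Rauzy--Veech induction and its first $n$ edges are exactly $e_1\dots e_n$. Intersecting over $n$ then gives the lemma. We work throughout with the normalized action $\hat{M}^{\gamma_n}$, using that $T_{(\Pi,\lambda)}$ and $T_{(\Pi,c\lambda)}$ have the same combinatorics for $c>0$. In this way the statement becomes a statement about the cones $M^{\gamma_n}L$.

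\textbf{One step.} Let $e$ be an edge from $\Pi$ with winner $\mathcal W_e$ and losers $\mathcal L_e$. If the winner is a single interval, top (resp. bottom) wins exactly when $\lambda_{w} > \sum_{\beta \in \mathcal L_e}\lambda_\beta$, the equality case being a connection that is excluded. By Equation~\eqref{eq:lengths}, the new lengths are $\lambda_1 = (M^e)^{-1}\lambda$, which is strictly positive precisely in this case. Hence
\[
\{\lambda \in L : e \text{ is the first edge of } T_{(\Pi,\lambda)}\} = M^e L,
\]
and $\lambda_1$ lies in $L$ and presents $T_1$ with permutation $R_t(\Pi)$ or $R_b(\Pi)$.

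If the winner is a tail interval, $M^e = Id$ and the condition that this tail wins is an open condition on $\lambda$. It is not automatically satisfied, so this case needs care. I would handle it using the reduced-path convention: along a maximal run of edges on which the same tail wins, the run is followed either by that tail losing or by the last revealed interval winning. The run together with the next edge should correspond to a matrix product equal to $Id + E$ applied to the revealed letters. I expect to show that the set of lengths following the run plus the next edge is again exactly the image of $L$ under the product of the matrices of those edges, after accounting for the revealed single intervals.

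\textbf{Induction.} Having the one-step (or one-block) description, I would argue by induction. Suppose $\lambda = M^{\gamma_n}\mu$ with $\mu \in L$. Then applying the one-step description to $T_{(\Pi_n,\mu)}$ at vertex $\Pi_n$ shows the following: $\mu$ follows $e_{n+1}$ iff $\mu \in M^{e_{n+1}}L$. By Lemma~\ref{prop:lengthrelation}, together with Lemma~\ref{lem:relationshipRVmatrices} for the grouped version, $\mu$ is precisely the length vector $\lambda_n$ of $T_n$. So $\lambda \in M^{\gamma_{n+1}}L$ iff the first $n+1$ edges agree. Since the matrices are injective on $L$ (they are unipotent with finitely supported columns), $\mu$ is uniquely determined by $\lambda$, which makes the equivalence honest rather than merely an inclusion. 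Infinite renormalizability is exactly that every prefix is realized, which gives the statement.

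\textbf{Main obstacle.} The delicate point is the tail-wins edges, where $M^e = Id$ yet the edge still imposes an inequality. I must show that membership in $\Delta^{\gamma_n}$ detects which tail wins. My first attempt is to use the reducedness hypothesis to package tail-winning runs with the subsequent edge. If that does not close the gap, I would instead interpret the revealed outermost letter's comparison through the subsequent single-interval step, whose matrix does encode the inequality. A secondary check is that $\lambda$ remains strictly positive and summable after each step, which holds because the losing sums are bounded by $\lambda_w$.
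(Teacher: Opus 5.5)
Your architecture matches the paper's: a one-step description for single winners, injectivity of the matrices, identification of $\mu$ with $\lambda_n$, and packaging tail-winning runs using reducedness. However, the step that carries the whole difficulty is wrong as you state it. Your key intermediate claim cannot hold for every $n$: if $e_n$ is a tail-winning edge then $\Delta^{\gamma_n}=\Delta^{\gamma_{n-1}}$, while following $e_n$ is a genuine open condition. So the equivalence can only hold at the ends of suitable blocks, which suffices by nestedness.

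More seriously, the block you propose, ``the run together with the next edge'', does not work. Take a run $e_1\cdots e_{k-1}$ in which the same tail wins, so that each reveal removes the outermost letter and the tail stays in playing position. Let $w$ beat the remaining tail $\mathcal T_k$ along $e_k$. The product of matrices is $Id+E_{w,\mathcal T_k}$, whose image is $\{\lambda_w>\sum_{\alpha\in\mathcal T_k}\lambda_\alpha\}$. But the lengths actually following $e_1\cdots e_k$ must also satisfy $\lambda_w<\sum_{\alpha\in\mathcal T_k}\lambda_\alpha+\lambda_\rho$, where $\rho$ is the last revealed letter; this is the last tail win. That inequality is invisible to every matrix up to time $k$, so your ``exactly the image'' claim fails. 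You also misread the reducedness hypothesis. It does not say ``either the tail loses or the last revealed interval wins''; it says ``the tail loses and then the last revealed interval wins''.

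The missing idea is that the lost inequality is recorded by the edge $e_{k+1}$. There $w$, now of length $\lambda_w-\sum_{\alpha\in\mathcal T_k}\lambda_\alpha$, plays against $\rho$. If $\rho$ wins, then $M^{e_{k+1}}=Id+E_{\rho,w}$ and
\[
(Id+E_{w,\mathcal T_k})(Id+E_{\rho,w})L=\Bigl\{\lambda\in L:\ \sum_{\alpha\in\mathcal T_k}\lambda_\alpha<\lambda_w<\sum_{\alpha\in\mathcal T_k}\lambda_\alpha+\lambda_\rho\Bigr\},
\]
which is exactly the set of lengths following $e_1\cdots e_{k+1}$. If instead $w$ won there, one would get $\lambda_w>\sum_{\alpha\in\mathcal T_k}\lambda_\alpha+\lambda_\rho$, contradicting the last tail win. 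Such a path is realized by no IET even though its simplex is nonempty, and reducedness is precisely what rules it out. The paper's proof is exactly this block: run, loss, forced win of $\rho$. Your fallback sentence points in this direction but identifies neither the edge nor the role of reducedness.

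Two further situations are covered neither by your plan nor by this block argument.
\begin{enumerate}
\item If a run of tail wins never ends, then $\Delta^{\boldsymbol{\gamma},\infty}=\Delta^{\gamma_m}$ is nonempty while no length realizes $\boldsymbol{\gamma}$, since the tail masses tend to $0$. So you need every run to be finite, which holds for instance for infinite-complete paths.
\item If the playing tail accumulates away from the right endpoint, the revealed letter itself becomes the playing interval. The tail-win inequality is then only encoded once some revealed letter beats $w$. For example, ``tail wins, $w$ beats the revealed letter, $w$ beats the remaining tail'' satisfies the reducedness condition as literally stated but is realized by no length.
\end{enumerate}
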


\begin{proof} We consider the path $\boldsymbol{\gamma} = e_1e_2e_3 \dots$ and its first edge $e_1$. We note that from Lemma \ref{prop:lengthrelation} together with Lemma \ref{lem:relationshipRVmatrices}, for $\lambda \in \Delta$ we have the relation
\begin{equation*}\label{eq:lengthrelation2}
    \lambda = M^{e_1} \cdot \lambda_{1},
\end{equation*}
where $\lambda_{1}$ is the length vector obtained from $\lambda$ after one step of the grouped induction according to $e_1$.
Consider now the set $$ \hat{M}^{e_1} \Delta.$$
If no tail interval wins along $e_1$, it is similar to the case of finite-type IETs: let $\mathcal L$ be the set of losing labels and $w$ the winning label, then any vector $\lambda \in \hat{M}^{e_1} \Delta$ satisfies $\sum_{\alpha \in \mathcal L} \lambda_{\alpha} < \lambda_w$, thus it defines a once renormalizable IET for the grouped induction whose combinatorial rotation number begins with $e_1$.

On the other hand if a tail interval wins along $e_1$, then $M^{e_1}$ is equal to identity and not all length vectors in $\hat{M}^{e_1} \Delta$ need to be renormalizable.
The ambiguity comes from the fact that revealing could be done without assuming any relations on lengths.
To solve this we can accelerate the induction until the tail loses (plus an extra step).
Let $\gamma_k = e_1 \dots e_k$ be a prefix of $\gamma$ and $k$ be the smallest integer such that the tail loses along $e_{k}$.
Let $\mathcal T_k$ be the set of labels in the tail after $k-1$ revealing steps, $\rho$ the label of the last revealed interval in the tail and $w$ the label of the winning label along $e_k$.
By \cref{def:reduced}, $w$ must lose against $\rho$ along $e_{k+1}$.
Any vector $\lambda \in \hat{M}^{\gamma_{k+1}} \Delta$ then satisfies
$\sum_{\alpha \in \mathcal T_k} \lambda_{\alpha} < \lambda_w < \sum_{\alpha \in \mathcal T_k} \lambda_{\alpha} + \lambda_\rho$.
Then any vector in $ \hat{M}^{\gamma_{k+1}} \Delta$ defines a $(k+1)$-times renormalizable IET whose path begins with $e_1 \dots e_{k+1}$.
\end{proof}

\subsubsection{Invariant measures} Given an infinite-type IET $T$, let $\mathcal{M}_{T}$ denote the set of $T$-invariant Borel probability measures. We now show in a proof analogous to the finite-type case (see Section 8.1 in \cite{Yoccoz}) that the set $\mathcal{M}_{T}$ can be identified with the infinite intersection of simplices $\Delta^{\boldsymbol{\gamma}, \infty}$.

\begin{prop}\label{prop:invmeasures} Let $T = T_{(\Pi,\lambda)}$ be an IET with proper and irreducible permutation $\Pi$ and infinite-complete rotation number $\boldsymbol{\gamma}$. Then $\Delta^{\boldsymbol{\gamma}, \infty}$ and the set of $T$-invariant Borel probability measures $\mathcal{M}_{T}$ are in one-to-one correspondence.
\end{prop}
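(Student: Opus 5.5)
The plan is to follow the finite-type argument (Yoccoz, Section 8.1), building the map from measures to length vectors first and then its inverse. Given $\mu \in \mathcal{M}_T$, define $\lambda^\mu \in \R^{\mathcal A}_{\ge 0}$ by $\lambda^\mu_\alpha := \mu(I^t_\alpha)$. The sum over $\alpha$ is $\mu([0,l]) = 1$, since by minimality (Lemma \ref{lem:infcompisminimal}) an invariant probability measure has no atoms and so the countable set of singularities is null. We also need $\lambda^\mu_\alpha > 0$: minimality together with invariance forces every open interval to have positive measure, because finitely many translates $T^i(I^t_\alpha)$ cover $[0,l]$ up to a null set. The same argument applies at every time $n$. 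Set $\lambda^{\mu,n}_\alpha := \mu(I^{t,n}_\alpha)/\mu(I^n)$, taken with respect to the grouped induction.

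We then show $\lambda^\mu = M^{\gamma_n}\,\mu(I^n)\lambda^{\mu,n}$, which gives $\hat\lambda^\mu \in \Delta^{\gamma_n}$ for every $n$, and hence $\hat\lambda^\mu \in \Delta^{\boldsymbol\gamma,\infty}$. This identity is the measure-theoretic version of Lemma \ref{prop:lengthrelation}. At a single-winner step, the winner $I^{t}_w$ splits, up to a point, into the new $I^{t,1}_w$ and the image or preimage of the losing intervals. Invariance then gives $\mu(I_w) = \mu(I^1_w) + \sum_{\mathcal L}\mu(I_\beta)$, and this sum converges because $\mu$ is a probability. At a revealing step $M^e = Id$, and nothing changes. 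Induction on $n$ completes this step.

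Conversely, given $\lambda' \in \Delta^{\boldsymbol\gamma,\infty}$, Lemma \ref{lem:pathequivalence} shows that $T' := T_{(\Pi,\lambda')}$ is an infinitely renormalizable IET with the same infinite-complete path $\boldsymbol\gamma$. Proposition \ref{prop:conjugation} then supplies a homeomorphism $h$ with $h\circ T' = T\circ h$ off a countable set. We set $\mu := h_*\mathrm{Leb}_{\lambda'}$. Lebesgue measure is $T'$-invariant, and the exceptional set is countable and Lebesgue-null, so $\mu$ is a $T$-invariant Borel probability measure. Moreover $h$ maps $I'^{t}_\alpha$ onto $I^t_\alpha$, because it matches the tower floors (the level-$0$ partitions). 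This gives $\mu(I^t_\alpha) = \lambda'_\alpha$.

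Finally we check that the two maps are mutually inverse. Starting from $\lambda'$, the composite clearly returns $\lambda'$. The harder direction, and the main obstacle I expect, is injectivity of $\mu \mapsto \lambda^\mu$: a measure must be determined by its values on the $I^t_\alpha$. For this I would use the whole sequence. For each $n$, $\mu$ is determined on every floor $T^i(I^{t,n}_\alpha)$ by $\mu(I^{t,n}_\alpha)$, which in turn is read off from $\lambda^\mu$ via the nested relation above. To make that reading well defined, I would apply the surjectivity construction to $\hat\lambda^\mu$, producing a measure $\nu$, and then check that $\nu$ and $\mu$ agree on all floors of all $\mathcal Q_n$. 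The floors generate the Borel $\sigma$-algebra because their diameters tend to $0$, as used in the proof of Proposition \ref{prop:conjugation}, so this forces $\mu = \nu$. The delicate point is showing that the values $\mu(I^{t,n}_\alpha)$ are determined by $\lambda^\mu$ itself and not merely constrained by it. The plan is to realise them as the lengths after $n$ steps for $T_{(\Pi,\hat\lambda^\mu)}$, which holds because the relation $\lambda = M^{\gamma_n}\lambda^n$ is invertible step by step, via (\ref{eq:lengths}), along a fixed path.
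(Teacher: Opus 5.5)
Your proposal is correct and follows essentially the same route as the paper. The map $\mu\mapsto(\mu(I^t_\alpha))_\alpha$ lands in $\Delta^{\boldsymbol\gamma,\infty}$ via $\lambda_0=M^{\gamma_n}\lambda_0^n$, its inverse is the push-forward of Lebesgue measure through the conjugacy of Proposition \ref{prop:conjugation}, and invertibility of $M^{\gamma_n}$ makes $\mu$ and the reconstructed measure agree on every tower floor, which generate the Borel $\sigma$-algebra (the only differences being cosmetic: you orient $h$ the other way, normalize $\lambda^{\mu,n}$, and derive the length relation edge by edge rather than by counting floors with multiplicity $M^{\gamma_n}_{\beta\alpha}$).
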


\begin{proof} \textbf{From $\Delta^{\gamma,\infty}$ to $\mathcal{M}_{T}$}: let \(\lambda_0\in\Delta^{\gamma,\infty}\), and let
\(T_0=T_{(\Pi,\lambda_0)}\) be the corresponding IET. Since \(T\)
and \(T_0\) have the same Rauzy path, Proposition~\ref{prop:conjugation} gives a
homeomorphism \(h:I\to I\) such that
\[
h\circ T =T_0\circ h
\]
outside a countable set. Since \(T_0\) preserves
Lebesgue measure, the pullback measure
\[
F(\lambda_0)(A):=\operatorname{Leb}(h(A)),
\qquad A\in\mathcal B(I),
\]
is a \(T\)-invariant Borel probability measure. Thus $F(\lambda_0)\in\mathcal M_{T}.$

\textbf{From $\mathcal{M}_{T}$ to $\Delta^{\gamma,\infty}$:} Conversely, let \(\mu\in\mathcal M_{T}\). Since \(\gamma\) is
infinite-complete, \(T\) has no finite orbit. Hence \(\mu\) is non-atomic;
otherwise an atom would produce infinitely many atoms of the same positive
mass along its \(T\)-orbit. In particular, the endpoints of all tower
partitions have \(\mu\)-measure zero. Furthermore, since $\boldsymbol{\gamma}$ is infinite-complete, by Theorem A the map $T$ is minimal and hence $\mu$ gives strictly positive mass to nonempty open intervals. Denote by $I_{\alpha}^{t}$ the top intervals of $T$. For $\alpha\in\mathcal A, n\geq 1$ define
\begin{align*}
(\lambda_0)_{\alpha} &:= \mu(I_{\alpha}^{t}) > 0, \\
(\lambda_{0}^n)_{\alpha} &:=\mu(I_{\alpha}^{t,n}) > 0.
\end{align*}
By construction of Rauzy--Veech induction, each interval \(I_{\beta}^t\) is
partitioned, up to endpoints, into levels of towers over the intervals
\(I_{\alpha}^{t,n}\), with multiplicities \(M_{\beta\alpha}^{\gamma_n}\).
Since \(\mu\) is \(T\)-invariant, all levels in the tower over
\(I_{\alpha}^{t,n}\) have the same \(\mu\)-mass \((\lambda_0^n)_{\alpha}\).
Therefore
\[
\mu(I_{\beta}^t)
=
\sum_{\alpha\in\mathcal A}
M_{\beta\alpha}^{\gamma_n}(\lambda_{0}^n)_{\alpha},
\]
or equivalently
\[
\lambda_0=M^{\gamma_n}\lambda_0^n .
\]
Thus \(\lambda_0\in\hat M^{\gamma_n}\Delta\) for every \(n\), and so
$
\lambda_0\in\Delta^{\gamma,\infty}.$
This defines
\[
G:\mathcal M_{T}\longrightarrow\Delta^{\gamma,\infty},
\qquad
G(\mu)=\lambda_0.
\]

\textbf{\(F\) and \(G\) are inverse maps}: let
\(\lambda_0\in\Delta^{\gamma,\infty}\), and set \(\mu=F(\lambda_0)\). By the
construction of the conjugacy in Proposition~\ref{prop:conjugation}, the map \(h\) sends each initial
continuity interval \(I_{\alpha}^t\) for \(T\) to the corresponding interval
\(I_{0,\alpha}^t\) for \(T_0 = T_{(\Pi,\lambda_0)}\), up to endpoints.
Hence
\[
G(F(\lambda_0))_\alpha
=
\mu(I_{\alpha}^t)
=
\operatorname{Leb}(h(I_{\alpha}^t))
=
\operatorname{Leb}(I_{0,\alpha}^t)
=
(\lambda_{0})_{\alpha}.
\]
Therefore $G(F(\lambda_0))=\lambda_0$.

Conversely, let \(\mu\in\mathcal M_{T}\), set as before $\lambda_0=G(\mu)$ and $(\lambda_0^n)_\alpha=\mu(I_{\alpha}^{t,n})$ and let \(T_0=T_{(\Pi,\lambda_0)}\). Let \(\nu=F(\lambda_0)\), and let \(h\) be
the conjugacy used to define \(\nu\). We show that \(\nu=\mu\).

As above, for all $n \in \N$,
\[
\lambda_0=M^{\gamma_n}\lambda_0^n.
\]
Let \(\ell^n\) be the length vector of the map \((T_0)_n\) obtained from $T_0$ by $n$ steps of the grouped induction. Since
\(\lambda_0\in\Delta^{\gamma,\infty}\), the IET \(T_0\) has Rauzy path \(\gamma\).
Hence
\[
\lambda_0=M^{\gamma_n}\ell^n.
\]
Since \(M^{\gamma_n}\) is invertible, we get $\ell^n=\lambda_0^n$ for all $n \in \N$. Now let $P=T^i(I_{\alpha}^{t,n})$
for some \(\alpha\in\mathcal A\) and \(0\le i<r_\alpha^n\). By
\(T\)-invariance of \(\mu\), and since endpoints have \(\mu\)-measure zero,
\[
\mu(P)=\mu(I_{\alpha}^{t,n})=(\lambda_0^n)_\alpha.
\]
On the other hand, by Proposition \ref{prop:conjugation} and the construction of \(h\),
\(h(P)\) is the tower level corresponding to the same pair \((\alpha,i)\) for
\(T_0\), up to endpoints. Therefore
\[
\nu(P)=\operatorname{Leb}(h(P))=\ell^n_\alpha=(\lambda_0^n)_\alpha.
\]
Thus \(\mu\) and \(\nu\) agree on every atom of every tower partition of $T$. Since the tower partitions refine each other and the endpoints of these partitions are dense in
\(I\), the atoms generate \(\mathcal B(I)\) modulo endpoints. As endpoints
have both \(\mu\)- and \(\nu\)-measure zero, we obtain $\mu=\nu$ and therefore $F(G(\mu))=\mu.$
\end{proof}

\subsection{Non-emptiness of the infinite intersection}\label{sec:non-emptiness} We now prove that if $\boldsymbol{\gamma}$ is infinite-complete, then the set $\Delta^{\boldsymbol{\gamma}, \infty}$ is not empty.

\subsubsection{Mass escaping to infinity} A standard approach to do so is to construct a sequence $(\lambda^n)_{n \in \N}$ where $\lambda^n \in \Delta^{\gamma_n}$. Then if $\lambda^n$ has a convergent subsequence whose limit lies in $\Delta$, it follows that the infinite intersection of the sets $\Delta^{\gamma_n}$ is non-empty.

In order to find a convergent subsequence, we want the closed simplex $\overline{\Delta}$ to be compact. However, a key difficulty that arises when working with infinite dimensions is that this is not the case: the closed set
\begin{align*}
   \overline{\Delta} := \{\lambda \in \R_{\geq 0}^\N\hspace{1mm}|\hspace{1mm}\sum_{i \in \N}\lambda_i = 1 \}
\end{align*}
is not compact with respect to the $\ell^1$-topology. For example, the sequence
\begin{align*}
    b_1 = (1,0,0, \dots), \qquad b_2 = (0,1,0, \dots), \qquad b_3 = (0,0,1, \dots),
\end{align*}
does not have a convergent subsequence, since for all $i,j$ we have $\|b_i - b_j \|_1 =2$. This phenomenon is due to the fact that in infinite dimension, we can "move" mass infinitely far into the tail. However, the following Proposition tells us that if we exclude the scenario in which the sequence $(\lambda^n)_{n \in \N}$ "moves" mass to infinity, we can extract an $\ell^1$-convergent subsequence.

\begin{prop}\label{prop:l1convergence} Let $(\lambda^n)_{n\in \N}\subset \overline{\Delta}$. Assume that for every $\epsilon>0$ there exist integers $k,n_0 \in \N$ such that
\[
\| \lambda^n \|_{k,1} = \sum_{i>k} \lambda_i^n<\epsilon \qquad\text{for all } n\ge n_0.
\]
Then there exists a subsequence $(\lambda^{n_j})_{j\in \N}$ and $\lambda \in \overline{\Delta}$
such that
\begin{align*}
    \lambda^{n_j} \to \lambda \in \overline{\Delta} \text{ in } \ell^1.
\end{align*}
\end{prop}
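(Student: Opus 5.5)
The plan is the standard diagonal-extraction argument, combined with the uniform tail bound to upgrade coordinatewise convergence to $\ell^1$ convergence. This is essentially a Kolmogorov–Riesz type compactness criterion in $\ell^1$.

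First, each coordinate satisfies $0 \le \lambda^n_i \le 1$. By Bolzano–Weierstrass and a Cantor diagonal argument we extract a subsequence $(\lambda^{n_j})_j$ such that $\lambda^{n_j}_i \to \lambda_i$ for every $i \in \N$, for some $\lambda \in [0,1]^\N$. Fatou's lemma (for counting measure) gives $\lambda_i \ge 0$ and $\sum_i \lambda_i \le 1$, so $\lambda \in \ell^1$.

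Second, we show $\|\lambda^{n_j} - \lambda\|_1 \to 0$. Fix $\epsilon>0$ and take $k, n_0$ from the hypothesis. For every $K \ge k$ and every $n_j \ge n_0$ we have $\sum_{k<i\le K} \lambda^{n_j}_i < \epsilon$. Passing to the limit $j\to\infty$ in these finitely many coordinates and then letting $K \to \infty$ gives $\|\lambda\|_{k,1} \le \epsilon$. We then split
\[
\|\lambda^{n_j}-\lambda\|_1 \le \sum_{i\le k}|\lambda^{n_j}_i-\lambda_i| + \|\lambda^{n_j}\|_{k,1} + \|\lambda\|_{k,1}.
\]
The first term is a finite sum, so it tends to $0$. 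The remaining two terms are each at most $\epsilon$ once $n_j\ge n_0$. Hence $\limsup_j \|\lambda^{n_j}-\lambda\|_1 \le 2\epsilon$ for every $\epsilon>0$, and the limit is $0$.

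Finally, we check that $\lambda \in \overline{\Delta}$. The map $\mu \mapsto \sum_i \mu_i$ is $1$-Lipschitz for the $\ell^1$ norm, and each $\lambda^{n_j}$ has sum $1$, so $\sum_i \lambda_i = 1$. Together with nonnegativity this gives $\lambda\in\overline{\Delta}$.

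The only delicate point is the tail estimate on the limit, $\|\lambda\|_{k,1}\le\epsilon$. It must be obtained through finite truncations, since coordinatewise convergence alone does not control infinite sums. This is exactly where the hypothesis excluding escape of mass to infinity is used; without it, the limit could lose mass, as in the example $b_1, b_2, \dots$.
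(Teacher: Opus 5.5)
Your proposal is correct and follows essentially the same route as the paper: diagonal extraction to get coordinatewise convergence, a tail bound $\|\lambda\|_{k,1}\le\epsilon$ for the limit (the paper obtains it by applying Fatou's lemma to the tails, which is the same as your finite-truncation argument), the three-term splitting of $\|\lambda^{n_j}-\lambda\|_1$, and continuity of the $\ell^1$ norm to conclude $\sum_i\lambda_i=1$. The only differences are cosmetic, such as your $2\epsilon$ bound versus the paper's $\epsilon/3$ bookkeeping.
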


\begin{proof} For each \(i\), the sequence \((\lambda_i^n)_{n\ge1}\) is contained in the compact interval \([0,1]\), and therefore admits a convergent subsequence. By successive extraction\footnote{meaning that one first extracts a subsequence along which the first coordinate converges, then from that subsequence a further one along which the second coordinate converges, and so on. The final extraction is then obtained through a Cantor diagonal argument.}, one
constructs a subsequence \((\lambda^{n_j})_{j\ge1}\) such that for every \(i\ge1\),
\[
\lambda_i^{\,n_j}\longrightarrow \lambda_i
\]
for some $\lambda_i \in [0,1]$. Since all terms are nonnegative, Fatou's lemma gives
\[
\sum_{i=1}^\infty \lambda_i
\le
\liminf_{j\to\infty}\sum_{i=1}^\infty \lambda_i^{\,n_j}
=
1.
\]

We now show that \(\lambda^{n_j}\to \lambda \) in \(\ell^1\). Fix \(\epsilon>0\). By assumption, there exist integers \(k,j_0 \in \N \) such that
\[
\sum_{i>k} \lambda_i^{\,n_j} < \epsilon/3
\qquad\text{for all } j\ge j_0.
\]
Applying Fatou's lemma again to the tails, we obtain
\[
\sum_{i>k} \lambda_i
\le
\liminf_{j\to\infty}\sum_{i>k} \lambda_i^{\,n_j}
\le
\epsilon/3.
\]
On the other hand, since \(\lambda_i^{\,n_j}\to \lambda_i\) for each \(1\le i\le k\), and there are only finitely many such indices, there exists \(j_1\) such that
\[
\sum_{i=1}^k |\lambda_i^{\,n_j}-\lambda_i| < \epsilon/3
\qquad\text{for all } j\ge j_1.
\]
Therefore, for \(j\ge \max\{j_0,j_1\}\),
\[
\|\lambda^{n_j}-\lambda\|_1
\le
\sum_{i=1}^k |\lambda_i^{\,n_j}-\lambda_i|
+
\sum_{i>k} \lambda_i^{\,n_j}
+
\sum_{i>k} \lambda_i
<
\epsilon.
\]
Thus \(\lambda^{n_j}\to \lambda\) in \(\ell^1\). Finally, since the \(\ell^1\)-norm is continuous,
\[
\|\lambda\|_1
=
\lim_{j\to\infty} \|\lambda^{n_j}\|_1
=
1.
\]
\end{proof}

\subsubsection{Non-emptiness} We now prove our main Proposition \ref{prop:nonemptyintersection} of this section, using the previous Proposition \ref{prop:l1convergence} in order to extract $\ell^1$-convergent subsequences.

\begin{prop}\label{prop:nonemptyintersection} Let $\boldsymbol{\gamma}$ be an infinite-complete path in the Rauzy diagram. Then the set $\Delta^{\boldsymbol{\gamma}, \infty} = \bigcap_{n \geq 0} \Delta^{\gamma_n}$ is non-empty.
\end{prop}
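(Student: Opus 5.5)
The plan is to pick one point $\lambda^n \in \Delta^{\gamma_n}$ for each $n$ and extract an $\ell^1$-convergent subsequence using Proposition \ref{prop:l1convergence}. The main work is to show that the limit lies in $\Delta^{\gamma_m}$ for every $m$, and in particular in the open simplex $\Delta$ rather than only in $\overline{\Delta}$.

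\textbf{Choice of the sequence.} Fix a reference vector $\mu \in \Delta$, for instance $\mu_i = 2^{-i}$, and set $\lambda^n := \hat{M}^{\gamma_n}\mu \in \Delta^{\gamma_n}$. The support structure of Lemma \ref{lem:formrvmatrix} controls these vectors. The matrix $M^{\gamma_n}$ is $I_n$-supported for a finite set $I_n$. Outside $I_n$, the rows of $M^{\gamma_n}$ are identity rows, so $(M^{\gamma_n}\mu)_\alpha = \mu_\alpha$ for $\alpha \notin I_n$. On $I_n$, each row is constant on every remaining tail $\mathcal T_i$.

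\textbf{Tightness.} This is the step I expect to be the main obstacle. If the $\lambda^n$ could push mass to infinity, Proposition \ref{prop:l1convergence} would not apply. I would first try to avoid the problem by choosing the starting vector well: use $\mu$ concentrated on the finitely many \emph{revealed} coordinates. It then suffices to show that $\|M^{\gamma_n}\mu\|_1$ is dominated by finitely many coordinates, uniformly in $n$.

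Infinite-completeness is the key input. For each fixed $\alpha$ and each $k$, there is a time after which row $\alpha$ of $M^{\gamma_{k,k+n}}$ is strictly positive. Hence, beyond that time, the coordinates of $M^{\gamma_n}\mu$ are comparable to each other in the finite head, while the unrevealed tail coordinates just carry $\mu_\alpha$.

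An alternative I would keep in reserve is to work in the reversed direction. Here one fixes $m$ and considers the points $\hat{M}^{\gamma_m}\nu$ for $\nu \in \Delta^{\gamma_m,\gamma_n}$. Since $M^{\gamma_m}$ has finitely many nonzero entries per column and is the identity off $I_m$, the map $\hat{M}^{\gamma_m}$ is continuous. It also sends $\overline{\Delta}$ into a set whose tail outside $I_m$ is controlled by the tail of $\nu$ itself, up to normalization. In the worst case I would choose the $\lambda^n$ with tails bounded by a fixed summable sequence, which ensures the hypothesis of Proposition \ref{prop:l1convergence}.

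\textbf{Limit lies in every $\Delta^{\gamma_m}$.} Extract $\lambda^{n_j} \to \lambda \in \overline{\Delta}$. For fixed $m$ and $n_j \ge m$, write $\lambda^{n_j} = \hat{M}^{\gamma_m}\nu^{j}$ with $\nu^{j} \in \Delta^{\gamma_{m,n_j}}$. Since $M^{\gamma_m}$ is invertible with a bounded inverse, the $\nu^j$ inherit tightness from the $\lambda^{n_j}$. After a further extraction, $\nu^j \to \nu \in \overline{\Delta}$, and by continuity $\lambda = \hat{M}^{\gamma_m}\nu$.

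It remains to show that $\nu$ has strictly positive entries, so that $\lambda \in \hat{M}^{\gamma_m}\Delta$. For this I use infinite-completeness once more, now from time $m$. Take $p$ such that every row of $M^{\gamma_{m,m+p}}$ is positive on the finitely many relevant rows. Writing $\nu^j = \hat{M}^{\gamma_{m,m+p}}\eta^j$ shows that each coordinate of $\nu^j$ in $I_{m+p}$ is bounded below by a fixed fraction of the total mass of $\eta^j$. This positivity passes to the limit. Coordinates outside the support are handled by choosing $p$ larger, since $\bigcup_p I_{m+p} = \mathcal A$ as every letter eventually loses.

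Hence $\lambda \in \Delta^{\gamma_m}$ for all $m$, so $\Delta^{\boldsymbol{\gamma},\infty}$ is non-empty.
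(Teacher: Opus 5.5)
Your overall architecture matches the paper's: push a fixed vector forward by $\hat M^{\gamma_n}$, extract an $\ell^1$-limit via Proposition \ref{prop:l1convergence}, get positivity of each coordinate from a strictly positive row plus a bound on $\|M^{\gamma_n}\lambda\|_1$, and get membership in $\Delta^{\gamma_m}$ by factoring through $\hat M^{\gamma_m}$ and using continuity. However, the tightness step, which you yourself flag as the crux, is never proved.

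The support $I_n$ of $M^{\gamma_n}$ grows with $n$. So knowing that $M^{\gamma_n}\mu$ agrees with $\mu$ off $I_n$ gives no uniform cut-off $k$: mass can sit on high-index letters that have already been revealed and lie in $I_n$. None of your suggested routes closes this:
\begin{itemize}
\item The claim that the head coordinates are ``comparable'' does not follow from infinite-completeness, since a strictly positive row gives no bound on ratios.
\item Restricting $\mu$ to finitely many coordinates changes nothing, because the columns of $M^{\gamma_n}$ are still spread over the growing set $I_n$.
\item Choosing $\lambda^n$ with tails under a fixed summable sequence just restates what must be shown.
\item The reversed-direction remark is circular as written, since it bounds the tail of $\hat M^{\gamma_m}\nu$ by the tail of $\nu$.
\end{itemize}

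The missing idea is that the normalization itself kills the tail, uniformly in $n$. Given $N$, infinite-completeness (plus monotonicity of the entries) gives $m$ such that the first $N$ rows of $M^{\gamma_m}$ are strictly positive; take $k \ge \max I_m$. For $x \in \overline{\Delta}$:
\begin{itemize}
\item the rows beyond $k$ are identity rows, so $\|M^{\gamma_m}x\|_{k,1} = \|x\|_{k,1} \le 1$;
\item each of the first $N$ coordinates of $M^{\gamma_m}x$ is at least $\|x\|_1 = 1$, because those rows have integer entries $\ge 1$, so $\|M^{\gamma_m}x\|_1 \ge N$.
\end{itemize}
Hence every point of $\hat M^{\gamma_m}\overline{\Delta}$ has $\|\cdot\|_{k,1} \le 1/N$. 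By nestedness the same holds for every point of $\hat M^{\gamma_n}\overline{\Delta}$ with $n \ge m$. The paper states this column-wise: $\|v\|_1 \ge N\|v\|_{k,1}$ holds for every column of $M^{\gamma_m}$ and is preserved under nonnegative combinations, hence under all later products.

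With this estimate your remaining steps go through. Applying it from time $m$ (infinite-completeness is assumed from every starting time) gives tightness of your $\nu^j$ directly, without passing through $(M^{\gamma_m})^{-1}$. One minor point: positivity of every row from time $m$ on is exactly what infinite-completeness provides. It does not follow from every letter losing, since row $\alpha$ becomes nontrivial only when $\alpha$ wins.
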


\begin{proof} Define
\begin{align*}
    b_{i}^{n} := \hat{M}^{\gamma_n}b_i \in \overline{\Delta},
\end{align*}
where $b_i$ is the $i-$th basis vector whose only nonzero coefficient is the $i$-th entry, on which it is equal to $1$.

\textbf{There exists $b_i^{\infty} \in \overline{\Delta}$ such that $b_{i}^{n} \to b_i^{\infty}$ in $\ell^1$} (up to taking a subsequence):
let $N \in \N$, by infinite-completeness, there exists $n \in \N$ such that $M^{\gamma_{n}}$ is strictly positive on the first $N$ rows.
By Lemma \ref{lem:formrvmatrix} there also exists $k \in \N$ so that $M^{\gamma_{n}}$ acts trivially on rows larger than $k$. Then we have
    $\|M^{\gamma_n}b_{i}^{} \|_1 \ge N \| M^{\gamma_n}b_{i}^{} \|_{k,1}$.
    This inequality is linear and preserved by non-negative linear combination of columns (since $\|b_i+b_j\|_1 = \|b_i\|_1 + \|b_j\|_1$) thus it is preserved by the induction. In particular, for all $N\in \N$ there exists $n_0,k \in \N$ so that for all $n \geq n_0$
    $$
     \frac{\|M^{\gamma_n}b_{i}^{}  \|_{k,1}}{\| M^{\gamma_n}b_{i}^{} \|_1} \le \frac{1}{N}
    $$
    and hence, given $\epsilon > 0$, choosing $N > 1/\epsilon$ yields $\| b_i^{n} \|_{k,1} < \epsilon$ for all $n \geq n_0$.
Then the claim follows from Proposition \ref{prop:l1convergence}. To simplify notation, we do not change the indices when passing to subsequences. All the arguments which follow also analogously work for subsequences.

\textbf{All entries of $b_i^{\infty}$ are strictly positive i.e. $b_i^{\infty} \in \Delta$ :}
note that
\begin{equation}\label{eq:nested}
    \hat{M}^{\gamma_{n+1}}\overline{\Delta} \subseteq \hat{M}^{\gamma_n}\overline{\Delta}
\end{equation}
for all $n \in \N$. Let $\alpha \in \mathcal A$, by infinite-completeness, there exists $n \in \N$ such that all coefficients of the row corresponding to $\alpha$ of $M^{\gamma_{n}}$ are positive.
Since the matrix has integer coefficients, no coefficient in the row is less than 1.
Let $\mu = M^{\gamma_n} \lambda$ where $\lambda \in \overline{\Delta}$, then $1 \le \mu_\alpha$.
Moreover, by Lemma \ref{lem:formrvmatrix} the matrix $M^{\gamma_n}$ acts as identity on all but a finite number of rows thus there is a constant $C$ such that $\| \mu \|_1 \le C$  depending only on the matrix.
In particular, $\frac{\mu_\alpha}{\| \mu \|_1} \ge \frac{1}{C}$ for all $\mu \in M^{\gamma_n} \overline{\Delta}$, meaning that for all $\hat{\mu} \in \hat{M}^{\gamma_n}\overline{\Delta}$ we have $\hat{\mu}_{\alpha} \geq \frac{1}{C}$. By (\ref{eq:nested}) this holds also for all $\hat{\mu} \in \hat{M}^{\gamma_k} \overline{\Delta}$ where $k \geq n$. Since  $b_i^{k} \in \hat{M}^{\gamma_k} \overline{\Delta}$, this bound is also true for all $b_i^{k}$ where $k \geq n$ and hence also for the limit $b_i^{\infty}$, in particular the $\alpha$-coordinate of $b_i^{\infty}$ is strictly positive. By infinite-completeness we can repeat this for all $\alpha \in \mathcal{A}$, and it follows that $b_i^{\infty}$ has strictly positive coordinates and hence  $b_i^{\infty} \in \Delta$.

\textbf{ For all $n \in \N$, it holds $b_i^{\infty} \in \Delta^{\gamma_n}$}: in particular, $b_i^{\infty} \in \Delta^{\boldsymbol{\gamma}, \infty}$ and hence $\Delta^{\boldsymbol{\gamma}, \infty}$ is not empty.
Indeed, consider $d_i^{n} = \hat{M}^{\gamma_{1,n}} b_i$ (where $\gamma_{1,n}$ is the path obtained from $\gamma$ by removing the first edge, as in Definition \ref{def:rotationnumber}).
By the same argument as in the two previous paragraphs, the vectors $d_i^{n}$ converge in $\ell^1$ to some vector $d_i^\infty$ (up to taking a subsequence), and the vector $d_i^\infty$ is again in $\Delta$, where here we are using that in the definition of infinite-completeness, all rows become eventually positive also for $M^{\gamma_{1,n}}$. Since now the action of $\hat{M}^{\gamma_1}$ on $\Delta$ is continuous with respect to $\ell^1$ topology, $b_i^\infty = \hat{M}^{\gamma_1} d_i^\infty$.
Thus  $b_i^\infty \in \Delta^{\gamma_1}$ and by induction $b_i^{\infty} \in \Delta^{\gamma_n}$ for all $n \in \N$, in particular $b_i^\infty \in \Delta^{\boldsymbol{\gamma}, \infty}$ and hence $\Delta^{\boldsymbol{\gamma}, \infty}$ is non-empty.
\end{proof}

\section{Topological genericity of uniquely ergodic IETs} After having established that the infinite intersection $\Delta^{\gamma, \infty}$ is non-empty if $\gamma$ is infinite-complete, we now want to find infinite-complete paths for which this intersection is equal to a point. In particular, in this section we prove Theorem B, which states that the set of $\lambda \in \Delta$ which define such uniquely ergodic paths contains a dense $G_\delta$ set with respect to the $\ell^1$-norm. The main tool to prove density is Proposition \ref{prop:ueexists}, which states that starting from any proper and strongly irreducible permutation $\Pi$ there exists a uniquely ergodic path.  We obtain this Proposition through a combinatorial analysis of infinite-type Rauzy diagrams as well as their relationship with their finite-type counterparts obtained by \textit{removing} tail intervals (see Definition \ref{def:removing}).

\subsection{Combinatorial analysis of infinite-type Rauzy diagrams}
Starting from any permutation $\Pi$, we construct a path along which no tail intervals win and all tail intervals lose at most once and show as an application that in any Rauzy diagram there exists an infinite-complete path. In our construction we use the following classical result for finite-type Rauzy diagrams (see \cite{Rauzy}, resp. \cite{Yoccoz_Course}) where we recall Definition \ref{def:complete} of complete paths in finite-type Rauzy diagrams.

\begin{prop}\label{prop:yoccoz} Let $\pi$ be an irreducible finite-type permutation.
	Its connected component in the finite-type Rauzy diagram $\mathcal{R}_{\pi}$ is strongly connected, in particular there exists a complete finite path in $\mathcal{R}_{\pi}$.
\end{prop}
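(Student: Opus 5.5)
The plan is to prove the statement in two stages. First we establish strong connectivity from the fact that the Rauzy operations are bijective. Then we build a complete path as a high power of a loop whose matrix is primitive.

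\textbf{Strong connectivity.} Let $\pi$ be an irreducible finite-type permutation on $d$ letters, and let $\mathfrak S$ denote the finite set of irreducible permutations on $\mathcal A$.
\begin{enumerate}
\item We check that $R_t$ and $R_b$ are well-defined maps $\mathfrak S\to\mathfrak S$. This follows from Remark~\ref{rem:finiteRV} together with the irreducibility invariance used in the paper.
\item We check that both maps are injective. Given $R_t(\pi)$, the top row is unchanged and its last letter is the winner $w$. In the bottom row, the loser is the letter that immediately follows $w$. Moving the loser back to the last position recovers $\pi$, and symmetrically for $R_b$.
\item Since $\mathfrak S$ is finite, $R_t$ and $R_b$ are therefore permutations of $\mathfrak S$. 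Every edge $\pi'\to R_t(\pi')$ thus lies on the finite cycle $\pi',R_t(\pi'),R_t^2(\pi'),\dots$ of $R_t$, and similarly for $R_b$.
\item Hence every edge can be "reversed" by following the rest of its cycle. Consequently the weakly connected component $\mathcal R_\pi$ is strongly connected.
\end{enumerate}

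\textbf{Reduction to a primitive loop.} Every elementary matrix $Id+E_{w\ell}$ has positive diagonal, so the matrix of any loop has positive diagonal. Let $\gamma$ be a loop at $\pi$. Suppose the directed graph $G_\gamma$ on $\mathcal A$, with an arrow $\alpha\to\beta$ whenever $M^\gamma_{\alpha\beta}>0$, is strongly connected. Then $M^\gamma$ is irreducible with positive diagonal, hence primitive, and $(M^\gamma)^{d-1}=M^{\gamma^{d-1}}$ is strictly positive. The finite path $\gamma^{d-1}$ is then complete in the sense of Definition~\ref{def:complete}: every row is positive at the same time $n$. Using strong connectivity, we choose $\gamma$ to be a loop at $\pi$ that traverses every edge of $\mathcal R_\pi$. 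The entries of a product of nonnegative matrices with positive diagonals only grow along the product. So $G_\gamma$ contains every arrow $w\to\ell$ such that $w$ beats $\ell$ along some edge of $\mathcal R_\pi$.

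\textbf{Main obstacle: strong connectivity of the "winner beats loser" graph.} It remains to show that the graph whose arrows are $w\to\ell$, over all edges of $\mathcal R_\pi$, is strongly connected on $\mathcal A$. This is the step I expect to be hardest, and it is where irreducibility must enter.

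I would argue by contradiction. Suppose there is a nonempty proper subset $S\subset\mathcal A$ with no arrow from $\mathcal A\setminus S$ into $S$. Then along every edge of $\mathcal R_\pi$, a letter of $S$ never loses against a letter outside $S$. Now follow the $R_t$-cycle starting from $\pi$. Along this cycle the top-right letter stays fixed and wins each time, while the bottom letters rotate through the last position. Each bottom letter other than the winner therefore loses to the top-right letter at some point. Doing the same along the $R_b$-cycle, the bottom-right letter beats every top letter other than itself. Combining these two facts, $S$ must contain both rightmost letters, or else the set of letters these rightmost letters beat already leaves $S$ and we get a contradiction. Iterating this over all vertices of $\mathcal R_\pi$, the rightmost letters range over all letters. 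I then expect to derive $S=\mathcal A$, or to find an index $n$ with $\pi_t^{-1}\{1,\dots,n\}=\pi_b^{-1}\{1,\dots,n\}$ at some vertex of $\mathcal R_\pi$, contradicting irreducibility, which is preserved along the diagram. Should this direct argument get tangled, the fallback is the classical route: show that every letter wins along some loop (Rauzy, Yoccoz), then invoke the Marmi--Moussa--Yoccoz lemma that concatenating $2d-3$ such loops gives a positive matrix.
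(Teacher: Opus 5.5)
The first half of your argument is correct, and it is the classical argument behind this result (the paper does not prove the proposition; it cites Rauzy and Yoccoz). $R_t$ and $R_b$ are injective, hence bijective, on the finite set of irreducible permutations. So every edge lies on a directed cycle, and the component is strongly connected. Your reduction in the second half is also sound, and it loses nothing. A strictly positive product of matrices $Id+E_{w\ell}$ forces the winner--loser graph $H$ on $\mathcal A$ to be strongly connected, and conversely your primitivity argument turns strong connectivity of $H$ into a complete path. So the whole content of the second assertion sits in the step you flag as the main obstacle, and that step is not proved.

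The sketch you give starts from a false claim. Along the $R_t$-cycle, only the bottom letters lying to the \emph{right} of the top-right letter $w$ are cyclically permuted, so the cycle has length $d-\pi_b(w)$. Letters to the left of $w$ in the bottom row never move and never lose to $w$ on that cycle. For the irreducible permutation $\begin{pmatrix} A & B & C & D\\ B & D & A & C\end{pmatrix}$, the $R_t$-cycle has length $2$ and $B$ never loses to $D$. The same objection applies to your $R_b$ claim. What the no-arrow hypothesis on $S$ actually gives is only that at each vertex the two rightmost letters lie on the same side of $S$, not that both lie in $S$. From there, "the rightmost letters range over all letters" is unproved; it is itself a nontrivial classical fact. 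It would not suffice anyway, since you would also need these pairs to link up. The final contradiction with irreducibility is only anticipated.

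Here is one way to close the gap. Every vertex of $\mathcal R_\pi$ emits both a top and a bottom edge, with winner/loser pairs $(t,b)$ and $(b,t)$ for its rightmost letters $t,b$. Hence $H$ is symmetric, and you only need the undirected graph with one edge $\{t,b\}$ per vertex to be connected. This holds as soon as some vertex has its top-right letter $w$ in the first position of the bottom row. Its $R_t$-cycle then has length $d-1$ and brings every other letter to the bottom-right position, so $w$ is adjacent to every letter. That every Rauzy class contains such a (standard) permutation is Rauzy's combinatorial lemma, which you would have to prove or cite.

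Alternatively, you can stay within the paper's framework. For rationally independent lengths, $T_{(\pi,\lambda)}$ has no connections and is minimal by Keane. Theorem A then makes it combinatorially minimal. Since $\mathcal A$ is finite, Remark~\ref{rem:monotoneincreasing} yields a single $n$ with $M^{(0,n)}$ strictly positive, i.e.\ a complete prefix of its Rauzy path. Your stated fallback (every letter wins along some loop, then Marmi--Moussa--Yoccoz) also works. Like these routes, though, it imports classical results rather than proving them, which is ultimately what the paper does by citation.
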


\subsubsection{Modifying Permutations} In order to use existing results for finite-type diagrams, we define two different ways of modifying a permutation: the first one consists of replacing grouped intervals by single intervals, the second one consists of entirely removing a grouped interval. Let $\Pi$ be a permutation.
\begin{rem} Given a permutation $\Pi = (\mathcal P, \tau, \pi)$ on $d$ groups, the finite-type permutation $\finiteperm$ can be thought of as the permutation for a finite-type IET on $d$ intervals where we replace each tail interval $\mathcal T$ of $\Pi$ by a single interval.
\end{rem}

\begin{ex} Let $\Pi$ be as below where $\mathcal{T} = \{\alpha_3, \alpha_4, \dots \}$, then
\vspace{1mm}
\begin{align*}
\Pi = \begin{pmatrix}
	\alpha_1 & \alpha_2 & \overrightarrow{\mathcal{T}}\\
	\overleftarrow{\mathcal{T}} & \alpha_1 &  \alpha_2
	\end{pmatrix}, \qquad
\finiteperm = \begin{pmatrix}
	\alpha_1 & \alpha_2 & \mathcal T\\
	  \mathcal T & \alpha_1 &  \alpha_2
	\end{pmatrix}.
\end{align*}
\end{ex}
\vspace{1mm}
\begin{defs}\label{def:removing} Given a permutation $\Pi = (\mathcal P, \tau, \pi)$, for a subset $\mathcal{C} \subseteq \mathcal P$ of groups, we denote by $\pi_{{\mathcal{C}}}$ the finite-type permutation obtained from $\pi$ by \textit{removing} the groups in $\mathcal C$, that is, by restricting the (finite) top and bottom permutations $\pi_t, \pi_b$ to $\mathcal P \setminus \mathcal C$.
	If $\mathcal C$ is the set of labels of tail intervals in $\mathcal P$ we denote $\pi_{\mathcal C}$ by $\pi_*$.
\end{defs}
This will only be used to remove labels of tail intervals in what follows.
\begin{ex}\label{ex:notirreducible} Let $\Pi$ be as below, then $\pi_*$ is the following permutation:
\vspace{2mm}
\begin{align*}
\Pi = \begin{pmatrix}
	\alpha_1 & \alpha_2 & \overrightarrow{\mathcal{T}}\\
	\overleftarrow{\mathcal{T}} & \alpha_1 &  \alpha_2
	\end{pmatrix}, \qquad
\pi_* = \begin{pmatrix}
	\alpha_1 & \alpha_2 \\
	  \alpha_1 &  \alpha_2
	\end{pmatrix}.
\end{align*}
\end{ex}
\vspace{2mm}
Note that in this example $\pi_*$ is not irreducible while $\pi$ is. We say that a permutation $\Pi$ is strongly irreducible if after removing all tail intervals, it remains irreducible.

\begin{defs}\label{def:stronglyirreducible}
	We say $\Pi$ is \textit{strongly irreducible} if $\pi_*$ is irreducible.
\end{defs}

\subsubsection{Preserving irreducibility} The following lemma collects, in a single statement, the invariance properties of proper, irreducible and strongly irreducible permutations that we use throughout. They all reduce to \cref{lem:remove_irred}.

\begin{lem}\label{lem:invarianceofirreducibility} Let $\Pi = (\mathcal P, \tau, \pi)$ be a proper permutation.
\begin{enumerate}[label=(\roman*)]
	\item If $\Pi$ is irreducible, any permutation obtained from $\Pi$ by revealing a tail interval is again proper and irreducible, and so are the images $R_t(\Pi)$ and $R_b(\Pi)$ under one step of the grouped Rauzy--Veech induction.
		\label{lem:irred}
	\item If $\Pi$ is strongly irreducible, then revealing a tail interval preserves strong irreducibility. Also applying the induction steps $R_t, R_b$, preserves strong irreducibility, and for every subset $\mathcal C$ of tail intervals of $\Pi$ the permutation $\pi_{\mathcal C}$ is irreducible.\label{lem:stron_irred}
\end{enumerate}
\end{lem}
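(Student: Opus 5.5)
The plan is to reduce every claim to a check on the finite-type permutation $\pi=(\pi_t,\pi_b)$ on the groups, combined with \cref{lem:remove_irred} and \cref{rem:finiteRV}. Irreducibility of $\Pi$ is by definition irreducibility of $\pi$. Properness is a statement about the tail types of the first and last grouped intervals on top and bottom. Strong irreducibility is irreducibility of $\pi_*$. The key observation is that both revealing and the single-interval-wins induction step act on $\pi$, and on $\pi_*$, in controlled ways.

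\textbf{Revealing.} Revealing the outermost letter $\alpha$ of a tail $\mathcal T$ inserts a new single group $\{\alpha\}$ adjacent to $\mathcal T$. Because the order of intervals is reversed between top and bottom, $\{\alpha\}$ sits on the outer side of $\mathcal T$ on top and on the inner side on the bottom, or vice versa. Hence $\pi$ is recovered from the new permutation $\pi'$ by removing $\{\alpha\}$.
\begin{itemize}
\item \emph{Irreducibility.} First I check that $(\pi'_t(\alpha),\pi'_b(\alpha))\notin\{(1,1),(d+1,d+1)\}$. If $\{\alpha\}$ were first on both rows, then $\mathcal T$ would be first on top and second on bottom (or the reverse), and its accumulation direction would force $0$ to be accumulated on the other row. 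Properness of $\Pi$ then gives a contradiction; the same argument handles position $(d+1,d+1)$. Once this is checked, \cref{lem:remove_irred} transfers irreducibility from $\pi$ to $\pi'$.
\item \emph{Properness.} If $\mathcal T$ was extremal, its revealed outer letter becomes a single extremal interval. Otherwise the extremal groups are unchanged.
\item \emph{Strong irreducibility.} Revealing adds one single group to $\pi_*$. Removing $\alpha$ from $\pi'_*$ returns $\pi_*$, so \cref{lem:remove_irred} applies again, provided $\alpha$ is not at $(1,1)$ or $(d,d)$ in $\pi'_*$.

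This last positional condition is the step I expect to be the main obstacle. Removing the tails can move $\alpha$ to position $(1,1)$ in $\pi'_*$ even though it is not there in $\pi'$. I would handle this by arguing directly that a reducing prefix $\mathcal I$ of $\pi'_*$ containing only $\alpha$ would force a reducing prefix of $\pi_*$, or would contradict irreducibility of $\pi$ together with properness. If that fails, I would strengthen the induction hypothesis.
\end{itemize}

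\textbf{Induction step.} If a tail wins, $R_t(\Pi)$ or $R_b(\Pi)$ is a revealing, so the previous paragraph applies. If a single interval wins, then by \cref{rem:finiteRV} $\pi$ undergoes classical Rauzy induction, which preserves irreducibility of finite permutations.
\begin{itemize}
\item \emph{Properness.} The winner remains rightmost on its row and is single, which handles the right side. The left ends are unchanged, because the losers are inserted after the winner, which is not leftmost by irreducibility.
\item \emph{Strong irreducibility.} If the loser is single, then $\pi_{1,*}$ is the Rauzy image of $\pi_*$ and is irreducible. If the loser is a tail, then $\pi_{1,*}$ is obtained from $\pi_*$ by removing that tail, which was rightmost on one row. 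So $\pi_{1,*}=\pi_*$, and irreducibility is preserved.
\end{itemize}

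\textbf{Removing a subset of tails.} For $\mathcal C$ a subset of the tail groups, I reinsert the tails not in $\mathcal C$ one at a time, starting from $\pi_*$. Each tail, when reinserted into a permutation that already contains all single intervals, is not at $(1,1)$ or $(d,d)$: a tail placed first on both rows would make $\Pi$ improper, or would make $0$ accumulated on both rows. Iterating \cref{lem:remove_irred} then shows that $\pi_{\mathcal C}$ is irreducible.
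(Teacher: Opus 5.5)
\textbf{Gap: strong irreducibility under revealing.} Your overall strategy is the paper's: reduce everything to Lemma~\ref{lem:remove_irred} and to classical Rauzy induction via Remark~\ref{rem:finiteRV}. Your argument for removing a subset $\mathcal C$ of tails matches the paper's. Your observation that $\pi_{1,*}=\pi_*$ when a single interval beats a tail is more accurate than the paper's blanket identity $(R_t(\Pi))_*=R_t(\pi_*)$.

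However, the proposal does not prove that revealing preserves strong irreducibility. Part (ii) needs this both for revealing and for $R_t,R_b$ when a tail wins. To apply Lemma~\ref{lem:remove_irred} to $\pi'_*$, you must rule out that $\alpha$ is first in both rows, or last in both rows, of $\pi'_*$. You flag this as the main obstacle and leave it with hedges.

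Of your two suggested routes, the first cannot work, because irreducibility of $\pi_*$ (and of $\pi$) does not exclude this configuration. Take $\pi$ with rows $\mathcal T\,a\,b\,\mathcal S$ and $\mathcal S\,\mathcal T\,b\,a$. Reveal $\alpha$ from $\mathcal T$, so that it lands right after $\mathcal T$ on top and right before it on the bottom. Then $\pi'_*$ has rows $\alpha\,a\,b$ and $\alpha\,b\,a$. This is reducible, although $\pi$ and $\pi_*$ are irreducible. Here $\Pi$ is improper, and that is exactly what is excluded.

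Your second route is the right one, and properness alone suffices. Your own observation already gives it: $\alpha$ sits next to $\mathcal T$ in both rows of $\pi'$, after it in one row and before it in the other. Suppose every group left of $\alpha$ in both rows of $\pi'$ were a tail. Deleting $\alpha$, the leftmost group of $\pi$ in each row would then be a tail ($\mathcal T$ itself or a tail preceding it). This contradicts properness of $\Pi$, and the right end is symmetric. It is the same mechanism you use when reinserting tails in your last paragraph, and it is what the paper's one-line ``by properness of $\Pi$'' means.

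\textbf{Smaller slips.} Three justifications need repair, although their conclusions hold.
\begin{enumerate}
\item \emph{Properness after revealing.} Suppose an extremal tail accumulates at that endpoint. Then the revealed letter lands on the inner side, so ``its revealed outer letter becomes a single extremal interval'' is false there. The correct reason, as in the paper, is that $\alpha$ is inserted next to $\mathcal T$. Hence the single interval that properness guarantees at each end is never displaced.
\item \emph{Properness after an induction step.} The winner can be leftmost in the other row of an irreducible permutation, e.g.\ rows $a\,b$ and $b\,a$. So ``not leftmost by irreducibility'' is wrong. It is also irrelevant: inserting the losers right after the winner never changes the leftmost group.
\item \emph{Irreducibility after revealing.} That $\alpha$ is not at $(1,1)$ or $(d+1,d+1)$ in $\pi'$ follows immediately from the same adjacency. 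In one row $\mathcal T$ precedes $\alpha$ and in the other it follows it. Your detour through accumulation points is muddled and unnecessary.
\end{enumerate}
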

\begin{proof}
\ref{lem:irred} We first show invariance of properness: since $\Pi$ is proper, no tail interval occupies the position $(1,1)$ or $(d,d)$ in $\pi$.
This remains true after revealing since the single interval guaranteed by properness at each end is never removed. This also remains true when applying a step of Rauzy induction where no tail wins, since the winning single interval is preserved.

We now show invariance of irreducibility: let $\Pi'$ be obtained from $\Pi$ by revealing the outermost interval $\alpha$ of a tail.
Removing $\alpha$ from $\pi'$ gives back $\pi$, and $\alpha$ is not at position $(1,1)$ or $(d,d)$ of $\pi'$; hence $\pi'$ is irreducible by \cref{lem:remove_irred}.
Consider now $R_t(\Pi)$ (resp.\ $R_b(\Pi)$). If a tail interval wins, $R_t(\Pi)$ (resp.\ $R_b(\Pi)$) is a revealing of $\Pi$ and hence irreducible by above argument.
When a single interval wins, $R_t(\Pi) = R_t(\pi)$ (resp.\ $R_b$), where $R_t(\pi)$ denotes the classical Rauzy-Veech induction (see \cref{rem:finiteRV}), which by a classical fact preserves irreducibility.

\ref{lem:stron_irred} Assume in addition that $\pi_*$ is irreducible. If a single interval wins, $(R_t(\Pi))_* = R_t(\pi_*)$ (resp.\ $R_b(\pi_*)$) by \cref{rem:finiteRV}, which as before preserves irreducibility;
and when a tail wins the step is a revealing step, which adds to $\pi_*$ a single interval whose position is neither $(1,1)$ nor $(d,d)$ by properness of $\Pi$, so that $\pi_*$ remains irreducible by \cref{lem:remove_irred}.
Thus strong irreducibility is preserved.

Finally, for a subset $\mathcal C$ of tail intervals, we start from $(\pi_{\mathcal C})_* = \pi_*$ and add back the tail intervals of $\pi_{\mathcal C}$ one at a time.
The added intervals cannot be both at $(1,1)$ or $(d,d)$ since otherwise two tail intervals are at one extremity of the permutation and this is preserved when adding intervals to recover $\pi$, contradicting properness of $\Pi$.
Then \cref{lem:remove_irred} shows that $\pi_{\mathcal C}$ is irreducible.
\end{proof}

\begin{rem}
	This Lemma shows in particular (taking $\mathcal C = \varnothing$ in \ref{lem:stron_irred}) that strong irreducibility implies irreducibility for proper permutations.
\end{rem}

\begin{rem}\label{rem:preservingirreducibility} By \cref{lem:invarianceofirreducibility}, when $\Pi$ is proper and strongly irreducible, every permutation in the Rauzy diagram $\mathcal{R}_{\Pi}$ is proper and strongly irreducible, in particular the two playing (rightmost) intervals are never both tails. Furthermore, removing any subset of tail intervals from $\pi$ yields, as in \ref{lem:stron_irred},  an irreducible permutation
\end{rem}

The following lemma shows that, up to revealing finitely many intervals, one may always assume a proper and irreducible permutation to be strongly irreducible. This will allow us to state the results of this section for strongly irreducible permutations while keeping our main theorems for arbitrary proper and irreducible ones.

\begin{lem}\label{lem:stronglyirreduciblepresentation}
	Every proper and irreducible permutation $\Pi$ becomes proper and strongly irreducible after revealing finitely many tail intervals. Moreover, revealing a tail interval leaves the underlying IET unchanged: if $\Pi'$ is obtained from $\Pi$ in this way, then $T_{(\Pi,\lambda)} = T_{(\Pi',\lambda)}$ for every $\lambda \in \Delta$.
\end{lem}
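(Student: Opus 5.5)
The second assertion is immediate from the definition of revealing. Revealing only changes the partition $\mathcal P$ into $\mathcal P'$ by splitting off the outermost letter $\alpha$ of a tail as a singleton group. The underlying top and bottom intervals $I^t_\beta, I^b_\beta$ and the translations on them are unchanged. Since a tail-reversing IET is determined by the orders $\leq_t,\leq_b$ on $\mathcal A$ together with $\lambda$, and these orders are the same whether read from $\Pi$ or $\Pi'$, we get $T_{(\Pi,\lambda)}=T_{(\Pi',\lambda)}$. By \cref{lem:invarianceofirreducibility}\ref{lem:irred}, properness and irreducibility are preserved at every revealing step. So it only remains to show that finitely many revealings make $\pi_*$ irreducible.

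For this I would argue by induction on the number of reducibility witnesses of $\pi_*$. Suppose $\pi_*$ is reducible, with $\mathcal I=\pi_{*,t}^{-1}(\{1,\dots,n\})=\pi_{*,b}^{-1}(\{1,\dots,n\})$ for some $n$ smaller than the number of single groups. Since $\pi$ itself is irreducible, the corresponding initial segments in $\pi$ must be separated by tail groups. Concretely, let $p_t$ (resp.\ $p_b$) be the position in $\pi_t$ (resp.\ $\pi_b$) of the last element of $\mathcal I$. If the top and bottom prefixes of $\pi$ up to $p_t$ and $p_b$ contained the same set of groups, $\pi$ would be reducible. Hence some tail $\mathcal T$ lies in the top prefix but not the bottom one (or vice versa); that is, it is \lq\lq straddling\rq\rq\ the cut.

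Now reveal the outermost letter $\alpha$ of $\mathcal T$. The key point is that tail-reversal puts $\alpha$ on opposite ends on top and bottom. If $\mathcal T$ is right-sided, $\alpha$ is the rightmost interval of $I^t_{\mathcal T}$ and the leftmost of $I^b_{\mathcal T}$; the left-sided case is symmetric. In $\pi'_*$ the new single letter $\alpha$ then sits immediately adjacent to where $\mathcal T$ was, on both rows. Because $\mathcal T$ lies before the cut on one row and after it on the other, $\alpha$ lands before the cut on one row and after it on the other. Therefore the cut at $\mathcal I$ no longer witnesses reducibility in $\pi'_*$. I would check this by examining the four configurations (top/bottom before the cut, right/left-sided). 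In the two configurations where $\alpha$ lands on the \lq\lq wrong side\rq\rq\ on the top row, I would instead use the bottom row, or reveal one more letter.

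To ensure termination, I would use the following potential: the number of cut positions $n$ at which $\pi_*$ is reducible. Inserting a new letter $\alpha$ kills the cut at $\mathcal I$. I must also check that it creates no new reducible cut. Any new witness $\mathcal J$ in $\pi'_*$ either contains $\alpha$ or does not. Removing $\alpha$ gives a witness of $\pi_*$ (the argument of \cref{lem:remove_irred}), and that witness must coincide with an old cut; one that was killed cannot reappear. This last step is the main obstacle. The delicate point is the bookkeeping showing that $\alpha$ separates exactly the intended cut while not merging into another one. If it fails for a single reveal, I would instead reveal one letter from \emph{every} tail at once and repeat. Since there are finitely many cuts (at most $d$), each round strictly reduces the set of reducible cuts, and finitely many rounds suffice.
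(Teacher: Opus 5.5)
\textbf{There is a genuine gap in your termination argument.} The claim that a cut killed by revealing ``cannot reappear'' is false: it can reappear shifted by one, with the revealed letter absorbed into it. Your treatment of the second assertion and of properness/irreducibility under revealing is fine.

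Here is a counterexample. Take single intervals $a,b,c$ and tails $\mathcal T,\mathcal S$, with top row $a\,\mathcal T\,b\,c\,\mathcal S$ and bottom row $\mathcal S\,b\,a\,\mathcal T\,c$.
\begin{itemize}
\item This permutation is proper, and irreducible (check $n=1,\dots,4$).
\item $\pi_*$ has top row $a\,b\,c$ and bottom row $b\,a\,c$, so its only cut is $\{a,b\}$.
\item The tail $\mathcal T$ straddles this cut in your sense: it lies in the top prefix $\{a,\mathcal T,b\}$ but not in the bottom prefix $\{\mathcal S,b,a\}$.
\item Reveal its outermost letter $\alpha$. Then $\pi'_*$ has top row $a\,\alpha\,b\,c$ and bottom row $b\,a\,\alpha\,c$, which is reducible at $\{a,b,\alpha\}$.
\end{itemize}
Your potential, the number of reducible cuts, stays equal to $1$. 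Removing $\alpha$ from the new witness returns exactly the killed cut $\{a,b\}$. (Revealing from $\mathcal S$ would have worked here, so the outcome depends on the choice of tail.) Your fallback, that each round of revealing from every tail ``strictly reduces the set of reducible cuts'', rests on the same unproven bookkeeping.

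\textbf{The missing idea is one you almost state.} Because $\alpha$ is the outermost letter and the tail is order-reversed, $\alpha$ lies to the right of $\mathcal T\setminus\{\alpha\}$ in one row and to its left in the other. In both rows the pair forms a contiguous block occupying exactly the slot of $\mathcal T$. So after removing the remaining tails, $\alpha$ sits precisely where $\mathcal T$ sat in $\pi$, on both rows. There are no \emph{wrong side} configurations to worry about.

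Consequently, if you reveal one letter from every tail, then $\pi'_*$ is exactly $\pi$ with each tail renamed by its revealed letter. This is irreducible by hypothesis. One round suffices, no cut bookkeeping or iteration is needed, and this is precisely the paper's proof.
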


\begin{proof}
	Revealing a tail interval is the elementary step of the grouped induction in which a tail wins, so it leaves both the IET and the length vector unchanged (\cref{def:inductiontail-reversing}, case 1a) and, by \cref{lem:invarianceofirreducibility}\ref{lem:irred}, it preserves properness and irreducibility. It therefore suffices to show that after finitely many revealings the finite-type permutation obtained by removing all remaining tail intervals is irreducible.

    We denote by $\Pi'$ the permutation obtained by revealing one interval of each tail of $\Pi$. Then after removing all tail intervals, $\pi'_* = \pi$ and hence $\pi'_*$ is irreducible.
\end{proof}

\subsubsection{Lifting paths} Let $\Pi = (\mathcal P, \tau, \pi)$ be proper and irreducible. We now \textit{lift} paths in $\mathcal{R}_{\finiteperm}$ and $\mathcal{R}_{\pi_{{\mathcal{C}}}}$ to $\mathcal{R}_{\Pi}$ as described in the following two Lemmas.

\begin{lem}\label{lem:liftinfinite}
    A path in $\mathcal R_{\finiteperm}$ starting at $\finiteperm$ with winning labels $\alpha_i$ and along which no tail wins defines a path in $\mathcal R_{\Pi}$ starting at $\Pi$ with winning labels $\mathcal{C}_i = \{ \alpha_i \}$.
\end{lem}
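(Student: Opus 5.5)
I will argue by induction on the length of the path in $\mathcal R_{\finiteperm}$, showing that at each step the grouped Rauzy--Veech induction on $\Pi_i$ and the classical Rauzy--Veech induction on $\pi_i$ perform the same combinatorial move. Concretely, let $\pi = \pi_0 \xrightarrow{e_1} \pi_1 \xrightarrow{e_2} \cdots$ be the given path, and let $\alpha_i$ be the winner along $e_i$. I will define inductively $\Pi_0 = \Pi$ and $\Pi_{i} = R_{\epsilon_i}(\Pi_{i-1})$, where $\epsilon_i \in \{t,b\}$ is the type (top/bottom) of $e_i$. I will maintain the invariant that the finite-type permutation underlying $\Pi_i$ equals $\pi_i$, with unchanged partition $\mathcal P_i = \mathcal P$ and direction map $\tau_i = \tau$.

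\textbf{Base case and the step.} For $i=0$ the invariant holds by definition. Suppose it holds for $\Pi_{i-1}$, with underlying permutation $\pi_{i-1}$. The playing grouped intervals of $\Pi_{i-1}$ are exactly the rightmost top and bottom entries of $\pi_{i-1}$, since the positions of groups are the same. Along $e_i$ the winner $\alpha_i$ is, by hypothesis, not a tail, so it is a single interval of $\Pi_{i-1}$. We are therefore in case 1b) or its bottom analogue of Definition~\ref{def:inductiontail-reversing}. By Remark~\ref{rem:finiteRV}, $\mathcal P_i = \mathcal P$, $\tau_i=\tau$, and the permutation of $R_{\epsilon_i}(\Pi_{i-1})$ is the classical Rauzy move of $\pi_{i-1}$ of the same type, namely $\pi_i$. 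The winning label set is $\mathcal C_i=\{\alpha_i\}$, and the losing group is the corresponding group, which may be a tail.

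\textbf{Edges lie in $\mathcal R_\Pi$.} I will invoke Lemma~\ref{lem:invarianceofirreducibility}\ref{lem:irred} to ensure that each $\Pi_i$ is again proper and irreducible. Hence $\Pi_i$ is a vertex of the diagram and $e_i$ lifts to a genuine edge of $\mathcal R_\Pi$ starting at $\Pi$.

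\textbf{Main obstacle.} The delicate point is that in $\mathcal R_\Pi$ the direction of an edge (top or bottom) must match the type of the edge in $\mathcal R_{\finiteperm}$. The edge type is determined only by which side wins, so it matches by construction. What must be checked is that a tail interval never sits in a winning position, which would force a revealing step instead. This is exactly ruled out by the hypothesis that no tail wins along the finite path, because winners in $\pi_{i-1}$ and in $\Pi_{i-1}$ coincide as groups.
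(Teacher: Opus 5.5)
Your proof is correct and is the argument the paper intends. The paper states this lemma without a separate proof and relies on \cref{rem:finiteRV} together with \cref{lem:invarianceofirreducibility}, which keeps every vertex proper and irreducible. \cref{rem:finiteRV} says that a step in which a single interval wins leaves $\mathcal P$ and $\tau$ unchanged and acts on the underlying finite permutation by the classical Rauzy move of the same type, which is exactly the step your induction uses.
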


\begin{lem}\label{lem:liftfinite} Let $\mathcal C \subset \mathcal P$, a path in $\mathcal R_{\pi_{{\mathcal C}}}$ starting at $\pi_{\mathcal{C}}$  defines a path in $\mathcal R_{\pi}$ starting at $\pi$ with the same winners and losers except for when the removed intervals in $\mathcal{C}$ play, in which case they lose.
\end{lem}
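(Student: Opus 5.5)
The plan is to lift a path of $\mathcal R_{\pi_{\mathcal C}}$ edge by edge, by induction on its length. At each step I keep an invariant relating the current vertex $\pi'$ of $\mathcal R_\pi$ to the current vertex $\sigma'$ of $\mathcal R_{\pi_{\mathcal C}}$: restricting $\pi'$ to $\mathcal P\setminus\mathcal C$ gives exactly $\sigma'$. Initially this holds by Definition~\ref{def:removing}, since $\sigma=\pi_{\mathcal C}$.

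Suppose the next edge of the given path is a top edge with winner $w$ and loser $\ell$; the bottom case is symmetric. I look at the rightmost top and bottom entries of $\pi'$ and split into two cases.

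\begin{enumerate}
\item \emph{No element of $\mathcal C$ is playing.} Then the rightmost entries of $\pi'$ are exactly those of $\sigma'$, namely $w$ on top and $\ell$ on the bottom. I take the top edge of $\mathcal R_\pi$. The loser $\ell$ is moved on the bottom line to just after $w$. Restricting to $\mathcal P\setminus\mathcal C$ commutes with this move, because the relative order of the non-removed labels is changed in the same way in both permutations. So the invariant is preserved, with the same winner and loser.
\item \emph{Some elements of $\mathcal C$ are playing.} Say $c\in\mathcal C$ is rightmost on the bottom line; the case where $c$ is rightmost on top is symmetric. The top rightmost entry $a$ cannot also be in $\mathcal C$. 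I take the top edge of $\mathcal R_\pi$, so $a$ wins and $c$ loses. This only moves $c$ on the bottom line, to just after $a$, so the restriction to $\mathcal P\setminus\mathcal C$ is unchanged and the invariant still holds. I have not yet consumed the edge of the given path.
\end{enumerate}

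These $\mathcal C$-losing steps are inserted before the edge is finally performed. Each of them moves a removed interval strictly to the left of its previous position, into the interior of the bottom (or top) line. There are finitely many labels in $\mathcal C$ and finitely many positions, so I need to show this cannot go on forever. To do this I follow the finite-type argument: once every element of $\mathcal C$ that was at the right end has been displaced, the rightmost entries are both non-removed, and they coincide with those of $\sigma'$. More precisely, each forced loss pushes $c$ left of the rightmost non-$\mathcal C$ entry $a$. A further loss by $c$ can only occur after the right end has been consumed past it. Hence between two consecutive edges of the given path there are at most $|\mathcal C|$ inserted steps. After them, case 1 applies and the edge is realized with the same winner and loser.

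The main obstacle is case 2 in the finite-type setting. I must ensure that the two playing intervals are never both in $\mathcal C$, since otherwise the rule ``they lose'' would be ambiguous. I also need every intermediate vertex to be irreducible, so that it genuinely lies in $\mathcal R_\pi$. For the first point I use properness: whenever $\mathcal C$ consists of tail intervals, Remark~\ref{rem:preservingirreducibility} shows the two playing intervals are never both tails. For irreducibility I use Lemma~\ref{lem:remove_irred} (through Lemma~\ref{lem:invarianceofirreducibility}), together with the fact that $\sigma'$ is irreducible and the removed labels are never at positions $(1,1)$ or $(d,d)$.
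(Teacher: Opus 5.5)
Your proposal is correct and follows the paper's argument. Removing $\mathcal C$ commutes with an induction step in which $\mathcal C$ does not play, and leaves $\pi_{\mathcal C}$ unchanged when a letter of $\mathcal C$ loses. You are also more careful than the paper's two-line proof, which leaves implicit both the termination of the inserted losing steps and the exclusion of two playing intervals in $\mathcal C$.

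One repair is needed. After a forced loss, $c$ sits immediately to the right of $a$ on its line, so your sentence ``pushes $c$ left of the rightmost non-$\mathcal C$ entry $a$'' is not right as written. Progress instead comes from irreducibility of $\sigma'$: its last top and last bottom letters differ, so $a$ lies strictly left of the last non-$\mathcal C$ letter of the line on which $c$ moves. Hence each forced loss shrinks the trailing block of $\mathcal C$-letters on that line. Without this, for example with a bottom line ending in $a\,c_1\,c_2$, the losing steps would cycle forever.
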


\begin{proof} Consider the permutation obtained from $\pi$ by applying one step of the induction and then removing $\mathcal{C}$.
	If $\mathcal{C}$ does not play, it is the same as the one obtained by applying the same step of the induction to $\pi_{{\mathcal{C}}}$, while if a letter in $\mathcal{C}$ loses, it is the same as $\pi_{\mathcal{C}}$.
	Hence a path below defines a path above with the same winners and losers, except for when $\mathcal{C}$ plays, in which case the corresponding interval loses.
\end{proof}

\subsubsection{Winning and losing paths}\label{sec:winningandlosingpaths} We now construct two paths starting from any proper and irreducible permutation, one along which all tail intervals win and one along which all tail intervals lose. These paths will then be used in this section as well as the next Section \ref{sec:thmBproof} in order to construct infinite-complete and uniquely ergodic paths starting from any given permutation.

Let $\Pi$ be a proper and strongly irreducible permutation in $\mathcal{R}_{\Pi}$.

\begin{prop}\label{prop:alltailswin} There exists a permutation $\Pi'$ in $\mathcal{R}_{\Pi}$ and a finite path $\gamma$ from $\Pi$ to $\Pi'$ along which each tail interval wins exactly once.
\end{prop}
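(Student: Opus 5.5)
We propose to build the path one tail at a time, using the finite-type diagram $\mathcal R_{\pi}$, where every tail is collapsed to a single letter, as the combinatorial model. By \cref{rem:preservingirreducibility}, every vertex reached from $\Pi$ is proper and strongly irreducible, so in particular $\pi$ stays irreducible all along the path. The key observation is the following. Suppose a group $\mathcal T$ is rightmost on the top (resp. bottom) and the top (resp. bottom) edge is taken. Then in $\mathcal R_{\pi}$ the letter $\mathcal T$ wins. In $\mathcal R_{\Pi}$ this same edge is a revealing step along which $\mathcal T$ wins. Afterwards the newly revealed single letter $\alpha$ occupies the rightmost position on that side, just right of the remaining tail, and the other row is unchanged.

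The plan proceeds in three steps.

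First, fix a tail $\mathcal T$ that has not yet won. The finite permutation $\pi$ is irreducible, so by \cref{prop:yoccoz} its Rauzy diagram is strongly connected. It therefore contains a complete path, along which every letter, including $\mathcal T$, wins. Truncate this path at the first time $\mathcal T$ becomes the rightmost letter of the side that wins at the next step. Before that time, check whether some other tail becomes a winner. If it does, lift the prefix up to that moment and let that tail win instead. In either case the lifted prefix contains no tail wins, and we stop just before the first one. By \cref{lem:liftinfinite}, this prefix lifts to a path in $\mathcal R_{\Pi}$ with the same single-letter winners. We then take the edge in which the chosen tail wins, which is one revealing step.

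Second, after a tail $\mathcal T$ has won once, we must make sure it never wins again while we handle the remaining tails. The first move after revealing $\alpha$ is to take the opposite edge, so that the opposite single interval wins against $\alpha$. From then on, $\mathcal T$ should only play as a loser. To enforce this, we would work in $\mathcal R_{\pi_{\mathcal C}}$, where $\mathcal C$ is the set of tails that have already won. This permutation is irreducible by \cref{lem:invarianceofirreducibility}\ref{lem:stron_irred}. We apply the argument of the first step there to reach the next unused tail. Then \cref{lem:liftfinite} lifts the resulting path to $\mathcal R_{\pi}$ with the removed letters only losing. Combined with \cref{lem:liftinfinite}, the lifted path lies in $\mathcal R_{\Pi}$: along it the tails in $\mathcal C$ never win, the single letters win as prescribed, and at the end the next tail wins once. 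Since there are finitely many tails, the induction terminates. The endpoint $\Pi'$ is the resulting vertex and $\gamma$ is the concatenation of these pieces.

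The main obstacle is the compatibility of the lifts. One has to check that the two lifting lemmas combine, that is, that a path in $\mathcal R_{\pi_{\mathcal C}}$ lifts to $\mathcal R_{\Pi}$ and not merely to $\mathcal R_{\pi}$. The issue is that a removed tail might be rightmost on the winning side, in which case the lift would have it win instead of lose. We would handle this with the observation that a removed letter can only play on the losing side if we always choose the edge for which it loses. When a tail in $\mathcal C$ becomes rightmost, we take the opposite edge. Properness guarantees that the opposing rightmost interval is single, so this edge is legitimate. In $\pi_{\mathcal C}$ the corresponding step is the identity, and this is exactly the content of \cref{lem:liftfinite}. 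The argument must also track that the inserted forced losing steps do not make the target tail unreachable. Here we would rely on strong connectivity of $\mathcal R_{\pi_{\mathcal C}}$ being robust under these detours: after each detour we are still at a vertex of that diagram and can re-run \cref{prop:yoccoz}.
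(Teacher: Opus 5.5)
Your proposal is correct and is essentially the paper's proof. You reach the first tail win by lifting a path of $\mathcal R_{\pi}$ via \cref{lem:liftinfinite}. You then repeatedly pass to $\mathcal R_{\pi_{\mathcal C}}$ with the already-won tails removed and lift back via \cref{lem:liftfinite} and \cref{lem:liftinfinite}, so those tails only ever lose; you spell out the forced losing steps more explicitly than the paper does.

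One inaccuracy does not affect the argument. The revealed letter ends up rightmost only when the playing tail accumulates at its left end. If the tail accumulates at $l$, the outermost letter is the leftmost one of the group, so the remaining tail stays rightmost after revealing, and your removal of $\mathcal C$ is exactly what forces it to lose at the next step.
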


\begin{proof}  Consider the proper and strongly irreducible permutation $\Pi = (\mathcal P, \tau, \pi)$ and let $\finiteperm$ be its finite grouped permutation which is also irreducible.
Using  the connectedness and existence of a complete path in $\mathcal{R}_{\finiteperm}$ by Proposition \ref{prop:yoccoz},  together with Lemma \ref{lem:liftinfinite} one can construct a finite path in $\mathcal{R}_{\Pi}$ from $\Pi$ to a permutation $\Pi^0$ along which no tail intervals win, so that from $\Pi^0$ there is an outgoing edge to a permutation $\tilde{\Pi}^0$ along which a tail, call it $\mathcal{T}_1$, wins.
Note that both $\Pi^0$ and $\tilde{\Pi}^0$ are again proper and strongly irreducible by \cref{lem:invarianceofirreducibility}.
Consider the permutation $\sigma_1 := (\tilde{\pi}^0)_{\mathcal T_1}$ obtained by removing $\mathcal T_1$ from $\tilde{\pi}^0$, the finite permutation of $\tilde \Pi^0$.
By \cref{lem:invarianceofirreducibility}, this permutation is irreducible.
Again using Proposition \ref{prop:yoccoz} for $\mathcal{R}_{\sigma_1}$ there exists a finite path in $\mathcal{R}_{\sigma_1}$ starting at $\sigma_1$ along which no former tail interval wins before the final edge, and a former tail $\mathcal{T}_2$ plays at the final vertex.
By Lemma \ref{lem:liftfinite} together with Lemma \ref{lem:liftinfinite} this path can be lifted to a path in $\mathcal{R}_{\Pi}$ starting at $\tilde{\Pi}^0$ along which $\mathcal{T}_1$ must always lose.
Let $\Pi^1$ be its ending vertex in $\mathcal{R}_{\Pi}$.
By assumption there is an edge along which the tail $\mathcal T_2$ wins, let $\tilde{\Pi}^1$ be the vertex it points to.
Since $\tilde{\Pi}^1$ is again proper and strongly irreducible, we can now consider the finite permutation obtained after removing $\{\mathcal{T}_1, \mathcal{T}_2 \}$ from $\tilde{\pi}^1$ and continue and repeat the same procedure finitely often and concatenate all of the obtained paths to obtain a path along which all tail intervals win exactly once.
\end{proof}

\begin{prop}\label{prop:alltailslose} There exists a permutation $\Pi'$ in $\mathcal{R}_{\Pi}$ and a finite path $\gamma$ from $\Pi$ to $\Pi'$ along which no tail intervals win and each tail interval loses at least once.
\end{prop}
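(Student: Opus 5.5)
We construct the path inductively, one tail at a time, in the same spirit as the proof of Proposition \ref{prop:alltailswin}, but working throughout in the finite-type Rauzy diagram $\mathcal{R}_{\finiteperm}$ of the grouped permutation. By Lemma \ref{lem:liftinfinite}, any path in $\mathcal{R}_{\finiteperm}$ along which no tail label wins lifts to a path in $\mathcal{R}_{\Pi}$ with the same winners and losers. In the lift, a tail label losing simply means that the whole tail interval loses; no revealing takes place and the group structure $(\mathcal P,\tau)$ is unchanged. So it suffices to find a finite path in $\mathcal{R}_{\finiteperm}$ starting at $\finiteperm$ along which no tail label wins and every tail label loses at least once.

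\textbf{Step 1.} List the tails $\mathcal T_1,\dots,\mathcal T_k$ of $\Pi$. Suppose we are at a vertex $\pi^{j}$, reached by a path in which no tail has yet won and $\mathcal T_1,\dots,\mathcal T_{j}$ have each lost. By Lemma \ref{lem:invarianceofirreducibility} and Remark \ref{rem:preservingirreducibility}, the corresponding vertex of $\mathcal R_\Pi$ is proper and strongly irreducible. Hence the finite permutation $\sigma=(\pi^{j})_{\mathcal C}$, obtained by removing the set $\mathcal C$ of all tail labels, is irreducible. By Proposition \ref{prop:yoccoz}, $\mathcal R_\sigma$ is strongly connected and contains complete paths. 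Along a complete path every letter wins at some point. We follow a path in $\mathcal R_\sigma$ and lift it via Lemma \ref{lem:liftfinite} to $\mathcal R_{\pi^{j}}$. In the lifted path the removed letters, namely the tails, never win: whenever one of them plays, it loses. So it remains to force $\mathcal T_{j+1}$ to actually play.

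\textbf{Step 2 (main obstacle).} We must show that the lifted path eventually makes $\mathcal T_{j+1}$ play, that is, brings it to a rightmost position on top or bottom. Here is the idea. Let $\beta$ be the letter of $\sigma$ sitting to the left of $\mathcal T_{j+1}$ in $\pi^{j}$, say on the top row. When a top letter wins in $\sigma$, the lifted step in $\pi^j$ sends the losing bottom letter to just after the winner. The top row of $\pi^j$ only changes when bottom wins, and even then the letters to the right of the winner on top, tails included, shift right only past one letter at a time. Consequently the letters of $\sigma$ to the right of $\mathcal T_{j+1}$ on the top row must each lose at some point along a complete path. Each such loss moves them to the left of $\mathcal T_{j+1}$, because they are reinserted just after the winner, which lies further left. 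After finitely many steps no letter of $\sigma$ remains to the right of $\mathcal T_{j+1}$ on the top row. The only other letters that could remain there are tails, and by properness these can never occupy both rightmost positions. Hence either $\mathcal T_{j+1}$ or another tail becomes rightmost and plays, and loses. If another tail plays, that does no harm: it simply loses again. To make this rigorous, we would track the finite quantity ``number of non-tail letters to the right of $\mathcal T_{j+1}$ on its row'' and show that it is non-increasing and strictly decreases each time such a letter loses. If this bookkeeping proves delicate, an alternative is a contradiction argument: if $\mathcal T_{j+1}$ never played along the lifts of arbitrarily long complete paths of $\sigma$, then the top and bottom sets to the right of it would form an invariant block, contradicting irreducibility of $(\pi^{j})_{\mathcal C\setminus\{\mathcal T_{j+1}\}}$, which holds by Lemma \ref{lem:invarianceofirreducibility}(ii).

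\textbf{Step 3.} We stop the path at the step where $\mathcal T_{j+1}$ loses and set $\pi^{j+1}$ to be the resulting vertex. We iterate Steps 1 and 2 for $j=0,\dots,k-1$ and concatenate the resulting paths. We then lift the concatenation to $\mathcal R_\Pi$ via Lemma \ref{lem:liftinfinite}. This gives a finite path from $\Pi$ to some $\Pi'$ along which no tail wins and each tail loses at least once.
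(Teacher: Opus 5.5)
Your overall architecture matches the paper's: handle one tail at a time in the finite diagram, lift via Lemma~\ref{lem:liftfinite} so that removed tails only ever lose, then lift to $\mathcal R_\Pi$ via Lemma~\ref{lem:liftinfinite}. Step~2, however, has a genuine gap. It comes from your choice to remove $\mathcal T_{j+1}$ together with all the other tails. Completeness of the path in $\mathcal R_\sigma$ then says nothing about $\mathcal T_{j+1}$, and your argument that it nevertheless comes to play fails. When bottom wins, the top loser is reinserted immediately after the \emph{winner's top position}, and that position may lie to the right of $\mathcal T_{j+1}$. So the number of letters to the right of $\mathcal T_{j+1}$ on the top row is non-increasing, but it need not drop when such a letter loses. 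Restricting to one row (``say on the top row'') is also not legitimate. Take $\pi$ with top row $\mathcal T\,a\,b$ and bottom row $b\,\mathcal T\,a$; this is the grouped permutation of a proper, strongly irreducible $\Pi$. Every winner along your lifted paths is a single letter and so sits to the right of $\mathcal T$ on top. Hence $\mathcal T$ stays at top position $1$ forever. Your claim that eventually no letter of $\sigma$ remains to its right on the top row is false here, and $\mathcal T$ can only ever play on the bottom row. Your fallback is too vague to close this. A one-row ``invariant block'' contradicts nothing, as this example shows. And irreducibility of $(\pi^j)_{\mathcal C\setminus\{\mathcal T_{j+1}\}}$ at the \emph{starting} vertex is not what gets violated.

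You can repair the argument by tracking both rows. Let $R$ and $R'$ be the sets of letters to the right of $\mathcal T_{j+1}$ on top and on bottom. Until $\mathcal T_{j+1}$ plays, $|R|$ and $|R'|$ are non-increasing, so if it never played they would stabilise. From then on every winner lies in $R\cap R'$. Since you follow a concatenation of complete paths of $\sigma$, every single letter wins afterwards, so all single letters lie to the right of $\mathcal T_{j+1}$ in both rows. At that vertex, removing the other tails puts $\mathcal T_{j+1}$ at position $(1,1)$, which contradicts Lemma~\ref{lem:invarianceofirreducibility}(ii) applied at that vertex.

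The paper avoids this bookkeeping entirely. It removes all tails \emph{except} $\mathcal T_{j+1}$, so that $\mathcal T_{j+1}$ is a genuine letter of an irreducible finite permutation. A complete path in that diagram must make $\mathcal T_{j+1}$ win, hence play. One stops at the first vertex where it plays, takes the edge where it loses, and lifts.
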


\begin{proof} Choose a tail interval $\mathcal T_1$ and consider the permutation $\pi_{{\{\mathcal T_2, \dots \mathcal T_n \}}}$ obtained by removing all tail intervals from $\Pi$ except for $\mathcal{T}_1$.
	By \cref{lem:invarianceofirreducibility}, this permutation is irreducible.
	As in the previous proof we construct a finite path in $\mathcal R_{\Pi}$ along which no tail interval wins and along whose last vertex $\mathcal T_1$ plays.
	Make the letter $\mathcal T_1$ lose.
	Repeat this procedure with the new permutation by removing all tail intervals except for $\mathcal T_2$, the resulting permutation is proper and strongly irreducible by \cref{lem:invarianceofirreducibility}, and then again until we remove all tail intervals except for $\mathcal{T}_n$.
	Concatenating the obtained paths we obtain a path $\gamma$ from $\Pi$ to a permutation $\Pi'$ with the property that along $\gamma$ no tail intervals win but all tail intervals lose at least once.
\end{proof}

As an application of Proposition \ref{prop:alltailswin} and Proposition \ref{prop:alltailslose} we now prove that starting from any permutation $\Pi$ there exists an infinite-complete path in its Rauzy diagram. Also using Proposition \ref{prop:alltailslose} we will prove a stronger version of this statement in the next Section \ref{sec:thmBproof}, as well as in Section \ref{sec:matrixcriterion}, namely that starting from any permutation there exists a uniquely ergodic path in its Rauzy diagram.

\begin{prop}\label{prop:pathinfcomplete} In any Rauzy diagram $\mathcal{R}$, starting from any proper and strongly irreducible permutation $\Pi \in \mathcal{R}$, there exists an infinite-complete path.
\end{prop}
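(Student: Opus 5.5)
We build the infinite path as a concatenation of finite blocks, and each block is chosen so that after it a growing finite set of rows of the Rauzy--Veech matrix has become strictly positive. Fix the enumeration of $\mathcal A$. Suppose the current endpoint is a proper, strongly irreducible permutation $\Pi^{(j)}$. By \cref{rem:preservingirreducibility} every vertex of $\mathcal R_\Pi$ has this property. The block $\zeta_j$ starting at $\Pi^{(j)}$ has three parts, in this order:
\begin{itemize}
\item first a finite path from \cref{prop:alltailswin}, repeated $j$ times, so that each tail wins at least $j$ times and the single intervals revealed include the labels $1,\dots,j$;
\item then a path lifted via \cref{lem:liftinfinite} from a complete path in the finite irreducible diagram $\mathcal R_{\pi}$ of the current grouped permutation;
\item finally a path from \cref{prop:alltailslose}, along which every tail loses.
\end{itemize}
The infinite path is $\boldsymbol\gamma=\zeta_1\zeta_2\cdots$. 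We take it reduced, or replace it by its reduced version, which has the same matrices.

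The key matrix claim is the following. Let $\Pi^{(j)}$ be a vertex and $\zeta$ the complete lifted path followed by the tail-losing path. Then $M^\zeta$ is strictly positive on every row indexed by a single interval of $\Pi^{(j)}$. To see this, note that along the lifted complete path no tail wins. Each step therefore contributes $Id+E_{w,\mathcal L}$ with $w$ a single interval, and the losers are either a single letter or a whole tail group. By \cref{lem:formrvmatrix}, columns belonging to the same tail are equal on the support rows. Hence the grouped matrix, with each tail collapsed to one column, coincides with the classical finite Rauzy--Veech matrix of $\pi$. Completeness of the finite path (\cref{def:complete}) makes every row of that finite matrix positive. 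Lifting back, every row indexed by a single interval is then positive in all columns, including every tail column. Multiplying afterwards by further matrices with nonnegative integer entries and identity diagonal keeps these entries positive.

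For rows indexed by tail letters we use the revealing done at the start of $\zeta_j$: any fixed letter $\alpha$ eventually becomes a single interval. If $\alpha$ lies in a tail, that tail wins infinitely often along $\boldsymbol\gamma$, since we insert \cref{prop:alltailswin} $j$ times in block $j$. So $\alpha$ is revealed at some finite time $n_\alpha$. The \cref{prop:alltailslose} part is there so that tails lose and are mixed into the single rows. This is already ensured by the complete lifted path, but it also keeps the path reduced in the sense of \cref{def:reduced}.

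Now fix $\alpha$ and $k$. Choose a block $\zeta_j$ that begins after both $k$ and $n_\alpha$. At its start $\alpha$ is a single interval, and single intervals stay single. The row of $\alpha$ in $M^{\gamma_{k,\text{end of }\zeta_j}}$ is a row of $M^{\gamma_{k,\text{start}}}\cdot M^{\zeta_j}$. The first factor has diagonal entry at least $1$ in position $\alpha$, and by the claim the $\alpha$ row of $M^{\zeta_j}$ is positive. Hence this row is strictly positive, which is exactly infinite-completeness (\cref{def:infinitecomplete}).

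The main obstacle is the lifted matrix claim. We must check carefully that a tail losing as a group contributes the same column to all its letters, and that revealings at the start of the block do not break the identification with the finite Rauzy--Veech matrix. To handle this, I would verify the column-collapse identity one edge at a time, using \cref{def:groupedmatrix} together with \cref{lem:liftinfinite}.
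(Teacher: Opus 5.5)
There is a genuine gap in the second part of your block: a complete path in $\mathcal R_\pi$ along which no tail wins does not exist. Each elementary matrix $Id+E_{w,\mathcal L}$ differs from $Id$ only in row $w$. So if a letter never wins along a path, its row in the product is still the corresponding row of $Id$. Completeness in $\mathcal R_\pi$ (\cref{def:complete}) therefore forces \emph{every} letter of $\pi$ to win at least once, including each collapsed tail letter $\mathcal T$. Such a path violates the hypothesis of \cref{lem:liftinfinite}. Moreover, a tail win upstairs is a revealing step that changes the alphabet, so the finite path can no longer be tracked in $\mathcal R_\Pi$. Your column-collapse identity is actually fine; the obstacle is existence, not identification.

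A warning sign is that your argument never uses strong irreducibility. Take the proper, irreducible but not strongly irreducible permutation of \cref{ex:notirreducible}. There the only edge without a tail win is the loop where $\alpha_2$ beats $\mathcal T$ and the permutation is unchanged. So $\alpha_1$ never wins, no single-interval row ever becomes positive, and your key matrix claim fails.

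The repair is the paper's route. Apply \cref{prop:yoccoz} to $\pi_*$, which is irreducible exactly because $\Pi$ is strongly irreducible. Lift with \cref{lem:liftfinite}, so that tails only ever lose; this makes the block of entries indexed by pairs of single intervals positive. The \cref{prop:alltailslose} path is then not redundant but essential. The lifted $\pi_*$-path gives no control over whether, or when, a tail loses. It is only when a tail loses against a single $w$ \emph{after} that block is positive that each tail column receives column $w$, which is positive on all single rows.

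With this replacement, the rest of your argument goes through and matches the paper's construction. Every label is eventually revealed, since each tail wins infinitely often. Diagonal entries $\ge 1$ let positivity of the $\alpha$-row of the block matrix propagate to $M^{\gamma_{k,k+n}}$. One minor point: revealing $j$ intervals per tail does not reveal the labels $1,\dots,j$ for an arbitrary enumeration, but you only use that each fixed label is eventually revealed.
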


\begin{proof} Consider the irreducible finite-type permutation $\pi_*$ obtained by removing \textit{all} tail intervals.
By Proposition \ref{prop:yoccoz} it follows that there exists a finite complete path in its Rauzy diagram.
By Lemma \ref{lem:liftfinite} together with Lemma \ref{lem:liftfinite} this path can be lifted to a path $\gamma_1$ from $\Pi$ to a proper and strongly irreducible permutation $\Pi^1$ along which any tail interval which plays loses.
The corresponding Rauzy-Veech matrix $M^{\gamma_1}$ is strictly positive on all coordinates $(\alpha,\beta)$ where $\alpha$ and $\beta$ correspond to the labels of single intervals.

However, we want entire rows to be positive. By Proposition \ref{prop:alltailslose} we can choose a path $\gamma_2$ from $\Pi^1$ to a proper and strongly irreducible permutation $\Pi^2$ with the property that all tail intervals lose at least once and no tail intervals win. Since the tail intervals lose against single intervals, and since all entries of $M^{\gamma_1}$ indexed by single intervals are strictly positive, it follows that the matrix $M^{\gamma_1\gamma_2}$ corresponding to the path $\gamma_1\gamma_2 $ has all rows indexed by single intervals strictly positive. We then make sure to reveal at least one interval of each tail interval: as in Proposition \ref{prop:alltailswin} we find another path $\gamma_3$ from $\Pi^2$ to a proper and strongly irreducible permutation $\Pi^3$ along which all tail intervals win exactly once, and consider $\delta_1 := \gamma_1 \gamma_2 \gamma_3$. Repeating the entire construction, the infinite path
$$\boldsymbol{\delta} = \delta_1 \cdot \delta_2 \cdot \delta_3 \cdot \dots$$
	is infinite-complete, since along this path any $\alpha \in \mathcal{A}$ is eventually revealed and hence infinitely often the entire row labeled by $\alpha$ of the Rauzy-Veech matrices corresponding to $\delta$ is eventually positive.
	\end{proof}

\subsection{Proof of Theorem B}\label{sec:thmBproof} We now turn to the proof of Theorem B. A key tool to prove density of uniquely ergodic IETs is Proposition \ref{prop:approximation}, which states that one can approximate open $\epsilon-$balls in $\Delta$ using finite Rauzy-paths. The proof of this approximation property uses the fact that it is true for finite-type IETs. We then use Proposition \ref{prop:approximation} to show that starting from any permutation there exists a uniquely ergodic path, which implies density of uniquely ergodic IETs, while the fact that uniquely ergodic IETs form a $G_{\delta}$ set will follow from a simple intersection argument.

\subsubsection{Approximation property for finite-type IETs} We obtain the approximation property for finite-type IETs as a corollary of the analogue of our main Theorem B for finite-type IETs, which was proven in 1980 in \cite{KeaneRauzy1980} by Keane and Rauzy and which we state below.
	We denote the finite-dimensional length simplex on $d$ intervals by
	$$\Delta^{d-1} := \{\lambda \in \R_{>0}^{d}\hspace{1mm}|\hspace{1mm}\sum_{i=1}^{d} \lambda_i = 1 \}.$$

	\begin{thm}\label{thm:finiteanalogue} (Theorem 7 in \cite{KeaneRauzy1980}) For any irreducible finite-type permutation $\pi$ on $d$ intervals, the set
	\begin{align*}
	    \{ \lambda \in \Delta^{d-1} \, | \, T_{(\pi, \lambda)} \text{ is uniquely ergodic} \}
	\end{align*}
	contains a $G_{\delta}$-dense subset of $\Delta^{d-1}$ with respect to any euclidean norm, in particular the $\ell^1$-norm, on $\Delta^{d-1}$.
	\end{thm}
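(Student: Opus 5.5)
The plan is the standard Baire-category argument on the finite-dimensional simplex $\Delta^{d-1}$. It has three ingredients: a countable family of open sets defined by finite Rauzy paths, a characterisation of unique ergodicity through shrinking nested simplices, and a density argument. Throughout I work with the classical finite-type Rauzy--Veech induction and matrices $M^{\gamma}$ for finite paths $\gamma$ starting at $\pi$. For these the sets $M^{\gamma}\Delta^{d-1}$ (normalised) are open simplices. The finite-type analogue of Proposition \ref{prop:invmeasures} identifies, for an IET without connections, the set of invariant probability measures with $\bigcap_k \overline{\hat M^{\gamma_k}\Delta^{d-1}}$, where $\gamma_k$ are the prefixes of its rotation number.

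\textbf{Step 1 (the $G_\delta$ set).} Let $K\subset\Delta^{d-1}$ be the set of lengths for which $T_{(\pi,\lambda)}$ has no connections. It is the complement of countably many rational hyperplanes, hence a dense $G_\delta$. For $n\in\N$ set
\[
U_n:=\bigcup\Bigl\{\hat M^{\gamma}\Delta^{d-1} \,:\, \gamma \text{ finite path from } \pi,\ \operatorname{diam}_{\ell^1}\bigl(\hat M^{\gamma}\Delta^{d-1}\bigr)<1/n\Bigr\}.
\]
Each $U_n$ is open, being a union of open simplices. Let $\lambda\in K\cap\bigcap_n U_n$. Its rotation number $\boldsymbol\gamma$ is defined, and the simplices $\hat M^{\gamma}\Delta^{d-1}$ containing $\lambda$ are exactly those of the prefixes of $\boldsymbol\gamma$, up to boundary points excluded by $\lambda\in K$. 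So the nested closures have diameters tending to $0$, and their intersection is the single point $\lambda$. By the finite-type version of Proposition \ref{prop:invmeasures}, $T_{(\pi,\lambda)}$ is uniquely ergodic. Hence it suffices to show that each $U_n$ is dense; Baire's theorem then makes $K\cap\bigcap_n U_n$ a dense $G_\delta$.

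\textbf{Step 2 (density of $U_n$, the main obstacle).} Given a nonempty open $V\subset\Delta^{d-1}$, I need a finite path $\gamma$ with $\hat M^{\gamma}\Delta^{d-1}\subset V$ and diameter $<1/n$. The natural route is to find some $\lambda\in V\cap K$ that is uniquely ergodic. For it, the closed nested simplices along its path decrease to $\{\lambda\}$. By compactness in finite dimension their diameters then tend to $0$, so for $k$ large the prefix simplex lies in $V$ and has diameter $<1/n$.

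The crux is therefore producing one uniquely ergodic point in every open set without circularity. I would first try a direct construction. Take any $\lambda_0\in V\cap K$; by Keane's criterion its path is infinite-complete. Choose a prefix $\gamma_k$ ending at some $\pi_k$. By Proposition \ref{prop:yoccoz}, there is a loop $\delta$ at $\pi_k$ with $M^{\delta}$ strictly positive. Birkhoff's contraction theorem for the Hilbert metric then shows that the simplices $\hat M^{\gamma_k\delta^m}\Delta^{d-1}$ shrink to a point inside $\hat M^{\gamma_k}\Delta^{d-1}$. The remaining difficulty is ensuring $\hat M^{\gamma_k}\Delta^{d-1}\subset V$ for some $k$, which requires the prefix simplices around $\lambda_0$ to become small in at least some direction-controlled sense.

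If this cannot be arranged directly, I would instead invoke the Masur--Veech theorem. Uniquely ergodic lengths have full Lebesgue measure, so they meet every open $V$, and the argument above concludes. I expect this density step to be the only non-routine point.
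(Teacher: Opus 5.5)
Your proof is correct, but it takes a genuinely different route from the paper: the paper gives no argument at all and imports the statement as Theorem~7 of Keane--Rauzy. Your Step~1 is sound, and it is the same template the paper later uses for Theorem~B with the open sets $\mathcal O_m$. One small imprecision: $K$ is not literally the complement of countably many rational hyperplanes. It only contains that complement, by Keane's theorem on rationally independent lengths, which is all you need. In Step~2 your direct construction does not close on its own. If $\lambda_0$ is not uniquely ergodic, every prefix simplex $\hat M^{\gamma_k}\Delta^{d-1}$ contains the simplex of normalised invariant measures, which has some positive diameter $c$. So no prefix simplex lies in $V$ once $\operatorname{diam}(V)<c$, and nothing forces the Perron--Frobenius limit point of $\hat M^{\gamma_k\delta^m}\Delta^{d-1}$ to be near $\lambda_0$. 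Choosing $\lambda_0$ to avoid this is exactly the density you are trying to prove. The Masur--Veech fallback does close the argument, and legitimately so, since Masur's proof via Teichm\"uller dynamics does not pass through Keane--Rauzy. You also need that $K$ has full measure, so that uniquely ergodic points of $K$ meet $V$. Be aware of what this buys you, though. Once Masur--Veech is available, density of the uniquely ergodic set is immediate. Your Baire argument then only contributes the standard fact that the uniquely ergodic points of $K$ form a $G_\delta$; your two steps show they coincide with $K\cap\bigcap_n U_n$. So you are deriving the 1980 topological statement from the strictly stronger 1982 measure-theoretic one, trading Keane--Rauzy's combinatorial Rauzy-induction argument for much deeper machinery. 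For this paper that cost is harmless, since the statement is only used as a black box through Corollary~\ref{cor:cylinderapproxfin}, whose proof is precisely your compactness argument in Step~2.
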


    We then obtain the following corollary:

	\begin{cor}\label{cor:cylinderapproxfin} Let $\pi$ be an irreducible finite-type permutation on $d$ intervals, let $\lambda_0 \in \Delta^{d-1}$ and let $\epsilon > 0$. Then there exists a finite path $\gamma$ in $\mathcal{R}_{\pi}$ starting at $\pi$ such that
	\begin{align*}
	    \hat{M}^{\gamma}\Delta^{d-1} \subset B_{\ell^1}(\lambda_0, \epsilon)
	\end{align*}
	\end{cor}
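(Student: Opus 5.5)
The plan is to use Theorem~\ref{thm:finiteanalogue} to find a uniquely ergodic length vector near $\lambda_0$, and then compactness of the nested simplices to turn it into a finite path. By Theorem~\ref{thm:finiteanalogue}, the uniquely ergodic lengths contain a dense set, so we can pick $\lambda \in \Delta^{d-1}$ with $\|\lambda - \lambda_0\|_1 < \epsilon/2$ such that $T_{(\pi,\lambda)}$ is uniquely ergodic. Since a dense $G_\delta$ in $\Delta^{d-1}$ minus the countable union of hyperplanes corresponding to connections is still dense, we may also assume that $T_{(\pi,\lambda)}$ has no connections. Keane's criterion then shows that it is infinitely renormalizable, minimal, and has a well-defined infinite combinatorial rotation number $\boldsymbol{\gamma}$ in $\mathcal{R}_\pi$.

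Next, identify the intersection of the nested simplices along $\boldsymbol{\gamma}$. By the finite-type analogue of Lemma~\ref{lem:pathequivalence} and Proposition~\ref{prop:invmeasures} (here the classical statements, see Section 8.1 in \cite{Yoccoz}), $\bigcap_n \overline{\hat{M}^{\gamma_n}\Delta^{d-1}}$ corresponds bijectively to the set of invariant probability measures of $T_{(\pi,\lambda)}$. Unique ergodicity therefore gives
\[
\bigcap_n \overline{\hat{M}^{\gamma_n}\Delta^{d-1}} = \{\lambda\}.
\]
If one prefers not to quote the closure version, note that the simplex $\hat{M}^{\gamma_n}\overline{\Delta^{d-1}}$ is the convex hull of the normalized columns of $M^{\gamma_n}$. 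Any limit point of these columns is a limit of vectors in the nested sets, hence lies in the intersection, hence equals $\lambda$.

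Finally, conclude by compactness. The sets $K_n := \hat{M}^{\gamma_n}\overline{\Delta^{d-1}}$ are nested compact subsets of the finite-dimensional closed simplex, and their intersection is $\{\lambda\}$. Hence $\operatorname{diam} K_n \to 0$: otherwise we could pick points $x_n, y_n \in K_n$ with $\|x_n - y_n\|_1 \geq \delta$, and subsequential limits would give two distinct points in the intersection. Choose $n$ with $\operatorname{diam} K_n < \epsilon/2$. Since $\lambda \in K_n$, we get $\hat{M}^{\gamma_n}\Delta^{d-1} \subset K_n \subset B_{\ell^1}(\lambda,\epsilon/2) \subset B_{\ell^1}(\lambda_0,\epsilon)$, and $\gamma := \gamma_n$ is the required finite path.

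I expect the main obstacle to be the middle step: justifying that the intersection of the closed nested simplices is exactly the set of (normalized) invariant measures, and in particular that boundary points of $\overline{\Delta^{d-1}}$ cannot sneak in. The cleanest route I see is to invoke the classical finite-type fact that for a path whose matrices eventually become positive (infinite-complete), the closed and open intersections coincide, because a positive matrix maps $\overline{\Delta^{d-1}}$ into the interior. Minimality of $T_{(\pi,\lambda)}$ gives this infinite-completeness.
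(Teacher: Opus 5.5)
Your proposal is correct and follows the paper's own argument: choose a uniquely ergodic $\lambda$ close to $\lambda_0$ via Theorem~\ref{thm:finiteanalogue}, then use that the nested simplices $\hat{M}^{\gamma_n}\Delta^{d-1}$ along its path shrink to $\lambda$. You also supply details the paper's two-line proof leaves implicit, namely avoiding connections so that the path is infinite, and the compactness argument giving $\operatorname{diam} K_n \to 0$. The one caveat is that your ``convex hull of columns'' aside is circular on its own, because limit points of the columns are only known to lie in the closed intersection; the step is actually closed by your last paragraph, where infinite-completeness (coming from the absence of connections together with minimality, e.g.\ via Theorem A) makes the closed and open intersections coincide.
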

	\begin{proof} Follows from the finite-type theory: by Theorem \ref{thm:finiteanalogue} we can choose a uniquely ergodic $\lambda \in B_{\ell^1}(\lambda_0, \epsilon)$ with path $\gamma$ starting at $\pi$, and since the action of the corresponding Rauzy-Veech-matrices $M^{\gamma}$ is contracting $\Delta^{d-1}$ towards the point $\lambda$, for $n$ large enough the claim follows.
	\end{proof}

    \subsubsection{Approximation property for infinite-type IETs}\label{sec:approximation}  We extend Corollary \ref{cor:cylinderapproxfin} to the case of infinite-type IETs, using the fact that the boundary of $\Delta$ is dense in $\Delta$ together with the combinatorial properties of infinite-type Rauzy graphs from Section \ref{sec:winningandlosingpaths}.

	\begin{prop}\label{prop:approximation} Let $\Pi$ be a proper and strongly irreducible permutation, $\lambda \in \Delta$ and $\epsilon > 0$.
		There exists a finite reduced path $\gamma$ in $\mathcal{R}_{\Pi}$ starting at $\Pi$ such that
	\begin{align*}
	    \Delta^{\gamma} \subset B_{\ell^1}(\lambda, \epsilon).
	\end{align*}
	\end{prop}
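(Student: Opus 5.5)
The plan is to reduce to the finite-type approximation of Corollary \ref{cor:cylinderapproxfin}, exploiting that $\lambda$ puts little mass far out in the tails. Fix $\epsilon>0$ and first reveal enough intervals of each tail so that the mass left in the unrevealed tails is small. Concretely, choose $N$ such that the total $\lambda$-mass of the labels not among the first $N$ revealed intervals of each tail is less than $\epsilon/4$. Applying Proposition \ref{prop:alltailswin} repeatedly, we obtain a finite path $\gamma_0$ along which each tail wins at least $N$ times, so that the tail groups remaining after $\gamma_0$ carry $\lambda$-mass $<\epsilon/4$. The difficulty is that $\gamma_0$ does not consist only of revealing steps: single intervals also win, and $M^{\gamma_0}$ is not the identity. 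To handle this, I would instead use only pure revealing, which does not change the IET (Lemma \ref{lem:stronglyirreduciblepresentation}). Since $\Delta^{\gamma}$ is defined through matrices, revealing edges contribute $Id$, and only the reduced-path condition has to be respected; I would build the required revealings into the path as tails win at the appropriate moments. Either way, we reach a permutation $\Pi^1$ whose tails $\mathcal T_1,\dots,\mathcal T_k$ have small $\lambda$-mass, together with finitely many single labels $S$, and $\pi^1_*$ is irreducible by Lemma \ref{lem:invarianceofirreducibility}.

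Next, let $\mu\in\Delta^{|S|-1}$ be the normalised restriction of $\lambda$ to $S$ (pulled back through $M^{\gamma_0}$ if $\gamma_0$ is not pure revealing). Apply Corollary \ref{cor:cylinderapproxfin} to $\pi^1_*$, $\mu$ and $\epsilon/4$ to get a finite-type path $\sigma$ with $\hat M^{\sigma}\Delta^{|S|-1}\subset B_{\ell^1}(\mu,\epsilon/4)$. Lift $\sigma$ to $\mathcal R_{\Pi^1}$ using Lemma \ref{lem:liftfinite} with $\mathcal C$ the set of tails; along the lift, whenever a tail plays it loses. The lifted matrix acts on the single coordinates exactly as $M^\sigma$, and each tail column is added to rows of winners in $S$.

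The main obstacle is controlling where mass from the tail coordinates goes. A generic point of $\Delta^{\gamma}$ is $\hat M^{\gamma}\nu$ with $\nu$ arbitrary, and $\nu$ could place almost all its mass on a tail, which would distort the single coordinates of the image. To prevent this, I would follow the lift with a further finite-type complete path on the singles, repeated many times. Because the rows indexed by $S$ then grow large and positive (as in the proof of Proposition \ref{prop:nonemptyintersection}), the image's tail coordinates, which are fixed by the identity rows, become a proportion at most $1/K$ of the total for large $K$. Tail contributions get absorbed into the single rows, but only through columns that the finite projective contraction already sends near $\mu$ direction. Indeed, a tail column equals the column of the single it lost against at that moment, which lies in $\hat M^{\sigma}\Delta^{|S|-1}$ after normalisation. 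Hence every normalised column of the full matrix restricted to $S$ lies in the convex set $\hat M^\sigma\Delta^{|S|-1}\subset B(\mu,\epsilon/4)$, and the tail rows have weight $\le 1/K$. By convexity, $\Delta^{\gamma}$ is then within $\epsilon/4+2/K$ of $\mu$ extended by zero, which in turn is within $\epsilon/2$ of $\lambda$. Choosing $K$ large gives $\Delta^\gamma\subset B_{\ell^1}(\lambda,\epsilon)$. The delicate point I expect is to arrange the order so that every tail loses only after $\sigma$'s contraction has taken effect, that is, to concatenate $\sigma$ after a tail-losing path as in Proposition \ref{prop:alltailslose}, and to keep the path reduced.
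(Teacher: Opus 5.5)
Your proposal follows essentially the paper's proof. You pass to an equivalent presentation with enough tail intervals revealed (revealing contributes identity matrices), apply Corollary~\ref{cor:cylinderapproxfin} to the tail-free permutation, and lift via Lemma~\ref{lem:liftfinite} so that tails lose when they play. You then append tail-losing paths from Proposition~\ref{prop:alltailslose} after enough growth, so that every normalised tail column is dominated by a nonnegative combination of the contracted single columns, and conclude by convexity.

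Two points to tighten. First, commit to the pure-revealing version: a prefix $\gamma_0$ with single winners can already force $\lambda\notin\overline{\Delta^{\gamma_0}}$, and $(M^{\gamma_0})^{-1}\lambda$ need not be positive. Second, tails that lose during the lift of $\sigma$ leave a bounded uncontracted term in their columns. These columns therefore lie in $\hat M^{\sigma}\Delta^{|S|-1}$ only up to an error vanishing as $K\to\infty$. This is the same domination mechanism as the paper's $2/N$ estimate, so there is no need (and in general no way) to make tails lose only after $\sigma$.
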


	\begin{proof}
		Let us denote for $d \in \N$, $\lambda_{(d)} := (\lambda_1,\ldots,\lambda_d,0,0,\ldots)$ and choose $d$ large enough such that $\lambda_{(d)}$ is $\epsilon$-close to \(\lambda\).
		More precisely, let \(d\) and \(\delta>0\) such that
		\[
			B_{\ell^1}(\hat{\lambda}_{(d)},2\delta)
		    \subset B_{\ell^1}(\lambda,\epsilon).
		\]

		Consider also the permutation $\Pi_d$ obtained from $\Pi$ by revealing the first $d$ letters.
		Note that $\Pi_d$ is proper and irreducible by \cref{lem:invarianceofirreducibility}-\ref{lem:irred}.

	Consider
	\[
		\lambda_{(d)}^* := (\lambda_1,\ldots,\lambda_d)
	\]
	and $\hat{\lambda}_{(d)}^*$ its normalization in the simplex $\Delta^{d-1}$.
	Let $(\pi_d)_*$ be the finite-type permutation on $d$ intervals obtained by removing all tail intervals from $\Pi_d$.
	By \cref{lem:invarianceofirreducibility} this permutation is irreducible.
	By Corollary \ref{cor:cylinderapproxfin}, there exists a finite path $\gamma^*$ in \(\mathcal R_{(\pi_d)_*}\), which we can extend to a finite complete path by Proposition \ref{prop:yoccoz}, such that $M^{\gamma^*}$ is a positive matrix and
	\[
	    \hat{M}^{\gamma^*} \Delta^{d-1}
	    \subset B_{\ell^1}(\hat{\lambda}_{(d)}^* ,\delta).
	\]
	Equivalently, if we denote the columns of the matrix $M^{\gamma^*} = \begin{pmatrix} m_1^*  & \cdots & m_d^* \end{pmatrix},$ then
	\[
		\left\| \hat{m}_i^*- \hat{\lambda}_{(d)}^* \right\|_1<\delta
	    \qquad\text{for every } i=1,\ldots,d.
	\]

	\medskip

	Now lift \(\gamma^*\) to a path \(\gamma\) in the infinite-type Rauzy diagram $\mathcal{R}_{\Pi}$ as in Lemma \ref{lem:liftfinite}, where along $\gamma$ all tail intervals lose when they play.
	Then the matrix of the lifted path has the following block form
	\[
	M^{\gamma} = \left( \begin{array}{c@{\qquad}c} \begin{array}{ccc} m_1^{\!*} & \cdots & m_d^{\!*} \end{array} & \begin{array}{cccc} c_1^{\!*} & c_2^{\!*} & c_3^{\!*} & \cdots \end{array} \\[0.6em] 0 & I \end{array} \right).
	\]
	Denote by $m_{i} = (m_i^*, 0, 0, \dots)$ and $c_{i} = (c_i^*, 0, \dots, 0, 1, 0, \dots)$ the corresponding columns of $M^\gamma$.
	Since each column $\hat{m}_i^*$ is $\delta$-close to $\hat{\lambda}_{(d)}^*$, $\hat{m}_i$ is \(\delta\)-close to  $\hat{\lambda}_{(d)}$, i.e.
	\[
		\left\|\hat{m}_{i} - \hat{\lambda}_{(d)} \right\|_1<\delta
	    \qquad\text{for } i=1,\ldots,d.
	\]
	Choose a loop \(\eta\) based at the ending vertex of $\gamma$ which makes all tail coordinates lose (which exists by Proposition \ref{prop:alltailslose}) and consider the concatenation of $\gamma$ with this loop repeated $N$ times $\gamma \cdot \eta^N.$ For a tail column $c_i^{N}$ of \(M^{\gamma \cdot \eta^N}\), we have an expression of the form
	\[
	    c_i^{N}
	    =
	    e_i+ M_{N}^{\eta},
	\]
	where $e_i$ is a vector of the canonical basis and $M_{N}^{\eta}$ is  a linear combination of columns $m_i$, i.e
	 \[
	    M_{N}^{\eta} = \sum_{j=1}^{d} x_j^N m_j,
	\]
	for some $x_j^N \in \N$ so that $\sum_{j=1}^d x_j^N \ge N$ since every tail is assumed to lose at least once along the loop.
	Since the vectors $m_j^*$ are positive and have integer entries,
	\[
	    \left\|M^\eta_N\right\|_1 \ge \sum_{j=1}^d x_j^N \ge N ,
	\]
	thus $\left\|c_i^N\right\|_1 \ge  N = N \cdot \|c_i^N - M^\eta_N\|_1$ and
	\begin{align*}
		\|\hat c_i^N - \hat M^\eta_N\|_1 &\le \frac{\|c_i^N - M^\eta_N\|_1}{\|c_i^N\|_1} + \|M^\eta_N\|_1 \cdot \left|\frac{1}{\|c_i^N\|_1} - \frac{1}{\|M^\eta_N\|_1}\right|\\
						 &\le \frac{1}{N} + \frac{\left|\|M^\eta_N\|_1 - \|c_i^N\|_1\right|}{\|c_i^N\|_1} \le \frac{2}{N}.
	\end{align*}
	Recall that the vectors \(m_j\) are columns whose normalizations are \(\delta\)-close to \(\hat{\lambda}_{(d)}\) and hence the same is true for the sum $M_{N}^{\eta}$.
	Therefore, for \(N\) sufficiently large, every normalized column of
	\(M^{\gamma \cdot \eta^N}\) is contained in
	\(B_{\ell^1}(\hat{\lambda}_{(d)},2 \delta)\).
	Hence, up to taking $\delta$ small enough,
	\[
	    \hat{M}^{\gamma \cdot \eta^N} \Delta
	    \subset B_{\ell^1}(\hat{\lambda}_{(d)},2 \delta)
	    \subset B_{\ell^1}(\lambda,\epsilon).
	\]
    Up to replacing $\gamma \cdot \eta^N$ with the corresponding reduced path, the proposition follows.
	\end{proof}

We now prove that starting from any permutation $\Pi$, there exists a uniquely ergodic path. For \(E\subset\Delta\), write
\[
\operatorname{diam}_{\ell^1}(E)
:=
\sup\{\|x-y\|_1:x,y\in E\}.
\]

\begin{prop}\label{prop:ueexists}
Let \(\Pi\) be a proper and strongly irreducible permutation, let $\tilde{\gamma}$ be a finite reduced path in $\mathcal{R}_{\Pi}$ starting at $\Pi$. Then there exists an infinite-complete path $\boldsymbol{\gamma}$ starting at \(\Pi\) with prefix $\tilde{\gamma}$ such that  $\lim_{n\to\infty}\operatorname{diam}_{\ell^1}(\Delta^{\gamma_n}) = 0$ and \(\Delta^{\boldsymbol{\gamma},\infty}\) is equal to a single point.
\end{prop}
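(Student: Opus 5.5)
We build $\boldsymbol{\gamma}$ inductively as a concatenation $\tilde\gamma\cdot\zeta_1\cdot\zeta_2\cdots$ of finite reduced paths. Each block $\zeta_k$ serves two purposes. It contributes to infinite-completeness, in the spirit of the proof of Proposition~\ref{prop:pathinfcomplete}. It also shrinks the $\ell^1$-diameter of the nested simplices below $1/k$.

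Suppose the prefix $\gamma^{(k)}:=\tilde\gamma\zeta_1\cdots\zeta_{k-1}$ has been built and ends at a vertex $\Pi^{(k)}$. By Lemma~\ref{lem:invarianceofirreducibility}, $\Pi^{(k)}$ is proper and strongly irreducible. First append a path $\delta_k$ ending at some vertex $\Pi'$, constructed exactly as the block $\delta_1=\gamma_1\gamma_2\gamma_3$ in the proof of Proposition~\ref{prop:pathinfcomplete}. Along $\delta_k$ all rows indexed by current single intervals become strictly positive, and every tail wins at least once, so at least one further letter of each tail is revealed.

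Next, the map $\hat M^{\gamma^{(k)}\delta_k}$ is continuous from $\Delta$ to $\Delta$ in $\ell^1$; this holds because the matrix is $I$-supported with bounded entries (Lemma~\ref{lem:formrvmatrix}). Pick a point $\mu\in\Delta$ and let $\lambda=\hat M^{\gamma^{(k)}\delta_k}\mu$. By continuity there is $\eta>0$ with $\hat M^{\gamma^{(k)}\delta_k}B_{\ell^1}(\mu,\eta)\subset B_{\ell^1}(\lambda,1/(2k))$. Proposition~\ref{prop:approximation}, applied at $\Pi'$, gives a finite reduced path $\beta_k$ with $\Delta^{\beta_k}\subset B_{\ell^1}(\mu,\eta)$. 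Set $\zeta_k=\delta_k\beta_k$. Then $\Delta^{\gamma^{(k+1)}}\subset B(\lambda,1/(2k))$, so its diameter is at most $1/k$. The sets $\Delta^{\gamma_n}$ are nested, so $\operatorname{diam}_{\ell^1}(\Delta^{\gamma_n})\to 0$.

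Infinite-completeness (Definition~\ref{def:infinitecomplete}) works as follows. Fix $\alpha$ and a starting time $m$. Every label $\alpha$ is revealed after finitely many blocks, since each $\delta_k$ reveals at least one letter of each tail and the tails are finitely many. Once $\alpha$ is a single interval, the next full block $\delta_k$ after time $m$ makes row $\alpha$ of $M^{\gamma_{m,\cdot}}$ strictly positive. The argument is the one in Proposition~\ref{prop:pathinfcomplete}, with positivity persisting by Remark~\ref{rem:monotoneincreasing}.

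Given infinite-completeness, Proposition~\ref{prop:nonemptyintersection} shows that $\Delta^{\boldsymbol\gamma,\infty}$ is nonempty. Since the diameters go to zero, it is a single point. The concatenation of reduced paths may fail to be reduced at the junctions; in that case we pass to the associated reduced path as in Proposition~\ref{prop:approximation}.

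The step I expect to be the main obstacle is the interaction between Proposition~\ref{prop:approximation} and the starting vertex. That proposition starts from a permutation whose "first $d$ letters" refer to its own enumeration, so it must be applied at $\Pi'$ in the coordinates of $\Pi'$. It also does not by itself deliver a positive pullback of the ball. I therefore plan to use continuity of $\hat M^{\gamma^{(k)}\delta_k}$ (pull the target ball back) rather than any contraction estimate. I would double-check that this continuity is uniform enough on $\Delta$, which it is since the map is a finite modification of the identity followed by normalization with the norm bounded below by $1$. A secondary issue is keeping $\delta_k$ compatible with reducedness.
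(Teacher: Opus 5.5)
Your proof is correct, but it obtains infinite-completeness by a different route than the paper.

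The paper fixes a single $\lambda\in\Delta$. At step $r$ it applies Proposition~\ref{prop:approximation} at the end of $\gamma^{r-1}$ with a radius $\epsilon_r$ chosen to satisfy two conditions: $\epsilon_r<\min_{i\le r}\lambda_i$, and $\epsilon_r$ is small relative to a Lipschitz constant $L_r$ of $\hat M^{\gamma^{r-1}}$. The diameter bound then follows from $\operatorname{diam}_{\ell^1}(\hat M^{\gamma^{r-1}}E)\le L_r\operatorname{diam}_{\ell^1}(E)$. Infinite-completeness comes for free: every normalized column of $M^{\eta_r}$ lies in the closed $\epsilon_r$-ball around $\lambda$, so its $i$-th coordinate is positive for $i\le r$, i.e.\ rows $1,\dots,r$ of $M^{\eta_r}$ are strictly positive.

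You instead decouple the two requirements. Your approximation step only has to shrink diameters, so you can re-centre at each stage and need only continuity of $\hat M^{\gamma^{(k)}\delta_k}$ at one point $\mu$ to pull back the target ball. Infinite-completeness is supplied separately, by interleaving the combinatorial blocks $\delta_k$ of Proposition~\ref{prop:pathinfcomplete}. The price is the extra bookkeeping you carry out:
\begin{itemize}
\item each $\delta_k$ reveals a new letter in each of the finitely many (persistent) tails, so every label eventually becomes single and stays single;
\item row positivity of $M^{\delta_k}$ propagates to the whole window $M^{\gamma_{m,\cdot}}$, because all diagonal entries of Rauzy--Veech products are at least $1$.
\end{itemize}
The paper's anchoring trick avoids this, so its construction is a single repeated step.

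The obstacle you flag about coordinates does not arise. The alphabet and its enumeration are fixed throughout $\mathcal R_\Pi$, so Proposition~\ref{prop:approximation} applied at any vertex refers to the same coordinates. Both proofs treat reducedness at the junctions identically, by passing to the associated reduced path.
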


\begin{proof}
We want to construct inductively finite paths $\eta_{0},\eta_1, \eta_2, \dots$
so that $\gamma^r:=\eta_{0}\cdots\eta_r$ has prefix $\tilde{\gamma}$ and
\[
    \operatorname{diam}_{\ell^1}(\Delta^{\gamma^r})\leq2^{-r+1}
\]
Define $\eta_{0} := \tilde{\gamma}$. Then $\operatorname{diam}_{\ell^1}(\Delta^{\gamma^{0}})\leq2$ is trivially true. Assume now that \(\gamma^{r-1}\) satisfying these properties has already been constructed. Since \(M^{\gamma^{r-1}}\) is a bounded operator and \(\|M^{\gamma^{r-1}}\lambda\|_1 \geq 1\) for every \(\lambda \in \Delta\), the projectivized map
\[
\hat M^{\gamma^{r-1}} \colon \Delta \to \Delta
\]
is Lipschitz for the \(\ell^1\)-metric. Let \(L_r\) be a Lipschitz constant
for this map. Fix $\lambda \in \Delta$. Choose \(\epsilon_r>0\) such that for all $1 \leq i \leq r$
\[
   \lambda_i > \epsilon_r, \hspace{2mm} 2L_r\epsilon_r < 2^{-r}.
\]

By Proposition \ref{prop:approximation} applied at the endpoint of the path
\(\gamma^{r-1}\), there exists a finite path \(\eta_r\) such that
\[
    \Delta^{\eta_r}
    \subset
    B_{\ell^1}(\lambda,\epsilon_r).
\]

Now define
\[
    \gamma^r:=\gamma^{r-1}\eta_r.
\]
Then
\[
\begin{aligned}
    \operatorname{diam}_{\ell^1}(\Delta^{\gamma^r}) = \operatorname{diam}_{\ell^1}(\hat{M}^{\gamma^{r-1}}\hat{M}^{\eta_r}\Delta)
    &\le
    L_r\operatorname{diam}_{\ell^1}(\hat M^{\eta_r}\Delta) \\
    &\le
    L_r\operatorname{diam}_{\ell^1}(B_{\ell^1}(\lambda,\epsilon_r)) \\
    &\le
    2L_r\epsilon_r
    <
    2^{-r}.
\end{aligned}
\]
Consider now the path $\boldsymbol{\gamma} = \eta_0 \eta_1\eta_2\dots$ and as usual let $\gamma_n$ be the prefix of $\gamma$ up to time $n$. Let $r \geq 1$, then for $n$ large enough we obtain
\[
    \operatorname{diam}_{\ell^1}(\Delta^{\gamma_n}) \leq \operatorname{diam}_{\ell^1}(\Delta^{\gamma^r}) \leq 2^{-r}
\]
and hence  $\lim_{n \to \infty} \operatorname{diam}_{\ell^1}(\Delta^{\gamma_n}) = 0$. Since $\Delta^{\boldsymbol{\gamma},\infty} \subset \Delta^{\gamma_n}$ for all $n \in \N$ we have $\operatorname{diam}_{\ell^1}(\Delta^{\boldsymbol{\gamma}, \infty}) = 0$.

We now show that $\Delta^{\boldsymbol{\gamma}, \infty} \neq \emptyset$ and hence $\Delta^{\boldsymbol{\gamma}, \infty}$ is equal to a point. By Proposition \ref{prop:nonemptyintersection} it suffices to check infinite-completeness of $\boldsymbol{\gamma}$. Indeed, every normalized column of
\(M^{\eta_r}\) lies in the closure of \(\hat M^{\eta_r}\Delta\). Since
\[
    \Delta^{\eta_r}
    \subset
    B_{\ell^1}(\lambda,\epsilon_r),
\]
for $1 \leq i \leq r$ every normalized column of \(M^{\eta_r}\) has $i$-th coordinate at least
\[
    \lambda_{i}-\epsilon_r>0,
\]
and in particular the entire $i$-th row of \(M^{\eta_r}\) is positive. Up to replacing $\boldsymbol{\gamma}$ by its corresponding reduced path, $\boldsymbol{\gamma}$ is infinite-complete.
\end{proof}

\begin{theoremB} For any proper and irreducible permutation $\Pi$, the set of lengths $\lambda \in \Delta$ for which the IET $T_{(\Pi, \lambda)}$ is uniquely ergodic contains a dense $G_\delta$ set with respect to the $\ell^1-$topology.
\end{theoremB}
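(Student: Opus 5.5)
First reduce to the strongly irreducible case. By \cref{lem:stronglyirreduciblepresentation}, after revealing finitely many tail intervals we obtain a proper and strongly irreducible $\Pi'$ with $T_{(\Pi,\lambda)} = T_{(\Pi',\lambda)}$ for every $\lambda\in\Delta$. The set in question is therefore the same for $\Pi$ and $\Pi'$, and we may assume $\Pi$ is proper and strongly irreducible.

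\textbf{Definition of the candidate set.} For each finite reduced path $\gamma$ starting at $\Pi$, let $U_\gamma$ denote the interior of $\Delta^{\gamma}$ in the $\ell^1$-topology. This is the set of $\lambda$ whose path begins with $\gamma$ and which satisfy the strict inequalities defining $\Delta^\gamma$; by the description in the proof of \cref{lem:pathequivalence}, these are finitely many strict linear inequalities on finitely many coordinates, so the set is open. For $r\ge1$ define
\[
G_r := \bigcup \{\, U_\gamma : \gamma \text{ finite reduced}, \ \operatorname{diam}_{\ell^1}(\Delta^\gamma) < 2^{-r},\ M^\gamma \text{ strictly positive on rows } 1,\dots,r \,\},
\]
and set $G := \bigcap_r G_r$. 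Each $G_r$ is open, so $G$ is $G_\delta$.

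\textbf{Density.} We show each $G_r$ is dense; Baire's theorem is not needed for density of a single $G_r$, but density of $G$ requires Baire. $\Delta$ is an open subset, in the relative topology, of the closed subset $\overline\Delta$ of the complete space $\ell^1$; equivalently it is a $G_\delta$ in $\overline{\Delta}$, hence Polish and Baire. Given $\lambda\in\Delta$ and $\epsilon>0$, \cref{prop:approximation} yields a finite path $\tilde\gamma$ with $\Delta^{\tilde\gamma}\subset B(\lambda,\epsilon)$. \cref{prop:ueexists} then extends $\tilde\gamma$ to an infinite-complete path whose prefixes $\gamma_n$ have diameters tending to $0$ and whose later prefixes have the first $r$ rows positive. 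The set $U_{\gamma_n}$ is nonempty, since $M^{\gamma_n}$ applied to a vector with strictly positive coordinates lands in the interior. For $n$ large, $U_{\gamma_n}$ lies in $G_r\cap B(\lambda,\epsilon)$, so $G_r$ is dense.

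\textbf{Unique ergodicity on $G$.} Let $\lambda\in G$. For each $r$ there is a $\gamma^{(r)}$ with $\lambda\in U_{\gamma^{(r)}}$, so $\gamma^{(r)}$ is a prefix of the rotation number $\boldsymbol\gamma(T_{(\Pi,\lambda)})$. Since a point lies in a unique path, these prefixes are compatible. Any prefix of $\boldsymbol\gamma$ which, after a given time $k$, has the first $r$ rows positive yields infinite-completeness; this uses the fact that the diameters, and hence the prefix lengths, increase without bound. Alternatively, we can require in $G_r$ the stronger property that $M^{\gamma_{k,\cdot}}$ becomes positive for all $k\le r$. By \cref{prop:invmeasures}, $\mathcal M_T$ is in bijection with $\Delta^{\boldsymbol\gamma,\infty}$, which has diameter $0$, so $T$ is uniquely ergodic.

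\textbf{Main obstacle.} The delicate step is the openness of $U_\gamma$ together with the fact that $\lambda\in U_\gamma$ forces $\gamma$ to be a prefix of the rotation number. This is needed because tail-winning edges impose no inequality, which is handled by using reduced paths as in the proof of \cref{lem:pathequivalence}. A closely related point is encoding infinite-completeness, with its ``for all $k$'' quantifier, through countably many open conditions. I would handle this by indexing $G_{r}$ over pairs $(k,r)$ with $k\le r$ and including the requirement that $M^{\gamma_{k,n}}$ is row-positive on the first $r$ rows. This requirement remains achievable by \cref{prop:ueexists}, since that construction makes every late block $\eta_r$ row-positive.
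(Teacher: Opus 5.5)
Your proposal is correct and follows essentially the paper's route: reduce to the strongly irreducible case, intersect unions of open cylinders $\Delta^\gamma$ of shrinking diameter, obtain density from Proposition~\ref{prop:approximation} combined with Proposition~\ref{prop:ueexists}, and conclude with Proposition~\ref{prop:invmeasures}. The differences are minor. You use Baire, which is legitimate since $\Delta$ is a $G_\delta$ (though not an open) subset of the complete space $\overline{\Delta}$, whereas the paper directly exhibits the limit point of the nested cylinders inside any open set. Your $(k,r)$-indexed row-positivity condition makes explicit the infinite-completeness of $\boldsymbol{\gamma}_\lambda$ that Proposition~\ref{prop:invmeasures} requires and that the paper leaves unchecked; the time-$0$ condition alone would not directly control $M^{\gamma_{k,n}}$.
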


\begin{proof}
By \cref{lem:stronglyirreduciblepresentation}, revealing finitely many tail intervals turns $\Pi$ into a proper and strongly irreducible permutation $\Pi'$ with $T_{(\Pi,\lambda)} = T_{(\Pi',\lambda)}$ for every $\lambda \in \Delta$. The set of lengths for which the IET is uniquely ergodic is therefore the same for $\Pi$ and $\Pi'$, so we may assume that $\Pi$ is strongly irreducible.

For $m \in \N$, define
\[
\mathcal O_m
:=
\bigcup_{\substack{\gamma\text{ finite reduced path from }\Pi,\\ \operatorname{diam}_{\ell^1}(\Delta^\gamma)<1/m}}
\Delta^\gamma .
\]
Each \(\mathcal O_m\) is open. Indeed, for a finite reduced path $\gamma$ the matrix $M^{\gamma}$ differs from the identity in only finitely many rows: by \cref{def:groupedmatrix} each edge contributes either $\mathrm{Id}$ or $\mathrm{Id} + E_{w,\mathcal L}$, so $M^{\gamma} = \mathrm{Id} + N$ with $N$ supported on finitely many rows, and its (integer) inverse is likewise $(M^{\gamma})^{-1} = \mathrm{Id} + N'$ with $N'$ finitely supported. Hence
\[
\Delta^{\gamma} = \hat{M}^{\gamma}\Delta = \bigl\{\mu \in \Delta : (M^{\gamma})^{-1}\mu \in \R_{>0}^{\N}\bigr\}
\]
is cut out of $\Delta$ by the finitely many strict inequalities $\bigl((M^{\gamma})^{-1}\mu\bigr)_i > 0$ for $i$ in the support of $N'$ --- the remaining coordinates only give $\mu_i > 0$, which holds throughout $\Delta$.
Each of these is a strict linear condition with countably many uniformly bounded coefficients, hence $\ell^1$-continuous, so $\Delta^{\gamma}$ is open in $\Delta$ and therefore so is the union \(\mathcal O_m\). The intersection
\[
\mathcal G:=\bigcap_{m\ge1}\mathcal O_m
\]
is then a countable intersection of open sets and hence a \(G_\delta\) subset of \(\Delta\). We further claim that $\mathcal{G}$ is dense. Consider any open set $U \subset \Delta$. By Proposition \ref{prop:approximation}, there exists a finite reduced path $\tilde{\gamma}$ starting at $\Pi$ such that
\[
\Delta^{\tilde{\gamma}} \subset U.
\]
By Proposition \ref{prop:ueexists} we can extend $\tilde{\gamma}$ to an infinite-complete path $\boldsymbol{\gamma}$ with prefix $\tilde{\gamma}$ where the diameters of $\Delta^{\gamma_n} \subset U$ contract towards a single point $\lambda \in U$. Then for all $m \in \N$ there exists $n \in \N$ such that
$\operatorname{diam}_{\ell^1}(\Delta^{\gamma_n})<1/m$ and $\lambda \in \Delta^{\gamma_n}$, in particular $\lambda \in \mathcal{G}$ and hence $\mathcal{G} \cap U$ is nonempty.

It remains to show that
\[
\mathcal G\subset \{\lambda\in\Delta : T_{\Pi,\lambda}\text{ is uniquely ergodic}\}.
\]
Let $\lambda \in \mathcal G$, then for all $m \in \N$ there exists $\gamma_m$ such that
\[
\lambda \in \Delta^{\gamma_m}, \; \operatorname{diam}_{\ell^1}
\bigl(\Delta^{\gamma_m}\bigr) < \frac{1}{m}.
\]
By Lemma \ref{lem:pathequivalence}, $\gamma_m$ is then a prefix of the path $\gamma_{\lambda}$ defined by $\lambda$ for all $m \in \N$. Since the diameters of $\Delta^{\gamma_m}$ tend to \(0\), the lengths of the paths \(\gamma_m\) must tend to infinity. Hence the Rauzy path \(\boldsymbol{\gamma_\lambda}\) is infinite. Furthermore, for all $m \in \N$,
\[
\operatorname{diam}_{\ell^1}
\bigl(\Delta^{\boldsymbol{\gamma_{\lambda}}, \infty}\bigr)
\le
\operatorname{diam}_{\ell^1}(\Delta^{\gamma_m})
<
\frac1m,
\]
hence $\{\lambda \} = \Delta^{\boldsymbol{\gamma_{\lambda}}, \infty}$ is equal to a point and by Proposition \ref{prop:invmeasures} the IET $T_{(\Pi, \lambda)}$ is uniquely ergodic.

\end{proof}

\section{A matrix condition for unique ergodicity} \label{sec:matrixcriterion}

This last section is dedicated to the proof of Theorem C, which yields a sufficient condition for a given tail-reversing infinite-type IET to be uniquely ergodic using \textit{contraction} of the Rauzy-Veech matrices along its path. The proof of Theorem B did not rely on such a result, since we could use the fact that the boundary of $\Delta$ is dense in $\Delta$ together with finite-type theory. However, Theorem C explicitely describes a large set of uniquely ergodic paths and therefore might enable us to prove measure-theoretic genericity results for tail-reversing IETs in the future.

\subsection{Unique ergodicity} We introduce a sufficient condition for unique ergodicity using the Hilbert projective metric as well as explicit bounds on the diameters of the Rauzy-Veech matrices.
\newcommand{\diam}{D}
\subsubsection{The Hilbert projective metric} We equip the simplex $\Delta$ with a canonical extended projective metric (also called the \textit{Hilbert projective metric}) defined for all $x,y \in \Delta$ by \[d(x, y) = \log \sup_{i,j \in \N} \frac{x_i y_j}{y_i x_j}.\]
For any finite subset $A \subset \N$ one can define a restricted simplex
$$\Delta_A := \{\lambda \in \R_{>0}^A\hspace{1mm}|\hspace{1mm}\sum_{i \in A}\lambda_i = 1 \}.$$
We similarly define a projective metric on the simplex $\Delta_A$ for all $x, y \in \Delta_A$ by
$$d_A(x, y) = \log \sup_{i,j \in A} \frac{x_i y_j}{y_i x_j}.$$
Notice that for all $A \subset B \subset \N$,
\[d_A(x, y) \le d_B (x, y) \le d(x, y).\]

For a matrix $M$, denoting by $M^k$ its $k$-th column, let us define the $A$-diameter of the matrix
\[\diam_A(M) = \sup_{k,l \in \N} \, d_A (M^k, M^l) = \log \sup_{\substack{i, j \in A\\ k, l \in \N}} \frac{M_{ik} M_{jl}}{M_{il} M_{jk}}.\]
Notice that this constant takes into account the coefficients of the matrix on the tail intervals when they are losing.

	\subsubsection{The matrix condition}\label{sec:uniqueergodicitycriterion}

	In the estimates which follow, since the projective metric is invariant under rescaling, we write $M$ rather than $\hat{M}$.

	\begin{lem}
		\label{lem:contraction}
		Let $A \subset B \subset \N$ then for all $x, y \in \Delta$,
		\[ d_A( M x_B, M y_B) \le \left(1- e^{-\diam_A(M)}\right) d_B (x,y) \]
		where $v_B$ denotes the projection to the vector in $\R_{\geq 0}^\N$ which coincide with $v$ on $B$ and is $0$ for coordinates in $\overline B$.
		When $\diam_A(M) = \infty$ one simply has the contraction $d_A( M x_B, M y_B) \le d_B (x,y).$
	\end{lem}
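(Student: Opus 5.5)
This is a version of Birkhoff's contraction estimate for positive matrices, adapted to the truncation $x \mapsto x_B$ and the restricted metric $d_A$. The plan is to reduce it to the classical two-point computation by expressing $d_A(Mx_B,My_B)$ through ratios of linear forms.

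\textbf{Step 1: reduction to ratios of averages.} Put $\Lambda = d_B(x,y)$ and assume $\Lambda<\infty$, otherwise there is nothing to prove. For $i\in A$ write
\[
(Mx_B)_i = \sum_{k\in B} M_{ik} x_k, \qquad (My_B)_i=\sum_{k\in B} M_{ik} y_k .
\]
Using projective invariance, rescale $y$ so that
\[
m:=\inf_{k\in B} x_k/y_k, \qquad m' := \sup_{k\in B} x_k/y_k, \qquad \log(m'/m)=\Lambda .
\]
Then for each $i\in A$,
\[
\frac{(Mx_B)_i}{(My_B)_i}=\frac{\sum_{k\in B} M_{ik}y_k\,(x_k/y_k)}{\sum_{k\in B} M_{ik}y_k}.
\]
This is a weighted average of the ratios $x_k/y_k$ with weights $w^{(i)}_k\propto M_{ik}y_k$. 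So $d_A(Mx_B,My_B)=\log\sup_{i,j\in A} R_i/R_j$, where the $R_i$ are weighted averages of one set of numbers in $[m,m']$. This already gives the weak contraction $d_A\le d_B$, which settles the case $D_A(M)=\infty$. It also handles rows with zero entries in $B$, provided one uses the convention that the ratio is infinite when the weights vanish; this convention needs to be checked.

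\textbf{Step 2: quantify how different the weight vectors can be.} Let $\theta=e^{-D_A(M)}$. By definition of $D_A$, for $i,j\in A$ and all $k,l$ we have
\[
M_{ik}M_{jl}\ge \theta\, M_{il}M_{jk}.
\]
So the normalized weights satisfy $w^{(i)}_k/w^{(j)}_k \ge \theta\, w^{(i)}_l/w^{(j)}_l$ for all $k,l$. Averaging over $l$ with weights $w^{(j)}_l$ gives $w^{(i)}_k\ge \theta\, w^{(j)}_k$ for every $k$, since the $y$-factors cancel. Hence
\[
w^{(i)}=\theta\, w^{(j)}+(1-\theta)\,u
\]
for some probability vector $u$ on $B$.

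\textbf{Step 3: bound the ratio.} Write $a=R_j\in[m,m']$. Then
\[
R_i=\theta a+(1-\theta)c \quad\text{with } c\in[m,m'],
\]
and symmetrically for $R_j$ in terms of $R_i$. The aim is to show $\log(R_i/R_j)\le(1-\theta)\Lambda$.

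\begin{itemize}
\item The worst case is $a=m$, $c=m'$, which gives $\log(\theta+(1-\theta)e^{\Lambda})$.
\item By concavity of $\log$, $\log(\theta+(1-\theta)e^{\Lambda})\ge(1-\theta)\Lambda$, so this crude bound goes the wrong way.
\end{itemize}

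This step is the main obstacle. The decomposition in Step 2 must be used on both sides simultaneously, by considering the pair $(R_i,R_j)$ jointly, or equivalently by applying the classical Birkhoff--Hopf argument to the $2\times|B|$ submatrix formed by rows $i,j$.

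I would first try to reduce to $|B|=2$ via extremal points, following Birkhoff's original proof. In that case the cross-ratio computation is explicit and gives a contraction factor $\tanh(D_A/4)$. The stated factor $1-e^{-D_A}$ then follows from the elementary inequality $\tanh(D/4)\le 1-e^{-D}$ for $D\ge 0$, which needs to be verified. Finally, since the weights only involve $B$-coordinates and the supremum is over $i,j\in A$, the infinite index set causes no difficulty beyond taking suprema.
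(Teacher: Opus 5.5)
Your proposal is correct and is essentially the paper's argument. The paper simply cites the classical cone-contraction property of the Hilbert metric (Viana). Your Step 3 rests on the same classical fact: Birkhoff's theorem applied to $x|_B\mapsto (Mx_B)|_A$ (or to each pair of rows $i,j\in A$), whose image has $d_A$-diameter at most $\sup_{k,l\in B} d_A(M^k,M^l)\le D_A(M)$, which gives you the sharper factor $\tanh(D_A(M)/4)$. The comparison you left to check does hold: with $t=e^{D/2}\ge 1$ one has $\tanh(D/4)=\frac{t-1}{t+1}\le\frac{(t-1)(t+1)}{t^2}=1-e^{-D}$, since $t^2\le (t+1)^2$.
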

	\begin{proof}
		This is a classical property of the Hilbert Metric, see e.g.\ Section 1.2 in \cite{Viana97} for a proof.
	\end{proof}

	Since we are interested in bounding the distance $d_A(Mx,My)$, we need to estimate the error created by tail coordinates.
	In the following, for $A \subset \N$ and $x \in \Delta$, we denote by $\| x \|_A$ the $\ell^1$ norm of $x_A$ and by $\bar{A}$ the complementary set of $A$ in $\N$.
	Recall from Lemma \ref{lem:formrvmatrix} that we say a matrix is $A$-supported if on rows in $\bar{A}$ it is equal to the identity matrix;
	strictly $A$-supported if it is moreover strictly positive on rows in $A$.

    Note that due to Lemma \ref{lem:formrvmatrix} the matrix $M$ only has finitely many different columns when restricted to $A$, hence the constant $N_A(M)$ in the Proposition below is well-defined.

	\begin{lem}
		\label{lem:perturbation}
		Let $A \subset B \subset \N$ be finite sets and let $M$ be a Rauzy--Veech matrix which is strictly $A$-supported. For $x \in \Delta$, let $x_B$ and $x_{\overline{B}}$ be its projections onto $B$ and $\N \setminus B$, and set
		\[ N_A(M) := \max_{p,q \in \N} \frac{\|M^p\|_A}{\|M^q\|_A}, \qquad C_A(M) := \bigl(e^{\diam_A(M)} - 1\bigr)\, N_A(M). \]
		Then for all $x \in \Delta$,
		\[
			d_A(M x_B, M x) \le C_A(M) \, \frac {\|x\|_{\overline{B}}}{\| x \|_1}.
		\]
	\end{lem}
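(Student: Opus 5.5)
The plan is to compare the two vectors $u := M x_B$ and $v := M x$ coordinatewise on $A$. Since $x = x_B + x_{\overline B}$, we have $v = u + w$ with $w := M x_{\overline B} \ge 0$. Only rows $i \in A$ enter $d_A$, and there $M$ is strictly positive, so $u_i, v_i > 0$. Setting $a_i := w_i/u_i \ge 0$ gives
\[
d_A(u,v) = \log \sup_{i,j\in A} \frac{1+a_i}{1+a_j} \le \log\Bigl(1 + \sup_{i,j \in A} (a_i - a_j)_+\Bigr) \le \sup_{i,j\in A} (a_i - a_j)_+ ,
\]
using $\frac{1+a_i}{1+a_j} \le 1 + (a_i-a_j)_+$ and $\log(1+t)\le t$. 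The whole proof thus reduces to bounding the oscillation of the ratio $w_i/u_i$ across rows $i \in A$ by $C_A(M)\,\|x\|_{\overline B}/\|x\|_1$.

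The key input is the diameter. For $i,j\in A$ and any columns $k,l$, the definition of $\diam_A(M)$ gives
\[
\frac{M_{ik}}{M_{il}} \le e^{\diam_A(M)}\,\frac{M_{jk}}{M_{jl}} .
\]
Writing $w_i = \sum_{k \in \overline B} M_{ik} x_k$ and $u_i = \sum_{l\in B} M_{il}x_l$, I will express $a_i$ as a weighted average of the ratios $M_{ik}/M_{il}$. I would also normalize these ratios by the column masses: summing the displayed inequality over $j \in A$ yields $M_{ik}/M_{il} \le e^{\diam_A(M)} \|M^k\|_A/\|M^l\|_A \le e^{\diam_A(M)} N_A(M)$. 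To get the factor $e^{\diam_A(M)}-1$ rather than $e^{\diam_A(M)}$, I will subtract a common reference quantity. Concretely, I will write $a_i = \sum_k \theta_{k} \rho_{ik}$, where $\rho_{ik}$ compares $M_{ik}$ to the $u$-weighted average of row $i$. The diameter bound says $\rho_{ik}$ and $\rho_{jk}$ differ by at most a multiplicative factor $e^{\diam_A(M)}$, so $\rho_{ik}-\rho_{jk} \le (e^{\diam_A(M)}-1)\rho_{jk}$. Each $\rho_{jk}$ is at most of order $N_A(M)$, which gives the constant $C_A(M)$ times the total weight $\sum_{k\in\overline B} x_k$ divided by the normalizing mass.

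The step I expect to be the main obstacle is producing the denominator $\|x\|_1$ instead of $\|x\|_B$. The naive bound from $u$ only controls $w_i/u_i$ by $\|x\|_{\overline B}/\|x\|_B$, which is worse when most mass of $x$ lies outside $B$. I would first try to run the same oscillation argument with $v$ in place of $u$ as the reference, i.e. with $b_i := w_i/v_i \in [0,1)$, since $v_i = \sum_{k\in\N} M_{ik}x_k$ involves all of $x$. The same diameter and normalization argument bounds $b_i$ and the oscillation of $b_i$ by $(e^{\diam_A(M)}-1)N_A(M)\,\|x\|_{\overline B}/\|x\|_1$. One then has $d_A(u,v) = \log\sup_{i,j}\frac{1-b_j}{1-b_i}$, and I would need to control this by $\sup (b_i - b_j)_+$ without the $1/(1-b_i)$ loss. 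If that fails in general, I would split into two cases. When $\|x\|_{\overline B}/\|x\|_1$ is small, $1-b_i$ is bounded below and the estimate follows up to the constant. When it is large, the right-hand side already dominates the trivial bound $d_A(u,v)\le \diam_A(M)$ coming from Lemma~\ref{lem:contraction}, since both $u$ and $v$ lie in the cone spanned by the columns of $M$.
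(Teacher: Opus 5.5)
Your first route cannot give the stated bound, and your instinct about why is correct. From $d_A(u,v)=\log\frac{1+a_{\max}}{1+a_{\min}}$, the step $\frac{1+a_i}{1+a_j}\le 1+(a_i-a_j)_+$ discards the factor $1/(1+a_{\min})$. The oscillation $a_{\max}-a_{\min}$ is only controlled by $(e^{D_A(M)}-1)\,a_{\min}$. But $a_{\min}$ is unbounded when most of the mass of $x$ lies in $\overline{B}$, while the right-hand side stays at most $C_A(M)$. So the claim that the proof ``reduces to bounding the oscillation of $w_i/u_i$'' reduces it to a false statement.

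Your second route stops exactly at the step that matters. You never show how to bound $\log\frac{1-b_{\min}}{1-b_{\max}}$. Bounding the oscillation of $b$ and then dividing by $1-b_{\max}$ reintroduces the factor $1+a_{\max}$. The proposed case split yields at best $d_A(Mx_B,Mx)\le C'\,\|x\|_{\overline B}/\|x\|_1$ for some unspecified $C'$, not the prescribed constant $C_A(M)=(e^{D_A(M)}-1)N_A(M)$. The trivial bound $D_A(M)$ only takes over once $\|x\|_{\overline B}/\|x\|_1\ge D_A(M)/\bigl((e^{D_A(M)}-1)N_A(M)\bigr)$, and this threshold can be close to $1$. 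So ``$1-b_i$ bounded below'' gives no control with the right constant. Note also that $b_i$ itself is not bounded by $(e^{D_A(M)}-1)N_A(M)\|x\|_{\overline B}/\|x\|_1$: if the columns of $M$ are proportional on $A$, then $D_A(M)=0$ while $b_i>0$. Only $b_{\min}\le N_A(M)\|x\|_{\overline B}/\|x\|_1$ holds.

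The missing idea is to use the diameter multiplicatively on the ratios themselves, not on their oscillation. With $K=e^{D_A(M)}$, sum $M_{ik}M_{jl}\le K M_{il}M_{jk}$ with weights $x_kx_l$ over $k\in B$, $l\in\overline B$. This gives $u_iw_j\le K w_iu_j$ for $i,j\in A$, i.e.\ $a_{\max}\le K a_{\min}$. Hence
\[
d_A(u,v)=\log\frac{1+a_{\max}}{1+a_{\min}}\le\log\Bigl(1+(K-1)\frac{a_{\min}}{1+a_{\min}}\Bigr)\le (K-1)\,b_{\min},
\]
where $b_{\min}=\frac{a_{\min}}{1+a_{\min}}=\min_{i\in A}w_i/v_i$. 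In your notation this reads $\frac{1-b_{\min}}{1-b_{\max}}\le 1+(K-1)b_{\min}$, which is exactly the estimate without the $1/(1-b_{\max})$ loss that you were looking for. The denominator $\|x\|_1$ then comes from bounding the minimum by the $v$-weighted average:
\[
b_{\min}\le\frac{\|w\|_A}{\|v\|_A}=\frac{\sum_{k\in\overline B}x_k\|M^k\|_A}{\sum_{k\in\N}x_k\|M^k\|_A}\le N_A(M)\,\frac{\|x\|_{\overline B}}{\|x\|_1}.
\]
The last step uses $\|M^k\|_A\le N_A(M)\|M^l\|_A$ for all $k,l$. This is the paper's argument, and it produces the constant $C_A(M)$ exactly.
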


	\begin{proof}
		Let $u = Mx_B = \sum_{k \in B} x_k M^k$ and $v = Mx_{\overline{B}} = \sum_{k \in \overline{B}} x_k M^k$ (both are strictly positive on the rows indexed by $A$, since $M$ is strictly positive there).
		We want to estimate $d_A(u, u+v)$. For every $i \in A$ define $\beta_i = \frac{v_i}{u_i} > 0$ and let $\beta_{\max}$ ($\beta_{\min}$) be the largest (smallest) entry of $\beta$.
	Then $u_i +v_i = u_i(1+\beta_i)$ and
	\begin{equation}\label{eq:max}
	\exp d_A(u, u+v)
	= \sup_{i,j \in A} \frac{u_i(u_j+v_j)}{u_j(u_i+v_i)}
	= \sup_{i,j \in A} \frac{u_iu_j(1+\beta_j)}{u_ju_i(1+\beta_i)}
	= \frac{1+\beta_{\max}}{1+\beta_{\min}}.
	\end{equation}

	Let $K = e^{\diam_A(M)}$, then by definition
	$M_{ik} M_{jl}\le K M_{il} M_{jk}$ for all $i,j \in A$, $k,l \in \N$. Thus
	\[
	\sum_{k\in B,\ell\in\bar B}x_kx_\ell M_{ik}M_{j\ell}
	\le
	K\sum_{k\in B,\ell\in\bar B}x_kx_\ell M_{i\ell}M_{jk}.
	\]
	hence $u_i v_j\le K v_i u_j$ and dividing  by $u_i u_j>0$ yields $\beta_j \le K\beta_i$ for all $i,j \in A$ and in particular $\beta_{\max}\le K\beta_{\min}$.

	Now notice that
	\[
	  \frac{\beta_i}{1+\beta_i}
	  = \frac{(Mx_{\bar{B}})_i}{(Mx_B)_i + (Mx_{\bar{B}})_i}
	  = \frac{(Mx_{\bar{B}})_i}{(Mx)_i},
	\]
	so $\beta_{\min}/(1+\beta_{\min}) = \min_{i \in A}\, \beta_{i}/(1+\beta_{i}) = \min_{i \in A}\, (Mx_{\bar{B}})_i/(Mx)_i$.
	Since a minimum over a set is bounded above by any weighted average of elements of this set, we may choose the weights $\frac{(Mx)_i}{\|Mx\|_A}$ and obtain
	\[
	  \frac{\beta_{\min}}{1+\beta_{\min}}
	  \;\leq\; \sum_{i \in A} \frac{(Mx)_i}{\|Mx\|_A} \cdot  \frac{(Mx_{\bar{B}})_i}{(Mx)_i}
	  = \frac{\|Mx_{\bar{B}}\|_A}{\|Mx\|_A}.
	\]
	Let \(a_k:=\|M^k\|_A\); recall $N_A(M) = \max_{p,q \in \N} a_p/a_q$.
	By linearity and positivity,
	\[
	\|Mx_{\bar B}\|_A=\sum_{k\in\bar B}x_k a_k,
	\qquad
	\|Mx\|_A=\sum_{k\in\mathbb N}x_k a_k.
	\]
	Moreover, by definition of \(N_A(M)\), \(a_k\le N_A(M)\,a_l\) for all \(k,l \in \mathbb N\). Summing \(x_kx_l a_k\le N_A(M)\,x_kx_l a_l\) over
	\(k\in\bar B\), \(l\in\mathbb N\), gives
	\[
	\Big(\sum_{k\in\bar B}x_k a_k\Big)\Big(\sum_{l\in\mathbb N}x_l\Big)
	\le
	N_A(M)
	\Big(\sum_{k\in\bar B}x_k\Big)
	\Big(\sum_{l\in\mathbb N}x_l a_l\Big).
	\]
	Dividing by $\sum_{l\in\mathbb N} x_l \sum_{l\in\mathbb N}x_l a_l$ yields
	\[
	\frac{\|Mx_{\bar B}\|_A}{\|Mx\|_A}
	\le
	N_A(M)\frac{\|x_{\bar B}\|_1}{\|x\|_1}.
	\]
	Overall, it follows that
	\[
	  d_A(u,\, u+v)
	  = \log\frac{1+\beta_{\max}}{1+\beta_{\min}}
	  \leq \log\frac{1+K\beta_{\min}}{1+\beta_{\min}}
	  = \log\!\left(1 + \frac{(K-1)\,\beta_{\min}}{1+\beta_{\min}}\right)
	  \leq \frac{(K-1)\,\beta_{\min}}{1+\beta_{\min}}.
	\]
	where in the last step we used that $K \geq 1$. Using the previous inequality
	\[
	  d_A\!\left(Mx_B,\, Mx \right)
	  \;\leq\; (K-1) \, N_A(M) \,\frac{\|x_{\bar{B}}\|_1}{\|x\|_1}
	  \;=\; (e^{\diam_A(M)}-1)\, N_A(M) \, \frac{\|x_{\bar{B}}\|_1}{\|x\|_1}
	  \;=\; C_A(M)\,\frac{\|x_{\bar{B}}\|_1}{\|x\|_1}.
	\]
	This proves the first inequality. The second then follows from it together with Lemma \ref{lem:contraction} and the fact that
	\[ d_A( M x, M y) \leq d_A(M x, M x_B) + d_A(M x_B, M y_B) + d_A(M y_B, M y).
	\]

	\end{proof}

	We can now give our sufficient condition for a path $\boldsymbol{\gamma}$ to describe a uniquely ergodic IET. We first establish a \emph{localization} lemma, which bounds the $A$-diameter of a product $MN$ by the (larger) $B$-diameter of its second factor, provided that $N$ reveals enough intervals in each tail.

	\begin{lem}
		\label{lem:localization}
		Let $A \subseteq B \subset \N$ be finite sets, and let $M$ and $N$ be Rauzy--Veech matrices such that $M$ is $A$-supported and $N$ is strictly $B$-supported. Assume that $N = M^{\gamma}$ for a path $\gamma$ which reveals at least $n$ intervals from each tail of its starting grouped permutation; that is, along $\gamma$ at least $n$ intervals of each tail have been turned into single intervals, and their labels belong to $B$. Then
		\[ \diam_A( M N ) \le \diam_B(N) + \frac{2}{n}.\]
	\end{lem}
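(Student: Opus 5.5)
The plan is to compare the rows indexed by $A$ of $MN$ column by column, after splitting each column of $N$ into its part on $B$ and its part outside $B$. Fix a column index $k\in\N$. Since $N$ is strictly $B$-supported, it equals the identity on rows in $\bar B$, so $N^k = (N^k)_B + \mathbb 1_{k\notin B}\, e_k$. For $i\in A$ this gives
\[
(MN)_{ik} = u^k_i + \mathbb 1_{k\notin B}\, M_{ik}, \qquad u^k := M (N^k)_B .
\]
The proof then has two steps. First, control the projective distance between the principal parts $u^k, u^l$. Second, show that the correction term $M_{ik}$ is a uniformly small relative perturbation of $u^k_i$.

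\emph{Step 1 (principal part).} Apply \cref{lem:contraction} with the sets $A\subset B$ to $x=N^k$, $y=N^l$, only in its weak form $d_A(Mx_B,My_B)\le d_B(x,y)$. This gives $d_A(u^k,u^l)\le d_B(N^k,N^l)\le \diam_B(N)$ for all $k,l$. No contraction rate is needed here, since the target inequality keeps the full $\diam_B(N)$.

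\emph{Step 2 (tail correction).} Let $k\notin B$. Because $A\subseteq B$, $k$ is not in $A$, so by \cref{lem:formrvmatrix} $k$ lies in some tail $\mathcal T$ of $M$. On rows $i\in A$ the entries $M_{ij}$ take a common value $c_{i\mathcal T}$ for all $j\in\mathcal T$; in particular $M_{ik}=c_{i\mathcal T}$.

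Now use the revealing hypothesis. The path $\gamma$ of $N$ starts where the path of $M$ ends, so its starting tails are the tails of $M$. Hence $\gamma$ reveals at least $n$ labels $j_1,\dots,j_n$ of $\mathcal T$, and all of them lie in $B$. Since these labels are still in $\mathcal T$ from the point of view of $M$, we get $M_{ij_s}=c_{i\mathcal T}$. Strict $B$-support of $N$ gives $N_{j_s k}\ge 1$, because the entries are positive integers. Therefore
\[
u^k_i \;\ge\; \sum_{s=1}^n M_{ij_s}N_{j_sk}\;\ge\; n\,c_{i\mathcal T} \;=\; n\,M_{ik}.
\]
So $(MN)_{ik}=u^k_i(1+\theta^k_i)$ with $\theta^k_i\in[0,1/n]$ for every $i\in A$. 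For $k\in B$ we simply have $\theta^k_i=0$.

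\emph{Conclusion.} Plug these factors into the definition
\[
\diam_A(MN)=\log\sup_{i,j\in A,\;k,l}\frac{(MN)_{ik}(MN)_{jl}}{(MN)_{il}(MN)_{jk}} .
\]
Each ratio equals the corresponding ratio for the $u$'s multiplied by $\frac{(1+\theta^k_i)(1+\theta^l_j)}{(1+\theta^l_i)(1+\theta^k_j)}\le (1+1/n)^2$. Taking logarithms and using $\log(1+1/n)\le 1/n$ yields $\diam_A(MN)\le \diam_B(N)+2/n$.

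The main obstacle is the bookkeeping in Step 2. One must check carefully that the revealed labels of $\gamma$ really belong to the same tail $\mathcal T$ of $M$ as the column $k$, so that $M$'s row on $A$ is constant on them. This holds because revealing only peels labels off the current tails, and the tails at the start of $\gamma$ are those of $M$. One must also check that every $k\notin B$ indeed lies in a tail rather than among single intervals. This uses $A\subseteq B$ together with the fact that all single intervals of $N$'s permutation that occur in rows where $N$ is nontrivial belong to $B$. If the latter fails for some single label outside $B$, then $M_{ik}=0$ for $i\in A$, and the correction term vanishes anyway.
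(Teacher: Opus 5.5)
Your proposal is correct and follows the paper's proof essentially step for step: the same splitting of each column $N^k$ into $(N^k)_B$ plus $e_k$ when $k\notin B$, the non-expanding form of \cref{lem:contraction} for the principal parts $M(N^k)_B$, and the same use of the $n$ revealed labels of the tail containing $k$ (which lie in $B$ and carry the same $A$-column of $M$) to get $M_{ik}\le u^k_i/n$ for $i\in A$. The only difference is cosmetic: you absorb the two factors $1+\theta$ directly into the cross-ratio instead of using the triangle inequality for $d_A$. You should also state explicitly that $u^k_i\ge M_{ii}N_{ik}\ge 1$ for $i\in A\subseteq B$, so that the $\theta^k_i$ are well defined.
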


	\begin{proof}
		Recall that $\diam_A(MN) = \sup_{i,j} d_A\bigl((MN)^i, (MN)^j\bigr)$, where $(MN)^j = M N^j$ is the $j$-th column of $MN$.
		Fix a column index $j$. Since $N$ is $B$-supported, its rows outside $B$ coincide with those of the identity, so the column $N^j$ splits as
		\[ N^j = v_B + w, \qquad v_B := (N^j)_B, \quad w := (N^j)_{\overline B}, \]
		where $w = e_j$ if $j \in \overline B$ and $w = 0$ if $j \in B$.

		We show that $d_A(M N^j, M v_B) \le \tfrac1n$.
		If $j \in B$ then $M N^j = M v_B$ and there is nothing to prove.
		Assume $j \in \overline B$, in which case $M N^j = M v_B + M^j$.
		The index $j$ labels an interval of one of the tails of $M$, and by assumption $\gamma$ has revealed at least $n$ intervals of that same tail; their labels $k$ lie in $B$, satisfy $(v_B)_k = N^j_k \ge 1$, and the columns $M^k$ coincide with $M^j$ on coordinates in $A$ (see  \cref{prop:rauzy_mat}).
		Hence, on coordinates $i \in A$,
		\[ (M v_B)_i = \sum_{k \in B} (v_B)_k\, M^k_i \;\ge\; \sum_{k\ \mathrm{revealed}} M^k_i \;\ge\; n\, M^j_i, \]
		Notice that $(Mv_B)_i > 0$ since $M_i^i > 0$ and $(v_B)_i > 0$.
		Thus $0 \le M^j_i/(Mv_B)_i \le 1/n$ for every $i \in A$.
		Therefore, as in Equation (\ref{eq:max}) of the previous Lemma,
		\[ d_A(M N^j, M v_B) = \log \frac{\max_{i\in A}\bigl(1 + M^j_i/(Mv_B)_i\bigr)}{\min_{i\in A}\bigl(1 + M^j_i/(Mv_B)_i\bigr)} \le \log\!\Bigl(1+\tfrac1n\Bigr) \le \tfrac1n. \]
		Now let $i, j$ be arbitrary. Using the triangle inequality, the claim above (applied to the columns $i$ and $j$), and \cref{lem:contraction} (whose contraction factor is at most $1$),
		\begin{align*}
			d_A(M N^i, M N^j) &\le  d_A\bigl(M (N^i)_B,\, M (N^j)_B\bigr) + d_A\bigl(M N^i,\, M (N^i)_B \bigr) + d_A\bigl(M N^j,\, M (N^j)_B \bigr)\\ &\le
            d_A\bigl(M (N^i)_B,\, M (N^j)_B\bigr) + \frac{2}{n} \\
            & \le d_B(N^i, N^j) + \frac{2}{n}.
		\end{align*}
		Taking the supremum over $i,j$ gives $\diam_A(MN) \le \diam_B(N) + \tfrac2n$.
	\end{proof}

	\begin{theoremC}
		\label{prop:unique_ergodicity_criterion}
		Let $\boldsymbol{\gamma}$ be an infinite path in a Rauzy diagram $\mathcal{R}$. Assume there exist
		\begin{itemize}
			\item an increasing sequence of finite subsets $A_1 \subset A_2 \subset \dots \subset \N$ with $\bigcup_n A_n = \N$,
			\item a factorisation $\boldsymbol{\gamma} = \zeta_1 \cdot \zeta_2 \cdots$ such that, for every $n$, the matrix $M_n = M^{\zeta_n}$ is strictly $A_n$-supported and $\zeta_n$ reveals at least $n$ intervals in each tail,
			\item a constant $C$ with $\sup_n \diam_{A_n}(M_n) < C$.
		\end{itemize}
		Then there is a unique IET associated with $\boldsymbol{\gamma}$, and it is uniquely ergodic.
	\end{theoremC}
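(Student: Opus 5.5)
The plan is to reduce unique ergodicity to showing that $\Delta^{\boldsymbol{\gamma},\infty}$ is a single point, and then to apply Proposition~\ref{prop:invmeasures}.

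\textbf{Step 1: infinite-completeness and existence.} Since $M_n$ is strictly $A_n$-supported, every row indexed by $A_n$ of $M_n$ is strictly positive. Let $\alpha\in\N$ and a starting time $k$ be given. Choose $n$ with $\alpha\in A_n$ and with $\zeta_n$ starting after time $k$. The matrix $M^{\gamma_{k,\cdot}}$ up to the end of $\zeta_n$ is the product of a nonnegative integer matrix with nonzero diagonal and $M_n$. Hence its row $\alpha$ is strictly positive. So $\boldsymbol{\gamma}$ (after passing to the reduced path) is infinite-complete. Proposition~\ref{prop:nonemptyintersection} then gives $\Delta^{\boldsymbol{\gamma},\infty}\neq\emptyset$, and Lemma~\ref{lem:infcompisminimal} gives minimality. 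By Proposition~\ref{prop:invmeasures}, it remains to show that $\Delta^{\boldsymbol{\gamma},\infty}$ has one point. Proposition~\ref{prop:conjugation} together with $\Delta^{\boldsymbol\gamma,\infty}$ being a point also yields uniqueness of the IET.

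\textbf{Step 2: uniform contraction on pairs.} Set $P_n:=M_nM_{n+1}$. Since $M_n$ is $A_n$-supported, $M_{n+1}$ is strictly $A_{n+1}$-supported and $\zeta_{n+1}$ reveals at least $n+1$ intervals per tail, Lemma~\ref{lem:localization} gives
\[
\diam_{A_n}(P_n)\le \diam_{A_{n+1}}(M_{n+1})+\tfrac{2}{n+1}< C+2 .
\]
Setting $\theta:=1-e^{-(C+2)}<1$, Lemma~\ref{lem:contraction} then gives $d_{A_n}(P_n x_{B},P_n y_{B})\le\theta\, d_{B}(x,y)$ for every $B\supseteq A_n$. (Grouping into pairs may need care with the strict-positivity hypothesis; alternatively one can apply Lemma~\ref{lem:localization} directly to consecutive factors.)

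\textbf{Step 3: iterated estimate with tail errors.} Fix $m$ and $x,y\in\Delta^{\boldsymbol{\gamma},\infty}$. For large $N$ write $x=M_1\cdots M_N x^N$, and similarly for $y$. The vectors $M_Nx^N$ and $M_Ny^N$ lie in the cone of the columns of $M_N$, so their $d_{A_N}$-distance is at most $C$; this gives a finite starting bound. Descend from level $N$ to level $m$ through the blocks $P_n$. At each level $n$, replace the current vectors by their projections onto a large finite set $B_n\supseteq A_{n+1}$, and use the triangle inequality together with the perturbation Lemma~\ref{lem:perturbation}. The error at level $n$ is at most $C_{A_n}(P_n)\,\|z\|_{\overline{B_n}}/\|z\|_1$. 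The constant $C_{A_n}(P_n)$ depends only on $P_n$, so $B_n$ can be chosen (a priori, depending only on the path) to make this error at most $\varepsilon\,2^{-n}$. The outcome is
\[
d_{A_m}(x,y)\le \theta^{(N-m)/2}\,C+\varepsilon .
\]
Letting $N\to\infty$ and then $\varepsilon\to0$ gives $d_{A_m}(x,y)=0$ for every $m$. Since $\bigcup_m A_m=\N$, $x$ and $y$ are projectively equal, and as both lie in $\Delta$ they are equal.

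\textbf{Main obstacle.} The hard part is Step 3: the tail mass ratio $\|z\|_{\overline{B_n}}/\|z\|_1$ of the intermediate vectors must be uniformly small, independently of $N$. To handle this, I would first try the tightness argument from the proof of Proposition~\ref{prop:nonemptyintersection}. Strict positivity of the rows in $A_n$ forces every image column to have mass at least $|A_n|$ times its tail mass outside the support, which gives uniform tail control of all vectors in $\hat M^{\gamma_{k,\cdot}}\Delta$. If this is not directly uniform, I would instead enlarge $A_n$ within the factorisation, i.e.\ choose $B_n=A_{n+j}$ for suitable $j$.
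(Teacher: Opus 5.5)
Steps 1 and 2 are fine, and you correctly identify Step 3 as the hard part. But the mechanism you propose there does not work with the fixed pairing $P_n=M_nM_{n+1}$, so there is a gap. It is not about uniformity in $N$: your tightness observation is exactly the paper's column-leakage estimate, and it is uniform in $N$.

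The problem is a coupling between $B_n$ and the next block. After projecting onto $B_n$ and applying Lemma~\ref{lem:contraction} to $P_n$, you must still bound $d_{B_n}(z,w)$ with $z=P_{n+2}z'$. The block $P_{n+2}$ is the identity on rows outside $A_{n+3}$. Hence $D_B(P_{n+2})=\infty$ for every $B\not\subseteq A_{n+3}$, and $P_{n+2}$ gives no contraction into $d_{B_n}$ unless $B_n\subseteq A_{n+3}$.

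Under that constraint, the leakage $\|z\|_{\overline{B_n}}/\|z\|_1$ is only of order $1/|A_{n+3}|$. Meanwhile $C_{A_n}(P_n)=(e^{D_{A_n}(P_n)}-1)N_{A_n}(P_n)$ contains the column-norm ratio $N_{A_n}(P_n)$, which the hypotheses of Theorem~C do not control. For example, $\left(\begin{smallmatrix}1+ab & a\\ b & 1\end{smallmatrix}\right)$ has diameter $\log(1+1/ab)$ but column-norm ratio about $b$. So the errors $e_n$ need not be small, and $\sum_j\theta^j e_{m+2j}$ may diverge.

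Taking $B_n=A_{n+j}$ with $j$ large does make the leakage small. But the next quantity to control is then $d_{A_{n+j}}(z,w)$, which no pair block controls. So you cannot keep the fixed pairs or the bound $\theta^{(N-m)/2}C+\varepsilon$.

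The missing idea is an \emph{adaptive regrouping} of the factorisation, which is what the paper does:
\begin{enumerate}
\item Choose indices $n_0<n_1<\dots$ inductively and work with the blocks $Q_l=M_{n_{l-1}+1}\cdots M_{n_l}$, measured at level $A_{n_l}$.
\item By Lemma~\ref{lem:localization}, $D_{A_{n_l}}(Q_l)\le D_{A_{n_l}}(M_{n_l})+2/n_l$. This stays below $C$ once $n_l$ is large, so every regrouped block still contracts by a fixed factor $\rho<1$ (Lemma~\ref{lem:contraction}).
\item Choose the next index $n_{l+1}$ only after $Q_l$ is fixed, large enough that $C_{A_{n_l}}(Q_l)/n_{l+1}<\varepsilon$. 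This is possible because the next intermediate vector lies in the image of the strictly $A_{n_{l+1}}$-supported block $Q_{l+1}$, so its leakage outside $A_{n_{l+1}}$ is at most $1/n_{l+1}$. Lemma~\ref{lem:perturbation} then keeps each error below $\varepsilon$.
\end{enumerate}
After $k$ blocks this gives $d_{A_1}\le\rho^kC+\varepsilon/(1-\rho)$, and $k\to\infty$ as $N\to\infty$. Restarting from a block $M_1\cdots M_s$ at level $A_s$ handles every level $A_s$. The essential point is the order of choices: the length of each block must depend on the perturbation constant of the block before it.
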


\begin{proof} Since $D_{A_n}(M_n) < \infty$ for all $n \in \N$, the matrix $M_n$ is strictly positive on rows in $A_n$, in particular, $\boldsymbol{\gamma}$ is infinite-complete and the intersection $\Delta^{\boldsymbol{\gamma}, \infty}$ is non-empty.
	It remains to show that it is equal to a point.

	Set $\delta := \tfrac12\bigl(C - \sup_n \diam_{A_n}(M_n)\bigr) > 0$, so that $\diam_{A_n}(M_n) \le C - 2\delta$ for all $n$, and write $d_m := d_{A_m}$ and $\rho := 1 - e^{-C} \in (0,1)$. Note again that each $M_n$ is strictly positive on its rows in $A_n$, as $\diam_{A_n}(M_n) < \infty$. Note also that $|A_n| \ge n$: the $\ge n$ intervals revealed by $\zeta_n$ in any one tail are single intervals whose labels lie in $A_n$. We record two facts.

	\emph{(a) Column leakage.} Let $M$ be strictly $A_m$-supported, equal to $M^{\gamma}$ for a path revealing at least $m$ intervals in each tail. Then
	\begin{equation} \label{eq:colum_bound}
		\frac{\|M^i\|_{\overline{A_m}}}{\|M^i\|_1} \le \frac1m \quad (i \in \N), \qquad \text{hence} \qquad \frac{\|Mx\|_{\overline{A_m}}}{\|Mx\|_1} \le \frac1m \quad (x \in \Delta).
	\end{equation}
	Indeed, for $i \in A_m$ the numerator vanishes (as $M$ is $A_m$-supported), while for $i \in \overline{A_m}$ one has $\|M^i\|_{\overline{A_m}} = 1$ and, since $M$ is positive on its rows in $A_m$ and $|A_m| \ge m$,
	\[ \|M^i\|_{A_m} = \sum_{j \in A_m} M_{ji} \ge |A_m| \ge m. \]
	Thus $\|M^i\|_1 = \|M^i\|_{A_m} + 1 \ge m+1$, giving the first inequality; the second follows by averaging over columns.

	\emph{(b) Block diameters.} For $a < b$, applying \cref{lem:localization} with $A = B = A_b$ --- the prefix $M_a \cdots M_{b-1}$ is $A_{b-1}$-supported, hence $A_b$-supported, and $M_b$ reveals $\ge b$ intervals --- gives
	\begin{equation} \label{eq:block_diam}
		\diam_{A_b}(M_a \cdots M_b) \le \diam_{A_b}(M_b) + \tfrac2b \le C - 2\delta + \tfrac2b,
	\end{equation}
	which is $< C$ as soon as $b > 1/\delta$. Moreover, for any Rauzy--Veech matrix $Q$ the columns of $(M_a \cdots M_b)Q$ are nonnegative combinations of the columns of $M_a \cdots M_b$, and the projective $A_b$-diameter of the image of the positive cone is the largest pairwise distance between those columns; hence
	\begin{equation} \label{eq:diam_mono}
		\diam_{A_b}\bigl((M_a \cdots M_b)\, Q\bigr) \le \diam_{A_b}(M_a \cdots M_b).
	\end{equation}

	\emph{One contraction step.} We claim that, for any sequence satisfying the hypotheses of the proposition and any $\epsilon > 0$, there is $n_\ast$ such that for all $n \ge n_\ast$, $N \ge n$ and $x, y \in \Delta$,
	\begin{equation} \label{eq:onestep}
		d_1(M_1 \cdots M_N x,\, M_1 \cdots M_N y) \le \rho\, d_n(M_2 \cdots M_N x,\, M_2 \cdots M_N y) + \epsilon.
	\end{equation}
	Write $u = M_2 \cdots M_N x$ and $v = M_2 \cdots M_N y$. By the triangle inequality,
	\[ d_1(M_1 u, M_1 v) \le d_1(M_1 u_{A_n}, M_1 v_{A_n}) + d_1(M_1 u_{A_n}, M_1 u) + d_1(M_1 v_{A_n}, M_1 v). \]
	By \cref{lem:contraction}, the first term is at most $\bigl(1 - e^{-\diam_{A_1}(M_1)}\bigr) d_n(u, v) \le \rho\, d_n(u, v)$.
	By \cref{lem:perturbation}, applied to $M_1$ at the levels $A_1 \subset A_n$, together with \eqref{eq:colum_bound} for the strictly $A_n$-supported factor $M_2 \cdots M_n$ of $u = (M_2 \cdots M_n)(M_{n+1} \cdots M_N x)$,
	\[ d_1(M_1 u_{A_n}, M_1 u) \le C_{A_1}(M_1)\, \frac{\|u\|_{\overline{A_n}}}{\|u\|_1} \le \frac{C_{A_1}(M_1)}{n}, \]
	and the same bound holds for $v$. As $C_{A_1}(M_1)$ depends only on the fixed matrix $M_1$, both error terms are $\le \epsilon/2$ once $n \ge n_\ast := \lceil 2\, C_{A_1}(M_1)/\epsilon\rceil$, which proves \eqref{eq:onestep}. The argument used only the hypotheses of Theorem C, so \eqref{eq:onestep} holds for \emph{every} sequence satisfying them.

	\emph{Iteration.} Fix $x, y \in \Delta$, $\epsilon > 0$ and $k \in \N$. We choose $1 = n_0 < n_1 < \dots < n_k$ inductively so that, with $n_{-1} := 0$,
	\begin{equation} \label{eq:induction}
		\begin{split}
		d_1(M_1 \cdots M_N x,\, M_1 \cdots M_N y) \le \ &\rho^l\, d_{n_l}\bigl(M_{n_{l-1}+1} \cdots M_N x,\ M_{n_{l-1}+1} \cdots M_N y\bigr) \\
							      &+ (1 + \rho + \dots + \rho^{l-1})\,\epsilon
		\end{split}
	\end{equation}
	holds for all $N \ge n_l$ and all $1 \le l \le k$.
	For $l = 1$ this is \eqref{eq:onestep} applied to the original sequence, with $n_1 \ge n_\ast(\epsilon)$. Assume \eqref{eq:induction} holds for $l - 1$. The shifted sequence headed by the block $M_{n_{l-2}+1}\cdots M_{n_{l-1}}$, namely
	\[ \bigl(M_{n_{l-2}+1}\cdots M_{n_{l-1}},\ M_{n_{l-1}+1},\ M_{n_{l-1}+2},\ \dots\bigr) \quad \text{at levels} \quad \bigl(A_{n_{l-1}},\ A_{n_{l-1}+1},\ A_{n_{l-1}+2},\ \dots\bigr), \]
	again satisfies the hypotheses of the proposition with the same constant $C$: its first matrix is strictly $A_{n_{l-1}}$-supported with $A_{n_{l-1}}$-diameter $< C$ by \eqref{eq:block_diam} for $n_{l-1}$ large enough, reveals at least one interval per tail, and the remaining matrices $M_{n_{l-1}+j-1}$ reveal $\ge n_{l-1}+j-1 \ge j$ intervals, while all diameters are $< C$. Applying \eqref{eq:onestep} to it, we obtain an index $n_l > n_{l-1}$ such that for all $N \ge n_l$
	\[ d_{n_{l-1}}\bigl(M_{n_{l-2}+1}\cdots M_N x,\ M_{n_{l-2}+1}\cdots M_N y\bigr) \le
	\rho\, d_{n_l}\bigl(M_{n_{l-1}+1}\cdots M_N x,\ M_{n_{l-1}+1}\cdots M_N y\bigr) + \epsilon . \]
	Substituting into \eqref{eq:induction} for $l-1$ yields \eqref{eq:induction} for $l$.

	Taking $l = k$, by the definition of the diameter together with bounds from \eqref{eq:diam_mono}, for $n_k$ large enough, and \eqref{eq:block_diam}, for all $N \ge n_k$ we have
	\begin{align*}
		d_{n_k}\bigl(M_{n_{k-1}+1} \cdots M_N x,\ M_{n_{k-1}+1} \cdots M_N y\bigr) &\le \diam_{A_{n_k}}\bigl(M_{n_{k-1}+1} \cdots M_N\bigr) \\
											   &\le \diam_{A_{n_k}}\bigl(M_{n_{k-1}+1} \cdots M_{n_k}\bigr) < C,
	\end{align*}
	 so that $d_1(M_1 \cdots M_N x,\, M_1 \cdots M_N y) \le \rho^k\, C + \dfrac{\epsilon}{1-\rho}$ for all $N \ge n_k$. This bound is uniform in $x, y$, hence $\diam_{A_1}(M_1 \cdots M_N) \le \rho^k C + \tfrac{\epsilon}{1-\rho}$ for $N \ge n_k$; letting $N \to \infty$, then $k \to \infty$, then $\epsilon \to 0$ gives
	\[ \lim_{N \to \infty} \diam_{A_1}(M_1 \cdots M_N) = 0. \]
	As this derivation used only the hypotheses of the proposition, it applies to any sequence satisfying them: the projection of the corresponding nested intersection onto the \emph{first} level is a single point. In particular, for the sequence at hand, the projection of $\bigcap_N M^{\zeta_1 \cdots \zeta_N}\Delta$ onto $A_1$ is a single point.

	\emph{All levels.} Fix $s \in \N$ such that $s > 1/\delta$ and apply this to the sequence headed by the block $M_1 \cdots M_s$,
	\[ \bigl(M_1 \cdots M_s,\ M_{s+1},\ M_{s+2},\ \dots\bigr) \quad \text{at levels} \quad \bigl(A_s,\ A_{s+1},\ A_{s+2},\ \dots\bigr), \]
	which satisfies the hypotheses with constant $C$ (its first matrix is strictly $A_s$-supported with $\diam_{A_s}(M_1 \cdots M_s) < C$ by \eqref{eq:block_diam}, and reveals at least one interval per tail) and whose associated path is again $\boldsymbol{\gamma} = \zeta_1 \cdots \zeta_s \cdot \zeta_{s+1} \cdots$. Its products are $M_1 \cdots M_N$, so its first level being $A_s$, the conclusion gives that the projection of $\bigcap_N M^{\zeta_1 \cdots \zeta_N}\Delta$ onto $A_s$ is a single point.

	Since $\bigcup_s A_s = \N$, the set $\Delta^{\boldsymbol{\gamma}, \infty}$ is reduced to a single point, and by \cref{prop:invmeasures} the set $\mathcal{M}_T$ of invariant probability measures of the corresponding IET $T$ is a singleton. Thus $\boldsymbol{\gamma}$ determines a unique IET, which is uniquely ergodic.
\end{proof}

\subsubsection{Uniquely ergodic paths}\label{sec:uepath} We show in addition that there exists a uniquely ergodic path in any Rauzy diagram, which together with Proposition \ref{prop:approximation} yields an alternative proof of Theorem B.

\begin{prop}\label{prop:uniquely_ergodic_path}
	In any Rauzy diagram $\mathcal{R}$, starting from any proper strongly irreducible permutation $\Pi \in \mathcal{R}$, there exists a path satisfying the assumptions of Theorem C.
\end{prop}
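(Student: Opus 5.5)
We build the path block by block, as $\boldsymbol{\gamma} = \zeta_1\zeta_2\cdots$, and aim to make each block $\zeta_n$ have a matrix with uniformly bounded diameter on its support. The idea is to make every block essentially the same ``shape'': a complete finite-type loop on the single intervals, followed by losing and revealing steps for the tails. All ingredients are already available from Propositions \ref{prop:alltailswin}, \ref{prop:alltailslose} and \ref{prop:yoccoz}, together with Lemmas \ref{lem:liftinfinite} and \ref{lem:liftfinite}.

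\textbf{Step 1: build one block.} Suppose the current vertex $\Pi^{(n-1)}$ is proper and strongly irreducible (this is preserved by \cref{lem:invarianceofirreducibility}). Let $A_n$ be the set of labels of single intervals at the end of the block. First apply Proposition \ref{prop:alltailswin} $n$ times, so that each tail reveals at least $n$ intervals; we do this first so that the newly revealed letters are included in the positivity. Next, take a complete finite path for $\pi_*$ (Proposition \ref{prop:yoccoz}) and lift it with Lemma \ref{lem:liftfinite}, so that tails only lose; this makes the single-by-single block positive. Then append a path from Proposition \ref{prop:alltailslose}, in which every tail loses against single intervals; this makes the rows in $A_n$ positive on the tail columns as well, exactly as in the proof of Proposition \ref{prop:pathinfcomplete}. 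The resulting $M_n = M^{\zeta_n}$ is strictly $A_n$-supported, since rows outside $A_n$ are untouched (Lemma \ref{lem:formrvmatrix}). Because revealed letters are added to $A_n$, we get $A_n \subset A_{n+1}$ and $\bigcup A_n = \N$, since every letter is eventually revealed.

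\textbf{Step 2: uniform diameter bound (the main obstacle).} A naive construction makes the entries of $M_n$ grow with $n$, because $|A_n|$ grows. The diameter $D_{A_n}(M_n)$ is a supremum of cross-ratios $M_{ik}M_{jl}/(M_{il}M_{jk})$. We therefore want every row in $A_n$ to be positive with entries bounded above and below by constants independent of $n$. Entries are integers, so they are at least $1$; the difficulty is the upper bound.

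The first thing to try is to choose paths in which each letter wins a bounded number of times. In that case each column has entries at most $2^c$ on the rows in $A_n$, and $D_{A_n}(M_n) \le 2c\log 2$.

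Concretely, I would exploit the structure of hyperelliptic-like building blocks. Make the new letters win exactly once by ordering the losing steps so that each column gains only a single addition per row: the complete loop on $\pi_*$ is traversed so that every single interval wins against every other exactly once, in the spirit of a ``staircase'' path. The tails then lose once against a single interval whose column is already positive.

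If bounded multiplicity cannot be achieved uniformly in $d$, the fallback is to compare columns rather than entries. It suffices that, for every $i,j\in A_n$, the ratios $M_{ik}/M_{jk}$ lie in a fixed interval $[c^{-1},c]$ for all $k$. This would follow if all columns are, up to bounded factors, multiples of a common vector, for instance by finishing each block with a loop that adds a single ``universal'' column to every other column.

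\textbf{Step 3: conclude.} Finally, check that the constant $C$ is independent of $n$, and that the reduced-path convention of Definition \ref{def:reduced} does not break the factorisation. If needed, replace each block by its reduced version; this only merges identity matrices, so it changes neither the matrices nor the support sets.
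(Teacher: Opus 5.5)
Your Step~1 has the same shape as the paper's blocks: reveal at least $n$ intervals per tail via \cref{prop:alltailswin}, run a complete finite-type path on the single intervals lifted so that tails only lose, then make every tail lose via \cref{prop:alltailslose}. It correctly gives strict $A_n$-support, $A_n\subset A_{n+1}$ and $\bigcup_n A_n=\N$.

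The gap is Step~2. The uniform bound $\sup_n D_{A_n}(M_n)<C$ is the real content of the proposition, and you do not prove it.

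\begin{itemize}
\item The bounded-multiplicity route is unsupported. Nothing you cite provides complete paths in which each letter wins a number of times bounded independently of $|A_n|$.
\item Its key implication is not valid anyway. $M_{\alpha\beta}$ counts time-ordered chains of wins leading from $\alpha$ to $\beta$. Bounded win counts per letter still allow the number of such chains to grow exponentially with the number of letters. \cref{lem:formrvmatrix} only gives the bound $2^{|\gamma|}$, and a complete path has length at least of order $|A_n|$.
\item The fallback, making all columns nearly proportional to a common vector, is the right target. But you give no loop in $\mathcal R_\Pi$ that realizes it. A letter that keeps winning only beats the letters lying to the right of its own position in the opposite row, so a ``universal column'' loop is not available at a general vertex.
\end{itemize}

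The missing idea is that $D_{A_n}$ is projective, so large entries are harmless; smallness comes from iterating a positive matrix.

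\begin{enumerate}
\item Use strong connectedness (\cref{prop:yoccoz}) to close your complete path into a complete loop $\ell_*$ in the finite-type diagram on the letters $A_n$. The paper lets this loop begin with the revealing path, viewed as a finite-type path on $A_n$ along which the remaining tails never win.
\item Lift $\ell_*^k$ to $\mathcal R_\Pi$. Since $P:=M^{\ell_*}$ is strictly positive, $D_{A_n}(P)<\infty$. Then \cref{lem:contraction} with $A=B=A_n$ gives $D_{A_n}(P^k)\le\bigl(1-e^{-D_{A_n}(P)}\bigr)^{k-1}D_{A_n}(P)\to 0$.
\item With $C$ fixed in advance, choose $k=k(n)$ so that $D_{A_n}(P^k)<C$. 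The dependence of $k$ on $n$ costs nothing.
\item Appending the losing path $\sigma$ from \cref{prop:alltailslose} does not spoil this. No tail wins along $\sigma$, so on rows in $A_n$ each losing tail column only acquires (sums of) the already contracted single-interval columns. This gives $D_{A_n}(M^{\zeta_n})\le C$.
\end{enumerate}

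With this in place of your Step~2, your construction becomes the paper's proof.
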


\begin{proof}
Fix $C > 0$. We build a factorisation $\boldsymbol{\gamma} = \zeta_1 \cdot \zeta_2 \cdots$ satisfying the hypotheses of Theorem C, where $\zeta_n$ denotes the $n$-th block of $\boldsymbol{\gamma}$.

We construct the blocks $\zeta_n$ inductively, where at each stage the ending permutation of $\zeta_n$ is proper and strongly irreducible by \cref{rem:preservingirreducibility}. Define $\Pi_0 := \Pi$ and suppose $\zeta_1, \dots, \zeta_{n-1}$ have been built and end at a permutation $\Pi_{n-1}$ for some $n \geq 1$. At each step, the block $\zeta_n$ is defined as the concatenation of the following two paths:
\begin{itemize}
	\item \emph{Revealing.} Starting from $\Pi_{n-1}$, \cref{prop:alltailswin} applied $n$ times gives a path $\delta$ along which each tail interval wins exactly $n$ times, revealing at least $n$ intervals in each tail. Let $\Pi_{n-1}'$ be its ending infinite-type permutation.
	\item \emph{Finite-type contraction.} The path $\delta$ is the lift of a path $\delta_*$ in $\mathcal{R}_{(\pi_{n-1}')_*}$ since the tail intervals of $\Pi_{n-1}'$ do not win.
	By \cref{prop:yoccoz} the finite-type Rauzy diagram $\mathcal{R}_{(\pi_{n-1}')_*}$ is strongly connected and carries a complete loop $\ell_*$ starting with $\delta_*$. We consider its lift $\ell$ to the infinite-type Rauzy diagram $\mathcal{R}_{\Pi_{n-1}'}$, which is possible since along this loop no tail intervals in $\Pi_{n-1}'$ win.
    Let $A_n$ be the set of labels of the single intervals of $\Pi'_{n-1}$. The matrix $M^{\ell}$ restricted to coordinates in $A_n$ is equivalent to the strictly positive finite-type Rauzy--Veech matrix $M^{\ell_*}$ and the same remains true when repeating the loop $\ell$. Hence there exists $
    k$ large enough so that for the loop $\ell^k$ obtained by repeating $\ell$ a number $k$ of times, the finite-type matrix $M^{\ell_*^k}$ obtained by restricting $M^{\ell^k}$ to $A_n$ has diameter less than $C$.
    \item \emph{Infinite-type contraction.} Note however that we want the diameter restricted to entire \textit{rows} indexed by $A_n$ to be less than $C$. By \cref{prop:alltailslose} there exists a path $\sigma$ starting from the endpoint of $\ell^k$ along which all tails loose and no tail wins.
	    When a tail loses the winning column is added to the losing column, thus on rows indexed by $A_n$ the losing column coincides with one of the columns of the corresponding matrix restricted to $A_n$.
	    In particular for $\zeta_n := \ell^k \cdot \sigma$ it holds that $D_{A_n}(M^{\zeta_n}) \le C$.
\end{itemize}
By construction $M^{\zeta_n}$ acts trivially outside of rows in $A_n$, hence is strictly $A_n$-supported, and $\zeta_n$ reveals at least $n$ intervals in each tail. Furthermore, the sets $A_n$ are increasing and $\bigcup_n A_n = \mathcal{A}$, since every interval is eventually revealed. Hence, writing $M_n := M^{\zeta_n}$, the path $\boldsymbol{\gamma} = \zeta_1 \cdot \zeta_2 \cdots$ satisfies the three hypotheses of Theorem C, where the last hypotheses is satisfied up to taking a larger constant.
\end{proof}

\thispagestyle{empty}
\printbibliography
\end{document}